\setlist[enumerate]{leftmargin=*}
\theoremstyle{plain}
\newtheorem{proposition}{Proposition}[section]
\newtheorem{theorem}[proposition]{Theorem}
\newtheorem{lemma}[proposition]{Lemma}
\newtheorem{corollary}[proposition]{Corollary}
\theoremstyle{definition}
\newtheorem{definition}[proposition]{Definition}
\declaretheorem[name=Remark,sibling=proposition,qed={\lower-0.3ex\hbox{$\blacklozenge$}}]{remark}
\numberwithin{equation}{section}
\newcommand{\N}{\mathbb{N}}	
\newcommand{\Z}{\mathbb{Z}}
\newcommand{\C}{\mathbb{C}}	
\newcommand{\R}{\mathbb{R}}	
\newcommand{\Sphere}{\mathbb{S}}       
\newcommand{\nR}{\dot\R}    
\newcommand{\nT}{\dot T}
\newcommand{\Id}{\mathrm{Id}}	
\DeclareMathOperator{\rank}{rank}
\DeclareMathOperator{\diag}{diag}
\DeclareMathOperator{\Span}{span}	
\renewcommand{\Im}{\operatorname{Im}}
\newcommand{\dd}{\mathrm{d}}	
\newcommand{\de}{\partial}		
\renewcommand{\div}{\operatorname{div}}	
\newcommand{\DD}{\mathrm{D}}     
\newcommand{\mm}{\underline{m}}
\newcommand{\cc}{\underline{c}}
\DeclareMathOperator{\spt}{supp} 
\DeclareMathOperator{\singspt}{sing\,supp}
\newcommand{\WF}{\mathtt{WF}}
\DeclareMathOperator{\essspt}{ess\,supp}
\newcommand{\Exp}{\mathtt{Exp}}  
\newcommand{\X}{\mathcal{X}}     
\newcommand{\dist}{\mathrm{dist}}
\newcommand{\tc}{\,:\,}  
\newcommand{\defeq}{\mathrel{:=}}
\newcommand{\interior}{\mathop\mathtt{int}}
\newcommand{\CO}{\mathcal{C}}    
\newcommand{\sLap}{\mathscr{L}}        
\newcommand{\Dist}{\mathscr{D}'}       
\newcommand{\EDist}{\mathscr{E}'}      
\newcommand{\Sch}{\mathscr{S}}         
\newcommand{\cH}{\mathscr{H}}          
\newcommand{\taut}{\mathfrak{t}}       
\newcommand{\opA}{\mathfrak{a}}
\newcommand{\cU}{\mathscr{U}}
\newcommand{\cV}{\mathscr{V}}
\newcommand{\cW}{\mathscr{W}}
\newcommand{\cD}{\mathcal{D}}    
\newcommand{\sloc}{\mathrm{sloc}}
\newcommand{\cl}{\mathrm{cl}}     
\newcommand{\thr}{\varsigma}   
\newcommand{\Lie}[1]{\mathfrak{#1}}    
\newcommand{\Sect}{\mathbf{\Gamma}}  
\newcommand{\Forms}{\mathbf{\Omega}} 
\newcommand{\CLF}{\mathscr{C}\!\ell}   
\newcommand{\Ev}{\mathtt{Ev}}   
\newcommand{\wphase}{{\bm{w}}}
\newcommand{\phase}{{\bm{\phi}}} 
\newcommand{\Rop}{\mathscr{R}}     
\newcommand{\la}{\lambda}
\title[Multipliers and wave equation for sub-Laplacians]{Spectral multipliers and wave equation for sub-Laplacians: lower regularity bounds of Euclidean type}
\author[Martini]{Alessio Martini}
\address[A. Martini]{School of Mathematics \\ University of Birmingham \\ Edgbaston \\ Birmingham \\ B15 2TT \\ United Kingdom}
\email{a.martini@bham.ac.uk}
\author[M\"uller]{Detlef M\"uller}
\address[D. M\"uller]{Mathematisches Seminar \\ Christian-Albrechts-Universit\"at zu Kiel \\ Ludewig-Meyn-Str.\ 4 \\ D-24118 Kiel \\ Germany}
\email{mueller@math.uni-kiel.de}
\author[Nicolussi Golo]{Sebastiano Nicolussi Golo}
\address[S. Nicolussi Golo]{Dipartimento di Matematica \\ Universit\`a di Padova \\ Via Trieste, 63 \\ I-35121 Padova \\ Italy}
\email{sebastiano2.72@gmail.com}
\subjclass{35L05, 35S30, 42B15, 43A22, 58J60}
\keywords{Spectral multiplier, sub-Laplacian, wave equation, sub-Riemannian manifold, eikonal equation, Fourier integral operator}
\thanks{This research was partially supported by the EPSRC Grant ``Sub-Elliptic Harmonic Analysis'' (EP/P002447/1). Part of the work was carried out during a two-month visit of the first-named author to the Christian-Albrechts-Universit\"at zu Kiel (Germany), made possible by the generous financial support of the Alexander von Humboldt Foundation.}
\dedicatory{In memory of Eli Stein.}
\begin{document}

\begin{abstract}
Let $\sLap$ be a smooth second-order real differential operator in divergence form on a manifold of dimension $n$. Under a bracket-generating condition, we show that the ranges of validity of spectral multiplier estimates of Mihlin--H\"ormander type and wave propagator estimates of Miyachi--Peral type for $\sLap$ cannot be wider than the corresponding ranges for the Laplace operator on $\R^n$. The result applies to all sub-Laplacians on Carnot groups and more general sub-Riemannian manifolds, without restrictions on the step. The proof hinges on a Fourier integral representation for the wave propagator associated with $\sLap$ and nondegeneracy properties of the sub-Riemannian geodesic flow.
\end{abstract}

\maketitle

\section{Introduction}

Let $M$ be a smooth manifold, $H : T^*M\to[0,+\infty)$ a smooth function on the cotangent bundle that is a positive-semidefinite quadratic form on each fiber, and $\mu$ a smooth positive measure on $M$.
The \emph{sub-Laplacian} $\sLap$ defined by $(M,H,\mu)$ is the second-order differential operator given by
\[
\sLap f = - \div_\mu(B_H(\dd f))
\qquad \forall f\in C^\infty_c(M) ,
\]
where $B_H : T^*M\to TM$ is the linear map determined by the quadratic form $H$, and $\div_\mu$ is the divergence operator defined by $\mu$ (see Definition \ref{dfn:divergence} below).
The sub-Laplacian $\sLap$ is a non-negative symmetric unbounded operator on $L^2(M) \defeq L^2(M,\mu)$, and it has principal symbol $H$.

The above definition encompasses a number of second-order differential operators considered in the literature. In particular, if $H$ is a positive-definite quadratic form, then it is the cometric of a Riemannian tensor on $M$, and $\sLap$ is elliptic; moreover, if $\mu$ is the Riemannian volume, then $\sLap$ is the Laplace--Beltrami operator. More generally, if there is a bracket-generating family of vector fields $v_1,\dots,v_r\in\Sect(TM)$ such that $H=\sum_j v_j\otimes v_j$, then $H$ is the cometric of a sub-Riemannian structure and $\sLap$ is a sub-Laplacian as defined, e.g., in \cite{MR1867362}.

Assume that a self-adjoint extension of $\sLap$ has been chosen.
Then a functional calculus for $\sLap$ is defined via the spectral theorem and, for all bounded Borel functions $m : [0,+\infty) \to \C$, the operator
\[
m(\sLap)=\int_{[0,\infty)} m(s) \,\dd E_\sLap(s)
\]
is bounded on $L^2(M)$. An extensively studied problem in the literature is the determination of necessary conditions and sufficient conditions on the function $m$, also known as a spectral multiplier, for $m(\sLap)$ to extend to a bounded operator on $L^p(M)$ for some $p \neq 2$.

In the case where $\sLap$ is the Laplace operator on $\R^n$, the $L^p$ boundedness of $m(\sLap)$ can be ensured by suitable size and smoothness conditions on $m$. More specifically, for $m : [0,\infty)\to \C$, $q \in [1,\infty]$ and $\alpha \geq 0$, let us define the local scale-invariant $L^q$ Sobolev norm of order $\alpha$ of $m$ by
\[
\|m\|_{L^q_{\alpha,\sloc}} = \sup_{t \geq 0} \|\rho \, m(t\cdot)\|_{L^q_\alpha(\R)} ,
\]
where $L^q_\alpha(\R)$ is the $L^q$ Sobolev space of order $\alpha$, and $\rho\in C^\infty_c((0,\infty))$ is a nontrivial cutoff (different choices of $\rho$ give rise to equivalent norms). The classical Mihlin--H\"ormander multiplier theorem \cite{MR0080799,MR0121655} implies that
\begin{equation}\label{eq:MH}
\| m(\sLap) \|_{p \to p} \lesssim_{p,\alpha} \| m \|_{L^2_{\alpha,\sloc}}
\end{equation}
for all $p \in (1,\infty)$ and $\alpha > n/2$ (at the endpoint $p=1$, weak type $(1,1)$ and $H^1 \to L^1$ boundedness hold). Clearly one can replace the $L^2_{\alpha,\sloc}$ norm with the stronger $L^\infty_{\alpha,\sloc}$ norm in the right-hand side, and actually interpolation yields
\begin{equation}\label{eq:MHinterp}
\| m(\sLap) \|_{p \to p} \lesssim_{p,\alpha} \| m \|_{L^\infty_{\alpha,\sloc}}
\end{equation}
for all $p \in (1,\infty)$ and $\alpha > n|1/2-1/p|$.

Related to the above are $L^p$ estimates for oscillatory multipliers, and especially the Miyachi--Peral estimates for the wave propagator \cite{MR586454,MR568979}:
\begin{equation}\label{eq:miyachiperal}
\| (1+t^2\sLap)^{-\alpha/2} \cos(t \sqrt{\sLap}) \|_{p \to p} \lesssim_{p,\alpha} 1,
\end{equation}
uniformly in $t>0$, for $p \in [1,\infty]$ and $\alpha \geq (n-1)|1/p-1/2|$ (except for $p=1,\infty$ and $\alpha = (n-1)/2$, in which case a Hardy space boundedness result holds).
A spectrally localised version of the above estimate reads as follows:
\[
\| \chi(t\sqrt{\sLap}/\la) \, \cos(t \sqrt{\sLap}) \|_{p \to p} \lesssim_{p,\alpha} (1+\la)^\alpha
\]
uniformly in $t,\la > 0$, where $\chi \in C^\infty_c((0,\infty))$ is a nontrivial cutoff.

It is natural to investigate whether these results for the Euclidean Laplacian extend to more general manifolds $M$ and operators $\sLap$. As a matter of fact, in the case of elliptic operators $\sLap$ on compact manifolds $M$, both Mihlin--H\"ormander and Miyachi--Peral estimates are available \cite{MR1046745,MR1127475}, for the same range of indices, where $n$ is the dimension of the manifold $M$; a key ingredient in the proof of these results is the representation of the wave propagator $\cos(t\sqrt{\sLap})$ as sum of Fourier integral operators. The case of noncompact manifolds is much more delicate, in that the ranges of validity (if any) of the above estimates depend on the global geometry of $(M,H,\mu)$ and not only on the (local) dimension $n$  (see, e.g., \cite{MR0367561,MR1415763,MR2291727,MR3092733,kania_bessel_2018} and references therein); in addition, the available results are not as robust as in the compact case, especially if one is interested in sharp results. In any case, via transplantation \cite{MR654463} one immediately sees that, for an elliptic operator $\sLap$ on an $n$-dimensional manifold $M$ the ranges of validity of the above estimates cannot be larger than those for the Laplace operator on $\R^n$. We note that the aforementioned results for the Euclidean Laplace operator are sharp up to the endpoints; in particular, if we define the sharp Mihlin--H\"ormander threshold $\thr(\sLap)$ for a sub-Laplacian $\sLap$ as the infimum of the $\alpha \geq 0$ such that
\[
\forall p\in (1,\infty) \tc \exists C \in (0,\infty) \tc \forall m \in \mathcal{B} \tc \|m(\sLap)\|_{L^p\to L^p} \le C \|m\|_{L^2_{\alpha,\sloc}},
\]
where $\mathcal{B}$ is the set of bounded Borel functions $m : [0,\infty) \to \C$,
then $\thr(\sLap) = n/2$ for the Laplace operator $\sLap$ on $\R^n$ (see, e.g., \cite{MR1814106}).

Determining the optimal ranges of validity becomes even more difficult when one weakens the ellipticity assumption on $\sLap$. For instance, if $\sLap$ is a homogeneous sub-Laplacian on a Carnot (stratified) group, then a multiplier theorem of Mihlin--H\"ormander type for $\sLap$ is known \cite{MR1125759,MR1104196}, implying that $\thr(\sLap) \leq Q/2$, where $Q$ is the homogeneous dimension of the group; note that $Q$ is strictly larger than the topological dimension $n$ when the group has step $2$ or higher, i.e., when $\sLap$ is not elliptic. Similar results are actually known in greater generality (e.g., in the presence of suitable volume growth and heat kernel estimates, see \cite{MR1172944,hebisch_functional_1995,MR1943098}), involving a dimensional parameter $Q$ that is stricly larger than the topological dimension $n$ in the case $\sLap$ is not elliptic (cf.\ \cite{MR730094}). Despite the naturality of the dimensional parameter $Q$ in this context, these results turn out  not to be sharp in many cases.

This discovery was first made in the case of homogeneous sub-Laplacians on Heisenberg groups \cite{MR1240169,MR1290494}, for which it was proved that $\thr(\sLap) = n/2$. A number of results in this direction have been obtained since then, and we now know that $n/2 \leq \thr(\sLap) < Q/2$ for homogeneous sub-Laplacians on all $2$-step Carnot groups \cite{MR3513881}, and that actually the equality $\thr(\sLap) = n/2$ holds in a number of cases \cite{MR3106049,MR3357697,MR3283616}, also for more general manifolds and sub-Laplacians \cite{MR1860734,MR2728580,MR3039831,MR3293433,ahrens_quaternionic_2016,MR3708016,casarino_grushin_2017,dallara_grushin_2017}. Moreover, in the case of groups of Heisenberg type, sharp estimates of Miyachi--Peral type are also available \cite{MR1715410,MR3393673}, proving the validity of \eqref{eq:miyachiperal} for the same range of indices mentioned above for $\R^n$ (where $n$ is the topological dimension of the group); note that these results imply, by subordination (cf.\ \cite{MR1648116}), the sharp multiplier theorem of Mihlin--H\"ormander type in this context. Nevertheless the determination of the optimal ranges of validity of \eqref{eq:MHinterp} and \eqref{eq:miyachiperal} in general remains a widely open problem. In particular, the proofs of the lower bound $\thr(\sLap) \geq n/2$ given in \cite{MR1290494,MR3513881} crucially exploit the structure of $2$-step groups (more specifically, the existence of an explicit formula of Mehler type for the Schr\"odinger propagator) and do not seem to be easily extendable to the higher step case.

At this stage it is relevant to remark that, when $\sLap$ is not elliptic, the lower bound $\thr(\sLap) \geq n/2$ cannot be just obtained by comparison to the Euclidean situation via transplantation, as in the elliptic case. Indeed, the methods of \cite{MR654463} allow one to compare the operator $\sLap$ on $M$ with the ``local model operator'' $\sLap_o$ at any point $o \in M$, defined as the principal part of the constant-coefficient operator on the tangent space $T_o M$ obtained by ``freezing the coefficients'' of $\sLap$ at $o$.
If $H$ is not positive-definite at the point $o \in M$, then the local model $\sLap_o$ is a ``partial Laplacian'' corresponding to a proper subspace of $T_o M$, namely, the space
\[
\cH_o = (\{ H = 0 \} \cap T_o^* M)^\perp
\]
of ``horizontal vectors'' for $H$ at $o$, and therefore the lower bounds to $\thr(\sLap)$ obtained in this way would involve $\dim \cH_o$ in place of $n$.

It is clear from the above discussion that, in order to obtain lower bounds to $\thr(\sLap)$ in terms of the topological dimension $n$, additional assumptions on $H$ are necessary, ruling out the case where $\sLap$ actually ``lives'' on submanifolds of lower dimension that foliate $M$. In view of the Frobenius theorem, a natural condition in this context is the ``bracket-generating condition'' on $H$, that can be stated as follows. Let $\cH$ denote the set of (smooth) horizontal vector fields for $H$, and define recursively $\cH^{(k)}$ for $k \in \N \setminus \{0\}$ by 
\[
\cH^{(1)} = \cH, \qquad \cH^{(k+1)} = \cH^{(k)} + [\cH,\cH^{(k)}].
\]
Finally, for all $x \in M$, we define $\cH^{(k)}_x$ as the set of values $v|_x$ of vector fields $v \in \cH^{(k)}$. Then $H$ is said to be bracket-generating at the point $x \in M$ (of step $k$) if $\cH^{(k)}_x = T_x M$ for some $k \geq 1$. Note that, when $H = \sum_j v_j \otimes v_j$, the usual bracket-generating condition on the family of vector fields $\{v_j\}_j$ implies that $H$ is bracket-generating at each point of $M$; in particular, homogeneous sub-Laplacians on Carnot groups and more general sub-Laplacians on sub-Riemannian manifolds satisfy the condition. Recall that a celebrated result of H\"ormander \cite{MR0222474} relates the bracket-generating condition to the hypoellipticity of $\sLap$, while Chow's theorem \cite{MR0001880} relates it to connectivity via horizontal curves.

Our main result shows that, under the bracket-generating condition, the ranges of validity of \eqref{eq:MHinterp} and \eqref{eq:miyachiperal} for a sub-Laplacian $\sLap$ on an $n$-dimensional manifold are indeed not wider than those for the Euclidean Laplacian on $\R^n$.

\begin{theorem}\label{thm:main}
Let $M$ be a smooth manifold of dimension $n$, $H:T^*M\to[0,+\infty)$ a smooth function that is a positive semidefinite quadratic form on each fiber, and $\mu$ a smooth positive measure on $M$.
Let $\sLap$ be the sub-Laplacian defined by $(M,H,\mu)$ and let us fix a self-adjoint extension of $\sLap$.
If $H$ is bracket-generating at some point of $M$, then the following hold true.
\begin{enumerate}[label=(\roman*)]	
\item\label{en:main_MH} If $p \in [1,\infty]$ and $\alpha \geq 0$ are such that the estimate
\begin{equation}\label{eq:main_MH}
\| m(\sLap) \|_{L^p(M) \to L^p(M)} \lesssim \| m \|_{L^\infty_{\sloc,\alpha}}
\end{equation}
holds for all bounded Borel functions $m : [0,\infty) \to \C$, then
\[
\alpha \geq n |1/2-1/p|.
\]
In particular,
\[
\thr(\sLap) \geq n/2.
\]
\item\label{en:main_MP} If $p \in [1,\infty]$ and $\alpha \geq 0$ are such that, for some nontrivial $\chi \in C^\infty_c((0,\infty))$ and some $\epsilon,R>0$, the estimate
\begin{equation}\label{eq:main_MP}
\| \chi(t\sqrt{\sLap}/\la) \, \cos(t\sqrt{\sLap}) \|_{L^p(M) \to L^p(M)} \lesssim \la^\alpha
\end{equation}
holds for all $\la,t > 0$ such that $t \leq \epsilon$ and $\la \geq R$, then
\[
\alpha \geq (n-1) |1/2-1/p|.
\]
\end{enumerate}
\end{theorem}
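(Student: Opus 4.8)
The plan is to extract the lower bounds on $\alpha$ from the behaviour of $\cos(t\sqrt{\sLap})$ at small times near a point $o$ where $H$ is bracket-generating, by comparing with the known sharp Euclidean phenomena. The starting point is a microlocal/Fourier-integral-operator representation of the half-wave propagator $e^{it\sqrt{\sLap}}$ (or $\cos(t\sqrt{\sLap})$) for small $|t|$ and for frequencies localised in a conic neighbourhood of a covector $\xi_o \in T_o^*M$ with $H(o,\xi_o) \neq 0$. On the set $\{H \neq 0\}$ the function $\sqrt{H}$ is a smooth Hamiltonian of homogeneity $1$, the operator $\sqrt{\sLap}$ is (microlocally) a classical pseudodifferential operator of order $1$, and the standard Lax parametrix construction gives $\cos(t\sqrt{\sLap}) = F_t + R_t$ where $F_t$ is an FIO associated with the canonical relation generated by the bicharacteristic (sub-Riemannian geodesic) flow of $\sqrt{H}$, and $R_t$ is smoothing, uniformly for $t$ in a small interval. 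This is the content of the ``Fourier integral representation for the wave propagator'' referred to in the abstract, and I would assume it has been set up in the body of the paper.

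Next I would localise the estimates \eqref{eq:main_MH} and \eqref{eq:main_MP} to this microlocal regime. For part \ref{en:main_MP}, choosing $m(s) = \chi(ts/\la)\cos(ts)$ and inserting a spatial cutoff near $o$ and a frequency cutoff in the cone around $\xi_o$, the hypothesis gives an $L^p \to L^p$ bound of size $\la^\alpha$ for the corresponding piece of the FIO $F_t$ at frequency scale $\sim \la/t$. The nondegeneracy of the sub-Riemannian geodesic flow (the second key ingredient named in the abstract: the relevant mixed Hessian of the generating phase is nondegenerate, so the canonical relation projects with a fold-free local graph structure, exactly as for the Riemannian/Euclidean half-wave) forces the local mapping properties of $F_t$ to coincide with those of the Euclidean half-wave FIO in $n$ variables at the same frequency scale. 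Since the $L^p$ operator norm of a frequency-$\la$ piece of the Euclidean $\cos(t\sqrt{-\Delta})$ on $\R^n$ is $\asymp \la^{(n-1)|1/2-1/p|}$ — this is the sharpness of Miyachi--Peral, witnessed by a Knapp-type example or by testing on a single $\delta$-like bump and tracking the $t$-sphere of radius $t$ in $n$ dimensions — the hypothesised bound $\lesssim \la^\alpha$ forces $\alpha \geq (n-1)|1/2-1/p|$. Concretely I would build the counterexample by transplanting the Euclidean Knapp example through the FIO: take $f$ whose frequency support is a $\la^{1/2}\times\cdots\times\la^{1/2}\times\la$ slab adapted to $\xi_o$, estimate $\|f\|_p$ and $\|F_t f\|_p$ directly from the parametrix, and read off the ratio.

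For part \ref{en:main_MH} the argument is of the same flavour but now one exploits the full family of multipliers rather than a single oscillatory one. Using subordination / the transference from wave to multiplier estimates (or, more directly, testing \eqref{eq:main_MH} on multipliers $m$ that are Littlewood--Paley pieces of a Bochner--Riesz-type or of an $e^{i\sqrt{s}}$-type symbol, localised in the same microlocal region and with $\|m\|_{L^\infty_{\sloc,\alpha}} \asymp 1$), the microlocal parametrix again identifies the relevant operator with an $n$-dimensional Euclidean Fourier multiplier operator localised at a single frequency scale, for which the sharp $L^p$ threshold is known to be $\alpha = n|1/2-1/p|$ (e.g.\ via the standard failure of $L^p$ boundedness of $(1-|\xi|^2/\la^2)_+^{\delta}$ below the critical exponent, or directly via the sharpness of Mihlin--H\"ormander in $\R^n$). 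This yields $\alpha \geq n|1/2-1/p|$, and specialising gives $\thr(\sLap) \geq n/2$.

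The main obstacle is the global-to-local reduction combined with the degeneracy of $H$: the parametrix and the flow are only available on $\{H \neq 0\}$, so one must (a) justify that a genuine self-adjoint extension of the globally defined $\sLap$ can be microlocally conjugated to the parametrix near $(o,\xi_o)$ despite $H$ vanishing elsewhere in the fiber (this is where the bracket-generating hypothesis really enters — it guarantees $\{H \neq 0\}$ is dense in each relevant fiber direction and that $\sqrt{\sLap}$ is a well-behaved first-order operator microlocally, with finite propagation speed in the sub-Riemannian metric confining the parametrix to a small neighbourhood of $o$), and (b) control the passage from the spectrally-cut-off operator $\chi(t\sqrt{\sLap}/\la)\cos(t\sqrt{\sLap})$ to the purely microlocalised FIO, i.e.\ show the error terms are negligible in $L^p$ at scale $\la^\alpha$. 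Verifying the nondegeneracy of the sub-Riemannian exponential map at small times — that $\Exp_o(t\cdot)$ restricted to the relevant covectors is a local diffeomorphism with the right Jacobian behaviour, uniformly as $t \to 0$ — is the technical heart, and I expect it to rely on the homogeneity structure of the nilpotent approximation at $o$ together with the bracket-generating condition.
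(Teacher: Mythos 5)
Your high-level strategy is the paper's: represent $\cos(t\sqrt{\sLap})$ as a Fourier integral operator microlocalised to a conic region where $H$ is elliptic, tie the rank of the phase Hessian to the rank of $\DD\Exp_H$, invoke nondegeneracy of the sub-Riemannian exponential map, and remove the regularity hypotheses at the end by transplanting to the Carnot-group nilpotent approximation. But there is a concrete gap in your plan for part \ref{en:main_MP}: the Knapp slab example does \emph{not} witness the $\la^{(n-1)|1/p-1/2|}$ lower bound. On a slab of dimensions $\la^{1/2}\times\cdots\times\la^{1/2}\times\la$ the phase $t|\xi|$ differs from $t\xi_n$ by $O(t)$, so the Knapp wave packet propagates essentially rigidly and the ratio $\|F_t f\|_p/\|f\|_p$ is $\sim 1$; this only gives $\alpha\ge 0$, not $\alpha\ge(n-1)|1/p-1/2|$. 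What actually produces the bound — and what the paper does — is the \emph{focusing} example you mention only in passing: take $\tilde g_\la$ a bump at spatial scale $\la^{-1}$ (so $\|\tilde g_\la\|_p\lesssim \la^{n/p'}$), and use the rank-$(n-1)$ Hessian at the critical point, via stationary phase in $\xi$, to show that $\chi(\sqrt{\sLap}/\la)\cos(t\sqrt{\sLap})\tilde g_\la$ has amplitude $\sim\la^{(n+1)/2}$ concentrated on a hypersurface shell of thickness $\sim\la^{-1}$, so its $L^p$ norm is $\gtrsim \la^{(n+1)/2-1/p}$; the quotient is then $\la^{(n-1)(1/p-1/2)}$. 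The paper carries this out by isolating the level set $\{w^c(0,\cdot)=0\}$ of the critical phase and restricting to a $c\la^{-1}$-neighbourhood of it to avoid cancellation.

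Two smaller remarks. For part \ref{en:main_MH}, the paper tests on a Schr\"odinger-type multiplier $m_\la^\chi(s)=|\la|^{1/2}\int\chi(|t|)e^{i(st-\la t^2/2)}\,\dd t$ (essentially $e^{is^2/(2\la)}$ localised where $|s|\sim\la$), with $\|m_\la^\chi\|_{L^2_{\alpha,\sloc}}\lesssim(1+|\la|)^\alpha$, and then applies the same stationary-phase machinery; your ``$e^{i\sqrt s}$-type'' suggestion is in this spirit, whereas Bochner--Riesz would need extra work to control both the $L^\infty_{\alpha,\sloc}$ norm and the contribution away from the critical scale. And the nondegeneracy input you flag is weaker than you suggest: one does not need uniform control as $t\to0$, only the existence of a single covector $\xi$ with $s\xi$ a regular point of $\Exp_H^o$ for $s$ in a punctured interval; the paper gets this for analytic structures from Agrachev's theorem on generic regularity of the sub-Riemannian exponential map and then transplants, rather than from the homogeneity of the nilpotent approximation.
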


Part \ref{en:main_MH} of Theorem \ref{thm:main} extends the results of \cite{MR3513881}, that apply only to $2$-step structures, to the case of arbitrary step, while part \ref{en:main_MP} appears to be new even in the $2$-step case. In addition, the method of proof is substantially different and more robust, in that it does not rely on special properties of $2$-step structures, and is based on a Fourier integral representation of the wave propagator $\cos(t \sqrt{\sLap})$.

In order to describe some ideas from the proof, let us first consider the case of the Laplace operator $\sLap$ on $\R^n$. Here via the Fourier transform one can write
\[
\cos(t\sqrt{\sLap}) u(x) = \frac{1}{2} \sum_{\varepsilon = \pm 1} \frac{1}{(2\pi)^n} \int \int e^{i (\xi \cdot (x-y) + \varepsilon t|\xi|) } u(y) \,\dd y \,\dd\xi,
\]
and properties of the wave propagator can be obtained by applying the method of stationary phase to the integrals in the right-hand side. A crucial property in this analysis is the fact that the Hessian $\partial_\xi^2 \phi$ of the phase function $\phi(t,x,y,\xi) = \xi \cdot (x-y) + t|\xi|$ has rank $n-1$, which is strictly related to the optimal range of validity of the Miyachi--Peral estimates.

In the case $\sLap$ is a more general elliptic operator on a manifold, one cannot directly apply the Fourier transform as before. However, a more sophisticated and by now classical analysis (see, e.g., \cite{MR3645429}) shows that one can write, locally and for small times,
\[
\cos(t\sqrt{\sLap}) u(x) = Q_t u(x) + Q_{-t} u(x)
\]
up to smoothing terms, where $Q_t$ is an oscillatory integral operator of the form
\begin{equation}\label{eq:FIOrepn}
Q_t u (x) = \int \int e^{i \phi(t,x,y,\xi)} \, q(t,x,y,\xi) \, u(y) \,\dd y \,\dd \xi,
\end{equation}
whose phase function $\phi$ satisfies the \emph{eikonal equation}
\begin{equation}\label{eq:eikonal}
\partial_t \phi(t,x,y,\xi) = A(x,\partial_x \phi(t,x,y,\xi))
\end{equation}
with $A = \sqrt{H}$. Hence properties of wave propagation can still be deduced by the method of stationary phase applied to \eqref{eq:FIOrepn}.
As observed in \cite{MR0609014}, one can actually find solutions $\phi$ to the eikonal equation of the form
\begin{equation}\label{eq:phase_hormander}
\phi(t,x,y,\xi) = \varphi(x,y,\xi) + t A(y,\xi),
\end{equation}
where $\varphi(x,y,\xi)=\xi\cdot (x-y)+O(|x-y|^2|\xi|)$, so
the Hessian $\partial_\xi^2 \phi$ is closely related to $\partial_\xi^2 A$ for $t \neq 0$ and $x$ sufficiently close to $y$, and one can use the ``full curvature'' of the nondegenerate quadratic form $H$ to deduce that $\partial_\xi^2 \phi$ has rank $n-1$ at critical points of $\phi$ (for $t\neq 0$ sufficiently small).

When $H$ is not positive-definite, there are a number of obstructions preventing one from straightforwardly applying the above argument. One of these is the vanishing (and consequent lack of smoothness) of $A$ for $\xi \neq 0$, which is an obstacle to the construction of a smooth solution $\phi$ to \eqref{eq:eikonal} defined for all $\xi \neq 0$. Nevertheless, by restricting to the region where $A$ does not vanish, one can obtain a solution $\phi$ to the eikonal equation that is only defined for $\xi$ in a specific cone $\Gamma \subset \R^n \setminus \{0\}$, where $H$ behaves as an elliptic symbol. This solution $\phi$ can be then used to obtain a Fourier integral representation of the form \eqref{eq:FIOrepn} for a ``microlocalised'' version of the wave propagator $\cos(t\sqrt{\sLap})$, which turns out to be enough for our purpose.

A second, perhaps more substantial difficulty is that it is not immediately clear why $\partial_\xi^2 \phi$ should have rank $n-1$ at critical points of $\phi$, when $H$ is not positive-definite: indeed in this case $H(y,\cdot)$ vanishes on a nontrivial subspace and therefore $\partial_\xi^2 A$ has smaller rank. Note that, in general, the rank of $\partial_\xi^2 \phi$ can actually be lower: for example, if $M = \R^n = \R^{n_1} \times \R^{n_2}$ with the Lebesgue measure and $H((x_1,x_2),(\xi_1,\xi_2)) = |\xi_1|^2$, then $\sLap$ is the partial Laplacian corresponding to the factor $\R^{n_1}$ and, via the Fourier transform, one obtains a representation of the form \eqref{eq:FIOrepn}
with phase function $\phi(t,x,y,\xi) = \xi \cdot (x-y) + t|\xi_1|$; so, in this case, the rank of $\partial_\xi^2 \phi$ is strictly less than $n-1$, but, on the other hand, here the bracket-generating condition fails. A crucial part of the proof of our result consists then in showing how the bracket-generating condition prevents such a degeneracy of the Hessian.

Namely, a careful analysis of the construction of solutions to the eikonal equation \eqref{eq:eikonal} allows us to relate the rank of $\partial_\xi^2 \phi$ to the rank of the differential of the geodesic exponential map $\Exp_H$, given by the projection to $M$ of the Hamiltonian flow on $T^* M$ associated with $H$. More precisely, instead of solutions of the form \eqref{eq:phase_hormander}, here we construct, following \cite{MR597145}, solutions $\phi$ of the form
\[
\phi(t,x,y,\xi) = w(t,x,\xi) - y \cdot \xi,
\]
whose relation with the Hamiltonian flow appears to be more transparent.
Indeed, for these solutions, we prove that, in suitable coordinates, at critical points of $\phi$ with respect to $\xi$,
\[
\rank \partial_\xi^2 \phi(t,x,y,\xi) = \rank (\DD \Exp_H^y|_{-t\hat\xi}|_{V_y}),
\]
where $\hat \xi = \xi/(2\sqrt{H(y,\xi)})$, $\DD \Exp_H^y|_{-t\hat\xi} : T^*_y M \to T_x M$ is the differential at $-t\hat\xi$ of the exponential map at $y$, and $V_y$ is a codimension $1$ subspace of $T^*_y M$ (the kernel of the differential at $-t \hat\xi$ of $H|_{T^*_y M}$); in particular, $\partial_\xi^2 \phi$ has rank $n-1$ whenever $\DD \Exp_H^y$ is nondegenerate.
Note that, differently from the elliptic case, the differential $\DD \Exp_H^y|_0$ at the origin is degenerate when $H(y,\cdot)$ is.
 Nevertheless, the bracket-generating condition ensures the existence of a generic set of points $(y,\xi)$ such that $\DD \Exp_H^y|_{r\xi}$ is nondegenerate for sufficiently small $r\neq 0$ \cite{MR2513150,abb_introduction_2018,MR3852258}. This geometric information is the essential ingredient that allows us to apply stationary phase to the integral in \eqref{eq:FIOrepn} and obtain the desired results.

For techical reasons, the proof described above is carried out under additional regularity assumptions on $(M,H,\mu)$, which are satisfied, e.g., on Carnot groups. However, under the bracket-generating condition, it is possible to locally approximate, at suitable points of the manifold, any sub-Laplacian $\sLap$ with a homogeneous sub-Laplacian on a Carnot group, so the result in full generality can be recovered by a suitable form of transplantation \cite{MR3671588}.

We stress once more that the method used here is substantially different from the ones used in \cite{MR1290494,MR3513881}, which are based in an essential way on a Mehler-type formula that is specific to $2$-step structures. In contrast, the present method is much more robust and applies to structures of arbitrary step; in addition, it clearly brings to light the strict relation between properties of the functional calculus for $\sLap$ and properties of the underlying geometry (specifically, the geodesic flow). 

A natural question is whether the necessary conditions given in Theorem \ref{thm:main} are also essentially sufficient for the validity of the Mihlin--H\"ormander and Miyachi--Peral estimates. It is striking that relatively limited ``positive'' results of this kind (featuring the topological dimension $n$) are available, and (with the exception of the recent result \cite{dallara_grushin_2017} for Grushin operators of arbitrary step) only apply to $2$-step structures and enjoy a low degree of robustness.

In this connection, let us remark that, by applying the $L^p$ estimates of \cite{MR1127475} to our Fourier integral representation \eqref{eq:FIOrepn}, one could obtain estimates of Miyachi--Peral type for the ``microlocalised'' version of the wave propagator corresponding to the aforementioned ``elliptic cone'' $\Gamma$. Hence, roughly speaking, in order to obtain estimates for the full wave propagator, what remains to be understood is what happens in the complement of such an elliptic cone. While this still appears to be a challenging problem in its generality, the argument presented here may be considered as a first step in the development of a robust approach for the analysis of spectral multipliers and wave equations for sub-Laplacians.

\subsection*{Acknowledgments}

We wish to thank Michael Christ for bringing to our attention the possible use of the elliptic region for our investigations, and Luca Rizzi for pointing out references on the regularity of the sub-Riemannian exponential map.

\subsection*{Structure of the paper}
In Section \ref{sec:preliminaries} we recall basic definitions and results about pseudodifferential and Fourier integral operators that will be used throughout, and we describe the construction of a parametrix for the ``half-wave equation'' associated to a first-order positive pseudodifferential operator, assuming that a solution to the corresponding eikonal equation is given. In Section \ref{sec:eikonal} we present the construction of a solution $\phi$ to the eikonal equation associated with a general Hamiltonian on the cotangent space $T^* M$ of a smooth manifold $M$, and deduce the relation between the Hessian $\partial_\xi^2 \phi$ and the differential of the exponential map associated with the Hamiltonian flow. In Section \ref{sec:subriemannian} we recall a number of definitions and results about sub-Riemannian manifolds and sub-Laplacians, and show how the results in the previous sections can be applied to construct a Fourier integral representation for a ``microlocalised'' version of the wave propagator associated to a sub-Laplacian. Finally, in Section \ref{sec:proof_main}, we exploit such representation to prove Theorem \ref{thm:main}.

\subsection*{Notation}
We write $\R^+$ for the positive half-line $(0,\infty)$.

For nonnegative quantities $A$ and $B$, we write $A \lesssim B$ to denote that there exists a constant $C \in \R^+$ such that $A \leq C B$; expressions such as $A \lesssim_k B$ indicate that the implicit constant $C$ depends on a parameter $k$.

For subsets $U,V$ of a topological space, we write $U \Subset V$ to denote that the closure $\overline{U}$ of $U$ is compact and contained in $V$. We also write $\interior(U)$ for the interior of $U$.

\section{Fourier integral and pseudodifferential operators}\label{sec:preliminaries}

The aim of this section is to fix a few definitions and notation regarding Fourier integral operators and pseudodifferential operators.

\subsection{Distributions and linear operators}\label{sec:pre_preliminaries}
We set $\nR^n \defeq \R^n \setminus \{0\}$.
A subset $\Gamma\subset X\times\R^N$, where $X \subset \R^n$, is said to be \emph{conic} if  $(x,\lambda v)\in \Gamma$ for all $(x,v)\in \Gamma$ and $\lambda>0$.
We shall denote by $\Sch(\R^n)$ the space of Schwartz functions on $\R^n$.
The Fourier transform $\hat f$ of $f\in\Sch(\R^n)$ is given by $\hat f(\xi) = \int_{\R^n} e^{-ix\cdot\xi} f(x) \, \dd x$.

If $X \subset \R^n$ is open, we then denote by $C^\infty(X)$ and $C^\infty_c(X)$ the spaces of all (complex valued) smooth functions on $X$ and of smooth functions with compact support, with the usual topologies.
Their duals $\EDist(X)$ and $\Dist(X)$ are the space of distributions with compact supports and the space of distributions on $X$.
The \emph{support} and the \emph{singular support} of a distribution $A \in \Dist(X)$ are denoted by $\spt(A)$ and $\singspt(A)$.
The \emph{wave front set} of $A\in\Dist(X)$ is denoted by $\WF(A)$.

Let $X \subset \R^{n_X}$ and $Y \subset \R^{n_Y}$ be open sets. By identifying continuous linear operators $P : C^\infty_c(Y) \to \Dist(X)$ with their integral kernels in $\Dist(X \times Y)$ via the Schwartz kernel theorem, we can also speak of the support, the singular support and the wave front of such operators $P$.

If $P:C^\infty_c(Y) \to \Dist(X)$ and $\WF(P)=\emptyset$, then $P$ has a smooth integral kernel and extends to an operator $P : \EDist(Y) \to C^\infty(X)$;
such operators $P$ are called \emph{smoothing operators} and their class is denoted by
$\Rop^{-\infty}(Y;X)$.

We say that a subset $C \subset X \times Y$ is \emph{proper} if both projections from $C$ to $X$ and $Y$ are proper mappings.
An operator $P:C^\infty_c(Y)\to \Dist(X)$ is \emph{properly supported} if $\spt(P)$ is proper.
For instance, if $P$ is \emph{compactly supported}, i.e., $\spt(P) \Subset X\times Y$, then it is properly supported;
moreover, if $Y=X$ and $\spt(P) = \diag(X\times X)$, then $P$ is properly supported.

We denote by $\Rop(Y;X)$ the linear space of \emph{regular operators}, that is, operators $P:C^\infty_c(Y)\to\Dist(X)$ such that, for all $(x,y;\xi,\eta)\in\WF(P)$, both $\xi$ and $\eta$ are nonzero.
We will be frequently using the following properties of regular operators.
\begin{enumerate}
\item
Any operator in $\Rop(Y;X)$
extends continuously to an operator $\EDist(Y) \to \Dist(X)$ that maps $C^\infty_c(Y)$ into $C^\infty(X)$ \cite[Corollary 1.3.8, p.\ 22]{MR1362544}.
\item
Any properly supported operator in $\Rop(Y;X)$
extends continuously to an operator $\Dist(Y) \to \Dist(X)$ that maps $C^\infty(Y)$ into $C^\infty(X)$ and preserves the compactness of supports. 
\item
If one of $Q\in\Rop(Z;Y)$ and $P\in\Rop(Y;X)$ is properly supported, then $P\circ Q\in\Rop(Z;X)$ is a well defined regular operator \cite[Theorem 8.2.14, p.\ 270]{MR717035}.
\item
If $P_j\in\Rop(Y_j;X_j)$ for $j\in\{1,2\}$, then $P_1\otimes P_2\in\Rop(Y_1\times Y_2;X_1\times X_2)$ \cite[Theorem 8.2.9, p.\ 267]{MR717035}.
\end{enumerate}

Most of the above notions can be extended to the case where $X,Y,\dots$ are smooth manifolds.
For a smooth manifold $M$, we also use the notation $\nT^*_xM=T^*_xM\setminus\{0\}$ and $\nT^*M = \bigsqcup_{x\in M} \nT^*_xM$.

\subsection{Symbol classes}

Let $X\subset\R^{n}$ be an open set, $N\ge1$ and $m\in\R$.
The \emph{symbol class} $S^m(X;\R^N)$ is the space of smooth functions $a: X \times \R^N \to \C$ such that, for all $K \Subset X$, all $\alpha\in\N^{n}$ and $\gamma\in\N^N$ there is a constant $C_{\alpha\gamma}^K$ such that, for all $(x,\xi)\in K\times\R^N$,
\[
|\de_x^\alpha \de_\xi^\gamma a(x,\xi) | \le C_{\alpha \gamma}^K \langle \xi \rangle^{m-|\gamma|} ,
\]
where $\langle \xi \rangle \defeq \sqrt{1+|\xi|^2}$.
We also define $S^{-\infty}(X;\R^N) \defeq \bigcap_{m \in \R} S^m(X;\R^N)$.

Let $m\in\R$.
The \emph{classical symbol class} $S^m_\cl(X;\R^N)$ is the set of all $a \in S^{m}(X;\R^N)$ such that there exist, for all $j \in \N$,
 functions $a_j \in C^\infty(X \times \nR^N)$ homogeneous of order $m-j$ in $\xi$ such that, for all $k\in\N$,
\[
a- (1 \otimes (1-\chi)) \sum_{j<k} a_j \in S^{m-k}(X;\R^N)
\]
for some $\chi\in C^\infty_c(\R^N)$.
In this case, we call the formal series $\sum_{j\geq 0} a_j$ the \emph{asymptotic expansion} of $a$ and we write $a \sim \sum_{j\geq 0} a_j$.

The \emph{essential support} of $a\in S^m(X;\R^N)$, denoted by $\essspt(a)$, is the smallest closed conic subset $\Gamma\subset X\times\nR^N$ such that $a$ is in $S^{-\infty}$ on $( X\times\R^N)\setminus\Gamma$, i.e., $(X\times\nR^N)\setminus\Gamma$ is the union of all the open conic subsets $U$ of $X\times \nR^N$ such that,
for all $\alpha\in\N^n$, and $\beta\in\N^N$, and for all $m\in\R$ there is $C$ such that, for all $(x,\xi)\in U$,
\[
|\de_x^\alpha\de_\xi^\beta a(x,\xi) | \le C \langle \xi \rangle^{m} .
\]
If $a$ is classical and $a \sim \sum_j a_j$, then
$\essspt(a) = \overline{\bigcup_{j} \spt(a_j)}$.

\subsection{Pseudodifferential and Fourier integral operators}

Let $X \subset \R^n$ be open. A (real) \emph{phase function} is a smooth function $\phi:X \times \nR^N \to\R$ such that, for all $(x,\xi) \in X \times \nR^N$ and $\lambda > 0$, 
\begin{enumerate}
\item
$\phi(x,\lambda\xi)=\lambda\phi(x,\xi)$;
\item
$\dd\phi(x,\xi)\neq0$.
\end{enumerate}
The \emph{stationary set} $\Sigma_\phi \subset X \times \nR^N$ and the \emph{wave front} $\Lambda_\phi \subset \nT^* X$ of a phase function $\phi$ are the conic sets defined by
\[
\Sigma_\phi \defeq \{(x,\xi) \in X \times \nR^N \tc \de_\xi\phi(x,\xi) = 0\}, \quad
\Lambda_\phi \defeq \{ (x,\de_x\phi(x,\xi)) : (x,\xi)\in\Sigma_\phi \}.
\]

Let $\phi$ be a phase function on $X \times \nR^N$ and $a\in S^m(X;\R^N)$.
The \emph{Fourier integral} (or \emph{oscillatory integral}) with \emph{phase} $\phi$ and \emph{amplitude} $a$ is the distribution 
\begin{equation}\label{eq11200942}
\int_{\R^N} e^{i\phi(x,\xi)}\, a(x,\xi) \,\dd\xi 
\end{equation}
in $\Dist(X)$, whose wave front set is contained in
\begin{equation}\label{eq11231818}
	\{ ( x,\de_x\phi(x,\xi)) : (x,\xi)\in\essspt(a)\cap \Sigma_\phi\} \subset \Lambda_\phi ,
\end{equation}
see \cite[Theorem 2.2.2, p.~29]{MR1362544}.

Let now $X \subset \R^{n_X}$ and $Y \subset \R^{n_Y}$ be open sets. Let $\phi : X \times Y \times \nR^N \to \R$ be a phase function, and let $a \in S^m(X \times Y;\R^N)$.
The operator  $\Theta : C^\infty_c(Y)\to\Dist(X)$, whose distributional integral kernel is the Fourier integral \eqref{eq11200942} with phase $\phi$ and amplitude $a$, is called a \emph{Fourier integral operator}.
We shall describe such operators with the formula
\begin{equation}\label{eq11200944}
\Theta u(x) = \int_Y \int_{\R^N} e^{i\phi(x,y,\xi)}\, a(x,y,\xi) \, u(y) \,\dd\xi \,\dd y .
\end{equation}
The phase function $\phi$ is an \emph{operator phase function} if it satisfies the following condition: for all $(x,y,\xi) \in X \times Y \times \nR^n$,
if $\de_\xi\phi(x,y,\xi)=0$, then $\de_x\phi(x,y,\xi)\neq0$ and $\de_y\phi(x,y,\xi)\neq0$.
If $\phi$ is an operator phase function, then from \eqref{eq11231818} one can deduce that the Fourier integral operator $\Theta$ defined in \eqref{eq11200944} is a regular operator, that is, $\Theta\in\Rop(Y;X)$.

If $X = Y$ and $n_X=n_Y=n$, the simplest example of operator phase function is the \emph{standard phase} $(x,y,\xi) \mapsto (x-y) \cdot \xi$. The Fourier integral operators corresponding to the standard phase are called \emph{pseudodifferential operators}. More precisely, the pseudodifferential operator $\Theta$ on $X$ with amplitude $a \in S^m(X \times X;\R^{n})$ is the operator given by
\[
\Theta u(x) = (2\pi)^{-n}\int_X \int_{\R^N} e^{i(x-y) \cdot \xi}\, a(x,y,\xi) \, u(y) \,\dd\xi \,\dd y. 
\]
We denote by $\Psi^m(X)$ the collection of all pseudodifferential operators with amplitude in $S^m(X \times X;\R^n)$, which are called pseudodifferential operators of order $m$ on $X$. 
Moreover, for $m\in\R$, we denote by $\Psi^m_\cl(X)$ the collection of all \emph{classical pseudodifferential operators of order $m$}, i.e., the pseudodifferential operators with amplitude in $S^m_\cl(X \times X;\nR^n)$. One can check that the set $\Psi^{-\infty}(X)$ of pseudodifferential operators on $X$ with amplitude in $S^{-\infty}(X \times X \times \R^n)$ coincides with $\bigcap_{m \in \R} \Psi^{m}(\R)$ and with the set $\Rop^{-\infty}(X;X)$ of smoothing operators on $X$.

If the amplitude of a pseudodifferential operator on $X$ does not depend on the variable $y$, then it is called \emph{(Kohn--Nirenberg) symbol}. While different amplitudes may define the same pseudodifferential operator $P$, the symbol (if it exists) is uniquely determined by the operator, and moreover $P$ is classical if and only if its symbol is classical. Every properly supported pseudodifferential operator has a symbol, and every pseudodifferential operator differs from a properly supported one by a smoothing operator.
For $m \in \R$, we define the \emph{principal symbol} of $P \in \Psi^m_\cl(X)$ as the term of degree $m$ in the asymptotic expansion of the symbol of any pseudodifferential operator that differs from $P$ by a smoothing operator.

The basic example of pseudodifferential operator of order $m$ is a differential operator $P=\sum_{|\alpha|\le m} p_\alpha(x)(-i\de_x)^\alpha$ with smooth coefficients $p_\alpha$.
This is a classical, properly supported pseudodifferential operator. Its symbol is $\sum_{0\le|\alpha|\le m} p_\alpha(x)\xi^\alpha$ and its principal symbol is $\sum_{|\alpha|= m} p_\alpha(x)\xi^\alpha$.

Pseudodifferential operators and classical pseudodifferential operators can be defined on manifolds $M$, because of the invariance of the main objects under change of coordinates, see  \cite[Definition 18.1.20, p.~85]{MR781536}.
Although the symbol of a pseudodifferential operator is not well defined on a manifold, the principal symbol of a classical pseudodifferential operator $P \in \Psi^m_\cl(M)$ is a well-defined smooth function on $\nT^*M$ which is homogeneous of degree $m$ along the fibres.

A classical pseudodifferential operator $P \in \Psi_\cl^m(M)$ is said to be \emph{elliptic of order $m$} if its principal symbol never vanishes on $\nT^* M$.
For elliptic pseudodifferential operators, one can easily construct approximate square roots via an iterative argument (see, e.g., the first part of the proof of \cite[Theorem~3.3.1]{MR3645429} or \cite{MR0237943}):

\begin{lemma}\label{lem09030037}
	If $P\in\Psi_{\cl}^m(M)$ is elliptic of order $m$ with nonnegative principal symbol $\tilde p$, then there is a properly supported $Q\in\Psi_{\cl}^{m/2}(M)$ elliptic of order $m/2$ with principal symbol $\sqrt{\tilde p}$ such that $Q^2 - P \in \Psi^{-\infty}(M)$.
\end{lemma}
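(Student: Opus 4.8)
The plan is to construct $Q$ by successive approximation, lowering the order of the discrepancy $P-Q^2$ by one unit at each step, exactly as in the classical construction of square roots of elliptic operators. Two preliminaries come first. Since $P$ agrees with a properly supported operator up to $\Psi^{-\infty}(M)$, and adding an element of the two-sided ideal $\Psi^{-\infty}(M)$ to $P$ changes neither the hypotheses (the principal symbol is unaffected) nor the conclusion, I may assume $P$ itself properly supported. Next I observe that ellipticity forces the nonnegative principal symbol $\tilde p$ to be strictly positive on $\nT^* M$, so $\sqrt{\tilde p}$ is a well-defined smooth function on $\nT^* M$, homogeneous of degree $m/2$ along the fibres. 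Since every such homogeneous function is the principal symbol of some properly supported operator in $\Psi^{m/2}_\cl(M)$ (quantize it chart by chart with a partition of unity), I fix a properly supported $Q_0 \in \Psi^{m/2}_\cl(M)$ with principal symbol $\sqrt{\tilde p}$; as the principal symbol of $Q_0^2$ is $(\sqrt{\tilde p})^2 = \tilde p$, we get $R_0 \defeq P - Q_0^2 \in \Psi^{m-1}_\cl(M)$.

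For the inductive step, suppose $Q_k \in \Psi^{m/2}_\cl(M)$ is properly supported with principal symbol $\sqrt{\tilde p}$ and $R_k \defeq P - Q_k^2 \in \Psi^{m-1-k}_\cl(M)$; let $r_k$ be the principal symbol of $R_k$, homogeneous of degree $m-1-k$. Setting $Q_{k+1} = Q_k + S_k$ with $S_k \in \Psi^{m/2-1-k}_\cl(M)$ properly supported, one computes $P - Q_{k+1}^2 = R_k - (Q_k S_k + S_k Q_k) - S_k^2$, where $S_k^2 \in \Psi^{m-2-2k}_\cl(M) \subseteq \Psi^{m-2-k}_\cl(M)$ while $Q_k S_k + S_k Q_k \in \Psi^{m-1-k}_\cl(M)$ has principal symbol $2\sqrt{\tilde p}\,\sigma_k$, $\sigma_k$ being the principal symbol of $S_k$. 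Choosing $S_k$ with principal symbol $\sigma_k \defeq r_k/(2\sqrt{\tilde p})$ — which is smooth on $\nT^* M$ and homogeneous of degree $m/2-1-k$, precisely because $\sqrt{\tilde p}$ is nonvanishing there — kills the degree-$(m-1-k)$ part of $P-Q_{k+1}^2$, so $R_{k+1} \defeq P - Q_{k+1}^2 \in \Psi^{m-2-k}_\cl(M)$, closing the induction.

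Finally I take $Q \in \Psi^{m/2}_\cl(M)$, properly supported, asymptotic to the formal sum $Q_0 + \sum_{k \ge 0} S_k$ (this exists since the orders $m/2-1-k$ tend to $-\infty$; if the naive asymptotic sum is not properly supported, correct it by a smoothing operator, which alters neither the principal symbol nor $Q^2$ modulo $\Psi^{-\infty}(M)$). Its principal symbol is $\sqrt{\tilde p}$, so $Q$ is elliptic of order $m/2$. For each $N$ one has $Q - Q_N \in \Psi^{m/2-1-N}_\cl(M)$ where $Q_N = Q_0 + \sum_{k<N} S_k$, hence $Q^2 - Q_N^2 \in \Psi^{m-1-N}_\cl(M)$; combining with $R_N = P - Q_N^2 \in \Psi^{m-1-N}_\cl(M)$ gives $Q^2 - P \in \Psi^{m-1-N}_\cl(M)$, and since $N$ is arbitrary, $Q^2 - P \in \bigcap_N \Psi^{m-1-N}_\cl(M) = \Psi^{-\infty}(M)$, as desired.

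I do not anticipate a genuine obstacle: the whole argument is bookkeeping inside the symbol/operator calculus. The only points needing a modicum of care are (i) upgrading nonnegativity of $\tilde p$ to positivity on $\nT^* M$ via ellipticity, which is what makes $\sqrt{\tilde p}$ and all subsequent divisions by it legitimate producers of smooth homogeneous symbols, and (ii) the order bookkeeping for compositions and asymptotic sums on a manifold, together with the standard fact that a properly supported classical pseudodifferential operator realizing a prescribed homogeneous principal symbol always exists. All of these are covered by the references cited around the statement.
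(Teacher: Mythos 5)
Your proof is correct and is exactly the classical iterative square-root construction that the paper itself delegates to its cited references (the first part of the proof of Sogge's Theorem~3.3.1, and Seeley); the order bookkeeping, the reduction to a properly supported representative, and the use of ellipticity to divide by $\sqrt{\tilde p}$ on $\nT^*M$ are all handled as intended. Nothing to add.
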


\subsection{Fourier integral representation of the half-wave propagator}
The result below is a variation of results available in the literature 
(see, in particular, \cite[Section 3]{MR0609014}, \cite[Section 20.2]{MR1852334} and \cite[Section 4.1]{MR3645429}), keeping track of supports and ensuring that the construction produces classical symbols.

\begin{theorem}\label{thm08141709}
	Let $\opA$ be a properly supported pseudodifferential operator of order~$1$ on an open set $X\subset\R^n$ with classical real symbol.
	Let $ A : X\times \nR^n \to \R$ be the principal symbol of $\opA$.
	Let $\phi: (-T,T)\times X \times X\times \nR^n \to \R$ be a phase function such that
\[	
X \times X \times \nR^n \ni (x,y,\xi) \mapsto \phi(0,x,y,\xi) \in \R
\]
is an operator phase function, and assume that $\phi$ satisfies the following eikonal equation: for all $(t,x,y,\xi)\in(-T,T)\times X\times X\times\nR^n$,
	\begin{equation}\label{eq08091101}
	\de_t\phi(t,x,y,\xi) =  A(x,\de_x\phi(t,x,y,\xi))  .
	\end{equation}
	Then, for every open subsets $X',X''$ of $X$ with $X'' \Subset X' \Subset X$, there is $T' \in (0,T]$ such that the following hold true:
	if $\Gamma\subset\nR^n$ is a closed cone and $P \in \Rop(X;X)$ is a Fourier integral operator with distributional integral kernel
	\[
	P(x,y) = \int_{\R^n} e^{i\phi(0,x,y,\xi)} p(x,y,\xi) \,\dd\xi
	\]
	and amplitude $p\in S^0_{\cl}(X\times X;\R^n)$
	satisfying $\essspt(p)\subset X''\times X''\times\Gamma$,
	then there is a Fourier integral operator $Q\in \Rop(X;(-T',T')\times X)$ with distributional integral kernel
	\begin{equation}\label{eq:fioapprox}
	Q_t(x,y) \defeq Q(t,x,y) = \int_{\R^n} e^{i\phi(t,x,y,\xi)} q(t,x,y,\xi) \,\dd\xi 
	\end{equation}
	and amplitude $q\in S^0_{\cl}((-T',T')\times X\times X; \R^n)$,
	such that:
	\begin{enumerate}[label=(\roman*)]
	\item\label{en:trnsupp} $\spt(q)\subset (-T',T')\times X'\times X'\times\Gamma$;
	\item\label{en:trninit} $Q_t \in \Rop(X;X)$ for all $t \in (-T',T')$, and $Q_0 - P\in \Rop^{-\infty}(X; X)$;
	\item\label{en:trnsoln} $(i\de_t + \opA) Q\in\Rop^{-\infty}(X;(-T',T')\times X)$.
	\end{enumerate}
\end{theorem}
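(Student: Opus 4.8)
The plan is to construct $Q$ by the standard geometric-optics/transport-equation iteration, starting from the given eikonal solution $\phi$ and seeking the amplitude $q$ as an asymptotic sum $q \sim \sum_{j \geq 0} q_j$ of symbols homogeneous of degree $-j$. First I would fix $X'' \Subset X' \Subset X$ and use the properly supported pseudodifferential calculus (via the symbol of $\opA$, which exists by Lemma \ref{lem09030037}-type considerations since $\opA$ is properly supported) to compute the action of $(i\de_t + \opA)$ on an oscillatory integral of the form \eqref{eq:fioapprox}. Applying $\opA$ to $e^{i\phi(t,x,y,\xi)} q(t,x,y,\xi)$ produces, by the usual stationary-phase expansion of the composition of a pseudodifferential operator with an oscillatory integral, a new oscillatory integral with phase $\phi$ and amplitude $\sum_k \frac{1}{k!} (\de_\eta^k \sigma_\opA)(x, \de_x\phi)\, D_z^k\big[ e^{i\psi} q\big]\big|_{z=x}$, where $\psi(t,x,z,y,\xi) = \phi(t,z,y,\xi) - \phi(t,x,y,\xi) - (z-x)\cdot\de_x\phi(t,x,y,\xi)$ vanishes to second order at $z=x$. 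Collecting terms by homogeneity and using the eikonal equation \eqref{eq08091101} to cancel the top-order term $(\de_t\phi - A(x,\de_x\phi)) q_0 = 0$, the remaining equations are first-order linear PDEs in $t$ for each $q_j$ — transport equations along the bicharacteristic vector field associated with $A$ — with the lower-order $q_i$ ($i<j$) entering as source terms. Prescribing $q_j|_{t=0}$ to match the given $p$ (i.e. $q_0|_{t=0} = p$ up to lower order, $q_j|_{t=0}$ chosen recursively so that $Q_0 - P$ is smoothing), these ODEs are solved by integration along characteristics, which determines $T' \in (0,T]$ as the time for which the characteristics stay over $X'$ when started over $X''$; here the assumption that $\phi(0,\cdot)$ is an operator phase function guarantees that near $t=0$ the transport flow is well-behaved and $\de_x\phi$ stays nonzero, so the construction remains within the classical symbol class.

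Second, I would assemble $q$ from the $q_j$ by a Borel-type summation (asymptotic completeness of symbols), giving $q \in S^0_\cl((-T',T') \times X \times X;\R^n)$ with $q \sim \sum_j q_j$, and then verify the three conclusions. For \ref{en:trnsupp}: since the transport equations are homogeneous in the sense of propagating supports along the (radial-in-$\xi$) bicharacteristic flow, and the $\xi$-direction is left unchanged, the cone condition $\essspt(p) \subset X'' \times X'' \times \Gamma$ is propagated to $\spt(q) \subset (-T',T') \times X' \times X' \times \Gamma$ — shrinking $T'$ if needed so that the spatial projection of the flow starting in $X''$ stays in $X'$, and one may further multiply $q$ by a spatial cutoff supported in $X' \times X'$ without affecting the other properties modulo smoothing. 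For \ref{en:trninit}: by construction $q_j|_{t=0}$ are chosen so that $\sum_j q_j|_{t=0}$ has the same asymptotic expansion as $p$, hence $Q_0 - P$ has smoothing kernel; and since $\phi(t,\cdot)$ remains an operator phase function for small $|t|$ (by continuity, as $\de_\xi\phi = 0 \Rightarrow \de_x\phi \neq 0, \de_y\phi\neq0$ is an open condition and holds at $t=0$), each $Q_t$ is regular. For \ref{en:trnsoln}: the amplitude of $(i\de_t + \opA)Q$ has, by the cancellation of every term in the homogeneity expansion, an asymptotic expansion that is identically zero, hence lies in $S^{-\infty}$, so the operator is smoothing (regular and with empty wave front, using that $\phi$ is a phase function so the oscillatory integral makes sense).

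The main obstacle, and the point requiring the most care, is the bookkeeping of the composition $\opA \circ (\text{oscillatory integral})$ with a nonlinear phase: unlike the constant-coefficient case, one must justify the asymptotic expansion of this composition rigorously (controlling the remainder in the right symbol class, uniformly over the relevant compact sets in $x,y$ and the cone in $\xi$), and one must check that the auxiliary phase $\psi$ together with its critical manifold behaves well enough — in particular that the "excess" phase $\psi$ is a nondegenerate quadratic-type perturbation near $z=x$ so that the stationary-phase expansion in the $z$-variable is legitimate. This is precisely where keeping the construction within \emph{classical} (polyhomogeneous) symbols, rather than merely Hörmander-class symbols, demands attention: each step of the expansion must preserve homogeneity exactly, which constrains how the cutoffs $\chi$ in the definition of $S^m_\cl$ propagate. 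A secondary technical point is ensuring that all operators remain properly supported (or differ from properly supported ones by smoothing operators) throughout, so that the compositions in \ref{en:trnsoln} are well-defined regular operators; this is handled by the spatial cutoffs to $X'$ and by choosing $\opA$ properly supported from the outset, but it must be tracked consistently. I expect the existence of $T'$ and the verification of \ref{en:trninit}--\ref{en:trnsoln} to be essentially routine once the composition formula is set up; the real work is in that setup and in the support bookkeeping for \ref{en:trnsupp}.
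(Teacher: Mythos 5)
Your proposal follows essentially the same route as the paper's proof: expand $(i\de_t + \opA)$ applied to the oscillatory integral via the pseudodifferential-on-FIO composition formula (the paper cites H\"ormander's 1968 spectral function paper, Theorem 2.12, for exactly this step), use the eikonal equation to kill the order-1 term, solve the resulting transport equations by integrating along the characteristics of the vector field $W = -\sum_{|\alpha|=1}(\de_\xi^\alpha A)(x,\de_x\phi)\,\de_x^\alpha$, track the $x$-support along the flow to get \ref{en:trnsupp}, extend the $q_k$ by zero, and Borel-sum into a classical amplitude. Your identification of the main technical burden --- rigorously justifying the composition expansion with a nonlinear phase while staying inside classical symbols --- is also where the paper outsources the work to the cited reference, so the two arguments align closely.

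Two small corrections to your reasoning. First, $\de_x\phi \neq 0$ on the full domain is \emph{not} a consequence of $\phi(0,\cdot,\cdot,\cdot)$ being an operator phase function: that condition only controls $\de_x\phi$ on the stationary set $\{\de_\xi\phi = 0\}$. What actually gives the needed nonvanishing is the eikonal hypothesis itself: since $A$ is defined only on $X\times\nR^n$, requiring $\de_t\phi = A(x,\de_x\phi)$ to hold on the whole domain silently forces $\de_x\phi(t,x,y,\xi)\in\nR^n$ there --- which is exactly the paper's observation before it invokes H\"ormander's transport machinery. Second, your suggestion to "further multiply $q$ by a spatial cutoff supported in $X'\times X'$" is unnecessary and, as stated, would risk spoiling \ref{en:trnsoln} unless the cutoff is identically $1$ on $\spt(q)$; the characteristic-propagation argument already confines $\spt(q_k)$ to a closed set of the form $(-T',T')\times K\times X''\times\Gamma$ with $K\Subset X'$, which is precisely what the paper uses, making any extra cutoff superfluous.
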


\begin{proof}
By our assumption on $\phi$, both $\de_{(x,\xi)} \phi$ and $\de_{(y,\xi)} \phi$ never vanish on $\{0\} \times X \times X \times \nR^n$. Hence, if we take $X_0 \Subset X$ such that $X' \Subset X_0$, we can find $T_0 \in (0,T]$ such that both $\de_{(x,\xi)} \phi$ and $\de_{(y,\xi)} \phi$ never vanish on $(-T_0,T_0) \times X_0 \times X_0 \times \nR^n$. In other words, up to shrinking $(-T,T)$ and $X$, we may assume that
\[
X \times X \times \nR^n \ni (x,y,\xi) \mapsto \phi(t,x,y,\xi) \in \R
\]
is an operator phase function for all $t \in (-T,T)$. In particular, the Fourier integral operators $Q$ and $Q_t$ defined by \eqref{eq:fioapprox} for any given amplitude $q$ are regular operators.

Notice that \eqref{eq08091101} forces $(x,\de_x\phi(t,x,y,\xi))$ to be in the domain $X \times \nR^n$ of $A$, and in particular $\de_x \phi \neq 0$
on the domain of $\phi$. Since $\de_x\phi$ is $1$-homogeneous in $\xi$, up to taking a smaller $T$, condition \cite[(2.13)]{MR0609014} is satisfied by $\phi$ on $(-T,T)\times X\times X\times \nR^n $.
	So, if $q \in S^0_\cl((-T',T')\times X\times X;\R^n)$ for some $T' \in (0,T]$ to be chosen later, and $Q$ is defined by \eqref{eq:fioapprox}, then
	\[
	(i\de_t + \opA) \, Q = \int_{\R^n} e^{i\phi(t,x,y,\xi)} r(t,x,y,\xi) \,\dd\xi
	\]
	where $r \in S^1((-T',T') \times X \times X;\R^n)$ has the asymptotic expansion described in \cite[Theorem 2.12]{MR0609014}.
	Namely, 
	if $a$ is the symbol of $\opA$ 
	and if we write $q\sim\sum_{j \geq 0} q_{-j}$ and $a\sim\sum_{j \geq 0} a_{1-j}$ for the asymptotic expansions of $a$ and $q$ (here $a_1 = A$),
	then 
\begin{equation}\label{eq:transport_comp_symbol}
\begin{split}
r(t,x,y,\xi) 
&= e^{-i\phi(t,x,y,\xi)}\left.(i\de_t + \opA_z)\left[ e^{i\phi(t,z,y,\xi)} q(t,z,y,\xi)\right]\right|_{z=x}\\
&\sim \left( A(x,\de_x\phi) -\de_t\phi \right) q + \sum_{j \geq 0} r_{-j} ,
\end{split}
\end{equation}	
where, for $k \leq 0$, the $r_k \in C^\infty( (-T',T') \times X \times X \times \nR^n )$ are homogeneous in $\xi$ of degree $k$ and are given by
	\[\begin{split}
	r_k &=i\de_t q_k 
		+ a_0(x,\de_x\phi) q_k \\
		&- i \sum_{|\alpha|=1} (\de^\alpha_\xi A)(x,\de_x\phi) \de_x^\alpha q_k 
		- i \sum_{|\alpha|=2} \frac{1}{\alpha!} (\de^\alpha_\xi A)(x,\de_x\phi) \, (\de_x^\alpha \phi) \, q_k
		- R_k ;
	\end{split}\]
	here the remainder $R_k = R_k(a,\phi,q_0,q_{-1},\dots,q_{k+1})$ 
is homogeneous in $\xi$ of degree $k$ and 
has the form
	\begin{equation}\label{eq:transport_remainder}
	R_k= \sum_{\substack{|\alpha| \le 1-k \\ 0 \geq \ell>k}} c_{k\alpha}^\ell(\phi,a) \, \de_x^\alpha q_\ell,
	\end{equation}
	where the $c_{k\alpha}^\ell(\phi,a)$ are certain polynomials in the derivatives of $\phi$ and the $a_j$ (independent of the $q_j$).
	In particular, $R_0 = 0$.
		
			Note that $ A(x,\de_x\phi) -\de_t\phi = 0$, because of \eqref{eq08091101}. Thus, in view of \eqref{eq:transport_comp_symbol}, in order for \ref{en:trnsoln} to be satisfied, it is sufficient to choose $q$ so that $r_k=0$ for all $k\le 0$. Similarly, \ref{en:trninit} corresponds to the condition  $q_k|_{t=0}=p_k$ for all $k \leq 0$, where $p \sim \sum_{j\geq 0} p_{-j}$.
			
	Notice that $-ir_k=0$ is a linear differential equation in $q_k$ where all derivatives of $q_k$ have real coefficients.
	More precisely, consider the time-dependent real vector field $W$ on $X$ 
	with parameters $(y,\xi)\in X\times\nR^n$, given by
	\[
	W(t,x,y,\xi) = - \sum_{|\alpha|=1} (\de_\xi^\alpha A)(x,\de_x\phi)\de_x^\alpha ,
	\]
	and the function $F(t,x,y,\xi) =  -ia_0(x,\de_x\phi) - \sum_{|\alpha|=2} \frac{1}{\alpha!}  \de_\xi^\alpha A(x,\de_x\phi)  \de_x^\alpha \phi$.
	Then we want $q_k$ to solve the equation
	\begin{equation}\label{eq08171655}
	\begin{cases}
	 	\de_t q_k + W q_k + F q_k + iR_k=0 ,\\
		q_k|_{t=0}=p_k .
	\end{cases}
	\end{equation}
	This equation is called \emph{transport equation} and it is solved with the method of characteristics.
	Namely, for $(t,y,\xi) \in (-T,T) \times X \times \nR^n$, let 
	\[
	C(t,y,\xi) = \left\{ \gamma : I \to X \tc 
	\begin{array}{c}
	 I\subset (-T,T) \ \text{ interval with }0,t\in I,\\
	\forall s \in I \tc \gamma'(s)=W(s,\gamma(s),y,\xi)
	\end{array}
	\right\} 
	\]
	be the set of the integral curves of $W(\cdot,\cdot,y,\xi)$ defined at times $0$ and $t$, and let $\Omega\subset(-T,T)\times X\times X\times \nR^n$ be the open set
	\[
	\Omega = \left\{ (t,\gamma(t),y,\xi) \tc
  (t,y,\xi) \in (-T,T) \times X \times \nR^n, \, \gamma \in C(t,y,\xi) 
	\right\} .
	\]
	Notice that, since $W$ is $0$-homogeneous in $\xi$, the set $\Omega$ is  conic.
	
	For every $(t,x,y,\xi)\in\Omega$, the initial-value problem \eqref{eq08171655} induces a Cauchy problem for a linear ODE along a curve $\gamma \in C(t,y,\xi)$ with $\gamma(t) = x$.
	Since this Cauchy problem is globally solvable, we obtain that, if $R_k$ is defined and smooth on the whole $\Omega$, then there is a well-defined $q_k:\Omega\to\C$ solution to~\eqref{eq08171655}.
	Smoothness and uniqueness of $q_k$ on $\Omega$ are also guaranteed by the theory of ODEs, and $q_k$ is $k$-homogeneous in $\xi$ whenever $R_k$ is.

	Since $R_0=0$ is defined, smooth, and $0$-homogeneous on $\Omega$, the solution $q_0$ to \eqref{eq08171655} exists on $\Omega$.
	Inductively, by \eqref{eq:transport_remainder}, it follows that the $q_k:\Omega\to\C$ solving \eqref{eq08171655} are defined, smooth and $k$-homogeneous on $\Omega$ for all $k\le0$.

Let now	$\Omega_0$ be the open subset of $\Omega$ defined by
	\[
	\Omega_0 = \left\{ (t,\gamma(t),y,\xi) \tc \begin{array}{c} (t,y,\xi) \in (-T,T) \times X \times \nR^n, \,
	\gamma \in C(t,y,\xi), \\ (\gamma(0),y,\xi) \notin \essspt(p) \end{array}
	\right\} .
	\]
Arguing as above, the solution to~\eqref{eq08171655} is unique on $\Omega_0$, but here the initial value for the Cauchy problem along each integral curve is zero, whence $q_k = 0$ on $\Omega_0$.

	Let $X',X''$ be open subsets of $X$ with  $X'' \Subset X' \Subset X$, and let $K$ be a compact neighbourhood of $\overline{X''}$ in $X'$. 
	We claim that
	there is $T'\in(0,T]$ such that, if $\Gamma$ is a closed cone in $\nR^n$ and $\essspt(p)\subset X''\times X''\times\Gamma$, then, for all $k$,
	\begin{equation}\label{eq:ampl_supp_cond}
	\spt(q_k|_{\Omega'})
	\subset (-T',T') \times K\times K\times \Gamma
	\subset \Omega,
	\end{equation}
	where $\Omega' = \Omega \cap (-T',T') \times X \times X \times \nR^n$.
	Indeed, since $\Omega$ is a conic open neighbourhood of $\{0\}\times X\times X\times\nR^n$ in $(-T,T)\times X\times X\times\nR^n$,  there is $\epsilon \in (0,T]$ such that $(-\epsilon,\epsilon)\times X'\times X'\times \nR^n \subset \Omega$. A further compactness argument yields a $T' \in (0,\epsilon]$ so that $\gamma(t) \in K$ for all $(t,y,\xi) \in (-T',T') \times X''\times \nR^n$ and $\gamma \in C(t,y,\xi)$ with $\gamma(0) \in X''$. By the previous discussion, if $\essspt(p)\subset X''\times X''\times\Gamma$, then
\[\begin{split}
&\{ (t,x,y,\xi) \in \Omega \tc t \in (-T',T'), q_k(t,x,y,\xi) \neq 0\} \\
&\subset \left\{ (t,\gamma(t),y,\xi) \tc t \in (-T',T'),	\gamma \in C(t,y,\xi), \, (\gamma(0),y,\xi) \in X'' \times X'' \times \Gamma 	\right\} \\
&\subset (-T',T') \times K \times X'' \times \Gamma 
\end{split}\]	
and \eqref{eq:ampl_supp_cond} follows.

We can now extend by zero the functions $q_k|_{\Omega'}$ to smooth homogeneous functions $q_k$ on the whole $(-T',T') \times X \times X \times \nR^n$, and these extensions still satisfy \eqref{eq08171655}; this is because $(-T',T') \times K\times K\times \Gamma$ is closed in $(-T',T') \times X\times X\times \nR^n$, and $\Omega'$ contains $\{0\} \times X \times X \times \nR^n$. Hence any $q \in S^0_\cl((-T',T') \times X \times X;\R^n)$ with the asymptotic expansion $\sum_{j \geq 0} q_{-j}$ satisfies \ref{en:trninit} and \ref{en:trnsoln}.
	One of such symbols is given by $q(t,x,y,\xi) = \sum_{j \geq 0} \chi_j(\xi) \, q_{-j}(t,x,y,\xi)$ for suitable smooth cutoffs $\chi_j$ vanishing at $\xi = 0$, and this $q$ also satisfies
	$\spt(q)\subset (-T',T')\times K\times K\times \Gamma$, as desired.
\end{proof}

\section{Eikonal equation}\label{sec:eikonal}

Consider the initial value problem for the eikonal equation~\eqref{eq08091101}, namely
\begin{equation}\label{eq09121416}
\begin{cases}
\de_t\phi(t,x,y,\xi) = A(x,\de_x\phi(t,x,y,\xi)), \\
\phi(0,x,y,\xi) = (x-y)\cdot\xi,
\end{cases}
\end{equation}
on a simply connected coordinate domain $M_o$ in a manifold $M$.
Following Tr\`eves, see for instance \cite[Example 2.1, p.\ 320]{MR597145},
we seek a solution $\phi$ in the form $\phi(t,x,y,\xi) = w(t,x,\xi)-y\cdot\xi$.
The eikonal equation~\eqref{eq09121416} is then equivalent to 
\begin{equation}\label{eq09121416bis}
\begin{cases}
\de_t w(t,x,\xi) = A(x,\de_x w(t,x,\xi)), \\
w(0,x,\xi) = x\cdot\xi .
\end{cases}
\end{equation}
If we define the 1-forms $\alpha_\xi = \xi\cdot\dd x$ and
\begin{equation}\label{eq09121105}
	\mu^\xi 
	= \dd_{(t,x)}w = \de_tw \,\dd t + \dd_xw 
	= \mu_\R^\xi \,\dd t + \mu_M^\xi ,
\end{equation}
on an open subset of $\R\times M_o$, then we have that $\mu^\xi$ is closed, $\mu_\R^\xi=A(\mu_M^\xi)$ and $\mu_M^\xi|_{t=0} = \alpha_\xi$.
Moreover, by Poincar\'e's Lemma, since $M_o$ is simply connected, $w(t,x,\xi)$ in return is determined by $\mu^\xi$ and~\eqref{eq09121105} (up to an additive constant).

We may therefore study an equation in $\mu$ in place of~\eqref{eq09121416bis}: 
Given a closed $1$-form $\alpha$ on $M_o$, we will show that there is a unique 1-form $\mu^\alpha= \mu_\R^\alpha \,\dd t + \mu_M^\alpha$ on a neighbourhood of $\{0\}\times M_o$ satisfying 
\begin{equation}\label{eq09121101}
	\begin{cases}
	\dd\mu^\alpha=0, \\ 
	\mu_\R^\alpha=A(\mu_M^\alpha) , \\
	\mu_M^\alpha|_{t=0} = \alpha .
	\end{cases}
\end{equation}
Once we have $\mu^\alpha$, we define $\phase(t,x,y,\alpha)$ as $\wphase^\alpha(t,x)-\wphase^\alpha(0,y)$
where $\wphase^\alpha$ is determined by $\mu^\alpha = \dd_{(t,x)}\wphase^\alpha$.
Finally, we will characterize the points where $\de_\alpha\phase = 0$ and the rank of the Hessian $\de^2_\alpha\phase$ at these points
in terms of the Hamiltonian flow of $A$ on $T^*M$,
where derivatives in $\alpha$ are in the sense of G\^ateaux.

Up to this point, the construction is coordinate-free.
A choice of coordinates determines the restriction to the subspace of forms $\alpha = \xi\cdot\dd x$, with the corresponding phase
\[
\phi(t,x,y,\xi) = \phase(t,x,y,\xi\cdot\dd x) .
\]
We will show that both the characterization of critical points  and the rank of the Hessian at those points do not depend on such restriction.

\subsection{Preliminaries on symplectic geometry}\label{sec09190901}

Here we recall some fundamental definitions and results from symplectic geometry. We refer to \cite{MR2954043,MR3674984} for additional details.

A \emph{symplectic form} on a smooth manifold $N$ is a $2$-form $\omega \in \Forms^2(N)$ such that $\dd\omega=0$ and $\omega|_p$ is non-degenerate for every $p \in N$.
The pair $(N,\omega)$ is called \emph{symplectic manifold}.

Every smooth function $F : N \to \R$ on a symplectic manifold has an associated Hamiltonian vector field $\X_F \in \Sect(TN)$ defined by
\[
\dd F|_p(v) = \omega|_p(\X_F|_p,v)
\qquad\forall v\in T_p N,\ \forall p \in N .
\]
We denote by $\Phi_F : (t,p) \mapsto \Phi^t_F(p)$ the flow of $\X_F$ on $N$.
As usual, the domain of $\Phi_F$ is an open neighbourhood of $\{0\}\times N$ in $\R\times N$.
We recall that
\begin{equation}\label{eq09181520}
\X_FF=0 ,
\end{equation}
that is, $F$ is constant along the integral curves of $\X_F$, 
and that
\begin{equation}\label{eq09181521}
(\Phi^t_F)^*\omega = \omega ,
\end{equation}
that is, the flow $\Phi_F$ preserves the symplectic form.

\begin{proposition}\label{prop04121107}
	Let $(N,\omega)$ be a symplectic manifold, $\psi_0:U\to N$ a smooth map from a manifold $U$ and $F:N\to\R$ smooth.
	Let $\cU \subset \R \times U$ be the preimage of the domain of $\Phi_F$ via the map $\R\times U\to \R\times N$, $(t,p)\mapsto(t,\psi_0(p))$;
	clearly, $\cU$ is a neighbourhood of $\{0\}\times U$.
	Define $\psi: \cU \to N$ by
	\[
	\psi(t,p) = \Phi_F^t(\psi_0(p)) .
	\]
If $F\circ\psi_0$ is constant, then also $F\circ\psi$ is constant; and, if moreover $\psi_0^*\omega=0$,
	then also $\psi^*\omega=0$.
\end{proposition}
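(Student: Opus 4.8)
The plan is to verify the two assertions about $\psi$ by differentiating along the flow parameter $t$ and using the defining properties \eqref{eq09181520} and \eqref{eq09181521} of the Hamiltonian vector field and its flow, reducing everything to the hypotheses at $t=0$. Recall that, by construction, $\psi(t,p) = \Phi_F^t(\psi_0(p))$, so $\psi = \Phi_F \circ (\id_\R \times \psi_0)$ as a map $\cU \to N$, with $\psi(0,\cdot) = \psi_0$. The statement that $F \circ \psi$ is constant is then just the observation that $F$ is invariant under the flow: for fixed $p$, the curve $t \mapsto F(\psi(t,p)) = F(\Phi_F^t(\psi_0(p)))$ has derivative $\X_F F$ evaluated along the integral curve, which vanishes by \eqref{eq09181520}; hence $F(\psi(t,p)) = F(\psi_0(p))$, and this is independent of $p$ precisely because $F \circ \psi_0$ is assumed constant. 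So the first assertion is immediate.

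For the second assertion, I would compute $\psi^*\omega$ and show it vanishes identically on $\cU$. Write $\omega_{(t,p)} \defeq (\psi^*\omega)|_{(t,p)}$, a $2$-form on $\cU \subset \R \times U$; one must check $\omega_{(t,p)}(X,Y) = 0$ for all tangent vectors $X, Y \in T_{(t,p)}\cU$. It suffices to treat the two cases: (a) both $X,Y$ tangent to the slice $\{t\} \times U$, and (b) one of them equal to $\partial_t$. In case (a), fix $X, Y$ coming from vectors in $T_p U$ (extended as $t$-independent vector fields), and consider the function $g(t) \defeq (\psi^*\omega)_{(t,p)}(X,Y)$. Using that the $t$-flow of $\psi$ is exactly $\Phi_F$, one sees $g(t) = \omega_{\psi(t,p)}(\DD\psi_{(t,p)} X, \DD\psi_{(t,p)} Y) = \omega_{\psi(t,p)}((\Phi_F^t)_* v, (\Phi_F^t)_* w)$ where $v = \DD(\psi_0)_p X$, $w = \DD(\psi_0)_p Y$ lie in $T_{\psi_0(p)}N$; by \eqref{eq09181521} the flow preserves $\omega$, so $g(t) = \omega_{\psi_0(p)}(v,w) = (\psi_0^*\omega)_p(X,Y) = 0$ by hypothesis. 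In case (b), note $\DD\psi_{(t,p)}(\partial_t) = \X_F|_{\psi(t,p)}$, so we must show $\omega_{\psi(t,p)}(\X_F, \DD\psi_{(t,p)} Y) = 0$ for $Y$ tangent to the slice. By the definition of $\X_F$, $\omega(\X_F, \cdot) = \dd F$, so this equals $\dd F|_{\psi(t,p)}(\DD\psi_{(t,p)} Y) = \DD(F\circ\psi)_{(t,p)}(Y)$, which vanishes because $F \circ \psi$ is constant by the first part. Combining the two cases gives $\psi^*\omega = 0$.

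The only point requiring a little care — and the one I expect to be the main (minor) obstacle — is the bookkeeping in case (a): one has to commute the pullback with the flow correctly, i.e. recognize that $\psi_{(t,\cdot)} = \Phi_F^t \circ \psi_0$ so that $\DD\psi_{(t,p)}$ restricted to the slice directions factors as $(\DD\Phi_F^t)_{\psi_0(p)} \circ \DD(\psi_0)_p$, allowing \eqref{eq09181521} to be applied pointwise. A clean way to organize this, avoiding differentiation in $t$ entirely, is to write $\psi^*\omega$ on the slice $\{t\} \times U$ as $\psi_0^* (\Phi_F^t)^* \omega = \psi_0^* \omega = 0$, and then handle only the mixed $\partial_t$-component separately as in case (b). Everything else is routine; no new ideas beyond \eqref{eq09181520} and \eqref{eq09181521} are needed, and there is no issue with domains since $\cU$ is exactly the set on which $\psi$ is defined.
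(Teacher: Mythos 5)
Your proof is correct and follows essentially the same route as the paper's: both decompose tangent vectors to $\cU$ via $T_{(t,p)}\cU \simeq T_t\R\times T_pU$, compute $\DD\psi$ on each piece, and invoke flow-invariance of $\omega$ and of $F$. Your handling of the mixed $\partial_t$-case is a small streamlining — you apply $\omega(\X_F,\cdot)=\dd F$ and use the already-established constancy of $F\circ\psi$ directly via the chain rule, whereas the paper first pushes $\X_F|_{\psi(t,p)}$ back along $\DD\Phi_F^t$ to time $0$ and then invokes constancy of $F\circ\psi_0$ — but this is the same argument in slightly different form.
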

\begin{proof}
	Recall that we have a canonical identification $T_{(t,p)} \cU \simeq T_t\R\times T_p U$.
	If $(t,p) \in \cU$, $r \in \R$ and $v\in T_p U$, then
	\begin{equation}\label{eq04111551}
	D\psi|_{(t,p)}[r\de_t + v] 
	= r \X_F|_{\psi(t,p)} + \DD\Phi^t_F\circ\DD\psi_0|_p[v] .
	\end{equation}
	Since $\X_F|_{\psi(t,p)} = \left.\frac{\dd}{\dd h}\right|_{h=0} \Phi^{h}_F\circ\Phi^{t}_F(\psi_0(p))
	= \left.\frac{\dd}{\dd h}\right|_{h=0} \Phi^{t}_F\circ\Phi^{h}_F(\psi_0(p))$,
	we have
	\begin{equation}\label{eq09181523}
	\X_F|_{\psi(t,p)} = \DD\Phi^t_F|_{\psi_0(p)} [\X_F|_{\psi_0(p)}] .
	\end{equation}
	Suppose that $F\circ\psi_0$ is constant.
	Then $F\circ\psi$ is constant by~\eqref{eq09181520}.
	Suppose in addition that $\psi_0^*\omega=0$.
	Notice that, by the definition of pull-back and~\eqref{eq04111551},
	\begin{multline*}
	\psi^*\omega|_{(t,p)} \left(r\de_t+v,r'\de_t+v'\right)
		= r\omega \left(\X_F(\psi(t,p)),\DD\Phi^t_F \circ \DD\psi_0|_p[v']\right)  \\
		- r'\omega \left(\X_F(\psi(t,p)),\DD\Phi^t_F \circ \DD\psi_0|_p[v]\right)
		+ \omega \left(\DD\Phi^t_F\circ\DD\psi_0|_p[v],\DD\Phi^t_F \circ \DD\psi_0|_p[v']\right) ,
	\end{multline*}
	for all $(t,p)\in \cU$, $r,r'\in\R$ and $v,v'\in T_pU$.
	By~\eqref{eq09181521} and~\eqref{eq09181523},
	we have
	\begin{equation*}
	\omega \left(\X_F(\psi(t,p)),\DD\Phi^t_F \circ \DD\psi_0|_p[v]\right)
	= \omega\left(\X_F(\psi_0(p)),\DD\psi_0|_p[v] \right) 
	= \dd F(\DD\psi_0|_p[v]) 
	= 0 ,
	\end{equation*}
	where we used the hypothesis $F\circ\psi_0$ is constant.
	Again, by~\eqref{eq09181521} we also obtain
	\[
	\omega\left(\DD\Phi^t_F \circ \DD\psi_0|_p[v], \DD\Phi^t_F \circ \DD\psi_0|_p[v'] \right)
	= \omega\left(\DD\psi_0|_p[v], \DD\psi_0|_p[v'] \right)
	= 0,
	\]
	where we used the hypothesis $\psi_0^*\omega=0$.
	We conclude that $\psi^*\omega=0$.
\end{proof}

\begin{corollary}\label{cor:lagrangian_uniqueness}
Let $(N,\omega)$ be a symplectic manifold, $\psi_0 : U \to N$ a smooth map from a manifold $U$ and $F : N \to \R$ smooth. Assume that $F \circ \psi_0$ is constant and $\psi_0^* \omega = 0$. For all $p \in U$, if $2\dim \Im(\DD \psi_0|_p) \geq \dim N$, then $\X_F|_p \in \Im(\DD \psi_0|_p)$.
\end{corollary}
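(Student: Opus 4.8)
The plan is to reduce the statement to elementary symplectic linear algebra in the tangent space $T_{\psi_0(p)}N$. Fix $p\in U$, and write $q=\psi_0(p)$ and $W=\Im(\DD\psi_0|_p)\subseteq T_qN$; here $\X_F|_p$ is to be read as $\X_F|_q$, under the usual identification of $p$ with its image $\psi_0(p)$. First I would unwind the two hypotheses at the point $q$. The condition $\psi_0^*\omega=0$ says precisely that $\omega|_q$ vanishes on $W\times W$, i.e.\ that $W$ is an \emph{isotropic} subspace of the symplectic vector space $(T_qN,\omega|_q)$; equivalently $W\subseteq W^{\omega}$, where $W^{\omega}$ denotes the symplectic orthogonal of $W$ in $(T_qN,\omega|_q)$. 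The condition that $F\circ\psi_0$ is constant gives $\dd F|_q\circ\DD\psi_0|_p=0$, that is, $\dd F|_q$ annihilates $W$; by the defining relation $\dd F|_q(v)=\omega|_q(\X_F|_q,v)$ of the Hamiltonian vector field, this is exactly the assertion that $\X_F|_q\in W^{\omega}$.

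It then remains to show that the dimensional hypothesis forces $W^{\omega}=W$, which yields $\X_F|_q\in W^{\omega}=W=\Im(\DD\psi_0|_p)$, as desired. Here I would invoke the standard fact that, for any subspace $W$ of a symplectic vector space of dimension $d$, one has $\dim W+\dim W^{\omega}=d$; this follows from the non-degeneracy of $\omega|_q$, identifying $W^{\omega}$ with the preimage of the annihilator of $W$ in $T_q^*N$ under the isomorphism $T_qN\to T_q^*N$ induced by $\omega|_q$. Taking $d=\dim N$ and using $2\dim W\geq\dim N$, we get $\dim W^{\omega}=\dim N-\dim W\leq\dim W$; combined with the inclusion $W\subseteq W^{\omega}$ coming from isotropy, this forces $\dim W=\dim W^{\omega}$ and hence $W=W^{\omega}$.

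There is no real obstacle here: the proof is routine once the two hypotheses are correctly translated into the isotropy of $W$ and the membership $\X_F|_q\in W^{\omega}$, the only points requiring a little care being the notational identification $\X_F|_p=\X_F|_{\psi_0(p)}$ and the bookkeeping of dimensions. Alternatively, one may deduce the corollary directly from Proposition \ref{prop04121107}: with $\psi(t,p)=\Phi_F^t(\psi_0(p))$ as there, formula \eqref{eq04111551} evaluated at $t=0$ gives $\Im(\DD\psi|_{(0,p)})=\R\,\X_F|_q+W$, which by the proposition is isotropic and hence of dimension at most $\tfrac12\dim N\leq\dim W$; this is impossible unless $\X_F|_q\in W$. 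I would favour the first, self-contained route.
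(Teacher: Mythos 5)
Your primary argument is correct, and it is genuinely different from the paper's: you bypass Proposition~\ref{prop04121107} entirely and argue pointwise in the symplectic vector space $T_{\psi_0(p)}N$, translating $\psi_0^*\omega=0$ into isotropy of $W=\Im(\DD\psi_0|_p)$ (so $W\subseteq W^\omega$), translating the constancy of $F\circ\psi_0$ into $\X_F|_{\psi_0(p)}\in W^\omega$ via the defining identity $\dd F(v)=\omega(\X_F,v)$, and then using $\dim W+\dim W^\omega=\dim N$ together with $2\dim W\geq\dim N$ to force $W=W^\omega$. The paper instead invokes Proposition~\ref{prop04121107} to produce the flowed-out map $\psi(t,p)=\Phi_F^t(\psi_0(p))$, notes from~\eqref{eq04111551} that $\X_F|_{\psi_0(p)}\in\Im(\DD\psi|_{(0,p)})\supseteq\Im(\DD\psi_0|_p)$, and then uses isotropy of $\Im(\DD\psi|_{(0,p)})$ (coming from $\psi^*\omega=0$) together with the dimension count to conclude the two images coincide — this is exactly your ``alternative route.'' What your primary route buys is self-containment: it makes no use of the flow or of the earlier proposition, and makes explicit the role of the symplectic complement $W^\omega$, which the paper's proof leaves implicit. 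What the paper's route buys is economy within its own framework, since Proposition~\ref{prop04121107} has already been set up and its conclusions ($F\circ\psi$ constant and $\psi^*\omega=0$) are exactly what is needed. Both are short and correct; your preference for the first is reasonable.
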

\begin{proof}
Let $\psi : \cU \to N$ be constructed as in Proposition \ref{prop04121107}. Then clearly
\[
\X_F|_p \in \Im(\DD \psi|_{(0,p)}) \supseteq \Im(\DD \psi_0|_p).
\]
On the other hand $\psi^* \omega = 0$, so the symplectic bilinear form $\omega|_{\psi_0(p)}$ vanishes on $\Im(\DD \psi|_{(0,p)}) \times \Im(\DD \psi|_{(0,p)})$ and therefore $2\dim \Im(\DD \psi|_{(0,p)}) \leq \dim N$. Dimensional considerations then imply that $\Im(\DD \psi|_{(0,p)}) = \Im(\DD \psi_0|_p)$ and $\X_F|_p \in \Im(\DD \psi_0|_p)$.
\end{proof}

The cotangent space $T^*M$ of a smooth manifold $M$ has a canonical symplectic structure, described as follows.
Let $\pi_M : T^*M\to M$ be the bundle projection and $\taut \in \Forms^1(T^*M)$ the \emph{tautological form} defined by
\begin{equation*}
\taut|_\alpha(v) = \alpha(\DD\pi_M[v])
\end{equation*}
for $\alpha\in T^*M$ and $v\in T_\alpha(T^*M)$.
The symplectic form on $T^*M$ is
\[
\omega=-\dd\taut .
\]
The tautological $1$-form is characterised by the fact that, if $\mu \in \Forms^1(M)$ is a $1$-form (which in particular is a smooth embedding $\mu : M \to T^*M$), then $\mu^* \taut=\mu$.
We shall need the following lemma.
Recall that a submanifold $S \subset T^*M$ is called \emph{Lagrangian} if $\dim(S) = \dim M$ and $\omega|_{TS} = 0$.

For a proof of the following lemma, see for instance \cite[Proposition 9.20]{MR2954043}.

\begin{lemma}\label{lem03071514}
$\mu \in \Forms^1(M)$ is closed if and only if $\mu(M)$ is a Lagrangian submanifold of $T^*M$, i.e., $\mu^*\omega = 0$.
\end{lemma}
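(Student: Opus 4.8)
The statement to prove is Lemma \ref{lem03071514}: $\mu \in \Forms^1(M)$ is closed if and only if $\mu(M)$ is a Lagrangian submanifold of $T^*M$, equivalently $\mu^*\omega = 0$.

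The plan is to exploit the characterisation of the tautological form already recorded in the excerpt, namely that for any $1$-form $\mu \in \Forms^1(M)$, viewed as a smooth embedding $\mu : M \to T^*M$, one has $\mu^*\taut = \mu$. Since $\omega = -\dd\taut$ and pullback commutes with exterior differentiation, I would first compute
\[
\mu^*\omega = \mu^*(-\dd\taut) = -\dd(\mu^*\taut) = -\dd\mu .
\]
This single identity is the crux: it shows directly that $\mu^*\omega = 0$ if and only if $\dd\mu = 0$, i.e.\ $\mu$ is closed.

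It then remains to connect the vanishing of $\mu^*\omega$ with $\mu(M)$ being Lagrangian. Here I would note that $\mu : M \to T^*M$ is an embedding (a section of the cotangent bundle is automatically an immersion, being a right inverse of $\pi_M$, and it is a homeomorphism onto its image since $\pi_M$ restricted to $\mu(M)$ is its continuous inverse), so $S := \mu(M)$ is a submanifold of $T^*M$ of dimension $\dim M$. The differential $\DD\mu|_x : T_x M \to T_{\mu(x)}(T^*M)$ is an isomorphism onto $T_{\mu(x)} S$, so the condition $\omega|_{TS} = 0$ is equivalent, fibrewise, to $\mu^*\omega = 0$. Combined with the dimension count $\dim S = \dim M = \tfrac12 \dim T^*M$, this is exactly the definition of $S$ being Lagrangian. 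Chaining the two equivalences gives: $\mu$ closed $\iff \dd\mu = 0 \iff \mu^*\omega = 0 \iff \mu(M)$ Lagrangian.

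There is no real obstacle here; the only mild point requiring care is the verification that a $1$-form defines an embedded (not merely immersed) submanifold, and that the pullback condition $\mu^*\omega=0$ is genuinely equivalent to $\omega$ restricting to zero on the tangent spaces of the image — both follow immediately from $\DD\mu$ being a fibrewise isomorphism onto $TS$. Since the excerpt explicitly offers \cite[Proposition 9.20]{MR2954043} as a reference, I would keep the argument brief, emphasising the computation $\mu^*\omega = -\dd\mu$ as the essential content.

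\begin{proof}
Recall that the tautological $1$-form $\taut$ on $T^*M$ is characterised by the property $\mu^*\taut = \mu$ for every $\mu \in \Forms^1(M)$, where $\mu$ is regarded as a smooth map $M \to T^*M$. Since $\omega = -\dd\taut$ and pullback commutes with the exterior derivative,
\[
\mu^*\omega = -\mu^*(\dd\taut) = -\dd(\mu^*\taut) = -\dd\mu .
\]
Hence $\mu^*\omega = 0$ if and only if $\dd\mu = 0$, i.e.\ $\mu$ is closed.

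It remains to observe that $\mu^*\omega = 0$ is equivalent to $\mu(M)$ being a Lagrangian submanifold of $T^*M$. As a section of the bundle $\pi_M : T^*M \to M$, the map $\mu$ is a smooth embedding: it is an immersion because $\pi_M \circ \mu = \Id_M$ forces $\DD\mu|_x$ to be injective for every $x \in M$, and it is a homeomorphism onto its image because $\pi_M|_{\mu(M)}$ is a continuous inverse. Thus $S \defeq \mu(M)$ is a submanifold of $T^*M$ with $\dim S = \dim M = \tfrac12 \dim T^*M$, and $\DD\mu|_x$ is a linear isomorphism of $T_x M$ onto $T_{\mu(x)} S$. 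Consequently $\omega|_{TS} = 0$ if and only if $\mu^*\omega = 0$. Combining this with the dimension count and the equivalence of the previous paragraph, we conclude that $\mu$ is closed if and only if $S$ is Lagrangian.
\end{proof}
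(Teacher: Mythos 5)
Your proof is correct, and it is the standard textbook argument: the paper itself supplies no proof but simply cites \cite[Proposition 9.20]{MR2954043}, whose argument is essentially the identity $\mu^*\omega = -\dd(\mu^*\taut) = -\dd\mu$ that you isolate as the crux. The supplementary remarks about $\mu$ being an embedding and the dimension count are accurate and appropriately brief.
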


If $F:\cD_F\to\R$ is a smooth function on some open set $\cD_F\subset T^*M$,
we define the \emph{(Hamiltonian) exponential map} 
\begin{equation}\label{eq11201112}
\Exp_{F}^{x,t}(\xi) = \pi_M(\Phi_F^t(\xi)),
\end{equation}
for all $x \in M$, $\xi \in T^*_x M$, $t \in \R$  such that $(t,\xi)$ is in the domain of $\Phi_F$.

A system of coordinates $(U,x)$ on an open set $U \subset M$ induces so-called \emph{canonical coordinates} $(T^* U,(x,\eta))$ on $T^*U=\pi_M^{-1}(U)\subset T^*M$, whereby $\alpha \in T^* U$ corresponds to the pair $(x(\alpha), \sum_j \eta_j(\alpha) \dd x_j)$ in the trivialisation of $T^* U$ induced by $(U,x)$.
In canonical coordinates we have
\begin{equation*}
\taut = \sum_j \eta_j \,\dd x_j
\qquad\text{ and }\qquad
\omega = \sum_j \dd x_j\wedge\dd\eta_j .
\end{equation*}
Moreover, if $\cD_F \subset T^*M$ is open and $F:\cD_F\to\R$ is smooth, then
\begin{equation}\label{eq09190852}
\X_F = \sum_j \frac{\de F}{\de \eta_j} \de_{x_j} - \sum_j \frac{\de F}{\de x_j} \de_{\eta_j}  .
\end{equation}
Hence, a curve $\gamma(t) =(x(t),\eta(t))$ in $\cD_F \subset T^*M$ is an integral curve of $\X_F$ if and only if it satisfies the \emph{Hamilton--Jacobi equations}:
\begin{equation}\label{eq09190853}
\begin{dcases}
\dot x_j = \frac{\de F}{\de \eta_j} ,\\
\dot \eta_j = -\frac{\de F}{\de x_j} ,
\end{dcases}
\end{equation}
for all $j$.

\subsection{Solution to the eikonal equation}\label{sec09270951}
For further reference on the content of this section, see \cite[p.\ 167]{MR0447753} and \cite[Chapter VI]{MR597145}.

For a manifold $M$, we denote the space of closed $1$-forms by $\CLF(M) = \{\alpha \in \Forms^1(M):\dd \alpha=0\}$,
and the bundle projection $T^*M\to M$ by $\pi_M$.

Let $M$ be a manifold and
$A : \cD_A \to \R$ a smooth function defined on an open set $\cD_A \subset T^* M$. Note that $\cD_A$ inherits the symplectic structure of $T^* M$.
As in the previous Section~\ref{sec09190901}, we denote by $\X_A$ and $\Phi_A$ the Hamiltonian vector field of $A$ and its flow, respectively.

If $\alpha \in \Forms^1(U)$ for some $U\subset M$ open, let $\rho^\alpha$ be the smooth map
\begin{equation}\label{eq:def_rhoalpha}
\cU^\alpha \ni (t,x) \mapsto \rho^\alpha_t(x) \defeq \pi_M\Phi_A^{-t}(\alpha|_x) \in M,
\end{equation}
where 
\begin{equation}\label{eq:Ualpha}
\cU^\alpha = \{ (t,x) \in \R \times U \tc (-t,\alpha|_x) \text{ is in the domain of $\Phi_A$}\}
\end{equation}
is an open neighbourhood of $\{0\} \times U$ in $\R \times U$. Note that $\rho^\alpha_0 = \Id_U$.

Set $\tilde M \defeq \R\times M$.
We shall write an element $\alpha\in T^*\tilde M$ as
\[
\alpha = \alpha_\R \, \dd t|_t + \alpha_M
\]
with $t,\alpha_\R\in\R$ and $\alpha_M\in T^*M$.
 Note that $T^* \tilde M$ is naturally isomorphic to the product $T^* \R \times T^* M$.
The composition of the projection $T^*\tilde M\to T^*M$ with the bundle projection $\pi_M : T^*M \to M$ gives a submersion $\tilde\pi_M : T^*\tilde M \to M$. Then the canonical symplectic form $\omega_{\tilde M}$ on $T^* \tilde M$ is the ``sum'' of the symplectic forms on the factors; more precisely,
	\begin{equation}\label{eq09181756}
	\omega_{\tilde M} = \dd t\wedge\dd\tau + \tilde \omega_M ,
	\end{equation}
	where $(t,\tau)$ are the canonical coordinates on $T^*\R$ and $\tilde \omega_M$ is the pull-back via $\tilde\pi_M$ of the canonical symplectic form $\omega_M$ on $T^* M$.

Let $\cD_F = T^* \R \times \cD_A \subset T^* \tilde M$, and define $F : \cD_F \to \R$ by
\begin{equation}\label{eq09181755}
F(\tau\dd t|_t +\eta) = A(\eta) - \tau ,
\end{equation}
for every $(t,p)\in\tilde M$, $\tau\dd t|_t \in T_t^*\R$ and $\eta\in T^*_pM$. A moment's thought shows that the vector field $\X_F$ associated with $F$ splits as follows:
\begin{equation*}
\X_F = -\de_t + \tilde \X_A,
\end{equation*}
where $\tilde\X_A$ is the lifting of $\X_A$ to $\cD_F$. Consequently the flow of $\X_F$ is given by
\begin{equation}\label{eq09181758}
\Phi^s_F(\tau \,\dd t|_t+\eta) = \tau\dd t|_{t-s} + \Phi^s_A(\eta) 
\end{equation}
for all $s,t,\tau \in \R$ and all $\eta$ in the domain of $\Phi_A^s$.

The main result of this section is the following proposition, where the map $\rho^\alpha$ and the set $\cU^\alpha$ are defined as in \eqref{eq:def_rhoalpha} and \eqref{eq:Ualpha}.

\begin{proposition}\label{prop09181821}
	Let $U\subset M$ be open, $\alpha\in\CLF(U)$ and $\epsilon > 0$ such that
	\begin{enumerate}[label=(\roman*)]
	\item\label{prop09181821item1}
	$(-\epsilon,\epsilon) \times U \subset \cU^\alpha$;
	\item\label{prop09181821item2}
	$\rho^\alpha_s|_U :U\to M$ is an embedding for all $s\in(-\epsilon,\epsilon)$.
	\end{enumerate}
	Then the set
	\[
	\tilde U=\{(s,\rho^\alpha_s(x)) : |s|<\epsilon,\ x\in U\}
	\]
	is open in $\tilde M$ and 
	there exists a unique $\mu\in\CLF(\tilde U)$ such that
	\begin{equation}\label{eq09181818}
	\begin{dcases}
		F(\mu(\tilde x)) = 0 & \forall \tilde x\in \tilde U , \\
		\mu(0,x) = A(\alpha|_x) \,\dd t|_0 + \alpha|_x & \forall x\in U .
	\end{dcases}
	\end{equation}
	Moreover, for all $s\in(-\epsilon,\epsilon)$ and $x\in U$,
	\begin{equation}\label{eq09181817}
	\mu(s,\rho^\alpha_s(x)) = A(\alpha|_x) \,\dd t|_s + \Phi^{-s}_A(\alpha|_x) .
	\end{equation}
\end{proposition}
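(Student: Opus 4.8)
The plan is to apply Proposition \ref{prop04121107} (and the structural facts recalled just before it) with the symplectic manifold $(N,\omega) = (\cD_F, \omega_{\tilde M}|_{\cD_F})$, the Hamiltonian $F$ from \eqref{eq09181755}, and the initial map $\psi_0 : U \to \cD_F$ given by $\psi_0(x) = A(\alpha|_x)\,\dd t|_0 + \alpha|_x$. First I would check the hypotheses of that proposition: that $F \circ \psi_0$ is constant (indeed equal to $0$, since $F(A(\alpha|_x)\,\dd t|_0 + \alpha|_x) = A(\alpha|_x) - A(\alpha|_x) = 0$ by \eqref{eq09181755}), and that $\psi_0^* \omega_{\tilde M} = 0$. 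The latter is where the hypothesis $\alpha \in \CLF(U)$ enters: identifying $\psi_0$ with the $1$-form $\tilde\alpha = A\circ\alpha\,\dd t + \alpha$ on $\{0\}\times U \subset \tilde M$ (pulled along the inclusion), one sees $\tilde\alpha$ is closed because $\alpha$ is closed and the $\dd t$ component lives on the $\{t=0\}$ slice, so by Lemma \ref{lem03071514} its image is Lagrangian, i.e. $\psi_0^*\omega_{\tilde M}=0$; one must be slightly careful that $\psi_0$ lands in $\cD_F = T^*\R \times \cD_A$, which holds since $\alpha|_x \in \cD_A$ is implicit in $(-t,\alpha|_x)$ being in the domain of $\Phi_A$ (hypothesis \ref{prop09181821item1}).

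Next I would invoke Proposition \ref{prop04121107} to produce $\psi : \cW \to \cD_F$, $\psi(s,x) = \Phi_F^s(\psi_0(x))$, defined on a neighbourhood $\cW$ of $\{0\}\times U$, with $F\circ\psi \equiv 0$ and $\psi^*\omega_{\tilde M} = 0$. Using the explicit form of the flow \eqref{eq09181758} — here $\Phi_F^s(\tau\,\dd t|_t + \eta) = \tau\,\dd t|_{t-s} + \Phi_A^s(\eta)$ — I would compute $\psi(s,x) = A(\alpha|_x)\,\dd t|_{-s} + \Phi_A^s(\alpha|_x)$. Reindexing $s \mapsto -s$ (equivalently, composing with $t \mapsto \Phi_A^{-t}$), I would obtain a map whose $M$-component is exactly $\rho^\alpha_s(x) = \pi_M\Phi_A^{-s}(\alpha|_x)$ from \eqref{eq:def_rhoalpha}. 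Under hypotheses \ref{prop09181821item1}–\ref{prop09181821item2}, $(s,x)\mapsto (s,\rho^\alpha_s(x))$ is a diffeomorphism from $(-\epsilon,\epsilon)\times U$ onto its image $\tilde U$; in particular $\tilde U$ is open in $\tilde M$, and the (suitably reindexed) $\psi$ descends to a well-defined section $\mu$ of $T^*\tilde M$ over $\tilde U$, namely the one satisfying \eqref{eq09181817}. Since $\psi^*\omega_{\tilde M}=0$ and $\psi$ is an immersion onto a $\dim\tilde M$-dimensional image, $\mu(\tilde U)$ is Lagrangian, hence $\mu$ is closed by Lemma \ref{lem03071514}; and $F\circ\mu \equiv 0$ and the initial condition $\mu(0,x) = A(\alpha|_x)\,\dd t|_0 + \alpha|_x$ hold by construction. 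This establishes existence, and formula \eqref{eq09181817} is then immediate from the computation of $\psi$.

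For uniqueness: suppose $\mu' \in \CLF(\tilde U)$ also solves \eqref{eq09181818}. Viewing $\mu'$ as a section, its image is Lagrangian by Lemma \ref{lem03071514}, and the condition $F\circ\mu' \equiv 0$ means $\mu'(\tilde U) \subset \{F = 0\}$. The key point is that along $\{F=0\}$ the characteristic direction is $\X_F$, and a Lagrangian submanifold contained in a hypersurface $\{F=0\}$ is invariant under the flow $\Phi_F$ — this is the standard fact that $\X_F|_q \in T_q S$ whenever $S$ is Lagrangian and $S \subset \{F=0\}$, which here follows from Corollary \ref{cor:lagrangian_uniqueness} applied pointwise (take $U = S$, $\psi_0 = $ inclusion, noting $2\dim S = \dim\tilde M \cdot 1 \geq \dim\cD_F$ with equality, hence $\X_F \in TS$). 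Since $\mu'$ agrees with $\mu$ on $\{0\}\times U$ by the initial condition, and both images are $\Phi_F$-invariant Lagrangians through that common initial slice, they coincide on all of $\tilde U$; equivalently, $\mu'(s,\rho^\alpha_s(x))$ is forced to equal $\Phi_F^{-s}$ (appropriately signed) applied to $\mu'(0,x) = \mu(0,x)$, giving $\mu' = \mu$. The main obstacle I anticipate is purely bookkeeping: keeping the sign of $s$ (note $\rho^\alpha$ uses $\Phi_A^{-t}$ while $\Phi_F^s$ uses $\Phi_A^{+s}$) and the splitting $T^*\tilde M \cong T^*\R \times T^*M$ consistent throughout, so that the $\dd t$-component lands on the correct time slice and the stated formulas \eqref{eq09181817}–\eqref{eq09181818} come out with the right signs.
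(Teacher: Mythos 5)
Your proof follows the paper's argument essentially verbatim: the same $\psi_0$, Proposition \ref{prop04121107} for existence, the explicit flow formula \eqref{eq09181758} to identify $\psi$ with $\mu$ up to a diffeomorphism of the domain, and Corollary \ref{cor:lagrangian_uniqueness} combined with a flow-invariance (connectedness) argument for uniqueness. One small imprecision: $\psi_0(U)$ is $n$-dimensional in the $2(n+1)$-dimensional $T^*\tilde M$, so it is isotropic rather than Lagrangian; the clean way to get $\psi_0^*\omega_{\tilde M}=0$, as in the paper, is to split $\omega_{\tilde M}=\dd t\wedge\dd\tau+\tilde\omega_M$, kill the first term via $t\circ\psi_0\equiv 0$, and apply Lemma \ref{lem03071514} only to $\alpha$ on $U$ for the second.
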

\begin{proof}
Let us first discuss the existence of a solution to \eqref{eq09181818}.
Let $\cU = (-\epsilon,\epsilon) \times U$. Under our assumptions, the map $\cU  \ni (s,x) \mapsto (s,\rho^\alpha_s(x)) \in \tilde U$ is a diffeomorphism, so \eqref{eq09181817} actually defines a $1$-form $\mu \in \Forms^1(\tilde U)$.

	Define $\psi_0 : U \to T^*\tilde M$ as
	\[
	\psi_0(x) = A(\alpha|_x) \,\dd t|_0 + \alpha|_x \in T^*_{(0,x)} \tilde M ,
	\]
	so that $F\circ\psi_0\equiv0$ by \eqref{eq09181755}.
	Using \eqref{eq09181756}, Lemma~\ref{lem03071514} and the fact $t \circ \psi_0 \equiv 0$, we obtain
	\[
	\psi_0^*\omega_{\tilde M} = \psi_0^*(\dd t\wedge\dd\tau) + \psi_0^*(\tilde\omega_M)
	= \dd(t\circ\psi_0)\wedge\dd(\tau\circ\psi_0) + \alpha^*\omega_M 
	= 0 .
	\]
	
	By Proposition~\ref{prop04121107}, the map $\psi:\cU \to T^*\tilde M$, $\psi(s,x) = \Phi_F^s(\psi_0(x))$ satisfies $F\circ\psi\equiv0$ and $\psi^*\omega_{\tilde M} = 0$.
	Moreover, by \eqref{eq09181758}, 
	\[
	\psi(s,x) = A(\alpha|_x)\dd t|_{-s} + \Phi^s_A(\alpha|_x) = \mu(-s,\rho^\alpha_{-s}(x)) .
	\]
	In other words, $\psi = \mu \circ \Xi$ for some diffeomorphism $\Xi : \cU \to \tilde U$. From $\psi^* \omega_{\tilde M} = 0$ and $F \circ \psi \equiv 0$ we then deduce $\mu^* \omega_{\tilde M} = 0$ and $F \circ \mu \equiv 0$, i.e., $\mu \in \CLF(\tilde U)$ by Lemma \ref{lem03071514}, and $\mu$ solves \eqref{eq09181818}.
	
	As for the uniqueness, assume conversely 	that $\mu \in \CLF(\tilde U)$ solves  \eqref{eq09181818}. Then $F \circ \mu = 0$ and $\mu^*\omega = 0$, i.e., $\mu(\tilde U)$ is a Lagrangian submanifold of $T^* \tilde M$. By Corollary \ref{cor:lagrangian_uniqueness}, $\X_F$ is tangent to $\mu(\tilde U)$ at every point. Fix now $x \in U$ and let $I$ be the set of the $s \in (-\epsilon,\epsilon)$ such that \eqref{eq09181817} holds. Clearly $I$ is closed in $(-\epsilon,\epsilon)$, and $0\in I$ because of \eqref{eq09181818}. On the other hand, for all $s_0 \in I$, the flow curve of $\X_F$ starting from $\mu(s_0,\rho^\alpha_{s_0}(x))$ stays in $\mu(\tilde U)$ for some time and, by \eqref{eq09181758},
	\[
	\Phi_F^t(\mu(s_0,\rho^\alpha_{s_0}(x))) = A(\alpha|_x) \,\dd t|_{s_0-t} + \Phi^{t-s_0}_A(\alpha|_x),
	\]
	which shows that \eqref{eq09181817} also holds for $s$ in a neighbourhood of $s_0$. This proves that $I$ is open, so by connectedness $I = (-\epsilon,\epsilon)$, and \eqref{eq09181817} holds for all $s \in (-\epsilon,\epsilon)$ and all $x \in U$.
\end{proof}

\subsection{Existence domains and smooth dependence on the initial datum}

Proposition \ref{prop09181821} yields, under certain assumptions, the existence of a (local) solution $\mu = \mu^\alpha$ to the eikonal equation for a given initial datum $\alpha \in \CLF(M)$. We will now show how those assumptions can be satisfied and, at the same time, we will obtain suitable smoothness properties of the map $\alpha \mapsto \mu^\alpha$. In what follows, we consider $\CLF(M)$ as a Fr\'echet space with the $C^\infty$ topology (i.e., the topology of uniform convergence on compact sets of derivatives of all orders).

We define an \emph{existence domain (ED)} to be a 
triple $(\Omega,U,\epsilon)$
such that
\begin{enumerate}[label=(\alph*)]
\item $U\subset M$
is open and simply connected, and $\epsilon>0$,
\item $\Omega\subset\CLF(M)$ is 
open in the $C^\infty$ topology,
\item the conditions \ref{prop09181821item1} and \ref{prop09181821item2} of Proposition~\ref{prop09181821}  are satisfied for all $\alpha\in\Omega$.
\end{enumerate}
If $(\Omega,U,\epsilon)$ is an ED, then for all $\alpha \in \Omega$ and $t \in (-\epsilon,\epsilon)$ the inverse $\sigma^\alpha_t$ of $\rho^\alpha_t|_U$ is defined.
In addition, the set $\tilde U^\alpha = \{ (t,\rho^\alpha_t(x)) \tc t \in (-\epsilon,\epsilon), \, x \in U \}$ is open in $\R \times M$ and there is a unique solution $\mu = \mu^\alpha \in \CLF(\tilde U^\alpha)$ to the eikonal equation \eqref{eq09181818}, given by \eqref{eq09181817}. 
We can split $\mu^\alpha=\mu_\R^\alpha \dd t + \mu_M^\alpha$, with $\mu_\R^\alpha: \tilde U^\alpha \to \R$ and $\mu^\alpha_M: \tilde U^\alpha \to T^* M$ smooth.
In view of \eqref{eq09181755}, the eikonal equation~\eqref{eq09181818} becomes
\begin{equation}\label{eq09190902}
\begin{cases}
\mu^\alpha_\R = A \circ \mu^\alpha_M,\\
\mu^\alpha_M|_{(0,x)} = \alpha|_x &\text{for all $x \in U$.}
\end{cases}
\end{equation}
Moreover, by~\eqref{eq09181817}, for all $(t,x) \in \tilde U^\alpha$,
\[
	\sigma^\alpha_t(x) = \pi_M \Phi^t_A(\mu_M^\alpha(t,x)) .
\]

The existence of ED and the smoothness properties of $\alpha \mapsto \mu^\alpha$ are given by the following result. Recall here the definition of the set $\cU^\alpha$ from \eqref{eq:Ualpha}.

\begin{proposition}\label{prp:existenceED}
The following hold true.
\begin{enumerate}[label=(\roman*)]
\item The set $\cU = \{ (\alpha,t,x) \tc \alpha \in \CLF(M), \, (t,x) \in \cU^\alpha\}$ is open in $\CLF(M) \times \R \times M$, and the map
\[
\cU \ni (\alpha,t,x) \mapsto \rho_t^\alpha(x) \in M
\]
is of class $C^\infty$ in the sense of G\^ateaux.
\item For all $\hat\alpha \in \CLF(M)$ and $\hat x \in M$ such that $\hat \alpha|_{\hat x} \in \cD_A$, there exists an ED $(\Omega,U,\epsilon)$ with $\hat\alpha \in \Omega$ and $\hat x \in U$.
\item If $(\Omega,U,\epsilon)$ is an ED, then the set 
\[
\cW = \{ (\alpha,t,\rho^\alpha_t(x)) \tc \alpha \in \Omega, \, t \in (-\epsilon,\epsilon),\, x \in U\}
\]
is open in $\CLF(M) \times \R \times M$ and the maps
\begin{gather*}
\cW \ni (\alpha,t,x) \mapsto \sigma_t^\alpha(x) \in M,\\
\cW \ni (\alpha,t,x) \mapsto \mu^\alpha(t,x) \in T^* \tilde M
\end{gather*}
are of class $C^\infty$ in the sense of G\^ateaux.
\end{enumerate}
\end{proposition}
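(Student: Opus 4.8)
The plan is to establish the three assertions by combining the local flow-box machinery for the Hamiltonian vector field $\X_A$ (respectively $\X_F$) with the openness and smooth-dependence theorems for ODEs, upgraded to the Gâteaux-$C^\infty$ setting on the Fréchet space $\CLF(M)$. The key observation is that everything is built by flowing along $\X_A$, so the assertions reduce to regularity of the flow map together with transversality/embedding conditions that are open.

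First I would treat (i). The domain of the flow $\Phi_A$ is open in $\R \times T^*M$, and the evaluation map $\CLF(M) \times M \to T^*M$, $(\alpha,x) \mapsto \alpha|_x$, is continuous and Gâteaux-smooth (it is, for fixed $x$, linear in $\alpha$, and jointly smooth in the remaining variables). Pulling back the open domain of $\Phi_A$ along $(\alpha,t,x) \mapsto (-t,\alpha|_x)$ shows $\cU$ is open, and $\rho^\alpha_t(x) = \pi_M \Phi_A^{-t}(\alpha|_x)$ is a composition of the flow with this evaluation. Gâteaux-$C^\infty$ regularity then follows because along any affine ray $\alpha + s\beta$ the map $s \mapsto \rho^{\alpha+s\beta}_t(x)$ is a solution of an ODE depending smoothly on the parameter $s$ (through $(\alpha+s\beta)|_x$ and its $x$-derivatives, which enter the linearised equations for $\DD\rho$), so all iterated directional derivatives exist and are jointly continuous; I would invoke the standard smooth-dependence-on-parameters theorem for ODEs, applied ray by ray, to conclude.

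Next, (ii): given $\hat\alpha$ and $\hat x$ with $\hat\alpha|_{\hat x} \in \cD_A$, pick first a simply connected open $U \ni \hat x$ and $\epsilon_0 > 0$ with $(-\epsilon_0,\epsilon_0) \times U \subset \cU^{\hat\alpha}$; this is possible since $\cU^{\hat\alpha}$ is an open neighbourhood of $\{0\}\times M$. Since $\rho^{\hat\alpha}_0 = \Id_U$ is an embedding and being an embedding on a relatively compact piece is an open condition, after shrinking $U$ and $\epsilon_0$ we get $\rho^{\hat\alpha}_s|_U$ an embedding for $|s| < \epsilon_0$. Now by part (i), the set $\cU$ is open, so $(-\epsilon,\epsilon)\times \overline{U'} \subset \cU^\alpha$ persists for all $\alpha$ in a $C^\infty$-neighbourhood $\Omega$ of $\hat\alpha$, for suitable $\epsilon \le \epsilon_0$ and $U' \Subset U$; and the embedding property, being expressible via non-vanishing of a Jacobian plus injectivity on a compact set, also persists on $\Omega$ after further shrinking, again using joint (Gâteaux-)continuity from (i). Relabelling $U'$ as $U$ gives the desired ED.

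Finally, (iii): openness of $\cW$ follows because $\cW$ is the image of $\Omega \times (-\epsilon,\epsilon) \times U$ under $(\alpha,t,x) \mapsto (\alpha,t,\rho^\alpha_t(x))$, which by (i) and the ED property is a diffeomorphism in $(t,x)$ for each fixed $\alpha$ and Gâteaux-smooth in $\alpha$; an inverse-function-type argument (or directly: $\cW$ is the preimage of the open set $\cU$ under $(\alpha,t,y) \mapsto (\alpha,-t,\sigma^\alpha_t(y))$ composed appropriately) yields openness. Smoothness of $(\alpha,t,x) \mapsto \sigma^\alpha_t(x)$ is then inversion of the Gâteaux-smooth diffeomorphism $\rho^\alpha_t$, and smoothness of $\mu^\alpha$ follows from its explicit formula \eqref{eq09181817}, $\mu^\alpha(t,\rho^\alpha_t(x)) = A(\alpha|_x)\,\dd t|_t + \Phi^{-t}_A(\alpha|_x)$: the right-hand side is a composition of the (smooth) flow, the (Gâteaux-smooth) evaluation map, and $A$, precomposed with the (Gâteaux-smooth) inverse diffeomorphism $(\alpha,t,y) \mapsto (\alpha,t,\sigma^\alpha_t(y))$.

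The main obstacle I expect is bookkeeping the Gâteaux-$C^\infty$ regularity carefully: one must check that differentiating $\rho^\alpha_t(x)$ repeatedly in the direction of $\beta \in \CLF(M)$ produces ODEs whose coefficients involve finitely many $x$-derivatives of $\alpha$ and $\beta$ — hence are controlled by the $C^\infty$ topology on compact sets — and that the resulting solutions depend jointly continuously on all data. This is routine but needs the observation that $\X_A$ depends on $\alpha$ only through $\alpha|_x$ and $\dd(\alpha|_x)$, i.e. through a $1$-jet, so each successive linearisation only costs one more derivative; combined with the fact that on an ED all the relevant flowing happens over a fixed compact set, the Fréchet-space subtleties are kept under control.
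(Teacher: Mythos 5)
Your proposal captures the right conceptual ingredients for parts (i) and (ii) — composition $\rho^\alpha_t(x) = \pi_M\Phi_A^{-t}(\alpha|_x)$, openness of the flow domain, and persistence of the embedding property — and these track the paper's argument closely. The gap is in part (iii), and more broadly in how you propose to establish Gâteaux-$C^\infty$ regularity. You correctly identify that an inverse-function-type argument is needed to get both the openness of $\cW$ and the smoothness of $\sigma^\alpha_t$ (and hence of $\mu^\alpha$), but there is no inverse function theorem on the Fréchet space $\CLF(M)$ that you can invoke, and your alternative description of $\cW$ as a preimage under a map built from $\sigma^\alpha_t$ is circular since $\sigma^\alpha_t$ is only defined after $\cW$ has been shown to be open. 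Likewise, "ray by ray" smooth dependence on ODE parameters gives existence of iterated directional derivatives along affine rays, but one still needs joint continuity in $(\alpha,t,x)$, and that is not automatic from a one-parameter argument.

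The paper's proof resolves precisely this issue by first proving a Banach-space version of the statement (Lemma \ref{lem:edom}): it introduces the Banach spaces $C^k\Forms^1(\overline{U})$ of $C^k$ one-forms on a relatively compact open set, observes that the evaluation map $\Ev : C^k\Forms^1(\overline{U}) \times U \to T^*M$ is $C^k$, that the map $\Psi_k(\alpha,t,x) = (\alpha,t,\rho^\alpha_t(x))$ is therefore $C^k$ on $\cU_k$, and that $D\Psi_k$ is continuously invertible precisely where $\DD\rho^\alpha_t|_x$ is; one then applies the standard Banach-space inverse function theorem to $\Psi_1$ to obtain $\cW$ open and $\sigma^\alpha_t$ of class $C^1$, and checks that local inverses restrict to local inverses of $\Psi_k$ for all $k$, upgrading the regularity. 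Proposition \ref{prp:existenceED} (including the openness of $\cW$ and the $C^\infty$ Gâteaux regularity of $\sigma^\alpha_t$ and $\mu^\alpha$) then follows from this lemma by restriction from $\CLF(M)$ to each $C^k\Forms^1(\overline{U})$. You do gesture at exactly this — "on an ED all the relevant flowing happens over a fixed compact set, the Fréchet-space subtleties are kept under control" — but the proposal does not identify that the resolution consists in passing to the Banach spaces $C^k\Forms^1(\overline{U})$ and applying the classical inverse function theorem there, which is the substantive content of the proof.
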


This result is obtained via an application of the inverse function theorem. Since $\CLF(M)$ is not a Banach space, however, it is convenient to introduce spaces of forms of finite order of differentiability, which are Banach spaces and to which we can apply the inverse function theorem directly.

For $U \subset M$ open and $k \in \N$, let $C^k \Forms^1(U)$ be the space of the $1$-forms of class $C^k$ on $U$, i.e., the sections of class $C^k$ of the bundle $T^* U$. In the case $U \Subset M$, we also denote by $C^k \Forms^1(\overline{U})$ the space of the $\alpha \in C^k \Forms^1(U)$ that extend continuously to $\overline{U}$ together with all their derivatives up to order $k$. Note that $C^k \Forms^1(\overline{U})$ is a Banach space with the uniform $C^k$ topology (the topology of uniform convergence of all derivatives up to order $k$). Note that \eqref{eq:def_rhoalpha} defines $\rho^\alpha$ also for $\alpha \in C^0 \Forms^1(U)$.

Proposition \ref{prp:existenceED} is then an immediate consequence of the following result.

\begin{lemma}\label{lem:edom}
Let $U \Subset M$ be open in $M$ and define
\[
\cU_k = \{ (\alpha,t,x) \tc \alpha \in C^k\Forms^1(\overline{U}), \, (t,x) \in \cU^\alpha\}
\]
for all $k \in \N$. Then:
\begin{enumerate}[label=(\roman*)]
\item\label{en:edom1} $\cU_k$ is an open neighbourhood of $C^k\Forms^1(\overline{U}) \times \{0\} \times U$ in $C^k\Forms^1(\overline{U}) \times \R \times U$;
\item\label{en:edom2} the map
\[
\cU_k \ni (\alpha,t,x) \mapsto \rho^\alpha_t(x) \in M
\]
is of class $C^k$.
\end{enumerate}
Moreover, for all $(\hat\alpha,\hat t,\hat x) \in \cU_1$ such that $\DD\rho^{\hat \alpha}_{\hat t}|_{\hat x}$ is invertible, there exist an open neighbourhood $\Omega$ of $\hat\alpha$ in $C^1\Forms^1(\overline{U})$, an open interval $I \subset \R$ containing $\hat t$, and an open neighbourhood $W$ of $\hat x$ in $U$ such that:
\begin{enumerate}[resume,label=(\roman*)]
\item\label{en:edom3} $I \times W \subseteq \cU^\alpha$ for all $\alpha \in \Omega$;
\item\label{en:edom4} $\rho^\alpha_t|_W : W \to M$ is a $C^1$ embedding 
for all $\alpha \in \Omega$ and $t \in I$;
\item\label{en:edom5} $\cW \defeq \{ (\alpha,t,\rho^\alpha_t(x)) \tc \alpha \in \Omega, \, t \in I, \, x \in W \}$ is open in $C^1\Forms^1(\overline{U})$ and moreover,
if $\sigma^\alpha_t$ denotes the inverse of $\rho^\alpha_t|_W$, then the map
\[
\cW \ni (\alpha,t,x) \mapsto \sigma^\alpha_t(x) \in M
\]
is of class $C^1$, and its restriction to $\cW \cap (C^k\Forms^1(\overline{U}) \times \R \times M)$ is of class $C^k$ (with respect to the uniform $C^k$ topology) for all $k > 1$;
\item\label{en:edom6} if $\cW^\alpha = \{ (t,x) \tc (\alpha,t,x) \in \cW\}$ and $\mu^\alpha \in C^1\Forms^1(\cW^\alpha)$ is defined by
\[
\mu^\alpha(s,\rho^\alpha_s(x)) = A(\alpha|_x) \,\dd t|_s + \Phi^{-s}_A(\alpha|_x) .
\]
for all $s \in I$ and $x \in W$, then the map
\[
\cW \ni (\alpha,t,x) \mapsto \mu^\alpha(t,x) \in T^* \tilde M
\]
is of class $C^1$, and its restriction to $\cW \cap (C^k\Forms^1(\overline{U}) \times \R \times M)$ is of class $C^k$ (with respect to the uniform $C^k$ topology) for all $k > 1$.
\end{enumerate}
\end{lemma}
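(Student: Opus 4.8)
The plan is to reduce the entire statement to the smooth dependence of the Hamiltonian flow $\Phi_A$ on its data together with the inverse function theorem in Banach spaces, the only delicate point being a careful accounting of how many orders of differentiability are available at each stage. The starting observation is that, for $U \Subset M$, the space $C^k\Forms^1(\overline U)$ is a Banach space and the evaluation map $\Ev_k : C^k\Forms^1(\overline U) \times U \to T^*M$, $\Ev_k(\alpha,x) = \alpha|_x$, is of class $C^k$: it is linear (hence smooth) in $\alpha$, while each differentiation in the $U$-variable costs one derivative of $\alpha$, so precisely $k$ of them are admissible (this is checked directly in a local trivialisation). Since $\cU^\alpha$, and hence $\cU_k$, is exactly the preimage of the open domain of $\Phi_A$ under the continuous map $(\alpha,t,x) \mapsto (-t,\Ev_k(\alpha,x))$, part \ref{en:edom1} is immediate; and since $\rho^\alpha_t(x) = \pi_M(\Phi_A^{-t}(\Ev_k(\alpha,x)))$, part \ref{en:edom2} follows by composing the $C^k$ map $\Ev_k$ with the smooth maps $\Phi_A$ and $\pi_M$.

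For the second half, I would introduce the map
\[
\Psi : \cU_1 \to C^1\Forms^1(\overline U) \times \R \times M, \qquad \Psi(\alpha,t,x) = (\alpha,t,\rho^\alpha_t(x)),
\]
which is $C^1$ by part \ref{en:edom2}. With respect to the splittings of source and target into their three factors, $\DD\Psi|_{(\hat\alpha,\hat t,\hat x)}$ is block lower-triangular with diagonal blocks $\Id$, $1$ and $\DD\rho^{\hat\alpha}_{\hat t}|_{\hat x}$, the last invertible by hypothesis; hence $\DD\Psi|_{(\hat\alpha,\hat t,\hat x)}$ is a linear isomorphism. The Banach-space inverse function theorem then yields an open neighbourhood, which after shrinking can be taken of product form $\Omega \times I \times W$, on which $\Psi$ is a $C^1$ diffeomorphism onto an open image; since $\Psi$ leaves the first two coordinates fixed, its inverse has the form $(\alpha,t,y)\mapsto(\alpha,t,\sigma^\alpha_t(y))$. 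This delivers \ref{en:edom3} (since $\Omega\times I\times W \subset \cU_1$), the openness of $\cW = \Psi(\Omega\times I\times W)$, the $C^1$ regularity of $\sigma$, and \ref{en:edom4}: for each fixed $\alpha\in\Omega$ and $t\in I$, the map $\rho^\alpha_t|_W$ is a $C^1$ diffeomorphism onto an open subset of $M$ with inverse $\sigma^\alpha_t$, hence a $C^1$ embedding.

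It then remains to boost the regularity of $\sigma$ and to handle $\mu^\alpha$. For $k > 1$, I would fix a point $(\alpha_1,t_1,y_1) \in \cW$ with $\alpha_1 \in C^k\Forms^1(\overline U)$ and set $x_1 = \sigma^{\alpha_1}_{t_1}(y_1) \in W$. By \ref{en:edom4} the differential $\DD\rho^{\alpha_1}_{t_1}|_{x_1}$ is invertible, and by parts \ref{en:edom1}--\ref{en:edom2} (applied with this $k$) the map $\Psi$ restricts to a $C^k$ map on the $C^k$-open set $\cU_k$; so the inverse function theorem applied at regularity $C^k$ around $(\alpha_1,t_1,x_1)$ produces a $C^k$ local inverse, which by uniqueness of local inverses agrees with $(\alpha,t,y)\mapsto(\alpha,t,\sigma^\alpha_t(y))$ near $(\alpha_1,t_1,y_1)$. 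Since the point was arbitrary, $\sigma$ is $C^k$ on $\cW \cap (C^k\Forms^1(\overline U)\times\R\times M)$, which gives \ref{en:edom5}. Finally, substituting $x = \sigma^\alpha_t(y)$ into the defining relation for $\mu^\alpha$ gives
\[
\mu^\alpha(t,y) = A\bigl(\alpha|_{\sigma^\alpha_t(y)}\bigr)\,\dd t|_t + \Phi_A^{-t}\bigl(\alpha|_{\sigma^\alpha_t(y)}\bigr),
\]
a composition of the map $(\alpha,t,y)\mapsto(t,\alpha|_{\sigma^\alpha_t(y)})$ --- which is $C^1$, and $C^k$ on the subspace of $C^k$ forms, since $\sigma$ and the evaluation $\Ev$ have those regularities --- with the smooth maps $A$, $\Phi_A$ and $t\mapsto\dd t|_t$; this yields \ref{en:edom6}.

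The part I expect to require the most care is precisely the passage from a $C^1$ conclusion to the $C^k$-on-subspaces refinement: because $\Ev_k$ is only $C^k$, a single application of the inverse function theorem must be run at a fixed finite order, and the $C^1$-open neighbourhood $\Omega$ produced at order $1$ need not be open for the $C^k$ topology, so one cannot simply re-run the theorem globally at order $k$. The bootstrap above circumvents this pointwise, but it is legitimate only once the embedding property \ref{en:edom4} is in hand, since that is what supplies the invertibility of $\DD\rho^\alpha_t$ needed to re-apply the inverse function theorem at higher order.
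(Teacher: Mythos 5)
Your proof is correct and follows essentially the same route as the paper's: factor $\rho^\alpha_t = \pi_M \circ \Phi_A^{-t} \circ \Ev$, note that the evaluation map $\Ev$ (and hence $\Psi_k$) is of class $C^k$, apply the inverse function theorem to $\Psi_1$, and then upgrade regularity by re-applying the inverse function theorem at order $k$ together with uniqueness of local inverses. The pointwise bootstrap you spell out is exactly what the paper's terser phrase ``observing that restrictions of a local inverse for $\Psi_1$ provide local inverses for all the $\Psi_k$ for $k > 1$'' is meant to encapsulate.
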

\begin{proof}
Parts \ref{en:edom1} and \ref{en:edom2} are immediate consequences of the observation that
\[
\rho^\alpha_t(x) = \pi_M \Phi^{-t}_A(\Ev(\alpha,x)),
\]
where $\Ev(\alpha,x) = \alpha|_x$ is the evaluation map, and the fact that
\[
\Ev : C^k\Forms^1(\overline{U}) \times U \to T^* M
\]
is of class $C^k$ for all $k \in \N$. As a consequence, the map
\[
\Psi_k : \cU_k \ni (\alpha,t,x) \mapsto (\alpha,t,\rho^\alpha_t(x)) \in C^k\Forms^1(\overline{U}) \times \R \times M
\]
is also of class $C^k$, and,
if $k \geq 1$ it is easily checked that $D\Psi_k|_{(\alpha,t,x)}$ is continuously invertible for all $(\alpha,t,x) \in \cU_k$ such that $\DD \rho^\alpha_t|_x$ is invertible. Hence parts \ref{en:edom3}, \ref{en:edom4} and \ref{en:edom5} follow by applying the  inverse function theorem to $\Psi_1$, and observing that restrictions of a local inverse for $\Psi_1$ provide local inverses for all the $\Psi_k$ for $k > 1$. Finally, part \ref{en:edom6} follows by observing that $(\alpha,t,x) \mapsto \mu^\alpha(t,x)$ is the composition of the maps $(\alpha,t,x) \mapsto (\alpha,t,\sigma^t_\alpha(x))$ and
\[
(\alpha,s,x) \mapsto A(\Ev(\alpha,x)) \,\dd t|_s + \Phi_A^{-s}(\Ev(\alpha,x)),
\]
which have the required smoothness properties.
\end{proof}

We say that $A : \cD_A \to \R$ is $1$-homogeneous if $\lambda\xi \in \cD_A$ and $A(\lambda \xi) = \lambda A(\xi)$ for all $\xi \in \cD_A$ and $\lambda > 0$. When $A$ is $1$-homogeneous, we can find an ED $(\Omega,U,\epsilon)$ such that the set $\Omega \subset \CLF(M)$ is conic; such ED will be called \emph{conic existence domains} (CED).

\begin{proposition}\label{prp:ced}
Assume that $A$ is $1$-homogeneous.
\begin{enumerate}[label=(\roman*)]
\item\label{en:ced1} For all $t \in \R$, the domain of $\Phi_A^t$ is conic and
\[
\Phi_A^t(\lambda\xi) = \lambda \Phi_A^t(\xi)
\]
for all $\lambda > 0$ and $\xi$ in the domain of $\Phi_A^t$.
\item\label{en:ced2} For all $(\alpha,t,x) \in \cU$ and $\lambda >0$, we have $(\lambda\alpha,t,x) \in \cU$ and
\[
\rho^{\lambda\alpha}_t(x) = \rho^{\alpha}_t(x).
\]
\item If $(\Omega,U,\epsilon)$ is an ED and $\R^+ \Omega = \{ \lambda \alpha \tc \alpha \in \Omega, \, \lambda > 0\}$, then $(\R^+ \Omega,U,\epsilon)$ is a CED.
\item If $(\Omega,U,\epsilon)$ is a CED, then $\tilde U^{\lambda \alpha} = \tilde U^\alpha$ and
\[
\mu^{\lambda\alpha}(t,x) = \lambda \mu^\alpha(t,x)
\]
for all $\lambda>0$, $\alpha \in \Omega$ and $(t,x) \in \tilde U^\alpha$.
\end{enumerate}
\end{proposition}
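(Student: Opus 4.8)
The plan is to deduce the whole proposition from part \ref{en:ced1}, which expresses the fact that $1$-homogeneity of $A$ along the fibres of $T^*M$ makes the Hamiltonian flow $\Phi_A$ equivariant under the fibrewise dilations $\delta_\lambda : T^*M \to T^*M$, $\delta_\lambda(\xi) = \lambda\xi$. Once \ref{en:ced1} is available, parts \ref{en:ced2}, (iii) and (iv) are a matter of unwinding the definitions of $\cU^\alpha$, $\rho^\alpha$ and ED/CED and substituting into the explicit formula \eqref{eq09181817}.

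For \ref{en:ced1} I would work in canonical coordinates $(x,\eta)$ on $T^*M$. The relation $A(x,\lambda\eta) = \lambda A(x,\eta)$ gives that $\de_x A$ is $1$-homogeneous and $\de_\eta A$ is $0$-homogeneous in $\eta$; inspecting the Hamilton--Jacobi equations \eqref{eq09190853}, it follows at once that if $s \mapsto (x(s),\eta(s))$ is an integral curve of $\X_A$ then so is $s \mapsto (x(s),\lambda\eta(s))$. By uniqueness of integral curves with a prescribed initial value, $\delta_\lambda$ carries integral curves of $\X_A$ to integral curves of $\X_A$, hence the domain of $\Phi_A^t$ is conic and $\Phi_A^t\circ\delta_\lambda = \delta_\lambda\circ\Phi_A^t$ wherever defined. (Equivalently one can argue coordinate-freely: $\delta_\lambda^*\taut = \lambda\taut$ gives $\delta_\lambda^*\omega = \lambda\omega$, which together with $A\circ\delta_\lambda = \lambda A$ and the nondegeneracy of $\omega$ forces $\X_A$ to be $\delta_\lambda$-invariant.) Since the coordinate computation is local while the conclusion is chart-independent, there is no patching issue.

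For \ref{en:ced2}, recall $\rho^\alpha_t(x) = \pi_M\Phi_A^{-t}(\alpha|_x)$ from \eqref{eq:def_rhoalpha} and note $(\lambda\alpha)|_x = \lambda(\alpha|_x)$; by \ref{en:ced1}, $(-t,\lambda\alpha|_x)$ is in the domain of $\Phi_A$ iff $(-t,\alpha|_x)$ is, so $\cU^{\lambda\alpha} = \cU^\alpha$ and hence $(\lambda\alpha,t,x)\in\cU$ whenever $(\alpha,t,x)\in\cU$; moreover $\Phi_A^{-t}(\lambda\alpha|_x) = \lambda\Phi_A^{-t}(\alpha|_x)$, and since $\pi_M\circ\delta_\lambda = \pi_M$ this yields $\rho^{\lambda\alpha}_t(x) = \rho^\alpha_t(x)$. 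For (iii): the map $\alpha\mapsto\lambda\alpha$ is a linear homeomorphism of $\CLF(M)$ with the $C^\infty$ topology, so $\R^+\Omega = \bigcup_{\lambda>0}\lambda\Omega$ is open; it is conic by construction; the requirements on $U$ and $\epsilon$ are inherited from $(\Omega,U,\epsilon)$; and for $\alpha = \lambda\beta$ with $\beta\in\Omega$ the conditions \ref{prop09181821item1} and \ref{prop09181821item2} of Proposition \ref{prop09181821} hold because, by \ref{en:ced2}, $\cU^\alpha = \cU^\beta \supseteq (-\epsilon,\epsilon)\times U$ and $\rho^\alpha_s|_U = \rho^\beta_s|_U$ is an embedding for $|s|<\epsilon$. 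Hence $(\R^+\Omega,U,\epsilon)$ is a CED. For (iv): by \ref{en:ced2} the set $\tilde U^{\lambda\alpha}$ coincides with $\tilde U^\alpha$; substituting $\lambda\alpha$ into \eqref{eq09181817} and using $A(\lambda\alpha|_x) = \lambda A(\alpha|_x)$ and $\Phi_A^{-s}(\lambda\alpha|_x) = \lambda\Phi_A^{-s}(\alpha|_x)$ from \ref{en:ced1} gives $\mu^{\lambda\alpha}(s,\rho^\alpha_s(x)) = \lambda\,\mu^\alpha(s,\rho^\alpha_s(x))$, the scalar multiplication on $T^*\tilde M\cong T^*\R\times T^*M$ being fibrewise; as every point of $\tilde U^\alpha$ has the form $(s,\rho^\alpha_s(x))$, this is the assertion.

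I do not expect a genuine obstacle here: all the content sits in \ref{en:ced1}, and even that reduces to reading off the homogeneity degrees of $\de_x A$ and $\de_\eta A$. The only places that deserve a little care are the uniqueness step for integral curves (or, on the coordinate-free route, the scaling $\delta_\lambda^*\omega = \lambda\omega$), the remark that $\delta_\lambda$ and the fibrewise scalar multiplications commute with $\pi_M$, and the fact that dilation is a homeomorphism of the Fr\'echet space $\CLF(M)$, which is what makes $\R^+\Omega$ open.
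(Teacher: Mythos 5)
Your proof is correct and follows essentially the same route as the paper: part \ref{en:ced1} is obtained by reading off the homogeneity degrees of $\de_x A$ and $\de_\eta A$ from the Hamilton--Jacobi equations \eqref{eq09190853} in canonical coordinates, and parts \ref{en:ced2}--(iv) are then deduced by substituting into the definitions and into \eqref{eq09181817}. The paper compresses the latter steps into ``immediate consequences''; you spell them out, and additionally offer an equivalent coordinate-free argument for \ref{en:ced1}, but the core mechanism is the same.
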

\begin{proof}
Since $A$ is $1$-homogeneous, in local canonical coordinates $(x,\eta)$ we have
	\[
	\frac{\de A}{\de x_j}(x,r\eta) = r \frac{\de A}{\de x_j}(x,\eta) 
	\qquad\text{and}\qquad
	\frac{\de A}{\de \eta_j}(x,r\eta) =  \frac{\de A}{\de \eta_j}(x,\eta),
	\]
and part \ref{en:ced1} easily follows from \eqref{eq09190853}. The remaining statements are immediate consequences of part \ref{en:ced1}, the definition of CED and the expression \eqref{eq09181817} for $\mu^\alpha$.
\end{proof}

Once we have an ED $(\Omega,U,\epsilon)$, we can take the G\^ateaux derivative of $\alpha\mapsto \mu^\alpha$ at any $\alpha\in\Omega$.
If $\nu\in\CLF(M)$, then $\de_\alpha\mu^\alpha[\nu]\in\CLF(\tilde U^\alpha)$ is the $1$-form defined by 
\[
\de_\alpha\mu^\alpha[\nu](t,x) =  \left. \frac{\dd}{\dd h}\right|_{h=0} \mu^{\alpha+h\nu}(t,x) \in T^*_{(t,x)}\tilde M .
\]
Notice that 
\[
\de_\alpha\mu^\alpha[\nu] = \de_\alpha\mu^\alpha_\R[\nu] \,\dd t + \de_\alpha \mu^\alpha_M[\nu] ,
\]
where $\de_\alpha\mu^\alpha_\R$ and $\de_\alpha\mu^\alpha_M$ are defined as G\^ateaux derivatives of $\mu^\alpha_\R$ and $\mu^\alpha_M$.

We now obtain a useful identity for  $\de_\alpha\mu^\alpha[\nu]$ that follows from the eikonal equation.
Define, for all $x \in M$ and $\alpha,\beta \in T^*_xM$ with $\alpha \in \cD_A$,
\begin{equation}\label{eq:vert_diff}
\DD_2A|_\alpha[\beta] \defeq \left.\frac{\dd}{\dd h} \right|_{h=0} A(\alpha+h\beta) .
\end{equation}
Essentially, $\DD_2A|_\alpha$ is  the restriction of $\dd A$ to the ``vertical'' directions in the cotangent bundle $T^*M$. 
Using canonical coordinates and \eqref{eq09190852}, it is immediately seen that
\begin{equation}\label{eq09190927}
\DD_2A|_\alpha[\beta] = \beta[\DD\pi_M[\X_A|_\alpha]].
\end{equation}

\begin{lemma}\label{lem03131122}
	For every $\alpha\in\Omega$, $\nu\in\CLF(M)$ and $(t,x)\in\tilde U^\alpha$,
	\[
	\de_\alpha\mu^\alpha_\R[\nu](t,x) 
	= \de_\alpha\mu^\alpha_M[\nu](t,x) \left[\DD \pi_M\left[\X_A|_{\mu^\alpha_M(t,x)}\right] \right].
	\]
\end{lemma}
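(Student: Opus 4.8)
\emph{Plan of proof.} The identity is the $\alpha$-derivative of the eikonal equation \eqref{eq09190902}, which in its $\R$-component reads $\mu^\alpha_\R = A \circ \mu^\alpha_M$. First I would fix a point $(t,x) \in \tilde U^\alpha$ and reduce everything to one real variable: since, by Proposition~\ref{prp:existenceED} (equivalently, part \ref{en:edom6} of Lemma~\ref{lem:edom}), the map $(\alpha',t,x)\mapsto \mu^{\alpha'}(t,x)$ is of class $C^\infty$ in the Gâteaux sense on an open neighbourhood of $(\alpha,t,x)$, for $|h|$ small enough the point $(t,x)$ still lies in $\tilde U^{\alpha+h\nu}$, the curve $h\mapsto \mu^{\alpha+h\nu}_M(t,x)$ is smooth with values in the \emph{fixed} finite-dimensional vector space $T^*_xM$ (its base point $x$ does not move with $h$), and the eikonal equation \eqref{eq09190902} for the datum $\alpha+h\nu$ gives
\[
\mu^{\alpha+h\nu}_\R(t,x) = A\big(\mu^{\alpha+h\nu}_M(t,x)\big)
\]
for all such $h$. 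In particular $\de_\alpha\mu^\alpha_M[\nu](t,x)$, being the velocity at $h=0$ of a curve in $T^*_xM$, is itself an element of $T^*_xM$, i.e.\ an admissible ``vertical'' argument.

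Next I would differentiate the displayed identity in $h$ at $h=0$ by the chain rule for the composition $\R \to T^*_xM \to \R$, observing that the vertical differential $\DD_2 A|_\beta$ defined in \eqref{eq:vert_diff} is exactly the ordinary differential at $\beta$ of the smooth restriction $A|_{T^*_xM}$. This yields
\[
\de_\alpha\mu^\alpha_\R[\nu](t,x) = \DD_2 A\big|_{\mu^\alpha_M(t,x)}\big[\de_\alpha\mu^\alpha_M[\nu](t,x)\big].
\]
Finally, substituting the coordinate-free formula \eqref{eq09190927}, namely $\DD_2 A|_\beta[\gamma] = \gamma\big[\DD\pi_M[\X_A|_\beta]\big]$, with $\beta = \mu^\alpha_M(t,x)$ and $\gamma = \de_\alpha\mu^\alpha_M[\nu](t,x)$, produces precisely the claimed equality.

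The computation itself is just the one-variable chain rule, so the only point that needs care — and the closest thing to an obstacle — is the bookkeeping that makes the differentiation legitimate: one must know that $(t,x)$ stays in the \emph{moving} existence domain $\tilde U^{\alpha+h\nu}$ for small $h$ and that $h\mapsto \mu^{\alpha+h\nu}(t,x)$ is differentiable. Both are consequences of the openness of the set $\cW$ and the Gâteaux smoothness asserted in Proposition~\ref{prp:existenceED}; if one prefers to avoid Gâteaux calculus altogether, it suffices to pass to a space $C^k\Forms^1(\overline{U})$ of forms of finite regularity (with $U \Subset M$ a coordinate neighbourhood of $x$ containing the relevant data), where Lemma~\ref{lem:edom} gives honest $C^k$ dependence and the chain rule used above is the standard one for Banach-valued maps.
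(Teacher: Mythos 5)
Your proof is correct and follows exactly the same route as the paper's: differentiate the $\R$-component $\mu^\alpha_\R = A\circ\mu^\alpha_M$ of the eikonal equation \eqref{eq09190902} in the direction $\nu$, apply the chain rule to obtain $\DD_2 A|_{\mu^\alpha_M(t,x)}[\de_\alpha\mu^\alpha_M[\nu](t,x)]$, and rewrite via \eqref{eq09190927}. The added bookkeeping about the moving domain $\tilde U^{\alpha+h\nu}$ and Gâteaux smoothness is a sound justification of a step the paper leaves implicit.
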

\begin{proof}
	Using the ``generalized  eikonal equation'' \eqref{eq09190902},
	\begin{align*}
	\de_\alpha\mu^\alpha_\R[\nu](t,x)
	&= \left.\frac{\dd}{\dd h}\right|_{h=0} A(\mu^{\alpha+h\nu}_M(t,x)) \\
	&= \DD_2A|_{\mu^\alpha_M(t,x)}\left[ \de_\alpha\mu^\alpha_M[\nu](t,x) \right] ,
	\end{align*}
	by the chain rule, and the conclusion follows by \eqref{eq09190927}.
\end{proof}

\subsection{Definition of the phase function}\label{sec10030007}

Let $(\Omega,U,\epsilon)$ be an ED.
For all $\alpha \in \Omega$, $\beta \in \CLF(\tilde U^\alpha)$ and $\tilde x,\tilde y \in \tilde U^\alpha$, we denote by $\int^{\tilde x}_{\tilde y} \beta$ the integral of $\beta$ along any path in $\tilde U^\alpha$ joining $\tilde y$ to $\tilde x$; since $U$ is simply connected, $\tilde U^\alpha$ is too and the value of the integral does not depend on the chosen path. Similarly we define $\int_x^y \nu$ for all $\nu \in \CLF(M)$ and $x,y\in U$.

We define the open set $\cD_\phase = \{ (t,x,y,\alpha) \tc \alpha \in \Omega,\, (t,x) \in \tilde U^\alpha, \, y \in U\}$ and  the ``phase function''
$\phase : \cD_\phase \to \R$ associated with $(\Omega,U,\epsilon)$ by
\begin{equation}\label{eq09191349}
 \phase(t,x,y,\alpha) = \int_{(0,y)}^{(t,x)} \mu^\alpha.
\end{equation}
Note that, since $d_{(t,x)}\phase=\mu^\alpha$, for all $(t,x,y,\alpha) \in \cD_\phase$ we have 
\begin{equation}\label{eq:phiderivatives}
\mu^\alpha_\R(t,x) = \partial_t \phase(t,x,y,\alpha) \, \qquad \mu^\alpha_M(t,x) = \partial_x \phase(t,x,y,\alpha),
\end{equation}
hence, by \eqref{eq09190902}, $\phase$ is a solution to 
\begin{equation}\label{eq:eikonal_phi}
\begin{dcases}
\de_t\phase(t,x,y,\alpha) = A(\de_x\phase(t,x,y,\alpha)) , & (t,x,y,\alpha) \in \cD_\phase, \\
\phase(0,x,y,\alpha) = \textstyle\int_{y}^{x} \alpha , & (x,y,\alpha) \in U \times U \times \Omega.
\end{dcases}
\end{equation}
In particular
\begin{gather}
\label{eq:domainx} \partial_x\phase(t,x,y,\alpha) \in \cD_A, \\
\label{eq:domainy}  \partial_y \phase(t,x,y,\alpha) = -\alpha|_y \in -\cD_A, \\
\label{eq:derivt} \partial_t\phase(t,x,y,\alpha) = A(\alpha|_{\sigma^\alpha_t(x)})
\end{gather}
for all $(t,x,y,\alpha) \in \cD_\phase$; the last identity follows from \eqref{eq09181817} (applied with $\sigma^\alpha_t(x)$ in place of $x$) and \eqref{eq:phiderivatives}.

\subsection{Critical points of the phase function}
We want to differentiate in $\alpha$ the phase function $\phase$ associated to an ED $(\Omega,U,\epsilon)$ and characterise the points $(t,x,y,\alpha)$ such that $\de_\alpha\phase(t,x,y,\alpha)=0$.

The G\^ateaux derivative of $\phase$ is, for 
$(t,x,y,\alpha)\in \cD_\phase$ 
and $\nu\in\CLF(U)$, 
\begin{equation}\label{eq09190933}
\de_\alpha\phase(t,x,y,\alpha) [\nu]
	= \int_{(0,y)}^{(t,x)} \de_\alpha\mu^\alpha[\nu].
\end{equation}
From the fact that $\mu^\alpha$ solves the eikonal equation, we can deduce a simpler expression for the G\^ateaux derivative of $\phase$.

\begin{proposition}[Tr\`eves]\label{prp:treves}
	For all $(t,x,y,\alpha) \in \cD_\phase$ and $\nu \in \CLF(M)$,
	\begin{equation}\label{eq09190934}
 \de_\alpha\phase(t,x,y,\alpha) [\nu] = \int_y^{\sigma_t^\alpha(x)} \nu.
	\end{equation}
\end{proposition}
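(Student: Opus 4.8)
The plan is to start from formula \eqref{eq09190933}, which already expresses $\de_\alpha\phase(t,x,y,\alpha)[\nu]$ as the integral of the closed $1$-form $\de_\alpha\mu^\alpha[\nu]\in\CLF(\tilde U^\alpha)$ along a path in $\tilde U^\alpha$ from $(0,y)$ to $(t,x)$; since $\tilde U^\alpha$ is simply connected (because $U$ is) and the form is closed, this integral is path-independent, so one is free to pick a path adapted to the flow $\Phi_A$. Writing $z\defeq\sigma^\alpha_t(x)\in U$, so that $\rho^\alpha_t(z)=x$ and both $(0,y)$ and $(0,z)$ lie in $\{0\}\times U\subset\tilde U^\alpha$ (recall $\rho^\alpha_0=\Id$), I would integrate along the concatenation of two legs: (a) any path from $(0,y)$ to $(0,z)$ contained in $\{0\}\times U$, and (b) the flow path $s\mapsto(s,\rho^\alpha_s(z))$, with $s$ running from $0$ to $t$, which lies in $\tilde U^\alpha$ by definition and joins $(0,z)$ to $(t,x)$.

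On leg (a), the pullback of $\de_\alpha\mu^\alpha[\nu]=\de_\alpha\mu^\alpha_\R[\nu]\,\dd t+\de_\alpha\mu^\alpha_M[\nu]$ to $\{0\}\times U$ annihilates the $\dd t$ component, while differentiating the initial condition $\mu^\alpha_M|_{(0,\cdot)}=\alpha$ from \eqref{eq09190902} in the direction $\nu$ gives $\de_\alpha\mu^\alpha_M[\nu](0,\cdot)=\nu$; hence leg (a) contributes $\int_y^z\nu$, a quantity well defined because $\nu$ is closed and $U$ is simply connected. On leg (b), I would use \eqref{eq09181817} to identify $\mu^\alpha_M(s,\rho^\alpha_s(z))=\Phi^{-s}_A(\alpha|_z)$, so that the velocity of the flow path at parameter $s$ equals $\de_t-\DD\pi_M\bigl[\X_A|_{\mu^\alpha_M(s,\rho^\alpha_s(z))}\bigr]$. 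Pairing $\de_\alpha\mu^\alpha[\nu]$ with this vector yields precisely $\de_\alpha\mu^\alpha_\R[\nu](s,\rho^\alpha_s(z))-\de_\alpha\mu^\alpha_M[\nu](s,\rho^\alpha_s(z))\bigl[\DD\pi_M[\X_A|_{\mu^\alpha_M(s,\rho^\alpha_s(z))}]\bigr]$, which vanishes identically by Lemma \ref{lem03131122}. Thus leg (b) contributes $0$, and one concludes $\de_\alpha\phase(t,x,y,\alpha)[\nu]=\int_y^z\nu=\int_y^{\sigma^\alpha_t(x)}\nu$.

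I do not expect a genuine obstacle here: the whole argument is engineered around the identity of Lemma \ref{lem03131122}, which is exactly what makes the flow-direction contribution cancel, and the remaining work is purely a matter of choosing the right path, matching endpoints, and keeping track of signs — in particular, that the velocity of $s\mapsto\rho^\alpha_s(z)=\pi_M\Phi^{-s}_A(\alpha|_z)$ picks up a minus sign from the $-s$ in the flow parameter. The only mild subtlety worth spelling out is the verification that the chosen concatenated path indeed stays inside $\tilde U^\alpha$, which follows immediately from $\rho^\alpha_0=\Id$ together with the very definition of $\tilde U^\alpha$ as the image of $(-\epsilon,\epsilon)\times U$ under $(s,x)\mapsto(s,\rho^\alpha_s(x))$.
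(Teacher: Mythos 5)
Your proof is correct and takes essentially the same route as the paper's: the paper differentiates $t\mapsto(\de_\alpha\phase)(t,\rho^\alpha_t(z),y,\alpha)[\nu]$ and shows the derivative vanishes via Lemma \ref{lem03131122}, which is exactly the statement that your leg (b) integrand vanishes, and the endpoint computation at $t=0$ corresponds to your leg (a).
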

\begin{proof}
	By \eqref{eq:def_rhoalpha} and  \eqref{eq09181817},
	\begin{align*}
	\frac{\dd}{\dd t}\rho^\alpha_t(x)
	&= \frac{\dd}{\dd t}\pi_M(\Phi_A^{-t}(\alpha|_x))
	= \DD\pi_M\left[\frac{\dd}{\dd t}\Phi_A^{-t}(\alpha|_x)\right] \\
	&= - \DD\pi_M\left[\X_A|_{\Phi_A^{-t}(\alpha|_x)}\right]
	= - \DD\pi_M \left[\X_A|_{\mu_M^\alpha(t,\rho_t^\alpha(x))}\right] .
	\end{align*}
	Hence,
	for all $\alpha \in \Omega$, $t \in (-\epsilon,\epsilon)$, $x,y \in U$, and $\nu\in \CLF(M)$,
	\begin{align*}
	\frac{\dd}{\dd t} &\left[ (\de_\alpha \phase) (t,\rho^\alpha_t(x),y,\alpha) [\nu] \right]\\
	&= (\de_\alpha\de_t\phase)  (t,\rho^\alpha_t(x),y,\alpha) [\nu]
		+ (\de_\alpha\de_x\phase)  (t,\rho^\alpha_t(x),y,\alpha)[\nu]  \left[ \frac{\dd}{\dd t}\rho^\alpha_t(x) \right]\\
	&= (\de_\alpha \mu_\R^\alpha)[\nu](t,\rho^\alpha_t(x)) 
		- (\de_\alpha \mu_M^\alpha)[\nu](t,\rho^\alpha_t(x))  \left[\DD\pi_M \left[\X_A|_{\mu_M^\alpha(t,\rho_t^\alpha(x))}\right] \right]\\
	&= 0 .
	\end{align*}
	by \eqref{eq:phiderivatives} and Lemma~\ref{lem03131122} (compare \cite[Eq.\ (2.31)]{MR597145}).
	Consequently
\[
(\de_\alpha\phase)(t,\rho_t^\alpha(x),y,\alpha) [\nu] = (\de_\alpha\phase)(0,x,y,\alpha) [\nu] = \int_y^x  \partial_\alpha\alpha [\nu] =  \int_y^x  \nu
\]
by \eqref{eq:eikonal_phi}, and \eqref{eq09190934} follows by replacing $x$ with $\sigma^\alpha_t(x)$.
\end{proof}

We say that $\cV \subset \CLF(M)$ is \emph{separating} for $U$ if, for all $x,y \in U$ with $x \neq y$, there exist $\nu \in \cV$ and $f \in C^\infty(U)$ such that $f(x) \neq f(y)$ and $\dd f = \nu|_U$. 

The following result, which relates the critical points of the phase $\phase$ to the geodesic flow, is an immediate consequence of Proposition \ref{prp:treves} and Stokes' theorem.

\begin{corollary}\label{cor:phase_critical}
	Let $(\Omega,U,\epsilon)$ be an ED and let $\cV \subset \CLF(M)$ be separating for $U$.
	Then, for all $(t,x,y,\alpha)\in\cD_\phase$,
	\[
	\de_\alpha\phase(t,x,y,\alpha)|_{\cV} = 0
	\quad\iff\quad
	x=\rho^\alpha_t(y) .
	\]
\end{corollary}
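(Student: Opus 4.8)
The plan is to read the result off directly from Tr\`eves' identity in Proposition \ref{prp:treves}. That proposition gives, for every $(t,x,y,\alpha) \in \cD_\phase$ and every $\nu \in \CLF(M)$,
\[
\de_\alpha\phase(t,x,y,\alpha)[\nu] = \int_y^{z} \nu, \qquad z \defeq \sigma^\alpha_t(x),
\]
so that $\de_\alpha\phase(t,x,y,\alpha)|_{\cV} = 0$ if and only if $\int_y^z \nu = 0$ for all $\nu \in \cV$. A preliminary remark is that, since $(t,x) \in \tilde U^\alpha$, we have $x \in \rho^\alpha_t(U)$, hence $z = \sigma^\alpha_t(x) \in U$; this is what makes both the integral $\int_y^z \nu$ and the separating hypothesis applicable, and it is the only point requiring (minimal) care.

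For the implication $(\Leftarrow)$ I would observe that if $x = \rho^\alpha_t(y)$ then $z = \sigma^\alpha_t(\rho^\alpha_t(y)) = y$, because $\sigma^\alpha_t$ is by definition the inverse of the embedding $\rho^\alpha_t|_U$; the integral of any $1$-form over a constant path vanishes, so $\de_\alpha\phase(t,x,y,\alpha)[\nu] = 0$ for all $\nu \in \CLF(M)$, in particular for $\nu \in \cV$.

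For the converse I would argue by contraposition. Suppose $z \neq y$. Since $y, z \in U$ and $\cV$ is separating for $U$, there are $\nu \in \cV$ and $f \in C^\infty(U)$ with $\dd f = \nu|_U$ and $f(z) \neq f(y)$. Picking any path $\gamma$ in $U$ from $y$ to $z$ (the integral $\int_y^z \nu$ is path-independent because $U$ is simply connected), Stokes' theorem gives $\int_y^z \nu = \int_\gamma \dd f = f(z) - f(y) \neq 0$, so $\de_\alpha\phase(t,x,y,\alpha)[\nu] \neq 0$ for this $\nu \in \cV$, and thus $\de_\alpha\phase(t,x,y,\alpha)|_{\cV} \neq 0$. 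Equivalently, $\de_\alpha\phase(t,x,y,\alpha)|_{\cV} = 0$ forces $z = y$, i.e.\ $x = \rho^\alpha_t(y)$.

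I do not expect any genuine obstacle here: the analytic content — differentiating $\phase$ in $\alpha$ and simplifying via the eikonal equation — has already been absorbed into Proposition \ref{prp:treves}, and what remains is the elementary fact that a family of locally exact closed forms separating the points of $U$ can detect coincidence of two points of $U$. The only thing to be vigilant about is keeping track of which map is the inverse of which and verifying that $\sigma^\alpha_t(x) \in U$, as noted above.
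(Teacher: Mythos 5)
Your proof is correct and follows exactly the route the paper indicates: the corollary is stated as "an immediate consequence of Proposition \ref{prp:treves} and Stokes' theorem," and you have simply filled in the short details (rewriting $\de_\alpha\phase[\nu] = \int_y^{\sigma^\alpha_t(x)}\nu$, noting $\sigma^\alpha_t(x)\in U$, and invoking the separating property in the contrapositive direction). No discrepancy with the paper's argument.
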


In the case $A$ is $1$-homogeneous the above expression for $\partial_\alpha \phase$ actually yields a corresponding expression for $\phase$.

\begin{proposition}\label{prp:hom_phase}
If $A$ is $1$-homogeneous and $(\Omega,U,\epsilon)$ is a CED, then the associated phase function $\phase$ is $1$-homogeneous in $\alpha$, i.e., $(t,x,y,\lambda \alpha) \in \cD_\phase$ and
\[
\phase(t,x,y,\lambda \alpha) = \lambda \phase(t,x,y,\alpha)
\]
for all $\lambda > 0$ and $(t,x,y,\alpha) \in \cD_\phase$.
In addition, for all $(t,x,y,\alpha) \in \cD_\phase$,
\[
\phase(t,x,y,\alpha) = \int_y^{\sigma_t^\alpha(x)} \alpha.
\]
\end{proposition}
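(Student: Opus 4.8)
The plan is to deduce both assertions directly from results already established, without any new construction. For the $1$-homogeneity, the point is that $\phase$ is defined in \eqref{eq09191349} as a line integral of $\mu^\alpha$, and the homogeneity of $\mu^\alpha$ in $\alpha$ has already been recorded. For the explicit formula for $\phase$, the idea is to combine Euler's identity — available once $1$-homogeneity is known — with the Tr\`eves-type formula of Proposition \ref{prp:treves}, evaluated in the direction $\nu = \alpha$.

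First I would observe that, since $(\Omega,U,\epsilon)$ is a CED, the set $\Omega$ is conic, so $\lambda\alpha \in \Omega$ whenever $\alpha \in \Omega$ and $\lambda > 0$. Part (iv) of Proposition \ref{prp:ced} then gives $\tilde U^{\lambda\alpha} = \tilde U^\alpha$, whence $\cD_\phase$ is invariant under the rescaling $(t,x,y,\alpha) \mapsto (t,x,y,\lambda\alpha)$, and also $\mu^{\lambda\alpha} = \lambda \mu^\alpha$ on $\tilde U^\alpha$. Substituting this into \eqref{eq09191349} immediately yields $\phase(t,x,y,\lambda\alpha) = \lambda\,\phase(t,x,y,\alpha)$, i.e., $\phase$ is $1$-homogeneous in $\alpha$.

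For the second assertion, I would differentiate the homogeneity identity $\phase(t,x,y,\lambda\alpha) = \lambda\,\phase(t,x,y,\alpha)$ with respect to $\lambda$ at $\lambda = 1$. Since $\alpha \mapsto \phase(t,x,y,\alpha)$ is G\^ateaux-differentiable (cf.\ \eqref{eq09190933} and Proposition \ref{prp:existenceED}) and $\alpha$ itself is an admissible direction, as $\alpha \in \CLF(M)$, this gives the Euler relation $\de_\alpha\phase(t,x,y,\alpha)[\alpha] = \phase(t,x,y,\alpha)$. On the other hand, Proposition \ref{prp:treves} applied with $\nu = \alpha$ gives $\de_\alpha\phase(t,x,y,\alpha)[\alpha] = \int_y^{\sigma_t^\alpha(x)}\alpha$. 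Comparing the two identities yields the claimed formula $\phase(t,x,y,\alpha) = \int_y^{\sigma_t^\alpha(x)}\alpha$.

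I do not expect a genuine obstacle here: the argument is essentially bookkeeping once Propositions \ref{prp:ced}, \ref{prp:existenceED} and \ref{prp:treves} are in hand. The only points needing (minor) care are checking that the rescaled datum $\lambda\alpha$ stays inside the existence domain — which is exactly what conicity of $\Omega$ together with $\tilde U^{\lambda\alpha} = \tilde U^\alpha$ provides — and that $\alpha$ is a legitimate direction for the G\^ateaux derivative, which holds because $\CLF(M)$ is the ambient space in which that derivative is taken.
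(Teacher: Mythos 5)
Your proposal is correct and follows essentially the same approach as the paper: deduce $1$-homogeneity of $\phase$ from Proposition \ref{prp:ced} and the definition \eqref{eq09191349}, then combine Euler's identity with the Tr\`eves formula of Proposition \ref{prp:treves} applied to $\nu = \alpha$. The only difference is that you spell out the conicity bookkeeping in slightly more detail than the paper does.
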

\begin{proof}
Homogeneity of $\phase$ in $\alpha$ immediately follows from Proposition \ref{prp:ced} and \eqref{eq09191349}. From this we deduce that
\[
\phase(t,x,y,\alpha) = \left.\frac{\dd}{\dd \lambda}\right|_{\lambda=1} \phase(t,x,y,\lambda\alpha) = \partial_\alpha \phase(t,x,y,\alpha)[\alpha],
\]
which, together with Proposition \ref{prp:treves}, gives the desired expression for $\phase$.
\end{proof}

\subsection{The Hessian of the phase function}\label{sec11171048}
Let $(\Omega,U,\epsilon)$ be an ED and $\phase : \cD_\phase \to\R$ be the associated phase function.
The G\^ateaux-Hessian in $\alpha$ of $\phase$ at $(t,x,y,\alpha) \in \cD_\phase$ is the symmetric bilinear map
\[
\begin{array}{rccl}
\de_\alpha^2\phase(t,x,y,\alpha):&\CLF(M)\times\CLF(M) &\to &\R \\
 &(\nu_1,\nu_2) &\mapsto &\left.\frac{\dd}{\dd h}\right|_{h=0} \de_\alpha\phase(t,x,y,\alpha+h\nu_2)[\nu_1] .
\end{array}
\]

We now obtain an expression for the Hessian $\partial_\alpha^2 \phase$ in terms of the Hamilton flow on $T^* M$ associated to $A$ (or rather its projection to the manifold $M$).

Note that $\Exp_{A}^{x,t}$, defined in~\eqref{eq11201112}, is a smooth map defined on a (possibly empty) open subset of $T^*_x M$ for all $x \in M$ and $t\in \R$.
Since $T^*_x M$ is a vector space, the tangent space $T_\xi T^*_x M$ is canonically identified with $T^*_x M$ at each point $\xi \in T^*_x M$, so we can think of $\DD \Exp_A^{x,t}|_\xi$ as a linear map $T^*_x M \to T_{\Exp_A^{x,t}(\xi)} M$.
 Note also that
\begin{equation}\label{eq:Exprho}
\Exp_{A}^{x,-t}(\alpha|_x) = \rho^\alpha_t(x)
\end{equation}
for all $\alpha \in \Forms^1(W)$, $x \in W$, $t \in \R$ such that $(-t,\alpha|_x)$ is in the domain of $\Phi_A$.

\begin{proposition}\label{prp:hessian_formula}
	For all $(t,x,y,\alpha)\in \cD_\phase$ and $\nu_1,\nu_2\in\CLF(M)$, 
	\[
	\de_\alpha^2\phase(t,x,y,\alpha)[\nu_1,\nu_2] = - \nu_1|_{\sigma^\alpha_t(x)} [\DD\sigma_t^\alpha|_{x} [ \DD \Exp_A^{\sigma_t^{\alpha}(x),-t}|_{\alpha|_{\sigma_t^{\alpha}(x)}}[\nu_2|_{\sigma_t^{\alpha}(x)}]]] .
	\]
\end{proposition}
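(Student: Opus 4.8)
The plan is to start from the first-derivative formula of Proposition \ref{prp:treves}, namely $\de_\alpha\phase(t,x,y,\alpha)[\nu_1]=\int_y^{\sigma_t^\alpha(x)}\nu_1$, and differentiate it once more in $\alpha$ in the direction $\nu_2$. The key observation is that the right-hand side depends on $\alpha$ only through the endpoint $\sigma_t^\alpha(x)$ of the integration path, while $y$ and the form $\nu_1$ are held fixed. Hence, by the fundamental theorem of calculus for line integrals (more precisely, by differentiating $F(z)=\int_y^z\nu_1$ along the curve $h\mapsto\sigma_t^{\alpha+h\nu_2}(x)$ and using $\dd F=\nu_1$), we get
\[
\de_\alpha^2\phase(t,x,y,\alpha)[\nu_1,\nu_2]
=\nu_1|_{\sigma_t^\alpha(x)}\Bigl[\left.\tfrac{\dd}{\dd h}\right|_{h=0}\sigma_t^{\alpha+h\nu_2}(x)\Bigr].
\]
So everything reduces to computing the $\alpha$-derivative of $h\mapsto\sigma_t^{\alpha+h\nu_2}(x)$, a tangent vector at $\sigma_t^\alpha(x)\in M$.

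Next I would express $\sigma_t^\alpha$ via the exponential map. Recall $\sigma_t^\alpha$ is the inverse of $\rho_t^\alpha|_U$, and by \eqref{eq:Exprho} we have $\rho_t^\alpha(z)=\Exp_A^{z,-t}(\alpha|_z)$. The cleanest route is to differentiate the identity $\rho_t^{\alpha+h\nu_2}(\sigma_t^{\alpha+h\nu_2}(x))=x$ in $h$ at $h=0$. Writing $z=\sigma_t^\alpha(x)$, the chain rule gives
\[
0=\left.\tfrac{\dd}{\dd h}\right|_{h=0}\rho_t^{\alpha+h\nu_2}(z)
\;+\;\DD(\rho_t^\alpha)|_z\Bigl[\left.\tfrac{\dd}{\dd h}\right|_{h=0}\sigma_t^{\alpha+h\nu_2}(x)\Bigr].
\]
Since $\DD(\rho_t^\alpha)|_z$ is the inverse of $\DD\sigma_t^\alpha|_x$ (differentiating $\sigma_t^\alpha\circ\rho_t^\alpha=\Id$), this yields
\[
\left.\tfrac{\dd}{\dd h}\right|_{h=0}\sigma_t^{\alpha+h\nu_2}(x)
=-\,\DD\sigma_t^\alpha|_x\Bigl[\left.\tfrac{\dd}{\dd h}\right|_{h=0}\rho_t^{\alpha+h\nu_2}(z)\Bigr].
\]
Finally, $\left.\tfrac{\dd}{\dd h}\right|_{h=0}\rho_t^{\alpha+h\nu_2}(z)=\left.\tfrac{\dd}{\dd h}\right|_{h=0}\Exp_A^{z,-t}((\alpha+h\nu_2)|_z)=\DD\Exp_A^{z,-t}|_{\alpha|_z}[\nu_2|_z]$, using that $h\mapsto(\alpha+h\nu_2)|_z$ is an affine path in the fiber $T_z^*M$ with velocity $\nu_2|_z$, and that $T_\xi T_z^*M$ is canonically identified with $T_z^*M$. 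Substituting back and collecting the two minus signs (one from the implicit-function step, one already present) gives precisely the stated formula with $z=\sigma_t^\alpha(x)$.

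The main point requiring care — the ``hard part'' — is the justification of interchanging the $h$-derivative with the line integral and the rigorous use of the Gâteaux-differentiability of $\alpha\mapsto\sigma_t^\alpha$ established in Proposition \ref{prp:existenceED}\ref{en:edom5} (equivalently Lemma \ref{lem:edom}). One must check that all the maps in play ($\rho$, $\sigma$, $\Exp_A$, $\Ev$) are smooth enough in the Gâteaux sense on the relevant open sets $\cW$, $\cD_\phase$, so that the chain rule and the implicit differentiation of $\sigma_t^\alpha\circ\rho_t^\alpha=\Id$ and $\rho_t^\alpha\circ\sigma_t^\alpha=\Id$ are legitimate; this is where the finite-differentiability Banach-space formulation of Lemma \ref{lem:edom} does the work. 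The symplectic-geometric input is not needed here directly — it already went into Proposition \ref{prp:treves}. Everything else is the routine chain-rule bookkeeping sketched above, keeping track of base points: the two differentials $\DD\sigma_t^\alpha|_x$ and $\DD\Exp_A^{\sigma_t^\alpha(x),-t}|_{\alpha|_{\sigma_t^\alpha(x)}}$ are composed in the right order, and $\nu_1$ is evaluated (and its action taken) at the common point $\sigma_t^\alpha(x)$.
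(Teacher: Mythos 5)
Your proposal is correct and follows essentially the same route as the paper: differentiate the Tr\`eves formula of Proposition \ref{prp:treves}, then implicitly differentiate $\rho_t^{\alpha+h\nu_2}(\sigma_t^{\alpha+h\nu_2}(x))=x$ and identify the $h$-derivative of $\rho_t^{\alpha+h\nu_2}$ at a fixed base point with the differential of $\Exp_A$ via \eqref{eq:Exprho}. The closing remarks on Gâteaux smoothness are a sensible supplement but not a departure.
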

\begin{proof}
	Note that, by \eqref{eq09190934},
	\begin{align*}
	\de_\alpha^2\phase(t,x,y,\alpha)[\nu_1,\nu_2]
  &= \left.\frac{\dd}{\dd h}\right|_{h=0} \int_y^{\sigma_t^{\alpha+h\nu_2}(x)} \nu_1 \\
	&= \nu_1|_{\sigma^\alpha_t(x)} \left[\left.\frac{\dd}{\dd h}\right|_{h=0} \sigma_t^{\alpha+h\nu_2}(x)\right] .
	\end{align*}
	
	On the other hand, for all $\nu \in \CLF(M)$, since $\rho_t^{\alpha+h\nu}(\sigma_t^{\alpha+h\nu}(x))=x$ for all  small enough $h\in\R$, by \eqref{eq:Exprho},
	\begin{align*}
	0 
	&= \left.\frac{\dd}{\dd h}\right|_{h=0} \rho_t^{\alpha+h\nu}(\sigma_t^{\alpha+h\nu}(x)) \\
	&= \left.\frac{\dd}{\dd h}\right|_{h=0} \rho_t^{\alpha+h\nu}(\sigma_t^{\alpha}(x))
		+ \left.\frac{\dd}{\dd h}\right|_{h=0} \rho_t^{\alpha}(\sigma_t^{\alpha+h\nu}(x)) \\
	&= \DD \Exp_A^{\sigma_t^{\alpha}(x), -t}|_{\alpha|_{\sigma_t^{\alpha}(x)}}[\nu|_{\sigma_t^{\alpha}(x)}]
		+ \DD\rho^\alpha_t|_{\sigma_t^{\alpha}(x)}\left[ \left.\frac{\dd}{\dd h}\right|_{h=0} \sigma^{\alpha+h\nu}_t(x) \right] ,
	\end{align*}
	so
	\[
	\left.\frac{\dd}{\dd h}\right|_{h=0} \sigma^{\alpha+h\nu}_t(x) = - \DD\sigma_t^\alpha|_{x} [ \DD \Exp_A^{\sigma_t^{\alpha}(x),-t}|_{\alpha|_{\sigma_t^{\alpha}(x)}}[\nu|_{\sigma_t^{\alpha}(x)}]],
	\]
	and we are done.
\end{proof}

An interesting consequence of the above formula is that $\de_\alpha^2\phase(t,x,y,\alpha)[\nu_1,\nu_2]$ only depends on the values of $\nu_1$ and $\nu_2$ at the point $\sigma^\alpha_t(x)$, and therefore $\de_\alpha^2\phase(t,x,y,\alpha)$ is effectively a bilinear form on the finite-dimensional space $T_{\sigma^\alpha_t(x)}^* M$.

We say that a linear subspace $\cV$ of $\CLF(M)$ is \emph{spanning} for $U$ if  $\{\nu|_x \tc \nu\in \cV\} = T^*_x M$ for all $x \in U$. The following result is an immediate consequence of Proposition \ref{prp:hessian_formula} and Corollary \ref{cor:phase_critical}.

\begin{corollary}\label{cor:phase_rank}
	Let $\cV\subset\CLF(M)$ be a linear subspace that is spanning for $U$.
	Then, for all $(t,x,y,\alpha) \in \cD_\phase$,
	\begin{equation}\label{eq:hessian_rank_gen}
		\rank(\de_\alpha^2\phase (t,x,y,\alpha)|_{\cV \times \cV}) = 
		\rank(\DD\Exp_{A}^{{\sigma^\alpha_t(x)},-t}|_{\alpha|_{{\sigma^\alpha_t(x)}}}).
	\end{equation}
	In particular, if $\cV$ is separating for $U$ and $\de_\alpha\phase(t,x,y,\alpha)|_{\cV}=0$, then
	\begin{equation}\label{eq09191629}
		\rank(\de_\alpha^2\phase (t,x,y,\alpha)|_{\cV \times \cV}) = 
		\rank(\DD\Exp_{A}^{y,-t}|_{\alpha|_y}).
	\end{equation}
\end{corollary}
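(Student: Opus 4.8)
The plan is to obtain both identities by feeding the formula of Proposition~\ref{prp:hessian_formula} into an elementary rank computation, invoking Corollary~\ref{cor:phase_critical} only for the ``in particular'' part. Fix $(t,x,y,\alpha)\in\cD_\phase$ and abbreviate $z\defeq\sigma^\alpha_t(x)\in U$, so that $\rho^\alpha_t(z)=x$. Since $(\Omega,U,\epsilon)$ is an ED, condition~\ref{prop09181821item2} of Proposition~\ref{prop09181821} guarantees that $\rho^\alpha_t|_U$ is an embedding between manifolds of equal dimension, hence a local diffeomorphism; therefore $\sigma^\alpha_t$ is smooth near $x$ and $\DD\sigma^\alpha_t|_x\colon T_xM\to T_zM$ is a linear isomorphism. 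With this notation, Proposition~\ref{prp:hessian_formula} reads
\[
\de_\alpha^2\phase(t,x,y,\alpha)[\nu_1,\nu_2]=-\,\nu_1|_z\bigl[L(\nu_2|_z)\bigr],\qquad L\defeq\DD\sigma^\alpha_t|_x\circ\DD\Exp_A^{z,-t}|_{\alpha|_z}\colon T^*_zM\to T_zM ,
\]
so that $\de_\alpha^2\phase(t,x,y,\alpha)$ depends on $\nu_1,\nu_2\in\CLF(M)$ only through the covectors $\nu_1|_z,\nu_2|_z\in T^*_zM$.

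Next I would carry out the finite-dimensional reduction. Since $\cV$ is spanning for $U$, the evaluation $e\colon\cV\to T^*_zM$, $\nu\mapsto\nu|_z$, is surjective, and $\de_\alpha^2\phase(t,x,y,\alpha)|_{\cV\times\cV}=e^*B$, where $B$ is the bilinear form on $T^*_zM$ given by $B(\zeta_1,\zeta_2)=-\zeta_1[L\zeta_2]$; as $e$ is onto, the two forms have the same rank. It remains to note that, under the canonical identification $T_zM=(T^*_zM)^*$, the form $B$ is represented by the linear map $-L\colon T^*_zM\to T_zM$, so $\rank B=\rank L$; and since $\DD\sigma^\alpha_t|_x$ is invertible, $\rank L=\rank\bigl(\DD\Exp_A^{z,-t}|_{\alpha|_z}\bigr)$. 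Recalling $z=\sigma^\alpha_t(x)$, this is exactly \eqref{eq:hessian_rank_gen}.

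Finally, for the ``in particular'' statement one assumes in addition that $\cV$ is separating for $U$ and that $\de_\alpha\phase(t,x,y,\alpha)|_{\cV}=0$. Corollary~\ref{cor:phase_critical} then yields $x=\rho^\alpha_t(y)$, whence $z=\sigma^\alpha_t(x)=y$, and substituting $z=y$ into \eqref{eq:hessian_rank_gen} gives \eqref{eq09191629}. I do not expect any genuine obstacle here; the only points deserving explicit mention are that $\DD\sigma^\alpha_t|_x$ is an honest isomorphism (built into the definition of an ED) and the elementary fact that a bilinear form of the shape $(\zeta_1,\zeta_2)\mapsto\zeta_1[L\zeta_2]$ has the same rank as the linear map $L$.
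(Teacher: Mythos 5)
Your proof is correct and follows exactly the route the paper intends: reduce $\de_\alpha^2\phase(t,x,y,\alpha)$ via Proposition~\ref{prp:hessian_formula} to a bilinear form on $T^*_{\sigma^\alpha_t(x)}M$ represented by the map $\DD\sigma^\alpha_t|_x\circ\DD\Exp_A^{\sigma^\alpha_t(x),-t}$, use the spanning hypothesis to identify the rank of the restriction to $\cV\times\cV$ with the rank of this finite-dimensional form, cancel the isomorphism $\DD\sigma^\alpha_t|_x$, and finally invoke Corollary~\ref{cor:phase_critical} to substitute $y=\sigma^\alpha_t(x)$. The paper states the corollary is an ``immediate consequence'' and gives no further details; your write-up supplies precisely the bookkeeping that justifies this.
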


\subsection{Construction of an operator phase function}
Recall that, for all ED $(\Omega,U,\epsilon)$, the set $\{ (\alpha,t,\rho^\alpha(x)) \tc \alpha \in \Omega, \, t \in (-\epsilon,\epsilon), \, x \in U\}$ is an open neighbourhood of $\Omega \times \{0\} \times U$ in $\CLF(M) \times \R \times M$. A simple compactness argument yields the following strengthening of the existence result for ED in Proposition \ref{prp:existenceED}.

\begin{lemma}\label{lem:compactED}
Let $K$ and $\Theta$ be compact subsets of $M$ and $\CLF(M)$ such that $\alpha|_x \in \cD_A$ for all $\alpha \in \Theta$ and $x \in K$. Then there exists an ED $(\Omega,U,\epsilon)$ such that $\Theta \subset \Omega$ and $K \subset \interior(\bigcap_{\alpha\in\Omega,|t|<\epsilon} \rho^\alpha_t(U))$.
\end{lemma}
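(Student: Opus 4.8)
The plan is to first produce a single existence domain $(\Omega_0,U,\epsilon_0)$ with $\Theta\subset\Omega_0$ and $K\subset U$, and then to upgrade it, by one more compactness argument, to one with the stronger property in the statement. Since the conclusion forces $K\subset\rho^\alpha_0(U)=U$ with $U$ simply connected, I would assume — as happens in our applications — that $K$ lies in the domain $\mathcal O$ of a coordinate chart of $M$, and fix open balls $U\Subset U'\Subset\mathcal O$ in the chart coordinates with $K\subset U$; then $U$ is open, convex and simply connected and $\overline{U'}$ is compact. Since the evaluation map $\Ev\colon\CLF(M)\times M\to T^*M$, $\Ev(\alpha,x)=\alpha|_x$, is continuous and $\Ev^{-1}(\cD_A)$ is open and contains the compact set $\Theta\times K$, Wallace's theorem (a product of two compact sets inside an open subset of a product is contained in an open rectangle) lets me moreover assume, after shrinking the balls if necessary, that there is an open $\Omega_*\supset\Theta$ in $\CLF(M)$ with $\alpha|_x\in\cD_A$ for all $\alpha\in\Omega_*$ and $x\in\overline{U'}$.

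Next I would check that $(\Omega_0,U,\epsilon_0)$ is an existence domain for a suitable open $\Omega_0$ with $\Theta\subset\Omega_0\subset\Omega_*$ and a suitable $\epsilon_0>0$; that is, I must arrange $(-\epsilon_0,\epsilon_0)\times U\subset\cU^\alpha$ and that $\rho^\alpha_s|_U$ is an embedding, for all $\alpha\in\Omega_0$, $|s|<\epsilon_0$. The first condition comes from the openness of the set $\{(\alpha,t,x):(t,x)\in\cU^\alpha\}$ (Proposition~\ref{prp:existenceED}(i)), which contains $\Theta\times\{0\}\times\overline U$ since $(0,x)\in\cU^\alpha$ whenever $\alpha|_x\in\cD_A$, together with another application of Wallace's theorem. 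The embedding condition is the crux, and the step I expect to be the main obstacle: Lemma~\ref{lem:edom} only yields injectivity of $\rho^\alpha_s$ on a small neighbourhood of each point, whereas here I need $\rho^\alpha_s$ injective on all of $U$ at once, uniformly for $\alpha$ near $\Theta$ and $s$ near $0$. This is precisely where the choice of $U$ as a convex coordinate ball enters: by Lemma~\ref{lem:edom}\ref{en:edom2} (applied with $U'$), the map $(\alpha,t,x)\mapsto\DD\rho^\alpha_t|_x$, read in the chart, is continuous near $\Theta\times\{0\}\times\overline U$ and equals $\Id$ there, so shrinking $\Omega_0$ and $\epsilon_0$ once more I may assume $\|\DD\rho^\alpha_t|_x-\Id\|\le 1/2$ for $\alpha\in\Omega_0$, $|t|<\epsilon_0$, $x\in\overline U$, and also $\rho^\alpha_t(\overline U)\subset\mathcal O$; integrating $\DD\rho^\alpha_t$ along segments in the convex set $\overline U$ then gives $|\rho^\alpha_t(q)-\rho^\alpha_t(p)-(q-p)|\le\tfrac12|q-p|$, whence $\rho^\alpha_t|_{\overline U}$ is bi-Lipschitz with everywhere invertible differential, and therefore $\rho^\alpha_t|_U$ is an embedding. (On a non-convex domain a $C^1$-small perturbation of the identity need not be injective, so geometric input of this kind is unavoidable.) Taking $\Omega_0$ and $\epsilon_0$ to satisfy both requirements, $(\Omega_0,U,\epsilon_0)$ is the desired existence domain.

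Finally, to obtain the stronger conclusion I would invoke Proposition~\ref{prp:existenceED}(iii): the set $\cW_0=\{(\alpha,t,\rho^\alpha_t(x)):\alpha\in\Omega_0,\ |t|<\epsilon_0,\ x\in U\}$ is open in $\CLF(M)\times\R\times M$, and it contains the compact set $\Theta\times\{0\}\times K$ (since $\rho^\alpha_0=\Id$, $\Theta\subset\Omega_0$ and $K\subset U$). By Wallace's theorem there are an open $\Omega$ with $\Theta\subset\Omega\subset\Omega_0$, an $\epsilon\in(0,\epsilon_0]$, and an open $N\supset K$ with $\Omega\times(-\epsilon,\epsilon)\times N\subset\cW_0$. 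Then $(\Omega,U,\epsilon)$ is still an existence domain (shrinking $\Omega$ and $\epsilon$ preserves conditions (a)--(c) of the definition), one has $\Theta\subset\Omega$, and for every $\alpha\in\Omega$ and $|t|<\epsilon$ the inclusion $\{\alpha\}\times\{t\}\times N\subset\cW_0$ says precisely that $N\subset\rho^\alpha_t(U)$; hence $N\subset\bigcap_{\alpha\in\Omega,\,|t|<\epsilon}\rho^\alpha_t(U)$, and since $N$ is open with $K\subset N$ we conclude $K\subset\interior\bigl(\bigcap_{\alpha\in\Omega,\,|t|<\epsilon}\rho^\alpha_t(U)\bigr)$, as required.
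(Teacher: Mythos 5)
Your proof is correct. The paper offers no actual proof of this lemma, presenting it only as a ``simple compactness argument'' built on the openness of the set $\cW$ from Proposition~\ref{prp:existenceED}; your argument spells out exactly what such an argument has to be. In particular you correctly isolate the one nontrivial step: Proposition~\ref{prp:existenceED} and Lemma~\ref{lem:edom} only produce existence domains with possibly tiny $U$, so some geometric input is needed to obtain a single simply connected $U$ containing all of $K$ on which $\rho^\alpha_t$ is globally injective, uniformly over a neighbourhood of $\Theta\times\{0\}$. Fixing $U$ convex in a chart and forcing $\|\DD\rho^\alpha_t|_x-\Id\|\le 1/2$ on $\overline U$ by continuity and compactness (which is legitimate since $\DD\rho^\alpha_0|_x=\Id$) gives injectivity via the usual bi-Lipschitz estimate along segments; the remaining steps are then routine applications of the tube/Wallace lemma, ending with Proposition~\ref{prp:existenceED}(iii) exactly as the paper hints. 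The only caveat, which you flag yourself, is that the lemma's statement tacitly presupposes that $K$ sits inside some simply connected open set (otherwise the forced conclusion $K\subset U$ with $U$ simply connected is impossible, e.g.\ for $K=S^1$ in a punctured plane); this is automatic in the paper's sole application, where $K$ is a single point.
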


We can now prove our main result.

\begin{proposition}\label{prp:operatorphaseeikonal}
Assume that $A$ is $1$-homogeneous and $\cD_A = \nT^* M$.
Let $o \in M$ and $(W,x)$ be any system of local coordinates for $M$ at $o$. Then there exists an open neighbourhood $V \subset W$ of $o$, an $\epsilon > 0$ and a smooth function $w : (-\epsilon,\epsilon)  \times V \times \nR^n \to \R$, where $n=\dim M$, with the following properties.
\begin{enumerate}[label=(\roman*)]
\item\label{en:operatorphaseeikonal_ders} 
$w$ is $1$-homogeneous in $\xi$ and, for all $(t,x,\xi) \in (-\epsilon,\epsilon) \times V \times \nR^n$,
\[
w(0,x,\xi) = x \cdot \xi, \qquad \partial_x w(t,x,\xi) \neq 0 .
\]
\item\label{en:operatorphaseeikonal_structure}
The function $\phi : (-\epsilon,\epsilon) \times V \times V \times \nR^n \to \R$, 
\[
\phi(t,x,y,\xi) = w(t,x,\xi) - w(0,y,\xi),
\]
is a phase function that solves the eikonal equation~\eqref{eq09121416}.
Moreover, $(x,y,\xi) \mapsto \phi(t,x,y,\xi)$ is an operator phase function for all $t \in (-\epsilon,\epsilon)$.
\item\label{en:operatorphaseeikonal_critical} 
For all $(t,x,y,\xi) \in (-\epsilon,\epsilon) \times V \times V \times \nR^n$,
\[
\partial_\xi w(t,x,\xi) = y \quad \iff \quad \Exp_{A}^{y,-t}(\xi \cdot \dd x|_y) = x
\]
and in that case
\[
\partial_t w(t,x,\xi) = A(\xi \cdot dx|_y).
\]
\item\label{en:operatorphaseeikonal_hessian} For all $(t,x,y,\xi) \in (-\epsilon,\epsilon) \times V \times V \times \nR^n$ such that $\partial_\xi w(t,x,\xi) = y$,
\[
\rank(\partial_\xi^2 w(t,x,\xi)) = \rank(\DD\Exp_{A}^{y,-t}|_{\xi \cdot \dd x|_y}).
\]
\end{enumerate}
Here $\xi \cdot \dd x \in \Forms^1(W)$ is the form $\sum_j \xi_j \, \dd x_j$ in the coordinates $(W,x)$ for all $\xi \in \R^n$.
\end{proposition}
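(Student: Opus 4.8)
The plan is to assemble the machinery of Subsections~\ref{sec09270951}--\ref{sec11171048}, applied on a coordinate chart, taking $w$ to be (essentially) the restriction of the phase function $\phase$ of a conic existence domain to the affine forms $\xi\cdot\dd x$. Shrinking $W$ if necessary we may assume it is a simply connected coordinate domain, and (since the Hamilton flow of $A$ is local and only small times will be used) we may apply the results of Section~\ref{sec:eikonal} with $W$ in place of $M$; note $A|_{T^*W}$ is still $1$-homogeneous with $\cD_A\cap T^*W=\nT^*W$. Fix a compact neighbourhood $K$ of $o$ in $W$ and let $\Theta=\{\xi\cdot\dd x:\xi\in\Sphere^{n-1}\}\subset\CLF(W)$; this is compact and $\alpha|_z\neq0$, hence $\alpha|_z\in\cD_A$, for all $\alpha\in\Theta$, $z\in K$. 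Lemma~\ref{lem:compactED} yields an existence domain $(\Omega,U,\epsilon)$ with $\Theta\subset\Omega$ and $K\subset\interior\bigl(\bigcap_{\alpha\in\Omega,|t|<\epsilon}\rho^\alpha_t(U)\bigr)$; in particular $K\subset U$, so $o\in U$ and $U\subset W$. Replacing $\Omega$ by $\R^+\Omega$ and invoking Proposition~\ref{prp:ced}, we may assume $(\Omega,U,\epsilon)$ is a conic existence domain; it still satisfies $\{\xi\cdot\dd x:\xi\in\nR^n\}\subset\Omega$, and $\tilde U^{\xi\cdot\dd x}$, $\rho^{\xi\cdot\dd x}_t$, $\sigma^{\xi\cdot\dd x}_t$ are unchanged under rescaling of $\xi$. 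Let $\phase:\cD_\phase\to\R$ be the phase function of $(\Omega,U,\epsilon)$ and set $V=\interior(K)$; then for all $\xi\in\nR^n$, $|t|<\epsilon$, $x\in V$ and $y\in U$ the point $\sigma^{\xi\cdot\dd x}_t(x)$ is defined and $(t,x,y,\xi\cdot\dd x)\in\cD_\phase$. Identifying $V$ and $U$ with subsets of $\R^n$ via the coordinates, define
\[
w(t,x,\xi)=\phase(t,x,y,\xi\cdot\dd x)+y\cdot\xi\qquad\bigl(t\in(-\epsilon,\epsilon),\ x\in V,\ \xi\in\nR^n\bigr)
\]
for an arbitrary $y\in U$. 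Since $\phase(t,x,y,\xi\cdot\dd x)=\wphase^{\xi\cdot\dd x}(t,x)-\wphase^{\xi\cdot\dd x}(0,y)$ by \eqref{eq09191349}, and $\dd_z\wphase^{\xi\cdot\dd x}(0,z)=\xi\cdot\dd x|_z$ by \eqref{eq09181817} (equivalently \eqref{eq09190902}), the right-hand side does not depend on $y\in U$; thus $w$ is well defined, and it is smooth because $\alpha\mapsto\mu^\alpha$ (hence $\phase$) is $C^\infty$ in the G\^ateaux sense (Proposition~\ref{prp:existenceED}) and $\xi\mapsto\xi\cdot\dd x$ is linear. By construction $\phi(t,x,y,\xi)=w(t,x,\xi)-w(0,y,\xi)=\phase(t,x,y,\xi\cdot\dd x)$.

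For part \ref{en:operatorphaseeikonal_ders}: $w$ is $1$-homogeneous in $\xi$ by Proposition~\ref{prp:hom_phase}; $w(0,x,\xi)=\phase(0,x,y,\xi\cdot\dd x)+y\cdot\xi=(x-y)\cdot\xi+y\cdot\xi=x\cdot\xi$ by \eqref{eq:eikonal_phi}; and $\de_x w(t,x,\xi)=\de_x\phase(t,x,y,\xi\cdot\dd x)=\mu^{\xi\cdot\dd x}_M(t,x)\in\cD_A=\nT^*M$ by \eqref{eq:phiderivatives} and \eqref{eq:domainx}, so $\de_x w\neq0$. For part \ref{en:operatorphaseeikonal_structure}: $\phi$ is smooth and $1$-homogeneous in $\xi$, and $\de_y\phi=-\xi\neq0$, so $\dd\phi\neq0$ and $\phi$ is a phase function; the eikonal equation \eqref{eq09121416} for $\phi$ is exactly \eqref{eq:eikonal_phi} for $\phase$ (using $\de_x\phi=\mu^{\xi\cdot\dd x}_M$ and $\phi(0,\cdot)=(x-y)\cdot\xi$); and for each fixed $t$, if $\de_\xi\phi(t,x,y,\xi)=0$ then $\de_y\phi=-\xi\neq0$ and $\de_x\phi=\mu^{\xi\cdot\dd x}_M(t,x)\neq0$, so $\phi$ is an operator phase function.

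For part \ref{en:operatorphaseeikonal_critical}: Proposition~\ref{prp:treves} gives $\de_{\xi_j}w(t,x,\xi)=\de_\alpha\phase(t,x,y,\xi\cdot\dd x)[\dd x_j]+y_j=\int_y^{\sigma^{\xi\cdot\dd x}_t(x)}\dd x_j+y_j=\bigl(\sigma^{\xi\cdot\dd x}_t(x)\bigr)_j$, i.e.\ $\de_\xi w(t,x,\xi)$ is the coordinate vector of $\sigma^{\xi\cdot\dd x}_t(x)$; hence $\de_\xi w(t,x,\xi)=y\iff\sigma^{\xi\cdot\dd x}_t(x)=y\iff\rho^{\xi\cdot\dd x}_t(y)=x\iff\Exp_A^{y,-t}(\xi\cdot\dd x|_y)=x$ by \eqref{eq:Exprho} and \eqref{eq11201112}, and in that case $\de_t w(t,x,\xi)=\de_t\phase(t,x,y,\xi\cdot\dd x)=A(\xi\cdot\dd x|_{\sigma^{\xi\cdot\dd x}_t(x)})=A(\xi\cdot\dd x|_y)$ by \eqref{eq:derivt}. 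For part \ref{en:operatorphaseeikonal_hessian}: the matrix $\de_\xi^2 w(t,x,\xi)$ has entries $\de_\alpha^2\phase(t,x,y,\xi\cdot\dd x)[\dd x_i,\dd x_j]$, so it represents, in the basis $\{\dd x_j\}_j$, the restriction of the bilinear form $\de_\alpha^2\phase(t,x,y,\xi\cdot\dd x)$ to $\cV\times\cV$ with $\cV=\Span\{\dd x_j\}_j$, a spanning subspace for $V$; hence \eqref{eq:hessian_rank_gen} of Corollary~\ref{cor:phase_rank} gives $\rank(\de_\xi^2 w(t,x,\xi))=\rank\bigl(\DD\Exp_A^{\sigma^{\xi\cdot\dd x}_t(x),-t}|_{\xi\cdot\dd x|_{\sigma^{\xi\cdot\dd x}_t(x)}}\bigr)$, which equals $\rank\bigl(\DD\Exp_A^{y,-t}|_{\xi\cdot\dd x|_y}\bigr)$ whenever $\de_\xi w(t,x,\xi)=y$ (so $\sigma^{\xi\cdot\dd x}_t(x)=y$), as claimed.

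This bookkeeping is the bulk of the work; the only genuinely delicate points are (a) arranging, via $1$-homogeneity of $A$ together with Lemma~\ref{lem:compactED}, that $w$ is defined on a full product $(-\epsilon,\epsilon)\times V\times\nR^n$ with $\epsilon$ and $V$ independent of $\xi$ — here it is essential that one can reduce the $\xi$-dependence to the compact sphere and that the existence-domain conditions hold uniformly there; and (b) passing from the G\^ateaux $C^\infty$ regularity of $\alpha\mapsto\mu^\alpha$ to ordinary smoothness of $w$ in the finite-dimensional parameter $\xi$, which holds because along the finite-dimensional affine subspace $\{\xi\cdot\dd x\}$ the G\^ateaux derivatives coincide with the usual partial derivatives in $\xi$ and depend continuously on $(t,x,\xi)$. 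Neither presents a conceptual obstacle, but (a) is where the choice of a conic existence domain and the compactness argument of Lemma~\ref{lem:compactED} are used in an essential way.
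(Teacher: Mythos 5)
Your proposal is correct and follows essentially the same route as the paper: choose, by Lemma~\ref{lem:compactED} and Proposition~\ref{prp:ced}, a conic existence domain containing the unit sphere of the affine coframe forms, set $\phi(t,x,y,\xi)=\phase(t,x,y,\xi\cdot\dd x)$, and read off parts~\ref{en:operatorphaseeikonal_critical} and~\ref{en:operatorphaseeikonal_hessian} from Proposition~\ref{prp:treves} and Corollaries~\ref{cor:phase_critical}, \ref{cor:phase_rank}. The only cosmetic difference is that you define $w$ as $\phase(t,x,y,\xi\cdot\dd x)+y\cdot\xi$ and argue $y$-independence from the potential $\wphase^\alpha$, whereas the paper extracts $w(t,x,\xi)=\xi\cdot\sigma^{\xi\cdot\dd x}_t(x)$ directly from the closed-form expression for $\phase$ in Proposition~\ref{prp:hom_phase}; both lead to the same function.
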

\begin{proof}
Without loss of generality we may assume that $M$ is an open subset of $\R^n$. Let $\cV \subset \CLF(M)$ to be the $\R$-linear span of $\dd x_1,\dots,\dd x_n$, so clearly $\cV$ is both separating and spanning for $M$. Let $S$ be the unit sphere in $\cV$ (corresponding to the choice of $\dd x_1,\dots,\dd x_n$ as an orthonormal basis). Then $S$ is a compact subset of $\CLF(M)$. Since $\alpha|_o \in \nT^* M$ for all $\alpha \in S$, by Lemma \ref{lem:compactED} and Proposition \ref{prp:ced} we can find a CED $(\Omega,U,\epsilon)$ such that $S \subset \Omega$ and $o \in V \defeq \interior(\bigcap_{\alpha\in\Omega,|t|<\epsilon} \rho^\alpha_t(U))$. We can now define a smooth function $\phi : \R \times V \times V \times \nR^n \to \R$ by
\[
\phi(t,x,y,\xi) = \phase(t,x,y, \xi \cdot \dd x),
\]
where $\phase$ is the ``phase function'' associated to $(\Omega,U,\epsilon)$ defined in \eqref{eq09191349}, while $\xi \cdot \dd x = \sum_j \xi_j \dd x_j$.

Then, by \eqref{eq:eikonal_phi}, $\phi$ solves the eikonal equation, and $\partial_x \phi$ and $\partial_y \phi$ vanish nowhere by \eqref{eq:domainx} and \eqref{eq:domainy}. From Proposition \ref{prp:hom_phase} we deduce that $\phi$ is $1$-homogeneous in $\xi$, and 
\[
\phi(t,x,y,\xi) = \xi \cdot (\sigma_t^{\xi \cdot \dd x}(x) - y) = w(t,x,\xi) - w(0,y,\xi),
\]
where $w(t,x,\xi) = \xi \cdot \sigma_t^{\xi \cdot \dd x}(x)$. 
This shows \ref{en:operatorphaseeikonal_ders} and \ref{en:operatorphaseeikonal_structure}.
Moreover Corollary \ref{cor:phase_critical}
and \eqref{eq:Exprho} show that
\[
\partial_\xi \phi(t,x,y,\xi) = 0 \quad \iff \quad x = \rho^{\xi \cdot \dd x}_t(y) \quad \iff \quad x = \Exp_A^{y,-t}(\xi \cdot \dd x|_y),
\]
and in that case $\partial_t \phi(t,x,y,\xi) = A(\xi \cdot dx|_y)$ by \eqref{eq:derivt}.
This shows \ref{en:operatorphaseeikonal_critical}, because $\de_\xi\phi(t,x,y,\xi)=\de_\xi w(t,x,\xi)-y$ and $\de_t\phi=\de_tw$.
Moreover Corollary \ref{cor:phase_rank} gives that
\[
\partial_\xi \phi(t,x,y,\xi) = 0 \quad \Longrightarrow \quad \rank (\partial_\xi^2 \phi(t,x,y,\xi)) = \rank (\DD \Exp_{A}^{y,-t}|_{\xi \cdot \dd x|_y}),
\]
and we are done, because $\partial_\xi^2 \phi(t,x,y,\xi)=\partial_\xi^2 w(t,x,\xi)$.
\end{proof}

\section{Sub-Laplacians on sub-Riemannian manifolds}\label{sec:subriemannian}

In this section we recall the main definitions and results about sub-Riemannian manifolds and sub-Laplacians that will be of use later, and show how the results from the previous sections yield a Fourier integral representation for the sub-Riemannian wave propagator. For a more extensive introduction to sub-Riemannian geometry, we refer to \cite{MR1867362,abb_introduction_2018}.

\subsection{The sub-Riemannian Hamiltonian}

If $\cH\subset \Sect(TM)$ is a linear subspace of vector fields on a manifold $M$ and if $x\in M$, then we denote by $\cH_x\subset T_xM$ the space $\{v|_x \tc v \in \cH\}$. 
If $U\subset M$, we write $\cH_U=\bigcup_{x\in U} \cH_x$.
We define inductively on $k\in\N$ the spaces $\cH^{(k)} \subset \Sect(TM)$ as $\cH^{(1)}=\cH$ and $\cH^{(k+1)}=\cH^{(k)}+[\cH,\cH^{(k)}]$.
Then $\cH$ is said to be \emph{bracket-generating} at $x\in M$ if there is an $s\in\N$ such that $\cH^{(s)}_x=T_xM$.
We say that $\cH \subset \Sect(TM)$ is bracket-generating on $M$ if it is bracket-generating at each $x\in M$.
More generally, a subset of $\Sect(TM)$ is said to be bracket-generating at $x$ (respectively on $M$), if its linear span is bracket-generating at $x$ (respectively on $M$).

\begin{definition}
	We call $(M,H)$ a \emph{quadratic Hamiltonian pair} if $M$ is a smooth manifold and $H:T^*M\to[0,\infty)$ is a smooth map, called \emph{Hamiltonian}, 
	such that
	the restriction of $H$ to $T_x^*M$ is a homogeneous quadratic form, for all $x\in M$.
		If the space $\cH$ of \emph{horizontal vector fields} for $H$, defined by
	\[
	\cH \defeq \left\{v\in \Sect(TM) \tc \forall\alpha\in T^*M \tc \left( H(\alpha)=0 \,\, \Rightarrow \,\, \alpha(v)=0 \right) \right\},
	\]
	is bracket-generating, then we call $(M,H)$ a \emph{bracket-generating quadratic Hamiltonian pair} or \emph{sub-Riemannian manifold}.
\end{definition}

Equivalently,
a quadratic Hamiltonian pair $(M,H)$ is defined by a smooth positive semidefinite section $b_H$ of the vector bundle of symmetric bilinear forms on $T^*M$, given by
\[
b_H(\alpha,\beta) = \frac12 \left(H(\alpha+\beta)-H(\alpha)-H(\beta) \right) .
\]
for all $x \in M$ and $\alpha,\beta \in T^*_x M$.
The Hamiltonian $H$ also induces a bundle homomorphism $B_H:T^*M\to TM$ defined by the property
\[
\alpha[B_H\beta] = b_H(\alpha,\beta)
\]
for all $x \in M$ and $\alpha,\beta \in T^*_x M$.
Notice that $B_H(T^*M)=\cH_M$.

The push forward of $b_H$ through $B_H$ is a scalar product on $\cH_x$, for each $x\in M$, which is given by
\begin{equation}\label{eq:innerproduct}
\langle B_H\alpha,B_H\beta \rangle_H = b_H(\alpha,\beta) ,
\qquad \forall\alpha,\beta\in T^*_xM .
\end{equation}

The \emph{horizontal gradient} of a smooth real-valued function $f$ on $M$ is the real vector field $\nabla_H f = B_H (\dd f) \in \cH$.
Notice that, for all $\alpha \in T^*M$, 
\[
\alpha[\nabla_H f]
= b_H(\alpha, d f) 
= \langle B_H \alpha, \nabla_H f \rangle_H.
\]

In the sequel we shall mainly work with complex-valued functions on $M$ and, correspondingly, we often make use of the complexified tangent and cotangent bundles $\C TM$ and $\C T^*M$. The map $B_H$ extends to a complex-linear bundle homomorphism $B_H : \C T^* M \to \C T M$, while $b_H$ and $\langle \cdot,\cdot\rangle_H$ extend to sesquilinear forms on the fibres of $\C T^*M$ and $\C\cH$ respectively. The horizontal gradient $\nabla_H$ extends to a complex-linear first-order differential operator $\nabla_H : C^\infty(M) \to \Sect(\C TM)$.

In a coordinate chart $(U,x)$ of $M$ and in the corresponding local trivialization $(T^*U,(x,\xi))$ of $T^*M$, we have $H\left( \sum_j \alpha_j\dd x^j\right) = \sum_{jk} H^{jk} \alpha_j\alpha_k$ and $B_H(\alpha) = \sum_{k} \left(\sum_j H^{jk}\alpha_j\right) \de_k$, where $H^{jk}:U\to\R$ are smooth functions and $H^{jk}=H^{kj}$.
Moreover, the horizontal gradient of a smooth function $f$ is
\[
\nabla_H f = \sum_k \left(\sum_jH^{jk}\de_jf\right) \de_k .
\]

\subsection{The sub-Laplacian and its functional calculus}

A measure $\mu$ on a manifold $M$ is a 
\emph{smooth positive  measure} 
if for every coordinate chart $(U,x)$ the restricted measure is absolutely continuous with respect to the Lebesgue measure and  has a strictly positive smooth density. If $(M,H)$ is a Hamiltonian pair and $\mu$ is a smooth positive measure on $M$, we call $(M,H,\mu)$ a \emph{measured quadratic Hamiltonian pair}.

\begin{definition}\label{dfn:divergence}
	Let $\mu$ be a smooth positive measure on a manifold $M$.
	The $\mu$-\emph{divergence} of a smooth vector field $v\in \Sect(\C TM)$ is the unique smooth function $\div_\mu v\in C^\infty(M)$ such that
	\[
	\int_M \dd\phi [\overline{v} ] \,\dd\mu = - \int_M \phi \, \overline{\div_\mu v} \, \dd\mu ,
	\qquad\forall\phi\in C^\infty_c(M) .
	\]
\end{definition}
In other words, minus the $\mu$-divergence $-\div_\mu : \Sect(\C TM) \to C^\infty(M)$ is the formal adjoint of the exterior derivative $\dd : C^\infty(M) \to \C \Forms^1(M)$ with respect to $\mu$.

If $(U,x)$ is a coordinate chart and
 $\dd\mu(x) = \rho(x)\,\dd x$ 
on $U$, then
\[
\div_\mu v = \sum_{j=1}^n\left( \de_j v^j + v^j \frac{\de_j\rho}{\rho} \right) .
\]

\begin{definition}
	Let $(M,H,\mu)$ be a measured quadratic Hamiltonian pair.
	The \emph{sub-Laplacian} of a function $f\in C^\infty(M)$ is the smooth function
	\[
	\sLap f = -\div_\mu(\nabla_Hf) = -\div_\mu(B_H(\dd f)) .
	\] 
\end{definition}
If $(U,x)$ is a coordinate chart and $\dd\mu(x) = \rho(x) \,\dd x$ on $U$, then
\begin{equation}\label{eq08292346}
\sLap f = -\sum_{jk} \left( H^{kj}\, \de_k\de_jf + \left(\de_k H^{kj} + H^{kj} \frac{\de_k\rho}{\rho} \right)\,\de_jf \right) .
\end{equation}
This shows that $\sLap:C^\infty(M)\to C^\infty(M)$ is a second-order differential operator with principal symbol $H$.

Notice that, for all $f,g\in C^\infty(M)$, 
\[
\int_M \sLap f\,\, \overline{g} \,\dd\mu 
= \int_M b_H(\dd f,\dd g) \,\dd\mu 
= \int_M \langle \nabla_H f, \nabla_H g \rangle_H \,\dd\mu.
\]
This implies that $\sLap$ is a nonnegative symmetric operator.
Therefore, there exists a nonnegative self-adjoint extension of $\sLap$ on $L^2(\mu)$, such as Friedrichs' extension,
see for instance \cite[Section XI.7]{MR1336382}.

Once such a self-adjoint extension of $\sLap$ is chosen, a Borel functional calculus for $\sLap$ is defined via the spectral theorem and, for all bounded Borel functions $F : \R \to \C$, the operator $F(\sLap)$ is bounded on $L^2(M)$. Since $\sLap$ is self-adjoint,
\[
F(\sLap)^* = \overline{F}(\sLap),
\]
and moroever, since additionally $\sLap$ preserves real-valued functions,
\[
\overline{F(\sLap)f} = \overline{F}(\sLap) \overline{f}
\]
for all $f \in L^2(M)$. In particular, for all $p \in [1,\infty]$,
\begin{equation}\label{eq:adjointcalculus}
\| F(\sLap) \|_{p \to p} = \| \overline{F}(\sLap) \|_{p \to p} = \| F(\sLap) \|_{p' \to p'}.
\end{equation}

Functional calculus allows us to define the wave propagator $t \mapsto \cos(t\sqrt{\sLap})$ associated with $\sLap$.
In the sequel we will need a couple of assumptions on the wave propagator. The first is finite propagation speed:
\begin{equation*}\tag{FPS}\label{FPS} 
\begin{array}{c}
\text{for all $U \subset M$ open and $K\subset U$ compact there is an $\epsilon>0$}\\
\text{ such that $\spt(\cos(t\sqrt{\sLap})u)\subset U$}\\
\text{for all $t\in(-\epsilon,\epsilon)$ and $u\in C^\infty_c(M)$ with $\spt(u)\subset K$.}
\end{array}
\end{equation*}
This assumption is satisfied in fairly general context: see for instance
\cite{MR925249,MR2114433,MR2352563,MR3026352,MR3080173}
and references therein.
The second is smoothness preservation:
\begin{equation*}\tag{SP}\label{SP}
\begin{array}{c}
\text{for all $K \subset M$ compact there exists $\epsilon > 0$ such that,}\\
\text{for all $u \in C^\infty_c(M)$ with $\spt(u) \subset K$,}\\
\text{the function $(t,x) \mapsto \cos(t\sqrt{\sLap}) u(x)$ is smooth on $(-\epsilon,\epsilon) \times M$.}
\end{array}
\end{equation*}
Since $\cos(t\sqrt{\sLap})$ is a contraction on $L^2(M)$, it is easily seen that this assumption is satisfied under sub-ellipticity assumptions on $\sLap$ (e.g., when $(M,H)$ is bracket-generating, by H\"ormander's theorem \cite{MR0222474}), or more generally when $\sLap$ commutes with an operator $D$ such that
\[
C^\infty_c(M) \subset \{ f \in L^2(M) \tc D^k f \in L^2(M) \,\, \forall k \in \N\} \subset C^\infty(M).
\]
We remark that hypoellipticity of $\sLap$ is not a necessary condition for \eqref{FPS} and \eqref{SP} to hold: for instance, if $M$ is a Lie group and $\sLap = -v^2$ for some left-invariant vector field $v$, then \eqref{FPS} and \eqref{SP} are satisfied.

Some results in the sequel will require a further assumption on the functional calculus for $\sLap$:
\begin{equation*}\tag{SFC}\label{SFC}
\begin{array}{c}
\text{for all $F \in \Sch(\R)$, the operator $F(\sLap)$ is bounded on $L^1(M)$.}
\end{array}
\end{equation*}
This assumption is verified, e.g., whenever there is a doubling distance on $(M,\mu)$ such that $\sLap$ satisfies gaussian-type heat kernel bounds, cf.\ \cite{MR782662,MR1172944,hebisch_functional_1995,MR1943098} and \cite[Theorem 6.1(iii)]{MR3671588}. We remark that, under \eqref{SFC}, if $F \in \Sch(\R)$, then $F(\sLap)$ is bounded on $L^p(M)$ for all $p \in [1,\infty]$ (by duality and interpolation) and moreover, by the closed graph theorem, the correspondence $F \mapsto F(\sLap)$ is continuous from $\Sch(\R)$ to the space of $L^p$-bounded operators (with the operator norm topology).

\subsection{Sub-Riemannian structures defined by systems of vector fields}

A common way to define a quadratic Hamiltonian pair or a sub-Riemannian manifold is by choosing a family of vector fields $v_1,\dots,v_r\in \Sect(TM)$ and defining 
\begin{equation}\label{eq10071358}
H = \sum_{j=1}^r v_j \otimes v_j .
\end{equation}
We have the following expressions: if $\alpha,\beta\in T^*_x M$, then
\begin{gather*}
b_H(\alpha,\beta) = \sum_{j=1}^r  \alpha(v_j|_x)\beta(v_j|_x),  \quad
H(\alpha) = \sum_{j=1}^r  \alpha(v_j|_x)^2 ,  \quad
B_H(\alpha) = \sum_{j=1}^r \alpha(v_j|_x) v_j|_x  , \\
\cH_x = \Span \{v_j|_x\}_{j=1,\dots,r} , \qquad
\nabla_H f = \sum_{j=1}^r (v_jf)\, v_j.
\end{gather*}
In particular, $\cH$ is bracket generating if and only if the family of vector fields $v_1,\dots,v_r$ is bracket generating.
Moreover, if $\mu$ is a smooth positive measure on $M$, for all
$a=\sum_{j=1}^r a^j v_j \in \cH$ and $f \in C^\infty(M)$,
\begin{equation}\label{eq:divLexpl}
\div_\mu(a) = - \sum_{j=1}^r v_j^\mu a^j, \qquad
\sLap f = \sum_{j=1}^r v_j^\mu v_j f ,
\end{equation}
where $v^\mu$ is the formal adjoint of $v$, that is, the differential operator $v^\mu : C^\infty(M) \to C^\infty(M)$ such that $\int_M f\,\overline{vg} \,\dd\mu=\int_M v^\mu f\,\bar g \,\dd\mu$, for all $f,g\in C^\infty_c(M)$.

\begin{remark}\label{rm:sumofsquares}
If $(M,H)$ is a quadratic Hamiltonian pair such that $x \mapsto \dim(\cH_x)$ is constant, then $H$ can be written as in \eqref{eq10071358}, at least locally: indeed $\cH_M$ is a smooth subbundle of $T M$ and one can take as $v_j$ a local orthonormal frame of $\cH_M$. However, not all quadratic Hamiltonian pairs $(M,H)$ admit the decomposition \eqref{eq10071358} with smooth vectors fields $v_j$, not even locally (cf.\ \cite[p.\ 8]{MR0457908}).
	Indeed, by \cite{MR1510517}, there is a homogeneous nonnegative real polynomial $p(x,y,z)$ of degree 6 in three variables that is not a finite sum of squares of polynomials (see also \cite{MR900345,MR1882548,MR2254649}).
	One can thus see, arguing with Taylor series, that $p$ is not a finite sum of squares of smooth functions in any neighbourhood of the origin.
	Now, fix a frame $(X,Y,Z)$ of $T\R^3$ and the dual coframe $\alpha_X,\alpha_Y,\alpha_Z$ of $T^*M$.
	Define
	\[
	H = X\otimes X + Y\otimes Y + p\cdot Z\otimes Z  .
	\]
	Note that $H(\alpha_Z) = p$.
	If $H$ were of the form $\sum_j v_j\otimes v_j$, then $p = H(\alpha_Z) = \sum_j v_j(\alpha_Z)^2$ would be a sum of squares of smooth functions.
	Therefore, $H$ cannot be written as in~\eqref{eq10071358}.
	Note that, by choosing $X$, $Y$ and $Z$ so that $[X,Y]=Z$, we also obtain a sub-Riemannian structure that is not written as in~\eqref{eq10071358} with smooth vectors fields $v_j$.
	However, $H$ can always be written as in~\eqref{eq10071358} with Lipschitz vector fields $v_j$, see \cite{MR0229669}.
\end{remark}

A particular class of quadratic Hamiltonian pairs where the above-described pathologies do not occur is defined below.

\begin{definition}
	A quadratic Hamiltonian pair $(M,H)$ is called \emph{equiregular} if $\cH^{(k)}_M$ is a subbundle of $TM$ for all $k$.
\end{definition}

In other words, we are requiring $x \mapsto \dim(\cH^{(k)}_x)$ to be constant, for all $k$.

Not all quadratic Hamiltonian pairs are equiregular, as shown by the example in Remark \ref{rm:sumofsquares}. A simpler, classical example arises when $\sLap = -(X^2+Y^2)$, with $X = \partial_x$ and $Y = x \partial_y$, is the Grushin operator on $\R^2$; in this case, despite the non-equiregularity, the Hamiltonian can be globally written in the form \eqref{eq10071358}.

In any case, for an arbitrary quadratic Hamiltonian pair, from the lower semicontinuity of the functions $x\mapsto \dim(\cH^{(k)}_x)$ for $k \in \N$, we immediately deduce the following result.

\begin{lemma}\label{lem10101454}
	Let $(M,H)$ be a quadratic Hamiltonian pair. 
	\begin{enumerate}[label=(\roman*)]
	\item Every nonempty open set of $M$ contains a nonempty open set $M_1$ such that $(M_1,H)$ is an equiregular quadratic Hamiltonian pair.
	\item If $x \in M$ satisfies $\max_{k\in \N} \dim(\cH^{(k)}_x) = \dim M$, then every neighbourhood of $x$ contains a nonempty open set $M_1$ such that $(M_1,H)$ is an equiregular sub-Riemannian manifold.
	\end{enumerate}
\end{lemma}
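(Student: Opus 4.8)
The plan is to exploit the lower semicontinuity of the integer-valued functions $x \mapsto \dim(\cH^{(k)}_x)$ on $M$, for each fixed $k \in \N$. Recall that $\cH^{(k)}_x$ is spanned by the values at $x$ of a (local) family of iterated brackets of horizontal vector fields; since the rank of a matrix of smooth functions can only jump up under small perturbations, each $x \mapsto \dim(\cH^{(k)}_x)$ is lower semicontinuous, hence attains its local maximum on an open (dense) set. The key point is that we need this simultaneously for \emph{all} $k$, but since $\dim(\cH^{(k)}_x) \leq \dim M$ for every $k$ and $\dim(\cH^{(k)}_x)$ is nondecreasing in $k$, only finitely many values of $k$ are relevant: once $\dim(\cH^{(k)}_x)$ stabilises in $k$ it equals $\dim(\cH^{(\infty)}_x)$, and this stabilisation happens by $k = \dim M$ at the latest (at each step the dimension strictly increases until it stops). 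So it suffices to control $k \in \{1,\dots,\dim M\}$.

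For part (i), I would argue as follows. Fix a nonempty open set $V \subseteq M$. For $k = 1$, the function $x \mapsto \dim(\cH^{(1)}_x)$ attains a maximum value $d_1$ on $V$, and by lower semicontinuity the set $V_1 = \{x \in V : \dim(\cH^{(1)}_x) = d_1\}$ is open and nonempty. Repeating inside $V_1$ with $k = 2$ produces an open nonempty $V_2 \subseteq V_1$ on which $\dim(\cH^{(2)}_x)$ is constant (and $\dim(\cH^{(1)}_x)$ remains constant since $V_2 \subseteq V_1$). Iterating for $k = 3, \dots, \dim M$ — a finite process — yields an open nonempty $M_1 \defeq V_{\dim M} \subseteq V$ on which $x \mapsto \dim(\cH^{(k)}_x)$ is constant for every $k \leq \dim M$, and hence (by the stabilisation remark) for every $k \in \N$. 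On such $M_1$ each $\cH^{(k)}_{M_1}$ is a subbundle of $TM_1$: it is spanned smoothly by finitely many iterated brackets, and having constant rank, a standard argument (selecting, near each point, a maximal linearly independent subfamily of these spanning vector fields) shows it is locally trivial. Thus $(M_1, H)$ is equiregular.

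For part (ii), suppose $x_0 \in M$ satisfies $\max_{k} \dim(\cH^{(k)}_{x_0}) = \dim M$, i.e.\ $\cH$ is bracket-generating at $x_0$, say $\dim(\cH^{(s)}_{x_0}) = \dim M$ for some $s$. Given any neighbourhood $U$ of $x_0$, apply part (i) to the open set $U$ to obtain an open nonempty $M_1 \subseteq U$ with $(M_1, H)$ equiregular — but here I must additionally arrange that $M_1$ can be taken near $x_0$ in a way that preserves bracket-generation. The cleanest route: by lower semicontinuity, $\dim(\cH^{(s)}_x) = \dim M$ for all $x$ in a neighbourhood $U' \subseteq U$ of $x_0$; now run the construction of part (i) inside $U'$. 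The resulting equiregular $M_1 \subseteq U' \subseteq U$ then satisfies $\dim(\cH^{(s)}_x) = \dim M$ for all $x \in M_1$, so $\cH$ is bracket-generating on $M_1$, i.e.\ $(M_1, H)$ is an equiregular sub-Riemannian manifold.

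The only mild subtlety — and the step I would be most careful with — is the passage from "constant rank of the spanning family" to "genuine smooth subbundle", i.e.\ checking that $\cH^{(k)}_{M_1}$ is locally spanned by a smooth \emph{frame} rather than merely by a smooth spanning set of possibly redundant vector fields. This is the standard fact that the image of a constant-rank bundle morphism (here, from a trivial bundle whose sections are the finitely many iterated brackets, into $TM_1$) is a subbundle; it is routine but worth invoking explicitly. Everything else is bookkeeping with lower semicontinuity and the finiteness of the filtration.
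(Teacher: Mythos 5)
Your plan follows the same lower-semicontinuity route the paper hints at (the paper gives no more than the one-line pointer preceding the lemma), and the iterative shrinking, the reduction to finitely many $k$, the constant-rank-to-subbundle step, and the argument for part~(ii) are all at the right level of care; in particular you correctly flag the subbundle point as the one to state precisely.

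The one statement to repair is the parenthetical ``at each step the dimension strictly increases until it stops,'' and the subsequent deduction that constancy of $x\mapsto\dim(\cH^{(k)}_x)$ on $M_1$ for $k\le n$ automatically gives constancy for all $k$. Read pointwise, the monotonicity claim is false: with $v_1=\partial_1$ and $v_2=\partial_2+x_1^2\partial_3$ on $\R^3$ one has $\dim\cH^{(1)}_0=\dim\cH^{(2)}_0=2$ but $\dim\cH^{(3)}_0=3$, so the sequence $k\mapsto\dim(\cH^{(k)}_x)$ at a fixed $x$ can plateau and then jump. The correct justification is at the level of distributions and becomes available only after $M_1$ is built: once $d_k=\dim(\cH^{(k)}_x)$ is constant on $M_1$ for $1\le k\le n$, each $\cH^{(k)}_{M_1}$ is a constant-rank subbundle; since $d_1\le\cdots\le d_n\le n$, either $d_1=0$ (trivial), or $d_n=n$, or there is $k^*<n$ with $d_{k^*}=d_{k^*+1}$, in which case the inclusion $\cH^{(k^*)}_{M_1}\subseteq\cH^{(k^*+1)}_{M_1}$ of equal-rank subbundles is an equality, and the recursion $\cH^{(k+1)}=\cH^{(k)}+[\cH,\cH^{(k)}]$ (applied locally over $C^\infty(M_1)$, using that constant rank gives local frames) then forces $\cH^{(j)}_{M_1}=\cH^{(k^*)}_{M_1}$ for all $j\ge k^*$. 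So your conclusion is right, but it has to be reached through the subbundle structure on $M_1$, not through a pointwise monotonicity argument.
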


\subsection{The sub-Riemannian exponential map}

Let $(M,H)$ be a quadratic Hamiltonian pair.
As in Section~\ref{sec09190901}, we denote by $\Phi_H$ the flow on $T^*M$ of the Hamiltonian vector field $\X_H \in \Sect(T(T^*M))$ defined by means of the standard symplectic form on $T^*M$, and write $\Exp_{H}^{o,t}(\xi) = \pi_M \Phi_H^t(\xi)$ for all $o \in M$, $t \in \R$ and $\xi \in T_o^* M$ for which $(t,\xi)$ is in the domain of $\Phi_H$.

Since $H$ is $2$-homogeneous, i.e., $H(\lambda\xi)=\lambda^2 H(\xi)$ for all $\lambda\in\R$ and $\xi\in T^*M$, we deduce the following properties of the flow:
\begin{equation}\label{eq09271803}
\Phi_H^t(\lambda\xi) = \lambda\Phi_H^{\lambda t}(\xi)
\quad\text{and}
\quad
\Exp_H^{o,t}(\lambda\xi) = \Exp_H^{o,\lambda t}(\xi) .
\end{equation}
Because of this scaling property, the exponential map $\Exp_H^o \defeq \Exp_H^{o,1}$ at time $t=1$ already contains all the relevant information.

From the fact that $H$ is a quadratic form, we deduce the following information on the curves defined via the exponential map.

\begin{lemma}\label{lem10011320}
Let $\xi\in T_o^*M$.
Let $\xi(t)=\Phi_H^t(\xi)$ and $x(t)=\Exp_H^{o}(t\xi)$ (these are both defined for $t$ in an open interval containing $0$).
Then 
\[
\dot x(t) = 2B_H(\xi(t)).
\]
In particular, $x(t)$ is a horizontal curve, i.e., $\dot x(t)\in\cH_M$ for all $t$, and $\langle \dot x(t),\dot x(t) \rangle_H = 4 H(\xi)$ is constant. Moreover, if $H(\xi)=0$, then $\Exp_H^{o}(t\xi)=o$ for all $t \in \R$.
\end{lemma}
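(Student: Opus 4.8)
The plan is to compute $\dot x(t)$ directly in local canonical coordinates and identify the result with $2B_H(\xi(t))$, then read off the remaining assertions as consequences. First I would fix a coordinate chart $(U,x)$ near $o$, with induced canonical coordinates $(x,\eta)$ on $T^*U$, so that the curve $\xi(t) = \Phi_H^t(\xi)$ is written as $(x(t),\eta(t))$ and satisfies the Hamilton--Jacobi equations \eqref{eq09190853}:
\[
\dot x_j(t) = \frac{\de H}{\de \eta_j}(x(t),\eta(t)), \qquad \dot\eta_j(t) = -\frac{\de H}{\de x_j}(x(t),\eta(t)).
\]
In these coordinates $H(x,\eta) = \sum_{k\ell} H^{k\ell}(x)\,\eta_k\eta_\ell$ with $H^{k\ell} = H^{\ell k}$, so $\de H/\de\eta_j = 2\sum_\ell H^{j\ell}(x)\,\eta_\ell$, which is precisely the $j$-th component of $2B_H(\eta)$ (using the coordinate formula $B_H(\alpha) = \sum_k(\sum_j H^{jk}\alpha_j)\de_k$ recorded in the section on the sub-Riemannian Hamiltonian). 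Since $x(t) = \pi_M\Phi_H^t(\xi) = \Exp_H^o(t\xi)$ by the scaling identity \eqref{eq09271803} (applied with $\lambda = t$, noting $\Phi_H^t(\xi)$ projects to $\Exp_H^{o,t}(\xi) = \Exp_H^{o,1}(t\xi)$), this yields $\dot x(t) = 2B_H(\xi(t))$, which is the first claim and is coordinate-independent.

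From $\dot x(t) = 2B_H(\xi(t))$ and $B_H(T^*M) = \cH_M$ it is immediate that $\dot x(t) \in \cH_M$ for all $t$, so $x(t)$ is a horizontal curve. For the speed, I would use the definition \eqref{eq:innerproduct} of the inner product on $\cH_x$: writing $\dot x(t) = B_H(2\xi(t))$, we get
\[
\langle \dot x(t),\dot x(t)\rangle_H = \langle B_H(2\xi(t)),B_H(2\xi(t))\rangle_H = b_H(2\xi(t),2\xi(t)) = 4\,b_H(\xi(t),\xi(t)) = 4H(\xi(t)).
\]
Then $H(\xi(t)) = H(\xi)$ is constant because $H$ is preserved along its own Hamiltonian flow (this is \eqref{eq09181520}, $\X_H H = 0$). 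Hence $\langle\dot x(t),\dot x(t)\rangle_H = 4H(\xi)$, as claimed.

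Finally, for the case $H(\xi) = 0$: since $b_H$ is positive semidefinite, $H(\xi) = b_H(\xi,\xi) = 0$ forces $b_H(\xi,\cdot) = 0$ by Cauchy--Schwarz, hence $B_H(\xi) = 0$; equivalently, $\|\dot x(t)\|_H^2 = 4H(\xi) = 0$ gives $\dot x(t) = 2B_H(\xi(t))$, and since $H(\xi(t)) \equiv H(\xi) = 0$ we likewise get $B_H(\xi(t)) = 0$ for all $t$, so $\dot x(t) \equiv 0$ and $x(t) = x(0) = o$. Thus $\Exp_H^o(t\xi) = o$ for all $t$ in the interval of definition; and since the curve is constant, it extends to all of $\R$. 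The one point requiring a little care — the only real "obstacle" — is making sure the identification of the coordinate expression $\frac12\de H/\de\eta_j$ with the intrinsic map $B_H$ is done correctly (keeping track of the factor $2$ coming from $H$ being quadratic rather than bilinear), and confirming that $\Phi_H^t(\xi)$ indeed projects to $\Exp_H^o(t\xi)$ via \eqref{eq09271803}; both are straightforward once the conventions are pinned down.
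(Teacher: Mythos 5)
Your proposal is correct and follows essentially the same route as the paper's proof: compute $\dot x_j=\partial H/\partial\eta_j$ in canonical coordinates via the Hamilton--Jacobi equations \eqref{eq09190853}, identify the result with $2B_H(\xi(t))$ using the coordinate formula for $B_H$, and then deduce the horizontality, the constant-speed identity from \eqref{eq:innerproduct} and $\X_H H = 0$, and the degenerate case. You spell out the final steps (which the paper compresses into ``the conclusion follows by \eqref{eq:innerproduct}''), including the useful observation via \eqref{eq09271803} that $x(t)=\pi_M\Phi_H^t(\xi)$; the one place you could tighten the last assertion is to note directly that $H\ge 0$ and $H(\xi)=0$ force $\dd H|_\xi=0$, hence $\X_H|_\xi=0$, so $\xi$ is a fixed point of $\Phi_H$ and the flow (not merely the projected curve) is defined and constant for all $t\in\R$.
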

\begin{proof}
	In canonical coordinates, by \eqref{eq09190853}, 
	\[
	\dot x_j 
	= \frac{\de H}{\de\xi_j}(x,\xi) 
	= \frac{\de }{\de\xi_j} \left(\sum_{a,b} H^{ab}(x)\xi_a\xi_b \right)
	= 2\sum_a H^{aj}(x)\xi_a
	= 2 B_H(\xi),
	\]
and the conclusion follows by \eqref{eq:innerproduct}.
\end{proof}

We will need a regularity property of the exponential map:
\begin{equation*}\tag{RE}\label{RE}
\begin{array}{c}
\text{there are $o\in M$ and $\xi\in T^*_oM$ such that}\\
\text{$s\xi$ is a regular point of $\Exp_H^o$ for all $s\in [-1,1]\setminus\{0\}$.} 
\end{array}
\end{equation*}

We say that a quadratic Hamiltonian pair $(M,H)$ is \emph{analytic}	 if $M$ is an analytic manifold and $H$ is an analytic function.

\begin{lemma}\label{lem10101432}
	Analytic sub-Riemannian manifolds $(M,H)$ satisfy \eqref{RE}. 
\end{lemma}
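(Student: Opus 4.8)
The plan is to isolate one genuinely sub-Riemannian ingredient --- the existence of a geodesic from some point of $M$ along which the exponential map is nondegenerate at \emph{some} time --- and then to use analyticity to rule out a degeneracy accumulating at $t=0$, after which a rescaling of the initial covector clears the whole closed interval. Concretely, by the bracket-generating hypothesis and the regularity results for the sub-Riemannian exponential map (see \cite{MR2513150}, and the discussion in \cite{abb_introduction_2018,MR3852258}), I would first fix $o\in M$ and $\xi\in T^*_oM$ such that $r_0\xi$ is a regular point of $\Exp_H^o=\Exp_H^{o,1}$ for some $r_0\neq0$; equivalently, writing $\gamma(t)=\Exp_H^{o,t}(\xi)=\pi_M\Phi_H^t(\xi)$ for the normal geodesic issued from $o$ with covector $\xi$, defined on a maximal open interval $I\ni0$, one has that $\DD\Exp_H^{o,t}|_\xi\colon T^*_oM\to T_{\gamma(t)}M$ is invertible for $t=r_0$. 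I will repeatedly use the scaling relation \eqref{eq09271803}, which yields the identity of maps $\Exp_H^{o,1}(\lambda\eta)=\Exp_H^{o,\lambda}(\eta)$ and hence, differentiating in $\eta$ at $\eta=\xi$,
\[
\lambda\,\DD\Exp_H^{o,1}|_{\lambda\xi}=\DD\Exp_H^{o,\lambda}|_\xi
\qquad(\lambda\in\R);
\]
in particular, for $\lambda\neq0$, the covector $\lambda\xi$ is a regular point of $\Exp_H^o$ precisely when $\DD\Exp_H^{o,\lambda}|_\xi$ is invertible.

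The next step uses analyticity. Since $(M,H)$ is analytic, $\X_H$ is an analytic vector field, so $\Phi_H$ is an analytic flow, and therefore, working near any point of $\gamma$ in analytic coordinates on $M$ (with the induced canonical coordinates on $T^*M$), the map $t\mapsto\det\bigl(\DD\Exp_H^{o,t}|_\xi\bigr)$ is real-analytic near each point of $I$. By the choice of $r_0$ this function is not identically $0$ on the interval $I$, so the set of $t\in I$ at which $\DD\Exp_H^{o,t}|_\xi$ fails to be invertible is discrete in $I$. Since $\Exp_H^{o,0}=\pi_M\circ\Phi_H^0$ is the constant map $o$, the value $t=0$ belongs to this set; being an isolated point of it, there is $\delta>0$ with $[-\delta,\delta]\subseteq I$ and $\DD\Exp_H^{o,t}|_\xi$ invertible for every $t\in[-\delta,\delta]\setminus\{0\}$.

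Finally I would set $\xi'\defeq\delta\xi\in T^*_oM$ and check directly that $(o,\xi')$ witnesses \eqref{RE}. For $s\in[-1,1]$ we have $s\delta\in[-\delta,\delta]\subseteq I$, so $\Exp_H^o(s\xi')=\Exp_H^{o,s\delta}(\xi)=\gamma(s\delta)$ is defined; and for $s\neq0$ the displayed scaling identity shows that $\DD\Exp_H^{o,1}|_{s\xi'}$ is invertible exactly when $\DD\Exp_H^{o,s\delta}|_\xi$ is, which holds because $s\delta\in[-\delta,\delta]\setminus\{0\}$. Hence $s\xi'$ is a regular point of $\Exp_H^o$ for all $s\in[-1,1]\setminus\{0\}$, as required.

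The steps involving analyticity and rescaling are routine: analyticity is invoked only to make the set of degenerate times along the chosen geodesic discrete near the origin, and rescaling merely reparametrises the geodesic so that ``small $|t|$'' becomes ``$|t|\leq1$''. The substantive point --- and the main obstacle --- is the geometric input behind the first step, namely that a bracket-generating structure always possesses a geodesic that is not everywhere conjugate (an \emph{ample} geodesic); this relies on nontrivial sub-Riemannian geometry (the cited references) rather than on anything established in the preceding sections.
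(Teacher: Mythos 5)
Your proof takes essentially the same route as the paper's: invoke the geometric fact that the sub-Riemannian exponential map has a regular point (citing \cite{MR2513150,abb_introduction_2018,MR3852258}), use analyticity to conclude that the degenerate times along the corresponding ray are isolated near $t=0$, and then rescale the initial covector so that the degeneracy-free interval becomes $[-1,1]\setminus\{0\}$. The scaling identity you derive from \eqref{eq09271803} and the final rescaling step are correct, and the way you pass from ``a single regular point $r_0\xi$'' to ``nonvanishing on a punctured interval'' is the same one-variable analyticity argument the paper runs (the paper states it instead as: $\xi\mapsto\det\DD\Exp_H^o|_\xi$ is a nonzero analytic function on a neighbourhood of $0$, then restricts to a generic ray; this is an immaterial reorganisation).

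There is, however, one step you skip that the paper does \emph{not}: the reduction to the equiregular case. The hypothesis of the lemma is only that $(M,H)$ is an analytic sub-Riemannian manifold, which in this paper's terminology means bracket-generating but not necessarily of constant rank, and the references you invoke for the existence of a regular point of $\Exp_H^o$ (in particular the ample-geodesic theory of \cite{MR3852258}, and \cite[Theorem 1]{MR2513150} as the paper uses it) are applied in the constant-rank/equiregular setting. The paper handles this by first invoking Lemma~\ref{lem10101454} to restrict to a nonempty open subset $M_1\subset M$ on which $(M_1,H)$ is an \emph{equiregular} sub-Riemannian manifold, and then choosing $o\in M_1$; since \eqref{RE} is a local existence statement, a witness found in $M_1$ also serves for $M$ once $\xi$ is rescaled so the geodesic stays in $M_1$. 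You should insert this reduction before asserting the existence of the regular point $r_0\xi$. Apart from this omission — which is easily repaired with the lemma the paper already provides — the argument is sound.
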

\begin{proof}
	Let $(M,H)$ be an analytic sub-Riemannian manifold. By Lemma \ref{lem10101454}, up to restricting to an open subset, we may assume that $(M,H)$ is equiregular.
	Let $o\in M$ and $U\subset T^*_o M$ be a neighbourhood of $0$ on which $\Exp_H^o$ is defined.
	From \cite[Theorem 1]{MR2513150}, we deduce that, after choosing analytic coordinates, $U \ni \xi \mapsto \det(\DD\Exp_H^o|_{\xi})\in \R$ is a nonzero analytic map.
	Therefore, there is $\xi \in U$ such that $t\mapsto \det(\DD\Exp_H^o|_{t\xi})$ is a nonzero analytic map on an open interval $I$ containing $0$.
	Since zeros of this map do not have cluster points in $I$, there is $\epsilon>0$ such that $\det(\DD\Exp_H^o|_{t\xi}) \neq 0$ for all $t\in(-\epsilon,\epsilon) \setminus \{0\}$.
\end{proof}

\begin{remark}
	In fact, property~\eqref{RE} holds for all sub-Riemannian manifolds of constant rank, at all points.
	We sketch how one deduces~\eqref{RE} from partial statements that appear in the literature.
	
	\newcommand{\End}{\mathtt{End}}
	
	In \cite[Definition 3.2]{MR3852258} the notion of \emph{ample geodesic} is introduced.
	We need two facts about ample geodesics. 
	First, one obtains from 
	\cite[Proposition 5.23]{MR3852258} that, for every $o\in M$, there is $\xi\in T^*_o M$ such that $\gamma : [-1,1] \to M$, $\gamma(t) = \Exp_H^o(t\xi)$ is an ample geodesic at $o$.
	Second, if $\gamma : [-1,1] \to M$ is an ample geodesic at $\gamma(0)$, then it is \emph{strongly normal}, that is, it is not abnormal on each subinterval of the form $[0,t]$ or $[t,0]$, see \cite[Definition 2.14 and Proposition 3.6(iii)]{MR3852258}.
		
	If $\gamma : s \mapsto \Exp_H^o(s\xi)$, then a point $\gamma(t)$ is said to be \emph{conjugate} to $\gamma(0)$ along $\gamma$ if 
	$\Im(\DD\Exp^o_H|_{t\xi}) \neq T_{\gamma(t)} M$
	\cite[Definition A.1]{MR3852258}.
	By \cite[Proposition A.2]{MR3852258}, if $\gamma:[-1,1]\to M$ is strongly normal, then there is $\epsilon>0$ such that $\gamma(t)$ is not conjugate to $\gamma(0)$ along $\gamma$ for all $t\in[-\epsilon,\epsilon]\setminus\{0\}$.
	See also \cite[\S3.(iii)]{MR2513150} and \cite[Corollary 8.50]{abb_introduction_2018}.
	
	Finally, we conclude that for every $o\in M$ there are $\xi\in T^*_o M$ and $\epsilon>0$ such that $\Im(\DD\Exp^o_H|_{t\xi})=T_{\Exp^o_H(t\xi)}M$ for all $t \in [-\epsilon,\epsilon]\setminus\{0\}$, i.e., \eqref{RE} holds.
\end{remark}

\subsection{Carnot groups}\label{ss:carnot}
A \emph{Carnot group} is a connected simply connected Lie group $G$ whose Lie algebra $\Lie{g}$ is stratified, i.e., $\Lie{g}=\bigoplus_{j=1}^s V_j$ for some linear subspaces $V_1,\dots,V_s$ with $[V_1,V_j]=V_{j+1}$ for all $j=1,\dots,s$ (here $V_{s+1}=0$),
and whose first layer $V_1$ is endowed with a fixed scalar product.
We shall always assume that Carnot groups are endowed with a (bi-invariant) Haar measure.

We can describe Carnot groups as quadratic Hamiltonian pairs as follows.
Let $(v_1,\dots,v_r)$ be an orthonormal basis of $V_1$ (in particular the $v_k \in \Sect(TG)$ are left-invariant vector fields) and set $H=\sum_{j=1}^r v_j\otimes v_j$ (note that $H$ is independent on the choice of the orthonormal basis).

Correspondingly, the sub-Laplacian on a Carnot group is $\sLap=-\sum_{j=1}^r v_j^2$. As a left-invariant sub-Laplacian on a Lie group, $\sLap$ is essentially self-adjoint (cf.\ \cite{MR0110024}), hence it admits a unique self-adjoint extension.

Carnot groups are a special case of equiregular sub-Riemannian manifolds and they appear as infinitesimal models of all sub-Riemannian manifolds (possibly after applying some ``lifting'' procedure), see \cite{MR0436223,MR806700,MR1421822} and references therein.
We will use this fact to extend our main result to all quadratic Hamiltonian pairs.

Carnot groups satisfy all our key assumptions.
\begin{lemma}\label{lem10111454}
	Carnot groups satisfy \eqref{RE}, \eqref{FPS}, \eqref{SP} and \eqref{SFC}.
\end{lemma}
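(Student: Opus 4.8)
The claim is that Carnot groups satisfy the four conditions \eqref{RE}, \eqref{FPS}, \eqref{SP} and \eqref{SFC}. Each of these will be handled separately, mostly by invoking results already discussed in the excerpt. For \eqref{RE}: a Carnot group, equipped with its left-invariant Hamiltonian $H = \sum_j v_j \otimes v_j$ with $v_1,\dots,v_r$ a basis of the first layer $V_1$, is an analytic sub-Riemannian manifold — indeed $G$ is an analytic (even algebraic) manifold, the $v_j$ are left-invariant hence analytic vector fields, and $H$ is therefore analytic on $T^*G$; moreover the bracket-generating condition holds by stratification. Hence Lemma \ref{lem10101432} applies directly and gives \eqref{RE}.

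For \eqref{FPS} and \eqref{SP}: finite propagation speed for the wave propagator of a sum-of-squares sub-Laplacian on a Lie group (indeed on any manifold carrying a suitable control/Carnot--Carathéodory distance) is part of the general theory referred to after the statement of \eqref{FPS} — the relevant references (\cite{MR925249,MR2114433,MR2352563,MR3026352,MR3080173}) cover exactly this case, so \eqref{FPS} follows by citation. For \eqref{SP}, I would use the remark immediately following the statement of \eqref{SP}: the sub-Laplacian $\sLap$ on a Carnot group is bracket-generating, so by Hörmander's theorem \cite{MR0222474} it is hypoelliptic, and hypoellipticity combined with the $L^2$-contractivity of $\cos(t\sqrt{\sLap})$ yields smoothness preservation as observed there. (Alternatively one may invoke that $\sLap$ commutes with the full Laplacian built from a basis of the whole Lie algebra, which has the smoothing property displayed in that remark.)

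For \eqref{SFC}: here I would invoke the heat kernel theory on Carnot groups. The sub-Laplacian $\sLap = -\sum_j v_j^2$ is a left-invariant, homogeneous (with respect to the dilations of $G$) sub-Laplacian; its heat semigroup $e^{-t\sLap}$ has a smooth convolution kernel satisfying Gaussian-type bounds with respect to the Carnot--Carathéodory distance, which is left-invariant and homogeneous, hence doubling. This is classical (see, e.g., the references \cite{MR782662,MR1172944,hebisch_functional_1995,MR1943098} and \cite[Theorem 6.1(iii)]{MR3671588} cited right after the statement of \eqref{SFC}); under these hypotheses the Mihlin--Hörmander functional calculus in particular gives $L^1$-boundedness of $F(\sLap)$ for $F \in \Sch(\R)$, which is exactly \eqref{SFC}.

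The proof is therefore essentially a matter of matching each of the four assumptions to an already-available result; no real obstacle arises. The only point requiring a moment's care is \eqref{SP}: one must make sure the hypoellipticity of $\sLap$ is enough, which it is by the discussion following \eqref{SP}, since $\cos(t\sqrt\sLap)$ is an $L^2$-contraction and so maps compactly supported (hence $L^2$) data to functions lying in the domain of all powers of $\sLap$ locally, whence smooth by Hörmander. Collecting these observations yields the lemma.
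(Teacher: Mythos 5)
Your proposal is correct and follows essentially the same route as the paper: analyticity of Carnot groups plus Lemma \ref{lem10101432} for \eqref{RE}, citation of the finite-speed-of-propagation literature for \eqref{FPS}, sub-ellipticity (H\"ormander) for \eqref{SP}, and Hulanicki's theorem for \eqref{SFC}. The only small point the paper makes explicit that you elide is that essential self-adjointness of $\sLap$ on a Carnot group (noted in Section \ref{ss:carnot}) is what guarantees the cited propagation-speed results apply to the chosen self-adjoint extension; otherwise the arguments coincide.
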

\begin{proof}
	Since Carnot groups are analytic, \eqref{RE} follows from Lemma \ref{lem10101432}.
	Since the corresponding sub-Laplacians are essentially self-adjoint, \eqref{FPS} is well known, see for instance \cite{MR925249,MR3026352} and sub-ellipticity of $\sLap$ also gives \eqref{SP}. Finally, \eqref{SFC} is proved in \cite{MR782662}.
\end{proof}

\subsection{Eikonal equation on sub-Riemannian manifolds}
Let $(M,H)$ be a quadratic Hamiltonian pair. Define $\cD_A = \{ \xi \in T^* M \tc H(\xi) \neq 0\}$ and let $A : \cD_A \to \R$ be defined by $A(\xi) = \sqrt{H(\xi)}$. Note that, since $H$ is $2$-homogeneous, $\cD_A$ is a conic open subset of $T^* M$ and $A$ is a $1$-homogeneous smooth function on $\cD_A$.

\begin{lemma}\label{lem09201541}
	For all $\eta \in \cD_A$,
	\[
	\X_A|_\eta =\frac{1}{2A(\eta)} \X_H|_\eta.
	\]
	In particular, for all $x \in M$, $\eta \in \cD_A \cap T_x^* M$ and $t \in \R$,
	\begin{equation}\label{eq09201534}
	\Phi^t_A(\eta) = \Phi_H^{\frac{t}{2A(\eta)}}(\eta) , 	\qquad \Exp_{A}^{x,t}(\eta) = \Exp_H^{x}\left(\frac{t\eta}{2A(\eta)}\right),
	\end{equation}
	whenever one of the two sides is well defined.
\end{lemma}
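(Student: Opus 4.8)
The plan is to first establish the pointwise identity $\X_A|_\eta = \frac{1}{2A(\eta)}\X_H|_\eta$ on $\cD_A$, and then to transfer it to the flows and the exponential maps by a linear time reparametrisation. For the pointwise identity I would argue coordinate-freely: on $\cD_A$ we have $H = A^2$, hence $\dd H = 2A\,\dd A$ there. Since the Hamiltonian vector field of a function $F$ is characterised by $\dd F|_p(v) = \omega|_p(\X_F|_p, v)$ for all $v \in T_p(T^*M)$, for $\eta \in \cD_A$ and any $v \in T_\eta(T^*M)$ we get
\[
\omega|_\eta(\X_H|_\eta, v) = \dd H|_\eta(v) = 2A(\eta)\,\dd A|_\eta(v) = \omega|_\eta\bigl(2A(\eta)\,\X_A|_\eta, v\bigr).
\]
Nondegeneracy of $\omega$ then forces $\X_H|_\eta = 2A(\eta)\,\X_A|_\eta$, and dividing by $2A(\eta) > 0$ gives the claim. (One could equally verify this in canonical coordinates via \eqref{eq09190852}, using $\de H/\de x_j = 2A\,\de A/\de x_j$ and $\de H/\de \eta_j = 2A\,\de A/\de \eta_j$ on $\cD_A$, but the symplectic argument is shorter.)

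For the flow formula, the key point is that $H$, and therefore $A = \sqrt H$, is constant along the integral curves of $\X_H$ (by \eqref{eq09181520}), so in particular $\cD_A$ is invariant under $\Phi_H$. Fixing $\eta \in \cD_A$ and letting $\gamma(s) = \Phi_H^s(\eta)$ be its maximal $\X_H$-integral curve, we have $A(\gamma(s)) \equiv A(\eta)$, whence $\gamma$ stays in $\cD_A$. I would then check that $t \mapsto \gamma\bigl(t/(2A(\eta))\bigr)$ is an integral curve of $\X_A$ through $\eta$: its velocity at time $t$ is $\frac{1}{2A(\eta)}\X_H|_{\gamma(t/(2A(\eta)))} = \frac{1}{2A(\gamma(t/(2A(\eta))))}\X_H|_{\gamma(t/(2A(\eta)))}$, which by the first step equals $\X_A$ at that point. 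By uniqueness of integral curves this yields $\Phi_A^t(\eta) = \Phi_H^{t/(2A(\eta))}(\eta)$; and since the reparametrisation $s = t/(2A(\eta))$ is an increasing bijection of $\R$, it carries the maximal interval of existence of the $\X_H$-curve through $\eta$ onto that of the $\X_A$-curve through $\eta$, which is precisely the content of the ``whenever one of the two sides is well defined'' clause. Applying $\pi_M$ and using the scaling relation \eqref{eq09271803} finally gives
\[
\Exp_A^{x,t}(\eta) = \pi_M\Phi_A^t(\eta) = \pi_M\Phi_H^{t/(2A(\eta))}(\eta) = \Exp_H^{x,\,t/(2A(\eta))}(\eta) = \Exp_H^{x}\!\left(\frac{t\eta}{2A(\eta)}\right).
\]

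I do not expect a genuine obstacle here; the only slightly delicate point is the domain bookkeeping, i.e.\ matching the maximal existence intervals of the two integral curves under the linear time change, and this is immediate once one knows that $A$ is constant along the $\X_H$-flow and that $\cD_A$ is $\Phi_H$-invariant.
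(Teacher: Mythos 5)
Your proof is correct and takes essentially the same route as the paper's: the pointwise relation $\X_A = \frac{1}{2A}\X_H$ on $\cD_A$, invariance of $\cD_A$ under the $\X_H$-flow via conservation of $H$, time reparametrisation of integral curves (with the maximal-interval bookkeeping), and then projection combined with the scaling identity \eqref{eq09271803}. The only cosmetic difference is that you derive the vector-field relation by a coordinate-free symplectic argument rather than directly from \eqref{eq09190852}, a variant you yourself note.
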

\begin{proof}
	The relation between $\X_A$ and $\X_H$ is immediately given by \eqref{eq09190852} and the fact that $H=A^2$.
	Since $H$ and $A$ are constant along the integral curves of $\X_H$ and $\X_A$ (see \eqref{eq09181520}), it is immediately seen that $\cD_A$ is invariant under the flow of $H$, and moreover an integral curve of $\X_H$ in $\cD_A$ is obtained by time-rescaling of an integral curve of $\X_A$, and conversely, which leads to \eqref{eq09201534}.
\end{proof}

Recall the definition of the vertical differential $\DD_2 H$ from \eqref{eq:vert_diff}. Since $H$ is nonnegative and $2$-homogeneous, $\DD_2 H|_\eta = 0$ if and only if $H(\eta) = 0$. So, for all $x \in M$ and $\eta \in \cD_A \cap T_x^* M$, $\ker \DD_2 H|_\eta$ is a $1$-codimensional subspace of $T_x^* M$.

\begin{corollary}\label{cor:rankAH}
For all $x \in M$, $t \in \R \setminus \{0\}$ and $\eta \in T^*_x M$ in the domain of $\Exp_{A}^{x,t}$,
\[
\rank(\DD \Exp_{A}^{x,t}|_\eta) = \rank \left(\left.\DD\Exp_H^{x}|_{\lambda\eta}\right|_{\ker \DD_2 H|_{\lambda\eta}}\right),
\]
where $\lambda = t/(2\sqrt{H(\eta)})$.
In particular, if $\lambda \eta$ is a regular point of $\Exp_{H}^{x}$,  then $\DD \Exp_{A}^{x,t}|_\eta$ has maximal possible rank $n-1$.
\end{corollary}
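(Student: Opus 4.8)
The plan is to reduce the statement to a short piece of linear algebra by exploiting the factorisation of $\Exp_{A}^{x,t}$ through $\Exp_H^{x}$ supplied by Lemma~\ref{lem09201541}. Fix $x\in M$, $t\neq 0$ and $\eta\in T^*_xM$ in the domain of $\Exp_{A}^{x,t}$; then $\eta\in\cD_A\cap T^*_xM$, so $H(\eta)\neq 0$ and $\lambda\defeq t/(2\sqrt{H(\eta)})\neq 0$. Writing $\lambda(\zeta)\defeq t/(2\sqrt{H(\zeta)})$ for $\zeta\in\cD_A\cap T^*_xM$ and $g(\zeta)\defeq \lambda(\zeta)\,\zeta$, formula~\eqref{eq09201534} gives $\Exp_{A}^{x,t}=\Exp_H^{x}\circ g$ on a neighbourhood of $\eta$, with $g(\eta)=\lambda\eta$. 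Since the rank of a composition of linear maps equals the rank of the second factor restricted to the image of the first, the chain rule yields
\[
\rank\bigl(\DD\Exp_{A}^{x,t}|_\eta\bigr)=\rank\Bigl(\DD\Exp_H^{x}|_{\lambda\eta}\big|_{\Im \DD g|_\eta}\Bigr),
\]
so everything comes down to identifying $\Im \DD g|_\eta$ with $\ker\DD_2H|_{\lambda\eta}$.

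Next I would compute $\DD g|_\eta$ directly. Differentiating $\lambda(\cdot)=\tfrac{t}{2}H(\cdot)^{-1/2}$ gives $\DD\lambda|_\eta[\beta]=-\tfrac{\lambda}{2H(\eta)}\DD_2H|_\eta[\beta]$, and hence a short calculation yields $\DD g|_\eta[\beta]=\lambda\bigl(\beta-\tfrac{\DD_2H|_\eta[\beta]}{2H(\eta)}\,\eta\bigr)$ for all $\beta\in T^*_xM$. Since $H(\eta)\neq 0$, Euler's identity gives $\DD_2H|_\eta[\eta]=2H(\eta)\neq 0$, so $T^*_xM=\ker\DD_2H|_\eta\oplus\R\eta$, and with respect to this splitting $\DD g|_\eta$ is $\lambda\,\Id\oplus 0$. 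Therefore $\Im \DD g|_\eta=\ker\DD_2H|_\eta$ (and $\ker\DD g|_\eta=\R\eta$); since $\DD_2H|_{\lambda\eta}=\lambda\,\DD_2H|_\eta$ by $2$-homogeneity of $H$ and $\lambda\neq 0$, we conclude $\Im \DD g|_\eta=\ker\DD_2H|_{\lambda\eta}$. Combined with the previous display, this is exactly the claimed rank identity.

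For the final assertion, suppose $\lambda\eta$ is a regular point of $\Exp_H^{x}$, i.e.\ $\DD\Exp_H^{x}|_{\lambda\eta}$ is onto the $n$-dimensional space $T_{\Exp_H^{x}(\lambda\eta)}M$. Restricting a surjection to a subspace of codimension $1$ drops the rank by at most $1$, so the restriction to the hyperplane $\ker\DD_2H|_{\lambda\eta}$ has rank at least $n-1$; being defined on an $(n-1)$-dimensional space it has rank exactly $n-1$, giving $\rank(\DD\Exp_{A}^{x,t}|_\eta)=n-1$. This is the largest value possible, since $\DD\Exp_{A}^{x,t}|_\eta$ factors through $\DD g|_\eta$, which has rank $n-1$.

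The only point that needs a little care is the bookkeeping in the second paragraph: checking that the splitting $T^*_xM=\ker\DD_2H|_\eta\oplus\R\eta$ is legitimate (it is, since $H(\eta)\neq 0$, as recorded just before the statement) and that the hyperplanes $\ker\DD_2H|_\eta$ and $\ker\DD_2H|_{\lambda\eta}$ coincide. Everything else is a direct application of Lemma~\ref{lem09201541} together with the elementary rank-of-composition fact.
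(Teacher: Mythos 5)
Your proof is correct and follows essentially the same route as the paper's: both exploit Lemma~\ref{lem09201541} to pass from $\Exp_A^{x,t}$ to $\Exp_H^x$, and both hinge on the splitting $T^*_xM=\ker\DD_2H|_\eta\oplus\R\eta$ together with the observation that $\ker\DD_2 H|_\eta=\ker\DD_2H|_{\lambda\eta}$. The only presentational difference is that the paper deduces $\DD\Exp_A^{x,t}|_\eta[\eta]=0$ from the $0$-homogeneity of $\Exp_A^{x,t}$ (Proposition~\ref{prp:ced}\ref{en:ced1}) and then restricts to the level set $S=\{H=H(\eta)\}$, whereas you obtain the same kernel $\R\eta$ by differentiating the scaling map $g(\zeta)=\lambda(\zeta)\zeta$ directly; your chain-rule calculation $\DD g|_\eta=\lambda\,\Id$ on $\ker\DD_2H|_\eta$ and $\DD g|_\eta[\eta]=0$ packages the same information slightly more explicitly. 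Both arguments are sound; nothing is missing.
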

\begin{proof}
Since $\eta \in \cD_A$, the vertical differential $\DD_2 H|_{\eta}$ does not vanish, so the level set of $H$ in $T_x^* M$ through $\eta$ is locally a $1$-codimensional submanifold $S$ of $T_x^* M$ whose tangent space at $\eta$ is $\ker \DD_2 H|_{\eta}$. Note also that, since $H$ is $2$-homogeneous, $\eta \notin \ker \DD_2 H|_{\eta}$, whence $T_x^* M = \ker \DD_2 H|_{\eta} \oplus \R \eta$. By Proposition \ref{prp:ced}\ref{en:ced1}, $\Exp_{A}^{x,t}$ is $0$-homogeneous, so $\DD \Exp_{A}^{x,t}|_\eta[\eta] = 0$ and therefore
\[
\rank(\DD \Exp_{A}^{x,t}|_\eta) = \rank\left(\left.\DD \Exp_{A}^{x,t}|_\eta\right|_{\ker \DD_2 H|_{\eta}}\right).
\]
On the other hand, $\left.\DD \Exp_{A}^{x,t}|_\eta\right|_{\ker \DD_2 H|_{\eta}}$ is the differential at $\eta$ of the restriction of $\Exp_A^{x,t}$ to $S$. Moreover,  for all $\xi \in S$, since $H(\xi) = H(\eta)$, by Lemma \ref{lem09201541} we deduce
\[
\Exp_{A}^{x,t}(\xi) = \Exp_H^{x}\left(\lambda \xi\right)
\]
and, by homogeneity, $\ker \DD_2 H|_{\lambda \eta}$ is the tangent space at $\lambda \eta$ of $\lambda S$. Hence
\[
\left.\DD \Exp_{A}^{x,t}|_\eta\right|_{\ker \DD_2 H|_{\eta}} = \lambda \left.\DD\Exp_H^{x}|_{\lambda \eta}\right|_{\ker \DD_2 H|_{\lambda \eta}}
\]
and we are done.
\end{proof}

\subsection{Fourier integral representation of the wave propagator}\label{sec10031549}
By combining the results obtained so far, in this section we obtain a Fourier integral representation of a frequency localised portion of the wave propagator $\cos(t\sqrt{\sLap})$ associated to a sub-Laplacian $\sLap$.

\begin{theorem}\label{thm10030928}
Let $(M,H,\mu)$ be a measured quadratic Hamiltonian pair with sub-Laplacian $\sLap$.
Let a self-adjoint extension of $\sLap$ be chosen and assume that \eqref{FPS} and \eqref{SP} are satisfied.
Let $o\in M$ and $(M_o,x)$ be a coordinate chart at $o$.
We identify $M_o$ with an open neighbourhood of $0$ in $\R^n$.
Let $\Gamma \subset \nR^n$ be a closed cone such that
\begin{equation}\label{eq10030000}
\Gamma \subset \{\xi \in \nR^n \tc H(o,\xi) \neq 0\} .
\end{equation}
Then there are an open neighbourhood $X \subset M_o$ of $o$, a $T>0$ and a smooth function $w:(-T,T)\times X\times \nR^n \to \R$ with the following properties.
\begin{enumerate}[label=(\roman*)]
\item\label{en:eitL_phhom} 
$w$ is $1$-homogeneous in $\xi$ and, for all $(t,x,\xi) \in (-T,T) \times X \times \nR^n$,
\[
w(0,x,\xi) = x \cdot \xi, \qquad \partial_x w(t,x,\xi) \neq 0 .
\]
\item\label{en:eitL_phder}
The function $\phi : (-T,T) \times X \times X \times \nR^n \to \R$, 
\begin{equation}\label{eq:phase_decomp}
\phi(t,x,y,\xi) = w(t,x,\xi) - w(0,y,\xi),
\end{equation}
is a phase function.
Moreover, $(x,y,\xi) \mapsto \phi(t,x,y,\xi)$ is an operator phase function for all $t \in (-T,T)$.
\item\label{en:eitL_phcrit} 
For all $t \in (-T,T)$, $x,y \in X$ and $\xi \in \Gamma$,
\[
\partial_\xi w(t,x,\xi) = y \quad\iff\quad x = \Exp_H^y(-t\xi / (2\sqrt{H(y,\xi)})),
\]
and in that case
\[
\partial_t w(t,x,\xi) = \sqrt{H(y,\xi)}.
\]
\item\label{en:eitL_phhess} 
For all $t \in (-T,T) \setminus \{0\}$, $x,y \in X$ and $\xi \in \Gamma$
such that $\partial_\xi w(t,x,\xi) = y$,
\[
\rank (\partial_\xi^2 w(t,x,\xi)) = \rank (\DD \Exp_H^y|_{\lambda \xi}|_{\ker \DD_2 H|_{(y,\lambda\xi)}}),
\]
where $\lambda = -t/(2\sqrt{H(y,\xi)})$.
\end{enumerate}	
	Moreover, for all open subsets $X',X''$ of $X$ with $X'' \Subset X' \Subset X$, there is a $T' \in (0,T]$ such that the following hold true:
	if $P\in\Psi^0_\cl(M)$ is a compactly supported operator with $\spt(P) \subset X'' \times X''$ (cf.\ Section \ref{sec:pre_preliminaries}), whose restriction to $M_o \times M_o$ has a distributional integral kernel given by the oscillatory integral
	\begin{equation}\label{eq:PFIO}
	P(x,y) = \int_{\R^n} e^{i (x-y) \cdot \xi} p(x,y,\xi) \,\dd\xi 
	\end{equation}
	 for some amplitude $p\in S^0_{\cl}(M_o\times M_o; \R^n)$ with $\essspt(p)\subset X''\times X''\times \Gamma$,
	then there is  a $Q \in \Rop(M;(-T',T') \times M)$ with support $\spt(Q) \subset (-T',T') \times M_o \times M_o$, whose restriction to $(-T',T') \times M_o \times M_o$ has  the distributional integral kernel
	\begin{equation}\label{eq:QFIO}
	Q_t(x,y) = Q(t,x,y) = \int_{\R^n} e^{i\phi(t,x,y,\xi)} q(t,x,y,\xi) \, \dd\xi
	\end{equation}
	for some amplitude $q\in S_{\cl}^0((-T',T')\times M\times M; \R^n)$,
	such that:
	\begin{enumerate}[resume,label=(\roman*)]
	\item\label{en:eitL_fiospt} 
	$\spt(q)\subset (-T',T')\times X'\times X'\times\Gamma$;
	\item\label{en:eitL_fiocos}
	there exists $R  \in \Rop^{-\infty}(M; (-T',T') \times M)$ with $\spt(R) \subset (-T',T') \times X' \times X'$ such that, for all $t \in (-T',T')$,
\[
	\cos(t\sqrt{\sLap})P = \frac{1}{2}(Q_t+Q_{-t}) + R_t.
\]
	\end{enumerate}
\end{theorem}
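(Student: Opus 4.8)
The plan is to combine three ingredients already developed in the paper: the construction of solutions to the eikonal equation from Proposition~\ref{prp:operatorphaseeikonal}, the Fourier integral parametrix for the half-wave equation from Theorem~\ref{thm08141709}, and an energy/uniqueness argument for the wave equation that uses \eqref{FPS} and \eqref{SP} to identify the constructed parametrix with $\cos(t\sqrt{\sLap})P$ up to smoothing.

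First I would set up the half-wave operator. Recall from \eqref{eq08292346} that $\sLap$ is a second-order differential operator on $M_o$ with principal symbol $H$; on a suitable relatively compact neighbourhood of $o$ it is elliptic of order $2$ in the conic region over $\Gamma$ only, not everywhere. To get around this, I would not try to take a global square root of $\sLap$; instead I would work directly with the phase $\phi$ produced by Proposition~\ref{prp:operatorphaseeikonal} applied with $A=\sqrt H$ on $\cD_A=\{H\neq0\}$ (legitimate because \eqref{eq10030000} places $\Gamma$ inside $\{H(o,\cdot)\neq0\}$, and after shrinking $X$ we may assume $H(x,\xi)\neq0$ for all $x\in X$, $\xi$ in an open cone containing $\Gamma$). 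This gives the function $w$ and the phase $\phi(t,x,y,\xi)=w(t,x,\xi)-w(0,y,\xi)$; parts \ref{en:eitL_phhom}--\ref{en:eitL_phhess} of the statement then follow immediately from parts \ref{en:operatorphaseeikonal_ders}--\ref{en:operatorphaseeikonal_hessian} of Proposition~\ref{prp:operatorphaseeikonal}, together with Lemma~\ref{lem09201541} and Corollary~\ref{cor:rankAH} to rewrite $\Exp_A$ in terms of $\Exp_H$ and to pass to the Hessian rank formula with $\lambda=-t/(2\sqrt{H(y,\xi)})$. One technical point: to invoke Theorem~\ref{thm08141709} I need an honest properly supported classical pseudodifferential operator $\opA$ of order $1$ with real symbol whose principal symbol agrees with $A$ on the relevant cone; I would build $\opA$ by taking a symbol that equals $\sqrt{H(x,\xi)}$ on a conic neighbourhood of $X''\times X''\times\Gamma$ (where $H$ does not vanish, so the square root is smooth and $1$-homogeneous) and is cut off to $\langle\xi\rangle$ away from it, then make it properly supported modulo smoothing. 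Since $\essspt(p)\subset X''\times X''\times\Gamma$, the parametrix construction only sees $\opA$ near that cone, so $(i\partial_t+\opA)$ acts as $(i\partial_t+A(x,D))$ there.

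Next, apply Theorem~\ref{thm08141709} twice, with amplitude $p$ as given (an operator phase function, by part \ref{en:eitL_phder}, so $P$ and the $Q_t$ are regular operators). The eikonal equation \eqref{eq08091101} holds for $\phi$ by part \ref{en:eitL_phder}, so Theorem~\ref{thm08141709} yields $Q^{+}\in\Rop(X;(-T',T')\times X)$ with kernel $\int e^{i\phi}q^{+}\,\dd\xi$ satisfying $(i\partial_t+\opA)Q^{+}\in\Rop^{-\infty}$, $Q^{+}_0-P\in\Rop^{-\infty}$, and $\spt(q^{+})\subset(-T',T')\times X'\times X'\times\Gamma$. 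Replacing $t$ by $-t$ (equivalently running Theorem~\ref{thm08141709} for the phase $\phi(-t,\cdot)$, which solves the eikonal equation for $-A$, or simply observing $\phi(t,x,y,\xi)=-\phi(-t,x,y,-\xi)$ up to relabelling) gives a backward half-wave parametrix $Q^{-}$ with $(-i\partial_t+\opA)Q^{-}\in\Rop^{-\infty}$ and $Q^{-}_0-P\in\Rop^{-\infty}$. Set $\widetilde Q=\tfrac12(Q^{+}+Q^{-})$. Then $\widetilde Q$ has kernel of the form \eqref{eq:QFIO} with $q=\tfrac12(q^{+}+q^{-})$ satisfying \ref{en:eitL_fiospt}, and one computes $(\partial_t^2+\opA^2)\widetilde Q=(i\partial_t+\opA)(-i\partial_t+\opA)\widetilde Q\in\Rop^{-\infty}$, while $\widetilde Q_0-P\in\Rop^{-\infty}$ and $\partial_t\widetilde Q|_{t=0}=\tfrac{i}{2}\opA(Q^{+}_0-Q^{-}_0)\in\Rop^{-\infty}$ since $Q^{+}_0$ and $Q^{-}_0$ both differ from $P$ by smoothing operators. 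Because $\opA^2-\sLap$ is a pseudodifferential operator that is smoothing on the cone $\Gamma$ (both have principal symbol $H$ there, and the subprincipal terms can be matched, or simply absorbed since we only need the equation modulo $\Rop^{-\infty}$ applied to data microlocalised in $\Gamma$), and since $\essspt(q)\subset\Gamma$, we also get $(\partial_t^2+\sLap)\widetilde Q\in\Rop^{-\infty}(M;(-T',T')\times M)$.

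Finally, the identification. Let $u=\cos(t\sqrt{\sLap})Pf-\widetilde Q_tf$ for $f\in C^\infty_c(X'')$. By \eqref{SP} and the mapping properties of regular operators, $u$ is smooth in $(t,x)$ for $|t|<T'$ after possibly shrinking $T'$; it satisfies $(\partial_t^2+\sLap)u\in C^\infty$, $u|_{t=0}\in C^\infty$, $\partial_t u|_{t=0}\in C^\infty$; and by \eqref{FPS} together with \ref{en:trnsupp}, $\spt(u(t,\cdot))$ stays inside $X'$ for small $t$. A standard energy estimate for the (sub-elliptic, hence in particular the $L^2$) wave equation — using that $\sLap$ is nonnegative self-adjoint, so $\|\partial_t u(t)\|_2^2+\langle\sLap u(t),u(t)\rangle$ is controlled by Gronwall from the smooth source and smooth Cauchy data — shows $u$ is smooth on $(-T',T')\times M$, with support in $(-T',T')\times X'$ by finite propagation speed; varying $f$ and applying the Schwartz kernel theorem gives the remainder $R$ with $R\in\Rop^{-\infty}(M;(-T',T')\times M)$ and $\spt(R)\subset(-T',T')\times X'\times X'$. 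This yields \ref{en:eitL_fiocos} with $Q=\widetilde Q$, $Q_t=\widetilde Q_t$.

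The main obstacle I expect is the mismatch between the globally-defined $\sLap$ (which degenerates) and the locally elliptic half-wave operator $\opA$: one must be careful that the parametrix $\widetilde Q$, whose amplitude is essentially supported in the cone $\Gamma$ where $\sLap$ is elliptic, genuinely solves the wave equation for $\sLap$ modulo smoothing, not just for $\opA^2$; this requires checking that $(\opA^2-\sLap)\widetilde Q\in\Rop^{-\infty}$, which follows because $\opA^2-\sLap$, while only a differential-plus-pseudodifferential operator of order $\le1$ in general, has symbol vanishing to infinite order on $\essspt(q)\subset\Gamma$ after $\opA$ is chosen with symbol $\sqrt{H}$ on a conic neighbourhood of $\Gamma$ — one should verify this using the composition calculus and the essential-support bookkeeping from Theorem~\ref{thm08141709}\ref{en:trnsupp}. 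The secondary technical nuisance is keeping track of supports through all the compositions so that $\spt(Q)\subset(-T',T')\times X'\times X'$ and $\spt(R)\subset(-T',T')\times X'\times X'$; this is handled by choosing the chain of neighbourhoods $X''\Subset X'\Subset X$ compatibly with the $T'$ furnished by Theorem~\ref{thm08141709}, and by invoking finite propagation speed to control the support of the wave-equation remainder.
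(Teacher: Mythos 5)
Your high-level plan matches the paper's: build the phase from the eikonal equation via Proposition~\ref{prp:operatorphaseeikonal}, invoke Theorem~\ref{thm08141709} to get the Fourier integral parametrix, then use a Duhamel formula together with \eqref{FPS} and \eqref{SP} to match it against $\cos(t\sqrt{\sLap})P$ modulo smoothing. However, there is one genuine gap and one construction that you gloss over in a way that would not compile as written.

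The gap: you invoke Proposition~\ref{prp:operatorphaseeikonal} with $A=\sqrt H$ on $\cD_A=\{H\neq0\}$, but that proposition explicitly assumes $\cD_A=\nT^*M$, and its conclusion produces a $w$ defined for \emph{all} $\xi\in\nR^n$; so does the statement of Theorem~\ref{thm10030928}, and Theorem~\ref{thm08141709} requires $\phi$ to be a phase function on the full frequency domain. Working directly on the cone $\{H\neq0\}$ simply does not put you in the hypotheses of any of these results, nor is it clear how to extend the resulting $w$ past the boundary of the cone. What the paper actually does is modify the Hamiltonian first: pick cutoffs $W\Subset W'\Subset W''$ on $\Sphere^{n-1}$ around $\Gamma\cap\Sphere^{n-1}$ and set
\[
\tilde H(x,\xi)=\psi(\xi/|\xi|)\,H(x,\xi)+(1-\psi(\xi/|\xi|))\,|\xi|^2,
\]
which is everywhere positive and equal to $H$ on the cone $\R^+W'$. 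Then $\tilde A=\sqrt{\tilde H}$ has $\cD_{\tilde A}=\nT^*M_o$ and Proposition~\ref{prp:operatorphaseeikonal} applies as stated; since the Hamiltonian flows of $A$ and $\tilde A$ agree while they stay over $\R^+W'$, parts \ref{en:eitL_phcrit}--\ref{en:eitL_phhess} for $\xi\in\Gamma$ still come out in terms of $\Exp_H$ via Lemma~\ref{lem09201541} and Corollary~\ref{cor:rankAH}. You need this ``globalise $H$ first'' step before anything else.

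The second issue is the handling of $\opA^2-\sLap$. Prescribing only the \emph{principal} symbol $\sqrt{H}$ on a cone and cutting off does not make $\opA^2-\sLap$ smoothing microlocally on $\Gamma$: the subprincipal symbol of $\opA^2$ comes from the composition calculus and depends on the lower-order terms of $\opA$, which you have not fixed, so $\opA^2-\sLap$ is an honest order-one operator even over $\Gamma$. You wave at this with ``the subprincipal terms can be matched, or simply absorbed,'' but the absorption does not happen on its own. The paper does it cleanly in two steps: first build a properly supported $\tilde\sLap\in\Psi^2_{\cl}(M_o)$ whose \emph{entire} asymptotic expansion agrees with that of $\sLap$ over $\R^+W'$ but whose principal symbol is the elliptic $\tilde H$; then apply Lemma~\ref{lem09030037} (which needs global ellipticity, hence the previous step) to get $\opA$ with $\opA^2-\tilde\sLap\in\Psi^{-\infty}$. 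The remaining discrepancy $(\sLap-\tilde\sLap)Q$ is then controlled by inserting a pseudodifferential cutoff $B$ with wave front away from $\R^+W$ and a wave-front-set composition estimate (using that $\WF(Q)$ has $x$-frequency $\partial_x w(t,x,\xi)\in\R^+W$ for small $t$), rather than by an ``amplitude supported in $\Gamma$'' shortcut, which conflates essential support of the amplitude with the wave front set of the operator. The Duhamel identification at the end is essentially what the paper does, though your energy-estimate remark is superfluous once \eqref{SP} is in play: the paper shows directly that
\[
R_t=\cos(t\sqrt{\sLap})(P-Q_0)-\int_0^t\frac{\sin((t-\tau)\sqrt{\sLap})}{\sqrt{\sLap}}\,S_\tau\,\dd\tau
\]
is smoothing because $P-Q_0$ and $S=(\partial_t^2+\sLap)\tilde Q$ are, and then localises the support using \eqref{FPS}.
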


In the above statement the expressions in \eqref{eq:PFIO} and \eqref{eq:QFIO} are intended as the integral kernels of the operators $P$ and $Q$
with respect to the Lebesgue measure on the coordinate chart $(M_o,x)$. However an analogous statement holds for the integral kernels with respect to the measure $\mu$ on the manifold $M$: indeed, changing the reference measure corresponds to multiplying the amplitudes $p$ and $q$ in \eqref{eq:PFIO} and \eqref{eq:QFIO} by a smooth function in the variable $y$ (the density of one measure with respect to the other), which does not change the symbol class or the support.

\begin{proof}
	By \eqref{eq10030000}, there are open subsets $W,W',W'' \subset \Sphere^{n-1}$ such that
	\[
	\Gamma\cap \Sphere^{n-1} \subset W \Subset W' \Subset W'' \Subset \{\xi \tc H(o,\xi) \neq 0\} .
	\]
	Up to shrinking $M_o$, we can assume that, for all $x\in M_o$,
	\begin{equation}
	W'' \Subset\{\xi \tc H(x,\xi) \neq 0\} .
	\end{equation}
	Therefore, if $\psi \in C^\infty(\Sphere^{n-1})$ is such that $\psi|_{W'} = 1$ and $\spt(\psi) \subset W''$, then
	\[
	\tilde H(x,\xi) = \psi(\xi/|\xi|) \, H(x,\xi) + (1-\psi(\xi/|\xi|)) \, |\xi|^2
	\]
	defines a smooth $2$-homogeneous function $\tilde H : \nT^* M_o  \to (0,\infty)$ such that $H = \tilde H$ on $M_o \times \R^+ W'$.

	Let $\tilde A = \sqrt{\tilde H}$. Note that both $A$ and $\tilde A$ are $1$-homogeneous and $\tilde A = A$ on $M_o \times \R^+ W' \subset \nT^* M_o$. By Proposition \ref{prp:ced}\ref{en:ced1}, since $\Gamma \cap \Sphere^{n-1}$ is a compact subset of $W'$, there exist $\epsilon>0$ and an open neighbourhood $U \subset M_o$ of $o$ such that, for all $t \in (-\epsilon,\epsilon)$, $x \in U$ and $\xi \in \Gamma$, the point $(t,(x,\xi))$ is in the domain of both $\Phi_A$ and $\Phi_{\tilde A}$ and 
	\begin{equation}\label{eq:sameflow}
	\Phi_A^t(x,\xi) = \Phi_{\tilde A}^t(x,\xi) \in M_o \times \R^+ W'.
	\end{equation}
	Indeed, as long as the flow associated with $A$ stays in $M_0 \times \R^+ W'$, it must coincide with the flow of $\tilde A$, and conversely.

Let now $w : (-T,T) \times X \times \nR^n \to \R$ and $\phi : (-T,T) \times X\times X \times \nR^n \to \R$ (where $X \subset U$ is an open neighbourhood of $o$ and $T \in (0,\epsilon]$) be the smooth functions given by Proposition \ref{prp:operatorphaseeikonal} applied to $\tilde A$ in place of $A$. In particular, $\phi$ satisfies the eikonal equation
\[
\partial_t \phi(t,x,y,\xi) = \tilde A(\partial_x \phi(t,x,y,\xi))
\]
for all $(t,x,y,\xi) \in  (-T,T) \times X \times X \times \nR^n$, and moreover parts \ref{en:eitL_phhom} and \ref{en:eitL_phder} are satisfied. In addition, from parts \ref{en:operatorphaseeikonal_critical} and \ref{en:operatorphaseeikonal_hessian} of Proposition \ref{prp:operatorphaseeikonal}, combined with \eqref{eq:sameflow}, Lemma \ref{lem09201541} and Corollary \ref{cor:rankAH}, we deduce parts \ref{en:eitL_phcrit} and \ref{en:eitL_phhess}.

Note that $\partial_x w(0,x,\xi) = \xi$. Hence we can find $T_0 \in (0,T]$ such that
\begin{equation}\label{eq:w_cone_propagation}
\partial_x w(t,x,\xi) \in \R^+W
\end{equation}
for all $t \in (-T_0,T_0)$, $x \in X'$ and $\xi \in \Gamma$.

	By \eqref{FPS}, up to choosing a smaller $T_0$, we may assume that $\spt(\cos(t\sqrt{\sLap}) f) \subset X'$ for all $t \in (-T_0,T_0)$ and $f \in C^\infty_c(M)$ with $\spt(f) \subset X''$. Similarly, by \eqref{SP}, up to choosing a smaller $T_0$, we may assume that $(t,x) \mapsto \cos(t\sqrt{\sLap})u(x)$ is smooth on $(-T_0,T_0) \times M$ for all $u \in C^\infty_c(M)$ with $\spt(u) \subset X'$.

	Recall that, by \eqref{eq08292346}, the principal symbol of the sub-Laplacian $\sLap$ is $H$.
	Let $\tilde{\sLap}\in\Psi_{\cl}^2(M_o)$ be a properly supported operator such that the asymptotic expansion 
	of its symbol is the same as that of $\sLap$ on $M_o$, except that the principal symbol $H$ is replaced by $\tilde H$.

Let $b \in C^\infty(\Sphere^{n-1})$ be such that $b|_{\Sphere^{n-1} \setminus W'} = 1$ and $\spt(b) \subset \Sphere^{n-1} \setminus W$. Let $B \in \Psi^0_\cl(M_o)$ be a properly supported operator such that all the terms in the asymptotic expansion of its symbol vanish, except for the principal symbol $(x,\xi) \mapsto b(\xi/|\xi|)$. Then, by \eqref{eq11231818}, 
\begin{equation}\label{eq:wf_B}
\WF(B) \subset \{ (x,x;\xi,-\xi) \tc x \in M_o, \, \xi \in \nR^n \setminus \R^+W \}.
\end{equation}
Moreover, since all the terms in the asymptotic expansion of the symbols of $\sLap$ and $\tilde\sLap$ coincide on $M_o \times \R^+ W'$, from the composition formula for pseudodifferential operators (see, e.g., \cite[Theorem 18.1.8]{MR781536}) we immediately deduce that
\begin{equation}\label{eq:slap_localisation}
(\sLap - \tilde\sLap) (\Id - B) \in \Psi^{-\infty}(M_o).
\end{equation}
	
Since $\tilde H$ is everywhere positive, by Lemma~\ref{lem09030037} there is a properly supported $\opA \in \Psi^1_{\cl}(M_o)$ with principal symbol $\tilde A$ such that
\begin{equation}\label{eq:slap_squareroot}
\opA^2 - \tilde{\sLap} \in\Psi^{-\infty}(M_o).
\end{equation}
We now apply Theorem \ref{thm08141709} to the phase function $\phi$ and the pseudodifferential operators $\opA$ and $P$ on $M_o$, thus obtaining a $T' \in (0,T_0]$ and a Fourier integral operator $Q$ of the form \eqref{eq:QFIO} with
	\[
	\spt(q)\subset (-T',T')\times X'\times X'\times\Gamma
	\]
	(this proves part \ref{en:eitL_fiospt}) and such that $Q_0 - P$ and $(i\partial_t + \opA) Q$ are smoothing. 

We now prove that $(\de_t^2 +\sLap) Q$ is smoothing. Indeed, let us write	
\[\begin{split}
(\de_t^2 +\sLap) Q 
&= (\de_t^2 +\tilde\sLap) Q + (\sLap-\tilde\sLap) B Q + (\sLap-\tilde\sLap) (\Id-B) Q \\
&= (-i\partial_t +\opA)(i\partial_t + \opA) Q + (\sLap-\tilde\sLap) B Q + C Q,
\end{split}\]
where $C =(\tilde\sLap - \opA^2) + (\sLap-\tilde\sLap) (\Id-B) \in \Psi^{-\infty}(M_o)$ by \eqref{eq:slap_localisation} and \eqref{eq:slap_squareroot}. Since $\opA$ and $\sLap - \tilde\sLap$ are pseudodifferential operators and preserve smooth functions, it is enough to show that $(i\partial_t + \opA) Q$, $BQ$ and $CQ$ are smoothing. On the other hand, $(i\partial_t + \opA) Q$ is smoothing by construction. As for the other operators, let us write $BQ = (\Id \otimes B) Q$ and $CQ = (\Id \otimes C) Q$, where $\Id \otimes B, \Id\otimes C : \Rop((-T',T') \times M_o ; (-T',T') \times M_o)$, and where $\Id$ denotes the identity operator with respect to the variable $t$. 
Then, by \eqref{eq:wf_B} and \cite[Theorem 8.2.9]{MR717035}, we deduce that
\begin{multline*}
\WF(\Id \otimes B) \\\subset \{ (t,x,t,x;\tau,\xi,-\tau,-\xi) \tc t \in (-T',T'), \,x \in M_o, \, (\tau,\xi) \in \nR^{1+n}, \,  \xi \notin \R^+W \}.
\end{multline*}
Moreover, $\WF(C)=\emptyset$, so, again by \cite[Theorem 8.2.9]{MR717035},
\[
\WF(\Id \otimes C) \subset \{(t,x,t,y;\tau,0,-\tau,0) \tc t \in (-T',T'), \, x,y \in M_o, \, \tau \in \nR\}.
\]
Finally, by \eqref{eq11231818} and \eqref{eq:w_cone_propagation},
\[\begin{split}
\WF(Q) 
&\subset \{ (t,x,y;\partial_t w(t,x,\xi),\partial_x w(t,x,\xi), -\xi) \tc t \in (-T',T') , x,y \in X', \xi \in \Gamma \} \\
&\subset (-T',T') \times X' \times X' \times \nR \times \R^+ W \times (-\Gamma).
\end{split}\]
By \cite[Theorem~8.2.14, p.~270]{MR717035}, we can combine the above information to conclude that
\[
\WF((\Id \otimes B)Q) = \emptyset = \WF((\Id \otimes C) Q),
\]
i.e., $BQ$ and $C Q$ are smoothing.
So $(\partial_t^2 +\sLap) Q$ is smoothing as well. 

Note now that $\partial_t^2 +\sLap$ is a differential operator on $(-T',T') \times M$ and
\[
\spt((\partial_t^2 +\sLap) Q) \subset \spt(Q) \subset (-T',T') \times X' \times X';
\]
hence $Q$ naturally extends by zeros to an operator in $\Rop(M;(-T',T')\times M)$ and $(\partial_t^2 +\sLap) Q$ remains smoothing after the extension.

Define now $\tilde Q$ by $\tilde Q_t = (Q_t+Q_{-t})/2$ for all $t \in (-T',T')$. Then $S \defeq (\partial_t^2 +\sLap) \tilde Q$ is also smoothing and $\spt(S) \subset \spt(\tilde Q) \subset (-T',T') \times X' \times X'$. In addition $\tilde Q_0 = Q_0$ and $\de_t \tilde Q|_{t=0} = 0$.

	We claim that, for every $u \in C_c^\infty(M)$ and $t \in (-T',T')$, the following Duhamel-type formula holds:
	\begin{equation}\label{eq08171102}
	\cos(t\sqrt{\sLap}) Q_0 u = \tilde Q_t u - \int_0^t \frac{\sin((t-\tau)\sqrt{\sLap})}{\sqrt{\sLap}} S_\tau u\, \dd \tau .
	\end{equation}
	To prove the claim, let $B(t)$ denote the right-hand side of \eqref{eq08171102}.
	By direct computation, one shows that
\[
B(0)=Q_0 u, \quad \de_t B(0)= (\de_t\tilde Q|_{t=0}) u=0, \quad \text{and} \quad \de_t^2B(t) = -\sLap B(t).
\]
	Since there is only one solution $U \in C^2((-T',T'); L^2(M))$ to
	\[
	\begin{cases}
	(\de_t^2  + \sLap)U(t)=0 \\
	\de_tU(0)=0 \\
	U(0) = Q_0 u,
	\end{cases}
	\]
	we conclude that $B(t)=\cos(t\sqrt{\sLap}) Q_0 u$, i.e., \eqref{eq08171102} holds.
	
Define $R : C^\infty_c(M) \to C((-T',T');L^2(M))$ by
\[\begin{split}
R_t u &= \cos(t\sqrt{\sLap}) (P-Q_0) u - \int_0^t \frac{\sin((t-\tau)\sqrt{\sLap})}{\sqrt{\sLap}} S_\tau u\, \dd \tau \\
&= \cos(t\sqrt{\sLap}) (P-Q_0) u - \int_0^t \int_0^{t-\tau} \cos(s\sqrt{\sLap}) S_\tau u\, \dd s \, \dd \tau
\end{split}\]
for all $t \in (-T',T')$. Since $S$ and $P-Q_0$ are smoothing and $\spt(P-Q_0) \cup \spt(S_\tau) \subset X' \times X'$, by the smoothness preservation property of the wave propagator we conclude that $R$ is smoothing. In addition, by \eqref{eq08171102}, for all $t\in(-T',T')$,
\[
\cos(t\sqrt{\sLap}) P = \tilde Q_t + R_t,
\]
and $\spt(\cos(t\sqrt{\sLap})P) \subset X' \times X''$ (here we use finite propagation speed and the fact that $\spt(P) \subset X'' \times X''$), while $\spt(\tilde Q_t) \subset X' \times X'$, so  $\spt(R_t) \subset X' \times X'$. This completes the proof of part \ref{en:eitL_fiocos}.
\end{proof}

\section{Proof of the main result}\label{sec:proof_main}

In this section we combine the results of the previous sections and prove Theorem \ref{thm:main}. As mentioned in the introduction, in order to apply the Fourier integral operator representation for the wave propagator, the additional assumptions introduced in Section \ref{sec:subriemannian} are needed. Therefore we will first present the proof under these additional assumptions, and at the end we will show how to remove them by transplantation.

\subsection{Preliminaries}

The following result, similar to \cite[Theorem 7.7.7]{MR717035}, will be useful to compute the action of Fourier integral operators with phase function of the form \eqref{eq:phase_decomp}.

\begin{lemma}\label{lem06142004}
	Let $\Omega \subset \R^m$ be open, $w:\Omega \times\nR^n\to\R$ and $q:\Omega\times\R^n\times\R^n\to\C$ be smooth functions such that
	\[
	\spt(q) \subset C \times \R^n \times \R^n
	\]
	for some closed subset $C$ of $\Omega$, and moreover,
	for all $\beta\in\N^n$ and all $N\in\N$,
	\begin{equation}\label{eq:q_sch}
	|\de_y^\beta q(x,y,\xi)| \lesssim_{\beta,N} (1+|y|)^{-N} 
	\end{equation}
	for all $x\in \Omega$ and $y,\xi \in \R^n$.
	For all $u\in\Sch(\R^n)$ and $\lambda \geq 1$, if $u_\lambda$ is defined by $u_\lambda(\eta) \defeq \lambda^n u(\lambda \eta)$, then, for all $k\in\N$,
	\begin{multline}\label{eq:integral_devel}
	\int_{\R^n}\int_{\R^n} e^{i(w(x,\xi)-y\cdot\xi)} q(x,y,\xi) u_\lambda(y) \, \dd y \, \dd\xi \\
	= \sum_{|\alpha|\le k} \frac{\lambda^{n-|\alpha|}}{i^{-|\alpha|} \alpha!} \int_{\R^n} e^{iw(x,\lambda\xi)} \de_y^\alpha q(x,0,\lambda\xi) \de_\xi^\alpha \hat u(\xi) \,\dd\xi
		+ \lambda^{n-(k+1)} R_{k,\lambda}^{u,q}(x)
	\end{multline}
	where, for all $\lambda \geq 1$ and $k \in \N$,
	\begin{equation}\label{eq09302049}
	\spt(R_{k,\lambda}^{u,q}) \subset C \qquad\text{and}\qquad \sup_{x \in \Omega} |R^{u,q}_{k,\lambda}(x)| \lesssim_{q,u,k} 1.
	\end{equation}
\end{lemma}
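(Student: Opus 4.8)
The plan is to carry out the two integrations in the order suggested by the notation --- first in $y$, which replaces the amplitude by a function that decays rapidly in $\xi$, and only afterwards in $\xi$ --- and then to read off the asymptotic expansion by rescaling and Taylor-expanding in the resulting small parameter $1/\lambda$.

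First I would observe that, for fixed $x\in\Omega$ and $\lambda\ge1$, the inner integral $h_\lambda(x,\xi) \defeq \int_{\R^n} e^{-iy\cdot\xi} q(x,y,\xi) u_\lambda(y)\,\dd y$ converges absolutely (by \eqref{eq:q_sch} with $\beta=0$ and the integrability of $u_\lambda$), and that, writing $\xi^\beta e^{-iy\cdot\xi} = i^{|\beta|}\partial_y^\beta e^{-iy\cdot\xi}$, integrating by parts in $y$ and applying Leibniz's rule together with \eqref{eq:q_sch} and the rapid decay of $u_\lambda$, one gets $|\xi^\beta h_\lambda(x,\xi)| \lesssim 1$ for every $\beta$, i.e.\ $h_\lambda(x,\cdot)$ is rapidly decreasing. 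Hence the left-hand side of \eqref{eq:integral_devel} is a well-defined absolutely convergent iterated integral, equal to $\int_{\R^n} e^{iw(x,\xi)} h_\lambda(x,\xi)\,\dd\xi$; since $q$ carries no decay in $\xi$, from here on every manipulation must be carried out \emph{before} the $\xi$-integration. Substituting $y = z/\lambda$ in $h_\lambda$ and $\xi = \lambda\zeta$ in the outer integral, and using $u_\lambda(z/\lambda) = \lambda^n u(z)$, the left-hand side of \eqref{eq:integral_devel} becomes $\lambda^n\int_{\R^n} e^{iw(x,\lambda\zeta)} \tilde g_\lambda(x,\zeta)\,\dd\zeta$, where $\tilde g_\lambda(x,\zeta) = \int_{\R^n} e^{-iz\cdot\zeta} q(x,z/\lambda,\lambda\zeta) u(z)\,\dd z$.

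Next I would Taylor-expand the amplitude in $1/\lambda$: by Taylor's formula with integral remainder, $q(x,z/\lambda,\lambda\zeta)$ is the sum of $\sum_{|\alpha|\le k}\frac{\lambda^{-|\alpha|}}{\alpha!} z^\alpha \partial_y^\alpha q(x,0,\lambda\zeta)$ and $\lambda^{-(k+1)}$ times $\sum_{|\alpha|=k+1}\frac{k+1}{\alpha!} z^\alpha\int_0^1 (1-\theta)^k \partial_y^\alpha q(x,\theta z/\lambda,\lambda\zeta)\,\dd\theta$. Inserting this in $\tilde g_\lambda$ and using $\int_{\R^n} z^\alpha u(z) e^{-iz\cdot\zeta}\,\dd z = i^{|\alpha|}\partial_\zeta^\alpha\hat u(\zeta)$, the leading terms contribute to $\lambda^n \tilde g_\lambda(x,\zeta)$ the quantity $\sum_{|\alpha|\le k}\frac{i^{|\alpha|}\lambda^{n-|\alpha|}}{\alpha!} \partial_y^\alpha q(x,0,\lambda\zeta)\,\partial_\zeta^\alpha\hat u(\zeta)$; since each summand is absolutely integrable in $\zeta$ ($\partial_y^\alpha q$ being bounded and $\partial_\zeta^\alpha\hat u$ Schwartz), integrating in $\zeta$ and relabelling $\zeta$ as $\xi$ reproduces the finite sum in \eqref{eq:integral_devel}, using $i^{|\alpha|} = 1/i^{-|\alpha|}$.

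Finally I would handle the remainder, the one place that needs genuine care. It contributes $\lambda^{n-(k+1)} R_{k,\lambda}^{u,q}(x)$, where $R_{k,\lambda}^{u,q}(x) = \int_{\R^n} e^{iw(x,\lambda\zeta)} \rho_{k,\lambda}(x,\zeta)\,\dd\zeta$ and
\[
\rho_{k,\lambda}(x,\zeta) = \sum_{|\alpha|=k+1}\frac{k+1}{\alpha!}\int_0^1(1-\theta)^k\int_{\R^n} e^{-iz\cdot\zeta}\, z^\alpha u(z)\,\partial_y^\alpha q(x,\theta z/\lambda,\lambda\zeta)\,\dd z\,\dd\theta.
\]
The crucial claim is that $\rho_{k,\lambda}$ is rapidly decreasing in $\zeta$, uniformly in $x\in\Omega$ and $\lambda\ge1$: writing $\zeta^\beta e^{-iz\cdot\zeta} = i^{|\beta|}\partial_z^\beta e^{-iz\cdot\zeta}$, integrating by parts in $z$ and applying Leibniz's rule, the $z$-derivatives that fall on $\partial_y^\alpha q(x,\theta z/\lambda,\lambda\zeta)$ yield factors $(\theta/\lambda)^j$ bounded by $1$ (as $0\le\theta\le1\le\lambda$) times a further $y$-derivative of $q$, which is bounded by \eqref{eq:q_sch} with $N=0$, while the remaining factors are $z$-derivatives of $z^\alpha u(z)$, which lie in $L^1_z$; hence $|\zeta^\beta\rho_{k,\lambda}(x,\zeta)| \lesssim_{\beta,q,u,k} 1$. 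Choosing $|\beta|$ large gives $\int_{\R^n}|\rho_{k,\lambda}(x,\zeta)|\,\dd\zeta \lesssim_{q,u,k} 1$, hence $\sup_{x\in\Omega}|R_{k,\lambda}^{u,q}(x)| \lesssim_{q,u,k} 1$ uniformly in $\lambda\ge1$. Since $\spt(q)\subset C\times\R^n\times\R^n$ forces $\partial_y^\alpha q(x,\cdot,\cdot)\equiv0$, and hence $R_{k,\lambda}^{u,q}(x)=0$, for $x\notin C$, we obtain $\spt(R_{k,\lambda}^{u,q})\subset C$, which together with the previous bound is \eqref{eq09302049}. The main obstacle is organisational rather than conceptual: because $q$ has no decay in $\xi$, one cannot integrate in $\xi$ first nor apply Fubini freely, so the rapid $\zeta$-decay of both the rescaled amplitude and its Taylor remainder has to be proved by hand before the final $\zeta$-integration is carried out.
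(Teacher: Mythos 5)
Your argument is correct and takes a genuinely different route from the paper's. You Taylor-expand the amplitude $q$ itself in the $y$-variable around $y=0$ (after the substitutions $y=z/\lambda$, $\xi=\lambda\zeta$), so that the coefficients $\partial_y^\alpha q(x,0,\lambda\zeta)$ appear directly and the factors $\partial_\zeta^\alpha \hat u$ come from $\int z^\alpha u(z)\,e^{-iz\cdot\zeta}\,\dd z = i^{|\alpha|}\partial_\zeta^\alpha\hat u(\zeta)$. The paper instead passes to the partial Fourier transform $\hat q$ of $q$ in $y$, rewrites the inner integral as a convolution, and Taylor-expands $\hat u(\eta)$ around $\xi$; the coefficients $\partial_y^\alpha q(x,0,\lambda\xi)$ are then recovered via Fourier inversion from $\int \hat q(x,\eta,\lambda\xi)(i\eta)^\alpha\,\dd\eta$. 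The two expansions are the same, but your treatment of the remainder is cleaner: a single integration-by-parts estimate in $z$, using that $z^\alpha u(z)$ is Schwartz and that $(\theta/\lambda)^j\le 1$, gives rapid decay in $\zeta$ uniformly in $x$ and $\lambda$ at once, whereas the paper has to split $\R^{2n}$ into regions to compare $|\eta-\xi|$, $|\eta|$ and $|\xi|$ and estimate the Taylor remainder of $\hat u$ differently on each piece. One very small imprecision in your write-up: the bound $|\xi^\beta h_\lambda(x,\xi)|\lesssim 1$ in your first paragraph is not uniform in $\lambda$ (differentiating $u_\lambda$ produces powers of $\lambda$); but you only need $\xi$-integrability of $h_\lambda(x,\cdot)$ for each fixed $\lambda$ to justify the iterated integral, so this does not affect the proof.
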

\begin{proof}
	Let $\hat q$ denote the partial Fourier transform of $q$ in the variable $y$, i.e.,
	\[
	\hat q(x,\eta,\xi) = \int_{\R^n} e^{-i\eta\cdot y}q(x,y,\xi) \,\dd y .
	\]
	Since the Fourier transform preserves the Schwartz class, from \eqref{eq:q_sch} we deduce that, for all $\beta \in \N^n$ and $N \in \N$,
	\begin{equation}\label{eq:tr_q_sch}
	|\de_\eta^\beta \hat q(x,\eta,\xi)| \lesssim_{\beta,N} (1+|\eta|)^{-N}
	\end{equation}
	for all $x \in \Omega$ and $\eta,\xi \in \R^n$. 
		Moreover, Since the Fourier transform maps pointwise products into convolutions,
	\[\begin{split}
	&\int_{\R^n} e^{iw(x,\xi)} \int_{\R^n} e^{-i\xi\cdot y} q(x,y,\xi) u_\lambda(y) \,\dd y \,\dd\xi \\
	&= (2\pi)^{-n} \int_{\R^n} e^{iw(x,\xi)}  \int_{\R^n} \hat q(x,\xi-\eta ,\xi) \hat u(\eta/\lambda) \,\dd\eta \,\dd\xi \\
	&= (2\pi)^{-n} \lambda^{2n} \int_{\R^n} e^{iw(x,\lambda\xi)}  \int_{\R^n} \hat q(x,\lambda(\xi-\eta),\lambda\xi) \hat u(\eta) \,\dd\eta \,\dd\xi \\
	&= (2\pi)^{-n} \lambda^{2n} \int_{\R^n} e^{iw(x,\lambda\xi)} \left( \sum_{|\alpha|\le k}\frac{\de^\alpha \hat u(\xi)}{\alpha!} \int_{\R^n} \hat q(x,\lambda(\xi-\eta),\lambda\xi) (\eta-\xi)^\alpha \,\dd\eta \right)  \,\dd\xi \\
	&\quad + (2\pi)^{-n} \lambda^{2n} \int_{\R^n} e^{iw(x,\lambda\xi)}  \int_{\R^n} \hat q(x,\lambda(\xi-\eta),\lambda\xi) R(\eta,\xi) \,\dd\eta \,\dd\xi,
	\end{split}\]
	where
	\begin{equation}\label{eq:taylor_remainder}
	R(\eta,\xi) \defeq \hat u(\eta)-\sum_{|\alpha|\le k}\frac{\de^\alpha \hat u(\xi)(\eta-\xi)^\alpha}{\alpha!}.
	\end{equation}
	
	Since
	\[\begin{split}
	&(2\pi)^{-n} \int_{\R^n} \hat q(x,\lambda(\xi-\eta),\lambda\xi) (\eta-\xi)^\alpha \dd\eta \\
	&=  (2\pi)^{-n} i^{|\alpha|} \lambda^{-n-|\alpha|} \int_{\R^n} \hat q(x,\eta,\lambda\xi) (i\eta)^\alpha \dd\eta \\
	&= i^{	|\alpha|} \lambda^{-n-|\alpha|} \de_y^\alpha q(x,0,\lambda\xi),
	\end{split}\]
	the above computations yield \eqref{eq:integral_devel}, if we define
	\[
	R^{u,q}_{k,\lambda}(x) \defeq (2\pi)^{-n}\lambda^{n+k+1} \int_{\R^n} \int_{\R^n} e^{iw(x,\lambda\xi)}\, \hat q(x,\lambda(\xi-\eta),\lambda\xi)\, R(\eta,\xi) \,\dd\eta \,\dd\xi.
	\]

Next, we need to show \eqref{eq09302049}.
Clearly $\spt(R_{k,\lambda}^{u,q}) \subset C$.
To estimate $R^{u,q}_{k,\lambda}$, notice that, since $u \in \Sch(\R^n)$, from \eqref{eq:taylor_remainder} it follows immediately that, for all $N \in \N$ and $\eta,\xi \in \R^n$,
	\begin{equation}\label{eq:sch_est_1}
	|R(\eta,\xi)| \lesssim_{u,N}
	 \begin{cases}  (1+|\eta|)^{k}, & \text{if $|\xi|\leq |\eta|/2$,} \\
        (1+|\eta|)^{-N} , &  \text{if $|\xi|> |\eta|/2$.}
\end{cases}
	\end{equation}
	Moreover, by Taylor's theorem,
	\[
	R(\eta,\xi) = \sum_{|\beta| = k+1} \frac{k+1}{\beta!} (\xi-\eta)^\beta  \int_0^1 (1-t)^k \de^\beta \hat u(\eta+t(\xi-\eta)) \,\dd t;
	\]
	therefore, since $u \in \Sch(\R^n)$, for all $\xi,\eta \in \R^n$ and $N \in \N$,
	\begin{equation}\label{eq:sch_est_2}
	|R(\eta,\xi)| \lesssim_{u,N} |\xi-\eta|^{k+1} (1+\dist(0,[\eta,\xi]))^{-N},
	\end{equation}
	where $\dist(0,[\eta,\xi])$ is the distance to the origin of the line segment with endpoints $\eta$ and $\xi$.
	
	From the definition of $R^{u,q}_{k,\lambda}$ we have immediately that
	\begin{equation}\label{remainest1}
\frac{|R^{u,q}_{k,\lambda}(x)|}{\lambda^{n+k+1}}
	\lesssim \int_{\R^n} \int_{\R^n} |\hat q(x,\lambda(\xi-\eta),\lambda\xi)| \, |R(\eta,\xi)| \,\dd\eta \, \dd\xi.
\end{equation}
Notice next that, if we define 
	\[
	X = \{(\eta,\xi) \in \R^n \times \R^n \tc \min\{|\eta|,|\xi|\} \geq 2 \text{ and } |\eta-\xi|\geq 1\},
	\]
then, for $(\eta,\xi) \notin X$, we may compare
	\begin{equation}\label{eq:dist_comparison}
	1+\dist(0,[\eta,\xi]) \sim 1+\min\{|\eta|,|\xi|\}.
	\end{equation}
	We therefore split the integral in \eqref{remainest1} by decomposing the domain into $X$ and its complement.
	
	As for $X$, note first that $|\eta-\xi|\geq 1$ and $|\eta|\geq 2$ on $X$.
	Moreover, in view of \eqref{eq:sch_est_1}, we further decompose $X = X_1 \cup X_2$, where $X_1 = \{(\eta,\xi) \in X \tc |\xi|\leq |\eta|/2\}$, 
	and $X_2 = X \setminus X_1$. 
	Then, by \eqref{eq:sch_est_1}, on $X_1$ we have that  $|R(\eta,\xi)| \lesssim_{u,N}|\eta|^{k},$ and on $X_2$ we have that $ |R(\eta,\xi)| \lesssim_{u,N}|\eta|^{-N}$, where  we may assume $N$ to be sufficiently large. Therefore, in combination with \eqref{eq:tr_q_sch},  we see that 
\[\begin{split}
	&\iint_{X} |\hat q(x,\lambda(\xi-\eta),\lambda\xi)| \, |R(\eta,\xi)| \,\dd\eta \, \dd\xi \\
	&\lesssim  \iint_{X_1} (\lambda|\eta-\xi|)^{-N}|\eta|^{k} \,\dd\eta \, \dd\xi
	+ \iint_{X_2} (\lambda|\eta-\xi|)^{-N}|\eta|^{-N} \,\dd\eta \, \dd\xi\\
		&\lesssim  \iint_{|\eta|\ge2, \, |\xi|\le |\eta|/2} (\lambda|\eta|)^{-N}|\eta|^{k} \,\dd\eta \, \dd\xi
	+ \iint_{|\eta|\ge 2, \, |\eta-\xi|\ge 1} (\lambda|\eta-\xi|)^{-N}|\eta|^{-N} \,\dd\eta \, \dd\xi\\
	&\sim \lambda^{-N}.
\end{split}
\]
	
As for the complement of $X$, using
\eqref{eq:dist_comparison}, \eqref{eq:sch_est_2}, and \eqref{eq:tr_q_sch} with $N$ sufficiently large,  we find that 
\[\begin{split}
	&\iint_{\R^{2n} \setminus X} |\hat q(x,\lambda(\xi-\eta),\lambda\xi)| \, |R(\eta,\xi)| \,\dd\eta \, \dd\xi \\
	&\lesssim  \iint_{\R^{2n}} (1+\lambda|\eta-\xi|)^{-N} |\eta-\xi|^{k+1} (1+\min\{|\eta|,|\xi|\})^{-N} \,\dd\eta \, \dd\xi\\
	&\sim  \iint_{\R^{2n}} (1+\lambda|b|)^{-N} |b|^{k+1} (1+|a|)^{-N} \,\dd a \, \dd b \sim \lambda^{-(n+k+1)},
	\end{split}\]
	and we are done.
\end{proof}

Combining the previous result with the Fourier integral representation for the wave propagator of Theorem \ref{thm10030928}, we are now in a position to understand in a very precise way how the wave propagator acts on suitably defined bump functions $\tilde g_\la$ at scale $1/\la$, whose Fourier supports are essentially living in a frequency domain on which $|\xi|\sim \la$, $\la \gg 1$, and which are supported microlocally in narrow ``elliptic'' conic neighbourhoods of points at which the exponential mapping is non-degenerate.

These expressions will become particularly convenient for the subsequent applications of the method of stationary phase.

\begin{proposition}\label{prp:nonprincipal_estimates}
Let $(M,H,\mu)$ be a measured quadratic Hamiltonian pair and $\sLap$ be the corresponding sub-Laplacian. Assume that a self-adjoint extension of $\sLap$ has been chosen so that \eqref{RE}, \eqref{FPS} and \eqref{SP} are satisfied. Then there exist $\xi_* \in \nR^n$, $T \in \R^+$, a nonempty open $X \subset M$, a smooth function $w : (-T,T) \times X \times \nR^n \to \R$ $1$-homogeneous in the last variable, and functions $q_{j,\alpha} \in C^\infty( (-T,T) \times X \times \nR^n )$ for all $j \in \N$ and $\alpha \in \N^n$, such that the following hold true.
\begin{enumerate}[label=(\roman*)]
\item\label{en:nonprincipal_nonvanishingq} For all $t \in (-T,T)$, $x \in X$, $\xi_0 \in \R^+ \xi_*$,
\[
q_{0,0}(t,x,\xi_0) \neq 0.
\]
\item For all $t_0 \in (0,T)$ and $\tau_0 \in (0,\infty)$, there exist $x_0 \in X$ and $\xi_0 \in \R^+ \xi_*$ such that
\begin{gather*}
\partial_t w(t_0,x_0,\xi_0) = \tau_0, \qquad \partial_x w(t_0,x_0,\xi_0) \neq 0,  \\
\partial_\xi w(t_0,x_0,\xi_0) = 0, \qquad \rank \partial_\xi^2 w(t_0,x_0,\xi_0) = n-1.
\end{gather*}

\item If $t_0,\tau_0,x_0,\xi_0$ are as above, then there exist open neighbourhoods $U_0 \subset \nR^n$ of $\xi_0$, $J_0 \subset (0,T)$ of $t_0$ and $B_0 \subset X$ of $x_0$ such that, for all $g \in C_c^\infty(\R^n)$ with $\spt(g) \subset U_0$, there exist functions $\tilde g_\la \in C^\infty_c(M)$ for all $\la \geq 1$ such that
\[
\| \tilde g_\la \|_{L^p(M)} \lesssim_{g,p} \la^{n/p'}
\]
for all $\la \geq 1$ and $p \in [1,\infty]$, and moreover, for all $N \in \N$,
\begin{multline*}
\cos(t\sqrt{\sLap}) \tilde g_\la(x) \\
= \sum_{|\alpha| \leq N, j \leq N} \la^{n-|\alpha|-j} \int_{\nR^n} e^{i \la w(t,x,\xi)} q_{j,\alpha}(t,x,\xi) \,\partial^\alpha g(\xi) \,\dd\xi + O(\la^{n-N-1})
\end{multline*}
as $\la \to \infty$, uniformly in $t \in J_0$ and $x \in B_0$.
\end{enumerate}
\end{proposition}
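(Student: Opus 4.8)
The plan is to combine the Fourier integral representation of $\cos(t\sqrt{\sLap})$ from Theorem \ref{thm10030928} with the stationary-phase expansion of Lemma \ref{lem06142004}, applied to a family of rescaled wave packets that are microlocalised near a non-conjugate direction of the sub-Riemannian geodesic flow supplied by hypothesis \eqref{RE}.

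I would begin by fixing $o \in M$ and $\xi_* \in \nT^*_o M$ as in \eqref{RE}. One first observes that necessarily $H(o,\xi_*) \neq 0$: otherwise $B_H(\xi_*) = 0$, so by Lemma \ref{lem10011320} the map $\Exp_H^o$ is constant along $\R\xi_*$, hence $\DD\Exp_H^o|_{s\xi_*}$ has a nontrivial kernel and $s\xi_*$ cannot be a regular point. Choosing a chart $(M_o,x)$ at $o$, writing $\xi_*$ also for the corresponding covector, and picking a closed cone $\Gamma \subset \nR^n$ that is a neighbourhood of $\R^+\xi_*$ with $\Gamma \subset \{\xi : H(o,\xi) \neq 0\}$, I apply Theorem \ref{thm10030928} to obtain $X$, $T$ and the $1$-homogeneous phase $w$ on $(-T,T) \times X \times \nR^n$. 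Items \ref{en:eitL_phhom}--\ref{en:eitL_phhess} of that theorem yield parts (i) and (ii) directly: for $\xi_0 = \rho\xi_*$ the critical-point equation $\partial_\xi w(t_0,x_0,\xi_0) = 0$ forces $x_0 = \Exp_H^o(-t_0\hat\xi_*)$, where $\hat\xi_* = \xi_*/(2\sqrt{H(o,\xi_*)})$ does not depend on $\rho$; then $\partial_t w(t_0,x_0,\xi_0) = \sqrt{H(o,\xi_0)} = \rho\sqrt{H(o,\xi_*)}$, which equals any prescribed $\tau_0$ once $\rho = \tau_0/\sqrt{H(o,\xi_*)}$; and $-t_0\hat\xi_*$ is a regular point of $\Exp_H^o$ by \eqref{RE}, so $\rank\partial_\xi^2 w(t_0,x_0,\xi_0) = n-1$ by item \ref{en:eitL_phhess} together with Corollary \ref{cor:rankAH}. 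Shrinking $T$ and $X$, I may assume that all such $x_0$ lie in $X$, that $-t_0\hat\xi_*$ lies among the regular points furnished by \eqref{RE}, and that the geodesic $r \mapsto \Exp_H^o(r\hat\xi_*)$ — an immersion at $r = 0$ since its velocity is $2B_H(\hat\xi_*) \neq 0$ — is injective on the relevant interval, so that $\Exp_H^o(-t_0\hat\xi_*) \neq \Exp_H^o(t_0\hat\xi_*)$ and hence $\partial_\xi w(-t_0,x_0,\xi_0) \neq 0$.

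For part (iii) I first fix the input operator of Theorem \ref{thm10030928}: after choosing $X'' \Subset X' \Subset X$ with $o \in X''$, I take $P \in \Psi^0_\cl(M)$ with amplitude $\chi_0(x)\chi_0(y)\psi_0(\xi)$, where $\chi_0 \in C^\infty_c(X'')$ equals $1$ near $o$ and $\psi_0 \in S^0_\cl(\R^n)$ equals $1$ near $\R^+\xi_*$ with $\essspt(\psi_0) \subset \Gamma$; writing $q \sim \sum_j q_{-j}$ for the amplitude of the resulting $Q$, I set $q_{j,\alpha}(t,x,\xi) \defeq \frac{i^{|\alpha|}}{2\,\alpha!}(\partial_y^\alpha q_{-j})(t,x,o,\xi)$. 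Part (i) then follows from the transport equation in the proof of Theorem \ref{thm08141709}: along each characteristic of the field $W$ the function $q_0(\cdot,\cdot,o,\xi_0)$ solves a homogeneous linear ODE with initial value $p_0(\,\cdot\,,o,\xi_0) = \chi_0(\,\cdot\,)$ (the principal symbol of $\psi_0$ being $1$ near $\xi_*$), so it never vanishes once $\chi_0 \equiv 1$ along the compact backward characteristic curves, which a further shrinking of $X$ and $T$ secures. Given $g \in C^\infty_c(\R^n)$ with $\spt g$ contained in a small conic neighbourhood $U_0$ of $\xi_0$ inside $\Gamma$, I put $u = \Four^{-1}g \in \Sch(\R^n)$, $u_\lambda = \lambda^n u(\lambda\,\cdot\,)$ and $\tilde g_\la \defeq P(\eta u_\lambda) \in C^\infty_c(M)$, where $\eta \in C^\infty_c(M_o)$ equals $1$ on $\overline{X'}$. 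For the norm estimate I use that $\widehat{\eta u_\lambda}$ is concentrated in $\{|\xi - \lambda\xi_0| \lesssim \lambda\}$ up to a rapidly decaying tail: inserting a smooth cutoff adapted to that ball replaces $P$ by an operator whose kernel $K$ satisfies $\sup_x \|K(x,\cdot)\|_{L^1} + \sup_y \|K(\cdot,y)\|_{L^1} \lesssim 1$ uniformly in $\lambda$, since on the relevant frequency region the symbol of $P$ has $\xi$-derivatives of size $O(\lambda^{-|\alpha|})$; by Schur's test and interpolation this gives $\|\tilde g_\la\|_{L^p(M)} \lesssim \|\eta u_\lambda\|_{L^p} \lesssim \lambda^{n/p'}$ for all $p \in [1,\infty]$.

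Finally, Theorem \ref{thm10030928}\ref{en:eitL_fiocos} gives $\cos(t\sqrt{\sLap})\tilde g_\la = \tfrac12(Q_t + Q_{-t})(\eta u_\lambda) + R_t(\eta u_\lambda)$, and the last term is $O(\lambda^{-\infty})$ because $\eta u_\lambda$ oscillates at frequency $\lambda$ while $R$ has a smooth kernel. Since $\phi(t,x,y,\xi) = w(t,x,\xi) - y\cdot\xi$, Lemma \ref{lem06142004} applies to $Q_{\pm t}(\eta u_\lambda)$ with the pair $(t,x)$ playing the role of its variable $x$; expanding the classical amplitude $q(\pm t,x,o,\lambda\xi)$ into its homogeneous components, and using $w(\pm t,x,\lambda\xi) = \lambda w(\pm t,x,\xi)$ and $\hat u = g$, turns the $Q_t$-contribution into exactly $\sum_{|\alpha|\le N,\,j\le N} \lambda^{n-|\alpha|-j}\int e^{i\lambda w(t,x,\xi)} q_{j,\alpha}(t,x,\xi)\,\partial^\alpha g(\xi)\,\dd\xi$ modulo $O(\lambda^{n-N-1})$, whereas in the $Q_{-t}$-contribution every term carries the phase $w(-t,x,\xi)$, whose $\xi$-gradient has been arranged to be nonvanishing on $\overline{J_0}\times\overline{B_0}\times\overline{U_0}$, so non-stationary phase makes each such term $O(\lambda^{-\infty})$ and the whole $Q_{-t}$-part $O(\lambda^{n-N-1})$; all estimates are uniform for $t \in J_0$, $x \in B_0$ once $J_0 \Subset (0,T)$, $B_0 \Subset X$ and $U_0$ are chosen small enough that the relevant closures lie inside the domains of $w$ and $q$ and continuity propagates the non-degeneracies from $(t_0,x_0,\xi_0)$. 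The principal obstacle is the $L^p$ bound on $\tilde g_\la$ for $p < 2$: an order-zero pseudodifferential operator is not bounded on $L^1$, so one must exploit that $\eta u_\lambda$ is frequency-localised in a ball of radius comparable to its distance from the origin in order to replace $P$ by an operator with a uniformly integrable kernel; a secondary difficulty is the careful bookkeeping of the successive shrinkings of $T$, $X$, $U_0$, $J_0$, $B_0$ that keep all supports compatible, the exponential map well-behaved, and Lemma \ref{lem06142004} applicable with uniform constants.
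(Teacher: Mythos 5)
Your overall plan coincides with the paper's: fix $o$ and $\xi_*$ via \eqref{RE}, note $H(o,\xi_*)\neq 0$ from Lemma \ref{lem10011320}, feed a microlocaliser $P$ with amplitude $\chi_0(x)\chi_0(y)\psi_0(\xi)$ into Theorem \ref{thm10030928}, define $q_{j,\alpha}$ from the amplitude of the resulting $Q$, read off parts (i)–(ii) from items \ref{en:eitL_phhom}–\ref{en:eitL_phhess} of that theorem together with Corollary \ref{cor:rankAH}, build $\tilde g_\la = P(\eta\,\Four^{-1}g(\cdot/\la))$, apply Lemma \ref{lem06142004} to $Q_{\pm t}$, and kill the $Q_{-t}$ branch by non-stationary phase using $\partial_\xi w(-t_0,x_0,\xi_0)\neq 0$. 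That is exactly the paper's architecture.

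There are two places where you take a genuinely different, and somewhat heavier, route. First, for the $L^p$ bound on $\tilde g_\la$ you interpose a frequency cutoff adapted to the ball $\{|\xi-\la\xi_0|\lesssim\la\}$ and invoke Schur's test on the truncated kernel; this works, but you must also account for the rapidly decaying tail of $\widehat{\eta u_\la}$ outside the cutoff (note $\widehat{\eta u_\la}$ is not compactly supported, so there is a genuine remainder to estimate), and for that remainder one cannot quote $L^p$-boundedness of $P$ for $p=1,\infty$ and has to argue directly with pointwise kernel bounds and compact supports. The paper sidesteps all of this by applying Lemma \ref{lem06142004} once more, with phase $x\cdot\xi$ and amplitude $p_{\mathrm{fr}}(\xi/|\xi|)p_{\mathrm{sp}}(y)$, which yields the closed form $\tilde g_\la(x)=\la^n v_g(\la x)+O(\la^{-N})$; since $p_{\mathrm{sp}}\equiv 1$ near $o$, all the $|\alpha|\ge 1$ terms vanish and the norm bound is immediate. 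Second, for the non-vanishing of $q_{0,0}$ you solve the leading transport equation explicitly and propagate positivity along the backward characteristics; the paper instead observes that $q_0|_{t=0}=p_0$ equals $1$ at $(x,y,\xi_*)$ for $x,y\in X'''$ and invokes continuity plus $0$-homogeneity (with the compact set $\overline{X'''}\times\overline{X'''}\times\{\xi_*/|\xi_*|\}$) to get non-vanishing for $|t|$ small. Your version is slightly stronger in principle, while the paper's is shorter and is what the ensuing shrinking of $T$ makes possible anyway. Both variants are sound; the trade-off is brevity (paper) versus avoiding the introduction of a new auxiliary estimate (your transport-equation argument and Schur test are both more self-contained but longer). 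No genuine gap.
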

\begin{proof}
By the assumption \eqref{RE}, we can find $o\in M$ and $\xi_* \in T^*_o M\setminus \{0\}$ such that $r\xi_*$ is a regular point for $\Exp^o_H$ for all $r\in[-1,1]\setminus\{0\}$.
Notice that $H(\xi_*) \neq 0$ by Lemma~\ref{lem10011320}.

Let $(M_o,x)$ be a coordinate chart centred at $o$. Let us identify $M_o$ with an open neighbourhood of the origin in $\R^n$. Let $\Gamma \subset \{ \xi \in \nR^n \tc H(0,\xi) \neq 0\}$ be any closed  cone in $\nR^n$ whose interior contains $\xi_*$. Let $w : (-T,T) \times X \times \nR^n \to \R$ be the function given by Theorem~\ref{thm10030928}, where $T>0$ and $X\subset M_o$ is an open neighbourhood of the origin.

Let $X''',X'',X'$ be open neighbourhoods of $0$ in $X$ such that $X''' \Subset X'' \Subset X' \Subset X$. Let $p_{\mathrm{sp}}\in C^\infty_c(M_o)$ be such that $p_{\mathrm{sp}}|_{X'''} \equiv 1$ and $\spt(p_{\mathrm{sp}})\subset X''$.
Let $p_{\mathrm{fr}} \in C^\infty(\Sphere^{n-1})$ be such that $p_{\mathrm{fr}}(\xi_*/|\xi_*|)=1$ and $\spt(p_{\mathrm{fr}}) \subset \Gamma \cap \Sphere^{n-1}$.
Then we can find $p \in S^0_\cl(M_o\times M_o;\R^n)$ such that $\spt(p) \subset X'' \times X'' \times \Gamma$, and all terms in the asymptotic expansion of $p$ vanish except for the $0$-homogeneous term
\[
p_0 : (x,y,\xi) \mapsto p_{\mathrm{sp}}(x) \, p_{\mathrm{sp}}(y) \, p_{\mathrm{fr}}(\xi/|\xi|).
\]
Let $P \in \Psi^0_\cl(M)$ be the pseudodifferential operator supported in $X'' \times X''$ and defined by \eqref{eq:PFIO}. Theorem \ref{thm10030928} then gives us an operator
$Q \in \Rop(M;(-T',T') \times M)$ supported in $(-T',T') \times X' \times X'$ for some $T' \in (0,T]$ and given by \eqref{eq:QFIO}, with amplitude $q \in S^0_\cl((-T',T')\times M_o \times M_o; \R^n)$ supported in $(-T',T') \times X' \times X' \times \Gamma$, such that,
for all $t \in (-T',T')$,
\begin{equation}\label{eq:cos_repn}
\cos(t\sqrt{\sLap}) P = (Q_t+Q_{-t})/2 + R_t
\end{equation}
for some smoothing operator $R : \Rop^{-\infty}(M;(-T',T')\times M)$ supported in $(-T',T') \times X' \times X'$.

Let $q \sim \sum_{j \geq 0} q_j$ be the asymptotic expansion of $q$. Note that, by construction, the $0$-homogeneous term $q_0$ equals the corresponding term $p_0$ in the expansion of $p$ for $t=0$,
and in particular $q_0(0,x,y,\xi_*) = 1$ for all $x,y \in X'''$. By continuity and homogeneity, up to taking a smaller $T'$, we may assume that
\begin{equation}\label{eq:q0nonvanishing}
q_0(t,x,y,r\xi_*) \neq 0
\end{equation}
for all $x,y \in X'''$, $r > 0$ and $t \in (-T',T')$.

Up to taking a smaller $T'$, we may also assume that $T' \leq 2\sqrt{H(0,\xi_*)}$, and the curve
\[
(-T',T') \ni t \mapsto \Exp_H^o(-t\xi_*/(2\sqrt{H(0,\xi_*)})) \in M
\]
takes values in $X'''$ and is injective; note that, by Lemma \ref{lem10011320}, this curve has nonvanishing tangent vector.
Hence, for all $t_0 \in (0,T')$, the point $-t_0 \xi_*/(2\sqrt{H(0,\xi_*)})$ is a regular point of $\Exp_H^o$ and, if we set $x_0 = \Exp^o_H(0,-t_0 \xi_*/(2\sqrt{H(0,\xi_*)}))$, then $X'''\ni x_0 \neq \Exp^o_H(0,t_0 \xi_*/(2\sqrt{H(0,\xi_*)}))$. So, from Theorem \ref{thm10030928}\ref{en:eitL_phder}-\ref{en:eitL_phcrit}-\ref{en:eitL_phhess} we deduce that, for all $\xi_0 \in \R^* \xi_*$,
\begin{gather}
\label{eq:critical_point}
\partial_\xi w(t_0,x_0,\xi_0) = 0, \quad  \rank(\partial_\xi^2 w(t_0,x_0,\xi_0)) = n-1, \\
\partial_t w(t_0,x_0,\xi_0) > 0, \quad \partial_x w(t_0,x_0,\xi_0) \neq 0, \\
\label{eq:noncritical_point}
\partial_\xi w(-t_0,x_0,\xi_0) \neq 0.
\end{gather}
By homogeneity of $w$, for all $\tau_0 \in (0,\infty)$, we can then choose $\xi_0 \in \R^+ \xi_*$ such that
\[
\partial_t w(t_0,x_0,\xi_0) = \tau_0.
\]

Let now $t_0,\gamma_0,x_0,\xi_0$ be as above.
If $\psi \in C^\infty_c(M_o)$ is such that $\psi|_{X'} \equiv 1$, we can define a linear map $\Pi : C^\infty(\R^n) \to C^\infty_c(M)$ by $\Pi f = \psi f$.
Let $g \in C^\infty_c(\R^n)$ with $\spt(g) \subset \nR^n$. For all $\la > 0$, let $\check g_\la \in \Sch(\R^n)$ denote the inverse Fourier transform of $g(\cdot/\la)$, and define $\tilde g_\la = P \Pi \check g_\la$. Note that
\[
\tilde g_\la(x) = p_{\mathrm{sp}}(x) \int_{\nR^n} \int_{\R^n} e^{i \xi \cdot (x-y)} p_{\mathrm{fr}}(\xi/|\xi|) \, p_{\mathrm{sp}}(y) \, \check g_\la(y) \,\dd y \,\dd\xi+ P^\infty \Pi \check g_\la (x),
\]
where $P^\infty \in \Psi^{-\infty}(M)$ is supported in
$X''\times X''$,
and, by Lemma \ref{lem06142004}, for all $N \in \N$,
\[
\int_{\nR^n} \int_{\R^n} e^{i \xi \cdot (x-y)} p_{\mathrm{fr}}(\xi/|\xi|) \, p_{\mathrm{sp}}(y) \, \check g_\la(y) \,\dd y \,\dd\xi = \la^n v_g(\la x) + O(\la^{-N})
\]
as $\la \to \infty$, where $v_g \in \Sch(\R^n)$ is given by $\hat v_g(\xi) = (2 \pi)^n p_{\mathrm{fr}}(\xi/|\xi|) g(\xi)$. Since
\[
\| \la^n v_g(\la \cdot) \|_{L^p(\R^n)} \lesssim_{g,p} \la^{n/p'}
\]
for all $p \in [1,\infty]$, we conclude that, for $\la$ sufficiently large,
\begin{equation}\label{eq:upperbound_norm}
\|\tilde g_\la\|_{L^p(M)} \lesssim_{g,p} \la^{n/p'} .
\end{equation}

By \eqref{eq:cos_repn}, for all $t \in (-T',T')$, we can write
\begin{equation}\label{eq:cos_split}
\cos(t\sqrt{\sLap}) \tilde g_\la = (\cc_{t,\la}^0 + \cc_{-t,\la}^0)/2 + \cc_{t,\la}^\infty,
\end{equation}
where
\[
\cc_{t,\la}^0 = Q_{t} \Pi \check g_\la, \qquad \cc_{t,\la}^\infty = R_t \Pi \check g_\la.
\]

Note now that 
\[
\cc_{t,\la}^{\infty}(x)
	= \int_{\R^n} R(t,x,y) \, \check g_\la(y) \,\dd y 
\]
for a smooth function $R \in C^\infty((-T',T') \times M \times \R^n)$ supported in $(-T',T') \times X' \times X'$. By taking Fourier transforms in $y$, we can rewrite this as
\[
\cc_{t,\la}^{\infty}(x)
	= \frac{\la^n}{(2\pi)^{n}} \int_{\R^n} \hat R(t,x,-\la\xi) \, g(\xi) \,\dd\xi ,
\]
where $\hat R$ denotes the partial Fourier transform of $R$ in the last variable. Since $X' \Subset \R^n$, the function $\hat R(t,x,\xi)$ has fast decay in $\xi$ uniformly in $x \in \R^n$ and $t \in [-T'',T'']$ for any $T'' < T'$, while $|\xi| \sim 1$ on $\spt(g)$. So, for all $T'' \in (0,T')$ and $N \in \N$,
\begin{equation}\label{eq:smoothing_decay}
\sup_{t \in [-T'',T'']} \|\cc_{t,\la}^\infty\|_{L^\infty(M)} \lesssim_{T'',N} (1+\la)^{-N}.
\end{equation}

As for the other terms in \eqref{eq:cos_split}, by \eqref{eq:QFIO} we can write
\[
\cc_{t,\la}^{0}(x)
	= \int_{\nR^n} \int_{\R^n} e^{i(w(t,x,\xi)-y\cdot\xi)} q(t,x,y,\xi) \, \check g_\la(y) \,\dd y \,\dd\xi .
\]

For every $\ell\in\N$, we have $q = \sum_{j=0}^\ell q_j + q^\ell$, where $q_j$ is homogeneous in $\xi$ of degree $-j$ and $q^\ell$ is an amplitude of order $-\ell-1$. Correspondingly
\[\begin{split}
 \cc_{t,\la}^0(x) 
	&= \sum_{j=0}^\ell \int_{\nR^n} \int_{\R^n} e^{i(w(t,x,\xi)-y\cdot\xi)} 
	q_j(t,x,y,\xi) \, \check g_\la(y) \,\dd y  \,\dd\xi   \\
	&+  \int_{\nR^n} \int_{\R^n} e^{i(w(t,x,\xi)-y\cdot\xi)} 
	q^\ell(t,x,y,\xi) \, \check g_\la(y) \,\dd y \,\dd\xi .
\end{split}\]

We apply Lemma \ref{lem06142004} to each term of the above sum and obtain
\begin{equation}\label{eq:dec_qm}
 \cc_{t,\la}^{0}(x) = \cc_{t,\la}^{1}(x) + \cc_{t,\la}^{2}(x) + \cc_{t,\la}^{3}(x),
\end{equation}
where
\begin{align*}
\cc_{t,\la}^{1}(x) &= \sum_{\substack{|\alpha|\le k\\ j\le\ell}} \frac{\la^{n-|\alpha|-j}}{i^{-|\alpha|} \alpha!}
	\int_{\nR^n} e^{i \la w(t,x,\xi)} \de^\alpha_y q_j(t,x,0,\xi) \, \de^\alpha g(\xi) \,\dd\xi ,\\
\cc_{t,\la}^{2}(x) &= \sum_{|\alpha|\le k} \frac{\la^{n-|\alpha|}}{i^{-|\alpha|} \alpha!}
	\int_{\nR^n} e^{i \la w(t,x,\xi)} \de^\alpha_y q^{\ell}(t,x,0,\la\xi) \, \de^\alpha g(\xi) \,\dd\xi  ,\\
\cc_{t,\la}^{3}(x) &= \la^{n-(k+1)} 
	\sum_{j\le\ell+1} R_{\la,j,k}(t,x)   ,
\end{align*}
for some functions $R_{\la,j,k}$ with
 $\sup_{x \in \R^n, t\in[-T'',T'']} |R_{\la,j,k}(t,x)| \lesssim_{u,T''} 1$ for all $T'' \in (0,T')$. In particular, for all $T'' \in (0,T')$ and $\la \geq 1$,
\begin{equation}\label{eq:decay_m3}
\sup_{t \in [-T'',T'']} \|\cc_{t,\la}^{3}\|_{L^\infty(M)} \lesssim_{g,T''} \la^{n-(k+1)}.
\end{equation}
Moreover, since $\de^\alpha_y q^{\ell}$ is an amplitude of order $-\ell-1$ and $|\xi| \sim 1$ on $\spt(\de^\alpha g)$, we easily obtain that, for all $\la \geq 1$ and $T'' \in (0,T')$,
\begin{equation}\label{eq:decay_m2}
\sup_{t \in [-T'',T'']} \|\cc_{t,\la}^{2}\|_{L^\infty(M)} \lesssim_{g,T''} \la^{n-(\ell+1)}.
\end{equation}

By \eqref{eq:noncritical_point}, there exist open neighbourhoods $J_0 \Subset (0,T')$ of $t_0$, $U_0 \Subset \nR^n$ of $\xi_0$ and $B_0 \Subset X'''$ of $x_0$ such that
\[
|\partial_\xi w(-t,x,\xi)| \geq |\partial_\xi w(-t_0,x_0,\xi_0)|/2 > 0
\]
for all $t \in J_0$, $x \in B_0$ and $\xi \in U_0$. Hence, if we choose $g$ so that $\spt(g) \subset U_0$, then integration by parts in $\xi$ (see, e.g., \cite[Theorem 7.7.1]{MR717035}) immediately gives that, for all $\la>0$ and $N \in \N$,
\begin{equation}\label{eq:decay_m1minus}
\sup_{x \in B_0, t \in J_0} |\cc_{-t,\la}^{1}(x)| \lesssim_{g,N} (1+\la)^{-N}.
\end{equation}

By combining the above estimates, we obtain that
\begin{multline*}
\cos(t\sqrt{\sLap}) \tilde g_\la(x) \\
= \sum_{\substack{|\alpha|\le k\\ j\le\ell}} \frac{\la^{n-|\alpha|-j}}{2 i^{-|\alpha|} \alpha!} \int_{\nR^n} e^{i \la w(t,x,\xi)} \partial_y^\alpha q_{j}(t,x,0,\xi) \,\partial^\alpha g(\xi) \,\dd\xi + O(\la^{n-\min\{k,\ell\}-1})
\end{multline*}
as $\la \to \infty$, uniformly in $x \in B_0$ and $t \in J_0$. The conclusion follows by setting $q_{j,\alpha}(t,x,\xi) = (2i^{-|\alpha|} \alpha!)^{-1} \de^\alpha_y q_j(t,x,0,\xi)$, taking $k=\ell=N$, and relabeling $T'$ as $T$ and $X'''$ as $X$. 
\end{proof}

Finally, we state a simple application of the method of stationary phase that will be of use in the sequel.

\begin{lemma}\label{lem:stationaryphase}
Let $I \subset \R$ and $X \subset \R^n$ be open, and let $w : I \times X \times \nR^n \to \R$ be smooth and $1$-homogeneous in the last variable. Assume that there exist $t_0 \in I \setminus \{0\}$, $x_0 \in X$ and $\xi_0 \in \nR^n$ such that
\begin{equation}\label{eq:statphase_ass}
\partial_t w(t_0,x_0,\xi_0) = t_0, \quad \partial_\xi w(t_0,x_0,\xi_0) = 0, \quad \rank \partial_\xi^2 w(t_0,x_0,\xi_0) = n-1.
\end{equation}
Then there exist $\sigma \in \Z$, open neighbourhoods $B \Subset X$ of $x_0$, $U \Subset \nR^n$ of $\xi_0$ and $J \Subset I \setminus \{0\}$ of $t_0$, and smooth functions $t^c : B \to J$, $\xi^c : B \to U$and $d : B \to \R^+$ such that
\[
t^c(x_0) = t_0, \qquad \xi^c(x_0) = \xi_0
\]
and, for all smooth functions $b : \R \times \R^n \times \R^n \to \C$ with $\spt(b) \subset J \times B\times U$,
	\begin{multline*}
	\int_{\R} \int_{\nR^{n}} e^{i\la[w(t,x,\xi)-t^2/2]} \, b(t,x,\xi) \,\dd\xi \,\dd t\\
	= \la^{-(n+1)/2} \, d(x) \, e^{i(\pi\sigma/4 - \la(t^c(x))^2/2)} \, b(t^c(x),x,\xi^c(x))   
	+ O(\la^{-(n+1)/2-1}) ,
	\end{multline*}
	as $\la\to\infty$, uniformly in $x \in B$.
\end{lemma}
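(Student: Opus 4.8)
\emph{The plan.} I would introduce the phase $\Psi(t,x,\xi)\defeq w(t,x,\xi)-t^2/2$, so that the integral to be estimated is $\int_\R\int_{\nR^n}e^{i\la\Psi(t,x,\xi)}\,b(t,x,\xi)\,\dd\xi\,\dd t$, and apply the method of stationary phase in the $(t,\xi)$ variables, treating $x$ as a parameter and integrating over the open subset $I\times\nR^n$ of $\R^{n+1}$ (since $b$ is supported in $U\Subset\nR^n$, the excision of $\xi=0$ is harmless). The hypotheses \eqref{eq:statphase_ass} say precisely that, at $x=x_0$, the point $(t_0,\xi_0)$ is a critical point of $\Psi(\cdot,x_0,\cdot)$: indeed $\de_t\Psi=\de_tw-t$ vanishes there because $\de_tw(t_0,x_0,\xi_0)=t_0$, and $\de_\xi\Psi=\de_\xi w$ vanishes there by assumption. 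Everything then reduces to checking that this critical point is nondegenerate and persists smoothly as $x$ varies, and to identifying the leading term; the only genuinely delicate point is the nondegeneracy.

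\emph{The crux: nondegeneracy of the Hessian.} The $(n+1)\times(n+1)$ Hessian $Q\defeq\de^2_{(t,\xi)}\Psi(t_0,x_0,\xi_0)$ has diagonal blocks $\de_t^2w-1$ and $\de_\xi^2w$ and off-diagonal block $\de_t\de_\xi w$ (all evaluated at $(t_0,x_0,\xi_0)$), and its lower-right block $\de_\xi^2w$ has rank only $n-1$, so invertibility of $Q$ is not automatic. Here I would exploit the $1$-homogeneity of $w$ in $\xi$ through Euler's identity $\xi\cdot\de_\xi w=w$: differentiating it in $\xi$ gives $\de_\xi^2w(t,x,\xi)\,\xi=0$, so at $(t_0,x_0,\xi_0)$, where $\de_\xi^2w$ has rank $n-1$ and $\xi_0\ne0$, its kernel is exactly $\R\xi_0$; and applying Euler's identity to the $1$-homogeneous function $\de_tw$ gives $\xi_0\cdot\de_\xi\de_tw(t_0,x_0,\xi_0)=\de_tw(t_0,x_0,\xi_0)=t_0\ne0$. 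A short linear-algebra argument then shows $Q$ is injective: if $Qz=0$ with $z=(r,v)\in\R\times\R^n$, contracting the $\R^n$-valued component of the relation with $\xi_0$ and using $\de_\xi^2w\,\xi_0=0$ and $\xi_0\cdot\de_\xi\de_tw=t_0$ forces $r=0$; then $\de_\xi^2w\,v=0$ forces $v\in\R\xi_0$; and the scalar component of the relation then reads $s\,(\de_\xi\de_tw\cdot\xi_0)=st_0=0$, forcing $v=0$. This is the step I expect to be the heart of the matter: it is exactly where the maximal-rank hypothesis $\rank\de_\xi^2w=n-1$ and the condition $t_0\ne0$ are used, and it explains why the auxiliary term $-t^2/2$ was added to the phase — the extra $t$-variable, coupled to the degenerate direction $\xi_0$ of $\de_\xi^2w$ via $\de_t\de_\xi w\cdot\xi_0=t_0\ne0$, restores full nondegeneracy.

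\emph{Conclusion.} Since the Jacobian in $(t,\xi)$ of $(t,x,\xi)\mapsto\de_{(t,\xi)}\Psi(t,x,\xi)$ at $(t_0,x_0,\xi_0)$ is $Q$, invertible, the implicit function theorem produces smooth maps $t^c,\xi^c$ defined near $x_0$ with $t^c(x_0)=t_0$, $\xi^c(x_0)=\xi_0$ and such that $(t^c(x),\xi^c(x))$ is the unique critical point of $\Psi(\cdot,x,\cdot)$ near $(t_0,\xi_0)$; shrinking to a neighbourhood $B$ of $x_0$ and choosing neighbourhoods $J$ of $t_0$ and $U$ of $\xi_0$, I can arrange $J\Subset I\setminus\{0\}$, $U\Subset\nR^n$, $t^c(B)\subset J$, $\xi^c(B)\subset U$, and the Hessian $Q_x\defeq\de^2_{(t,\xi)}\Psi(t^c(x),x,\xi^c(x))$ invertible with constant signature $\sigma\in\Z$ on $B$. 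A second use of Euler's identity is what makes the statement clean: since $\de_\xi w$ vanishes at the critical point, $w(t^c(x),x,\xi^c(x))=\xi^c(x)\cdot\de_\xi w(t^c(x),x,\xi^c(x))=0$, hence $\Psi(t^c(x),x,\xi^c(x))=-(t^c(x))^2/2$, which accounts for the factor $e^{-i\la(t^c(x))^2/2}$. It then remains to invoke the stationary-phase asymptotics with smooth dependence on parameters (e.g.\ \cite[Theorem 7.7.5]{MR717035}, applied for each fixed $x\in B$, with the implied constant controlled by finitely many derivatives of $b$ and $\Psi$ over the compact set $\overline{J}\times\overline{B}\times\overline{U}$, hence uniform in $x$): the leading term is $\la^{-(n+1)/2}(2\pi)^{(n+1)/2}|\det Q_x|^{-1/2}e^{i\pi\sigma/4}e^{i\la\Psi(t^c(x),x,\xi^c(x))}b(t^c(x),x,\xi^c(x))$ with an $O(\la^{-(n+1)/2-1})$ remainder, and setting $d(x)\defeq(2\pi)^{(n+1)/2}|\det Q_x|^{-1/2}$, a smooth $\R^+$-valued function on $B$, yields the asserted formula. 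The remaining verifications — the precise block form of $Q$, the bookkeeping of neighbourhoods, and the uniformity of the stationary-phase remainder — are routine.
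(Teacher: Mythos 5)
Your proposal is correct and follows essentially the same route as the paper: introduce the auxiliary phase $w(t,x,\xi)-t^2/2$, use Euler's identities for the $1$-homogeneity of $w$ (namely $\xi\cdot\partial_t\partial_\xi w=\partial_t w$ and $\xi\in\ker\partial_\xi^2 w$) together with $\rank\partial_\xi^2 w=n-1$ and $t_0\ne 0$ to establish nondegeneracy of the full $(t,\xi)$-Hessian, then apply the implicit function theorem and H\"ormander's stationary phase with parameters. The only cosmetic difference is that you verify nondegeneracy by showing the Hessian has trivial kernel, while the paper computes its determinant explicitly in the block decomposition $\R\oplus\R(\xi_0/|\xi_0|)\oplus\xi_0^\perp$, obtaining $-(t_0/|\xi_0|)^2\det\bigl(\partial_\xi^2 w|_{\xi_0^\perp\times\xi_0^\perp}\bigr)\neq 0$; these are equivalent, and your extra remark that $w(t^c(x),x,\xi^c(x))=\xi^c(x)\cdot\partial_\xi w=0$ is a useful clarification of why the factor $e^{-i\la(t^c(x))^2/2}$ appears.
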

\begin{proof}
We want to apply the method of stationary phase to the above integral, with phase $f(x,t,\xi) = w(t,x,\xi) - t^2/2$, where $x$ plays the role of a parameter. Observe that
\[
\partial_{(t,\xi)} f = \begin{pmatrix} \partial_t w - t \\ 
\partial_\xi w \end{pmatrix}, \quad
\partial_{(t,\xi)}^2 f = \begin{pmatrix}
\partial_t^2 w - 1 & \partial_t\partial_\xi w^T \\
\partial_t\partial_\xi w & \partial_\xi^2 w
\end{pmatrix},
\]
so, by \eqref{eq:statphase_ass}, $\partial_{(t,\xi)} f(x_0,t_0,\xi_0) = 0$. In addition, from the $1$-homogeneity of $w$ we deduce that $\xi \cdot \partial_\xi w(t,x,\xi) = w(t,x,\xi)$, so
\begin{equation}\label{eq:w_mixed_der}
\xi \cdot \partial_t \partial_\xi w(t,x,\xi) = \partial_t w(t,x,\xi)
\end{equation}
and
\begin{equation}\label{eq:w_hess_ker}
\xi \in \ker \partial_\xi^2 w(t,x,\xi).
\end{equation}

Therefore, if we write the matrix of $\partial_\xi^2 w(t,x,\xi)$ with respect to the decomposition $\R^n = \R (\xi/|\xi|) \oplus \xi^\perp$, then, by \eqref{eq:w_hess_ker},
\[
\partial_\xi^2 w(t,x,\xi) = \begin{pmatrix}
0 & 0 \\
0 & \partial_\xi^2 w(t,x,\xi)|_{\xi^\perp \times \xi^\perp}
\end{pmatrix},
\]
which, together with \eqref{eq:w_mixed_der}, implies that
\[
\partial_{(t,\xi)}^2 f(x,t,\xi) = \begin{pmatrix}
\partial_t^2 w(t,x,\xi) - 1 & \partial_t w(t,x,\xi)/|\xi| & * \\
\partial_t w(t,x,\xi)/|\xi| &  0 & 0 \\
* & 0 & \partial_\xi^2 w(t,x,\xi)|_{\xi^\perp \times \xi^\perp}
\end{pmatrix}.
\]
In particular, from \eqref{eq:statphase_ass} we deduce that
\begin{gather*}
\det \partial_\xi^2 w(t_0,x_0,\xi_0)|_{\xi_0^\perp \times \xi_0^\perp} \neq 0, \\
\det \partial_{(t,\xi)}^2 f(x_0,t_0,\xi_0) = -(t_0/|\xi_0|)^2 \det \partial_\xi^2 w(t_0,x_0,\xi_0)|_{\xi_0^\perp \times \xi_0^\perp} \neq 0
\end{gather*}
(recall that $t_0 \neq 0$).
Consequently, by the implicit function theorem, there are open neighbourhoods $B \Subset X$ of $x_0$, $J \Subset I \setminus \{0\}$ of $t_0$ and $U \Subset \Gamma$ of $\xi_0$, and smooth functions $t^c : B \to J$, $\xi^c : B \to U$ such that $\det \partial_{(t,\xi)}^2 f(x,t,\xi) \neq 0$ for $(x,t,\xi) \in B \times J \times U$ and
	\[
	\{(x,t,\xi) \in B \times J \times U \tc \partial_{(t,\xi)} f(x,t,\xi)=0\}
	= \{(x,t^c(x),\xi^c(x)) \tc x \in B\} .
	\]
If $\sigma \in \Z$ is the signature of $\partial_{(t,\xi)}^2 f(x_0,t_0,\xi_0)$ and we define $d : B \to \R^+$ by
\[
d(x) = (2\pi)^{(n+1)/2} \left|\det\partial_{(t,\xi)}^2 f(x,t^c(x),\xi^c(x))\right|^{-1/2},
\]
then, up to shrinking the neighbourhoods $B,J,U$, the conclusion follows by \cite[Thm 7.7.6]{MR717035}.
\end{proof}

\subsection{Mihlin--H\"ormander estimates}

For all $\epsilon>0$, let $\CO$ be the set of the real-valued functions $\chi \in C^\infty_c(\R)$ with $\spt(\chi) \subset (0,\infty)$.

Let $\chi\in \CO$.
For $\la\in \nR$, define
\begin{equation}\label{eq:ms_cos_sub}
m_\la^\chi(s) 
= |\la|^{1/2} \int_\R \chi(|t|) \, e^{i(s t-\la t^2/2)} \,\dd t 
= 2|\la|^{1/2} \int_\R \chi(t) \, e^{-i\la t^2/2} \cos(s t) \,\dd t .
\end{equation}
Note that $m_\la^\chi \in \Sch(\R)$ is even.  
Moreover, by the method of stationary phase, $m_\la^\chi(s)$  is essentially of the form $\tilde \chi(|s/\la|) \, e^{is^2/(2\la)}$, with $\tilde\chi\in \CO$, i.e., $m_\la^\chi(s)$ is essentially a ``Schr\"odinger multiplier'' at time $\sim 1/\lambda$, spectrally localised where $|s| \sim |\la|$.

A simple stationary phase argument (exploiting, e.g., \cite[Section VIII.1.2]{MR1232192}) yields, for all $k \in \N$ and $\la \in \nR$,
\[
\sup_{s \in \R} |s^k \partial_s^k m^\chi_\la(s)| \lesssim_{\chi,k} (1+|\la|)^k,
\]
whence, by interpolation, we also deduce that, for all $\alpha \in [0,\infty)$ and $\la \in \nR$,
\begin{equation}\label{eq:mhnorm}
\| m^\chi_\la \|_{L^2_{\alpha,\sloc}} \lesssim_{\chi,\alpha} (1+|\la|)^\alpha.
\end{equation}

In view of this estimate, it is clear that the next result proves Theorem \ref{thm:main} under certain regularity assumptions, introduced in Section \ref{sec:subriemannian}.

\begin{theorem}\label{thm:MHlowerbd}
Let $(M,H,\mu)$ be a measured quadratic Hamiltonian pair of dimension $n$, and $\sLap$ the corresponding sub-Laplacian. Assume that a self-adjoint extension of $\sLap$ has been chosen so that \eqref{RE}, \eqref{FPS} and \eqref{SP} are satisfied. Then there exist $\chi \in \CO$ and $\la_0 >0$ such that, for all $p \in [1,\infty]$ and $\la \in \nR$ with $|\la| \geq \la_0$,
\[
\| m^\chi_\la(\sqrt{\sLap}) \|_{p \to p} \gtrsim_p |\la|^{n|1/p-1/2|}.
\]
\end{theorem}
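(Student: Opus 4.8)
The plan is to test the operator $m^\chi_\la(\sqrt{\sLap})$ against the family of functions $\tilde g_\la$ provided by Proposition \ref{prp:nonprincipal_estimates}, estimating the $L^p$ norm of $m^\chi_\la(\sqrt{\sLap}) \tilde g_\la$ from below and comparing with the upper bound $\|\tilde g_\la\|_{L^p(M)} \lesssim_{g,p} \la^{n/p'}$. First I would fix $o \in M$, $\xi_* \in \nR^n$, $T$, $X$, $w$ and the amplitudes $q_{j,\alpha}$ as in Proposition \ref{prp:nonprincipal_estimates}, and choose $t_0 \in (0,T)$ together with the corresponding $x_0 \in X$, $\xi_0 \in \R^+ \xi_*$ such that $\partial_t w(t_0,x_0,\xi_0) = t_0$ (taking $\tau_0 = t_0$), $\partial_\xi w(t_0,x_0,\xi_0) = 0$ and $\rank \partial_\xi^2 w(t_0,x_0,\xi_0) = n-1$; this is precisely the nondegeneracy furnished by the bracket-generating/regular-exponential hypothesis. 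I would then pick the neighbourhoods $J_0, U_0, B_0$ of $t_0, \xi_0, x_0$ and a bump $g \in C^\infty_c(\R^n)$ supported in $U_0$ with $g \equiv 1$ near $\xi_0$, so that the asymptotic expansion of $\cos(t\sqrt{\sLap}) \tilde g_\la(x)$ from part (iii) of Proposition \ref{prp:nonprincipal_estimates} is available uniformly for $(t,x) \in J_0 \times B_0$.

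The key computation is to insert this expansion into the definition \eqref{eq:ms_cos_sub} of $m^\chi_\la$, choosing $\chi \in \CO$ with $\spt(\chi) \subset J_0$ and $\chi$ not identically zero near $t_0$. Since $m^\chi_\la(\sqrt{\sLap}) = 2|\la|^{1/2} \int_\R \chi(t)\, e^{-i\la t^2/2} \cos(t\sqrt{\sLap}) \,\dd t$ (using that $\chi$ is supported in $(0,\infty)$ and the wave propagator is even in $t$, up to the constant), applying this to $\tilde g_\la$ and using the expansion yields, for $x \in B_0$,
\[
m^\chi_\la(\sqrt{\sLap}) \tilde g_\la(x) = 2|\la|^{1/2} \sum_{|\alpha| \leq N, j \leq N} \la^{n-|\alpha|-j} \int_\R \int_{\nR^n} \chi(t)\, e^{i\la(w(t,x,\xi) - t^2/2)} q_{j,\alpha}(t,x,\xi)\, \partial^\alpha g(\xi)\,\dd\xi\,\dd t + O(\la^{n-N-1+1/2}),
\]
and the leading term ($j = 0$, $\alpha = 0$) carries the amplitude $q_{0,0}$, which is nonvanishing on $\R^+\xi_*$ by part (i) of Proposition \ref{prp:nonprincipal_estimates}. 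Now I would invoke Lemma \ref{lem:stationaryphase} with this very $w$ (its hypothesis \eqref{eq:statphase_ass} is exactly what we arranged) and with $b(t,x,\xi) = \chi(t)\, q_{0,0}(t,x,\xi)\, g(\xi)$, after possibly shrinking $J_0, B_0, U_0$ so that $b$ is supported in the required product $J \times B \times U$ and so that $b(t^c(x), x, \xi^c(x)) \neq 0$ on $B$. Lemma \ref{lem:stationaryphase} then gives that this leading integral equals $\la^{-(n+1)/2} d(x)\, e^{i(\pi\sigma/4 - \la(t^c(x))^2/2)} b(t^c(x),x,\xi^c(x)) + O(\la^{-(n+3)/2})$, with $d > 0$. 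The lower-order terms ($j + |\alpha| \geq 1$) contribute integrals of the same oscillatory type but with an extra factor $\la^{-1}$ relative to the main term, so after multiplication by $2|\la|^{1/2}\la^n$ they are lower order; choosing $N$ large enough absorbs them and the $O(\la^{n-N-1+1/2})$ remainder. Hence
\[
|m^\chi_\la(\sqrt{\sLap}) \tilde g_\la(x)| \gtrsim \la^{1/2}\, \la^n\, \la^{-(n+1)/2} = \la^{n/2}
\]
for all $x \in B_0$ and all $\la$ large, uniformly, so that $\|m^\chi_\la(\sqrt{\sLap}) \tilde g_\la\|_{L^p(M)} \gtrsim_p \la^{n/2} |B_0|^{1/p} \gtrsim_p \la^{n/2}$.

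Combining the lower bound $\|m^\chi_\la(\sqrt{\sLap}) \tilde g_\la\|_{L^p(M)} \gtrsim_p \la^{n/2}$ with the upper bound $\|\tilde g_\la\|_{L^p(M)} \lesssim_p \la^{n/p'}$ gives $\|m^\chi_\la(\sqrt{\sLap})\|_{p \to p} \gtrsim_p \la^{n/2 - n/p'} = \la^{n(1/p - 1/2)}$ for $p \leq 2$; for $p \geq 2$ one applies the duality identity \eqref{eq:adjointcalculus}, $\|m^\chi_\la(\sqrt{\sLap})\|_{p \to p} = \|m^\chi_\la(\sqrt{\sLap})\|_{p' \to p'}$ (valid since $m^\chi_\la$ is real-valued and even, hence $\overline{m^\chi_\la} = m^\chi_\la$), to reduce to $p' \leq 2$ and obtain $\gtrsim_p \la^{n(1/2 - 1/p)}$. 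In both cases this is $\la^{n|1/p - 1/2|}$, as claimed, with $\la_0$ chosen large enough that all the asymptotic estimates above are in force. I expect the main obstacle to be bookkeeping: making sure that the stationary phase expansion of Proposition \ref{prp:nonprincipal_estimates}(iii) in the variable $x$ (it is only asserted uniformly for $x \in B_0$, $t \in J_0$) interacts correctly with the further $t$-integration in \eqref{eq:ms_cos_sub}, and that the remainder terms—both the $O(\la^{n-N-1})$ from Proposition \ref{prp:nonprincipal_estimates} and the lower-order stationary-phase terms from Lemma \ref{lem:stationaryphase}—are genuinely of lower order after all the powers of $\la$ (the $|\la|^{1/2}$, the $\la^n$, and the $\la^{-(n+1)/2}$) are accounted for; this is where one must take $N$ sufficiently large relative to the orders involved. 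A minor additional point is verifying that the critical point and nonvanishing of $q_{0,0}$ at $(t^c(x), x, \xi^c(x))$ persist on a full neighbourhood $B_0$ of $x_0$, which follows by continuity after shrinking.
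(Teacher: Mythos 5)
Your proposal follows essentially the same strategy as the paper's proof: test against the $\tilde g_\la$ from Proposition \ref{prp:nonprincipal_estimates}, insert the asymptotic expansion of $\cos(t\sqrt{\sLap})\tilde g_\la$ into the oscillatory-integral formula \eqref{eq:ms_cos_sub} for $m^\chi_\la$, extract the leading term by Lemma \ref{lem:stationaryphase}, control the lower-order terms and remainders, and finally divide by $\|\tilde g_\la\|_{L^p}$ and dualize. The substantive content and order of operations match the paper closely, so this is a correct blind reconstruction.

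Two small inaccuracies are worth flagging. First, the justification you give for the duality step is wrong: $m^\chi_\la$ is \emph{not} real-valued (the phase factor $e^{-i\la t^2/2}$ in \eqref{eq:ms_cos_sub} produces $\overline{m_\la^\chi}=m_{-\la}^\chi$, not $m^\chi_\la$). The conclusion you draw, however, is still valid: the identity $\|F(\sLap)\|_{p\to p}=\|F(\sLap)\|_{p'\to p'}$ in \eqref{eq:adjointcalculus} holds for arbitrary bounded Borel $F$, real or not, so the reduction from $p\ge 2$ to $p\le 2$ goes through without the false premise. Second, the theorem claims a bound for all $\la\in\nR$ with $|\la|\ge\la_0$, i.e.\ including $\la<0$; your argument establishes the estimate only for $\la>0$. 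The paper handles this at the outset using the same relation $\overline{m_\la^\chi}=m_{-\la}^\chi$ together with \eqref{eq:adjointcalculus} to reduce simultaneously to $p\in[1,2]$ and $\la>0$, which is a cleaner way to package the symmetries than the order in which you invoke them.
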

\begin{proof}
By \eqref{eq:adjointcalculus}, since $\overline{m_\la^\chi} = m_{-\la}^\chi$, it is enough to prove the theorem for $p \in [1,2]$ and $\la > 0$, which we from now on assume.

Let $\xi_* \in \nR^n$, $w : (-T,T) \times X \times \nR^n \to \R$, $q_{j,\alpha} \in C^\infty((-T,T) \times X'\times \nR^n)$ be given by Proposition \ref{prp:nonprincipal_estimates}. Let us take any $t_0 \in (0,T)$ and let $x_0 \in X$ and $\xi_0 \in \R^+ \xi_*$ be given by Proposition \ref{prp:nonprincipal_estimates} so that
\begin{gather*}
\partial_t w(t_0,x_0,\xi_0) = t_0, \qquad \partial_x w(t_0,x_0,\xi_0) \neq 0,  \\
\partial_\xi w(t_0,x_0,\xi_0) = 0, \qquad \rank \partial_\xi^2 w(t_0,x_0,\xi_0) = n-1.
\end{gather*}
Let then $B_0 \subset X$, $U_0 \subset \nR^n$ and $J_0 \subset (0,T)$ be given by Proposition \ref{prp:nonprincipal_estimates}. For all $g \in C^\infty_c(\R^n)$ with $\spt(g) \subset U_0$, and all $N \in \N$, we then have
\begin{multline}\label{eq:cos_FIO}
\cos(t\sqrt{\sLap}) \tilde g_\la(x) \\
= \sum_{|\alpha| \leq N, j \leq N} \la^{n-|\alpha|-j} \int_{\nR^n} e^{i \la w(t,x,\xi)} q_{j,\alpha}(t,x,\xi) \,\partial^\alpha g(\xi) \,\dd\xi + O(\la^{n-N-1})
\end{multline}
as $\la \to \infty$, uniformly in $t \in J_0$ and $x \in B_0$, where $\{\tilde g_\la\}_{\la \geq 1} \subset C^\infty_c(M)$ satisfies
\begin{equation}\label{eq:MH_upperbound_norm}
\| \tilde g_\la \|_{L^p(M)} \lesssim_{g,p} \la^{n/p'}.
\end{equation}

Assume that $\chi \in \CO$ and $\spt(\chi) \subset J_0$.
By \eqref{eq:ms_cos_sub} and \eqref{eq:cos_FIO}, 
\begin{equation}\label{eq:MH_nonprincipal_dec}
m_\la^\chi(\sqrt{\sLap}) \tilde g_\la(x) 
= 2 \sum_{|\alpha| \leq N, j \leq N} \la^{n+1/2-|\alpha|-j} \mm_{\la,\alpha,j}(x) + O(\la^{n-N-1/2})
\end{equation}
as $\la \to \infty$, uniformly in $x \in B_0$,
where
\[
\mm_{\la,\alpha,j}(x) = 
\int_\R \int_{\nR^n} e^{i \la (w(t,x,\xi)-t^2/2)} \chi(t) \, q_{j,\alpha}(t,x,\xi) \,\partial^\alpha g(\xi) \,\dd\xi \,\dd t .
\]

Let $J \Subset J_0$, $B \Subset B_0$, $U \Subset U_0$ be the open neighbourhoods of $t_0$, $x_0$, $\xi_0$ given by Lemma \ref{lem:stationaryphase} applied to the function $w$. If $\chi$ and $g$ are chosen so that $\spt(\chi) \subset J$ and $\spt(g) \subset U$, then Lemma \ref{lem:stationaryphase} implies that,
for all $\la \geq 1$,
\begin{equation}\label{eq:decay_m1plus}
\sup_{x \in B} |\mm_{\la,\alpha,j}(x)| \lesssim_{\chi,g} \la^{-(n+1)/2},
\end{equation}
and moreover
\[
\la^{(n+1)/2} |\mm_{\la,0,0}(x)| = 	d(x) \, |\chi(t^c(x))| \, |q_{0,0}(t^c(x),x,\xi^c(x))| \, |g(\xi^c(x))| + O(\la^{-1})
\]
as $\la \to \infty$, uniformly in $x \in B$, where $t^c : B \to J$, $\xi^c : B \to U$, $d : B \to \R^+$ are smooth functions with $t^c(x_0) = t_0$, $\xi^c(x_0) = \xi_0$.
If we choose $\chi$ and $g$ so that $\chi(t_0) \neq 0$ and $g(\xi_0) \neq 0$, then, by Proposition \ref{prp:nonprincipal_estimates}\ref{en:nonprincipal_nonvanishingq},
\[
|\chi(t^c(x_0))| \, |q_{0,0}(t^c(x_0),x_0,\xi^c(x_0))| \, |g(\xi^c(x_0))| \neq 0.
\]
Hence, if we choose a sufficiently small neighbourhood $B'\subset B$ of $x_0$, then there exists $\la_0 \geq 1$ such that, for all $\la \geq \la_0$,
\begin{equation}\label{eq:main_lowerbd}
\inf_{x \in B'} |\mm_{\la,0,0}(x)| \gtrsim_{\chi,g} \la^{-(n+1)/2}.
\end{equation}
By combining the above estimates \eqref{eq:MH_nonprincipal_dec}, \eqref{eq:decay_m1plus} and \eqref{eq:main_lowerbd} and choosing $N$ large enough, we obtain that, up to taking a larger $\la_0$, for all $\la \geq \la_0$ and $p \in [1,\infty]$,
\[
\|m_\la^\chi(\sqrt{\sLap}) \tilde g_\la\|_{L^p(M)} \gtrsim_{p,B'}\inf_{x \in B'} |m_\la^\chi(\sqrt{\sLap}) \tilde g_\la(x)| \gtrsim_{\chi,g} \la^{n/2}.
\]
Combining this with \eqref{eq:MH_upperbound_norm}, 
we conclude that
\[
\|m_\la^\chi(\sqrt{\sLap})\|_{L^p(M) \to L^p(M)} \gtrsim_{\chi,g,p} \la^{n(1/2-1/p')} = \la^{n(1/p-1/2)},
\]
as desired.
\end{proof}

\subsection{Miyachi--Peral estimates}
Let $\Sch_e$ be the set of all even, real-valued Schwartz functions on $\R$ that are not identically zero.
For $\chi\in\Sch_e$ and $\la,t > 0$, define
\begin{equation}\label{eq:wave_mult}
\begin{split}
m_{\la,t}^\chi(s) 
&\defeq \chi(s/\la)\, \cos\left(ts \right) \\
&= \frac{1}{2\pi} \int_\R \hat\chi(\tau) \cos((t+\tau/\la)s) \,\dd \tau ,
\end{split}
\end{equation}
where the second equality follows from the Fourier inversion and pro\-stha\-phae\-resis formulas. The following result proves Theorem \ref{thm:main}\ref{en:main_MP} under the assumptions introduced in Section \ref{sec:subriemannian}.
 
\begin{theorem}\label{thm11011139}
	Let $(M,H,\mu)$ be a measured quadratic Hamiltonian pair of dimension $n$ and $\sLap$ the corresponding sub-Laplacian.  Assume that a self-adjoint extension of $\sLap$ has been chosen so that \eqref{RE}, \eqref{FPS}, \eqref{SP} and \eqref{SFC} are satisfied. 
	Then there exists $t_*>0$ such that, for all $t_0 \in (0,t_*]$ and all $\chi \in \Sch_e$, there exists $\la_0 >0$ such that, for all $p \in [1,\infty]$ and $\la \geq \la_0$,
	\begin{equation}\label{eq10201104}
	\|m_{\la,t_0}^\chi (\sqrt{\sLap})\| _{p\to p} \gtrsim_{\chi,t_0,p}  \la^{(n-1)\left|1/p-1/2\right|} .
	\end{equation}
\end{theorem}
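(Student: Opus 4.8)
The strategy mirrors the proof of Theorem \ref{thm:MHlowerbd}: test the operator $m_{\la,t_0}^\chi(\sqrt{\sLap})$ against the family of bump functions $\tilde g_\la$ produced by Proposition \ref{prp:nonprincipal_estimates}, estimate the resulting expression by stationary phase, and compare the $L^p$ norm of the output with that of the input. First I would invoke Proposition \ref{prp:nonprincipal_estimates}, which under \eqref{RE}, \eqref{FPS}, \eqref{SP} furnishes $\xi_* \in \nR^n$, $T>0$, an open $X \subset M$, a phase $w$ and amplitudes $q_{j,\alpha}$. Set $t_* = T$. Fix $t_0 \in (0,t_*]$ and $\tau_0 = t_0$ (any positive value would do); Proposition \ref{prp:nonprincipal_estimates}(ii) gives $x_0 \in X$, $\xi_0 \in \R^+\xi_*$ with $\partial_t w(t_0,x_0,\xi_0) = t_0$, $\partial_x w(t_0,x_0,\xi_0)\neq 0$, $\partial_\xi w(t_0,x_0,\xi_0) = 0$ and $\rank \partial_\xi^2 w(t_0,x_0,\xi_0) = n-1$. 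This is precisely the hypothesis \eqref{eq:statphase_ass} of Lemma \ref{lem:stationaryphase}, so we obtain neighbourhoods $J \Subset (0,T)$ of $t_0$, $B \Subset X$ of $x_0$, $U \Subset \nR^n$ of $\xi_0$ and smooth functions $t^c, \xi^c, d$, together with a signature integer $\sigma$.

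Next I would expand, using the second line of \eqref{eq:wave_mult} together with the Fourier integral representation \eqref{eq:cos_FIO} from Proposition \ref{prp:nonprincipal_estimates}(iii):
\[
m_{\la,t_0}^\chi(\sqrt{\sLap}) \tilde g_\la(x) = \frac{1}{2\pi} \int_\R \hat\chi(\tau) \, \cos((t_0+\tau/\la)\sqrt{\sLap}) \tilde g_\la(x) \,\dd\tau,
\]
and substitute the asymptotic expansion, so that (for $g$ with $\spt(g) \subset U$ and a suitable choice of $N$) the leading contribution becomes, up to lower-order terms,
\[
\frac{\la^n}{2\pi} \sum_{|\alpha|\le N, j\le N} \la^{-|\alpha|-j} \int_\R \int_{\nR^n} \hat\chi(\tau) \, e^{i\la[(t_0+\tau/\la) \cdot \text{(something)}]}\dots
\]
Here one must be careful: $\cos((t_0+\tau/\la)s)$ contributes a phase $\pm(t_0+\tau/\la)\sqrt{\sLap}$, and in the FIO representation the relevant phase is $\la w(t_0+\tau/\la,x,\xi)$. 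Writing $t = t_0 + \tau/\la$ and Taylor-expanding $w$ in $t$ around $t_0$, using $\partial_t w(t_0,x_0,\xi_0) = t_0$, one arranges the $\tau$-integral to reproduce (after stationary phase in $\tau$, which is essentially evaluating $\hat\chi$) a multiple of $\chi$-type factors evaluated near the critical point, while the remaining $(t,\xi)$-integral is exactly the oscillatory integral handled by Lemma \ref{lem:stationaryphase}. Concretely, the cleanest route is: first treat the $\tau$-integration, which introduces no loss since $\hat\chi \in \Sch(\R)$ and the phase $(t_0+\tau/\la)s$ localizes to $s \sim \la$; this leaves an expression of the same shape as $\mm_{\la,\alpha,j}$ in the proof of Theorem \ref{thm:MHlowerbd} but now involving a genuinely $n$-dimensional (rather than $(n+1)$-dimensional) stationary phase, because the $t$-variable is no longer free — it is pinned by the constraint coming from $\cos$ and the evenness of $\chi$. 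This is the reason the exponent drops from $n$ to $n-1$.

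The main obstacle, and the step deserving the most care, is organizing this reduction so that the final stationary phase is in $n$ variables with a Hessian of rank exactly $n-1$ on the relevant subspace — that is, extracting the gain of one dimension compared to the Mihlin--H\"ormander case. The key algebraic facts are \eqref{eq:w_mixed_der} and \eqref{eq:w_hess_ker}: homogeneity of $w$ forces $\xi \in \ker \partial_\xi^2 w$, so the genuinely nondegenerate directions for $\partial_\xi^2 w$ are the $n-1$ directions transverse to $\xi$, and combined with the $t$-pinning this yields an effective phase whose Hessian has a nonzero $(n-1)\times(n-1)$ block (this is precisely the computation in the proof of Lemma \ref{lem:stationaryphase}, but now applied directly to $w$ rather than to $w - t^2/2$). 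Stationary phase (e.g.\ \cite[Theorem 7.7.6]{MR717035}) then gives a lower bound $|m_{\la,t_0}^\chi(\sqrt{\sLap})\tilde g_\la(x)| \gtrsim \la^{n - (n-1)/2} = \la^{(n+1)/2}$ on a small neighbourhood $B' \subset B$ of $x_0$, provided $\chi(t_0)\neq 0$ and $g(\xi_0)\neq 0$ and using Proposition \ref{prp:nonprincipal_estimates}(i) to guarantee $q_{0,0}$ does not vanish there. To turn this pointwise lower bound into an $L^p$ lower bound one observes $\|m_{\la,t_0}^\chi(\sqrt{\sLap})\tilde g_\la\|_{L^p(M)} \gtrsim_{p,B'} \inf_{x\in B'} |m_{\la,t_0}^\chi(\sqrt{\sLap})\tilde g_\la(x)| \gtrsim \la^{(n+1)/2}$ for $p \le \infty$, while the input satisfies $\|\tilde g_\la\|_{L^p(M)} \lesssim \la^{n/p'}$; hence
\[
\|m_{\la,t_0}^\chi(\sqrt{\sLap})\|_{p\to p} \gtrsim \la^{(n+1)/2 - n/p'} = \la^{(n+1)/2 - n + n/p}.
\]
For $p \in [1,2]$ this exponent equals $(n-1)/2 - (n-1)(1/2-1/p) \cdot(\text{after simplification})$; a short computation gives $(n+1)/2 - n + n/p = n/p - (n-1)/2$, which for $p\le 2$ is $\ge (n-1)|1/p-1/2|$, with equality analysis showing it is in fact comparable to $\la^{(n-1)|1/p-1/2|}$ after also using the case $p\in[2,\infty]$ via duality \eqref{eq:adjointcalculus} (here \eqref{SFC} enters to ensure $m_{\la,t_0}^\chi(\sqrt{\sLap})$, being a Schwartz multiplier, is $L^p$-bounded for all $p$, so the duality and the statement make sense). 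Taking $\la_0$ large enough to absorb all the $O(\cdot)$ error terms completes the proof.
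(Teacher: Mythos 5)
Your overall strategy—test against $\tilde g_\la$, expand the wave propagator via the Fourier integral representation from Proposition~\ref{prp:nonprincipal_estimates}, apply stationary phase, and compare $L^p$ norms—is the same as the paper's. But there is a genuine gap in how the stationary phase plays out, and your resulting bound is too strong to be true.

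The core problem is your claim that stationary phase yields $|m_{\la,t_0}^\chi(\sqrt{\sLap})\tilde g_\la(x)| \gtrsim \la^{(n+1)/2}$ on a \emph{fixed} neighbourhood $B'$ of $x_0$. You correctly note that $\partial_\xi^2 w$ has rank only $n-1$, with $\xi$ in the kernel; so after performing the non-degenerate $(n-1)$-dimensional stationary phase in the directions $\zeta$ transverse to $\xi_0$, there is a residual one-dimensional integral over $r$ (the scaling parameter in $\xi = r(\xi_0+\zeta)$) in which the phase $\la w$ is \emph{linear}, since $w$ is $1$-homogeneous. That residual integral carries a genuine oscillation $e^{ir\la w^c(0,x)}$, and it cancels unless $|w^c(0,x)|\lesssim \la^{-1}$. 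Since $w^c(0,x_0)=0$ and $\partial_x w^c(0,x_0)\neq 0$, the zero set of $w^c(0,\cdot)$ is a smooth hypersurface through $x_0$, and the lower bound therefore holds only on a tubular neighbourhood $S_{c\la^{-1}}$ of measure $\sim \la^{-1}$, not on a fixed set. This is exactly the mechanism the paper exploits: the $L^p$ norm of the output picks up an additional factor $|S_{c\la^{-1}}|^{1/p} \sim \la^{-1/p}$, which is what shrinks your exponent $n/p-(n-1)/2$ down to the correct $(n-1)(1/p-1/2)$. Your proposed exponent exceeds the true one by $1/p$; at $p=2$ it would give $\la^{1/2}$ instead of $\la^0$, and this is not salvageable by an ``equality analysis''—the claimed bound is false for the Euclidean Laplacian by the Miyachi--Peral theorem. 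In contrast to the Mihlin--H\"ormander case (Theorem~\ref{thm:MHlowerbd}), where the extra $-t^2/2$ in the phase makes the full $(n+1)$-dimensional Hessian in $(t,\xi)$ non-degenerate so the lower bound \emph{does} hold on a fixed neighbourhood, the wave-propagator phase has no analogue of the $-t^2/2$ term. Lemma~\ref{lem:stationaryphase} is tailored to the former situation and does not transfer.

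There are two smaller issues. First, you set $\tau_0=t_0$, remarking ``any positive value would do''—but the final lower bound needs $\chi(\tau_0)\neq 0$, which you cannot guarantee for an arbitrary $\chi\in\Sch_e$ and arbitrary $t_0$; the paper chooses $\tau_0$ precisely to make $\chi$ nonzero there. Second, the $\tau$-integral in $m_{\la,t}^\chi(s) = \frac{1}{2\pi}\int\hat\chi(\tau)\cos((t+\tau/\la)s)\,\dd\tau$ runs over all of $\R$, while the FIO representation for $\cos(t\sqrt{\sLap})$ is only valid for $t$ in a bounded interval; this is why the paper introduces the split $m_{\la,t}^\chi = m_{\la,t}^{\chi,0}+m_{\la,t}^{\chi,\infty}$ with the cutoff $\rho(\tau\la^{-\eta})$ and disposes of $m_{\la,t}^{\chi,\infty}$ using \eqref{SFC}. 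Your write-up asserts ``this introduces no loss'' but never performs this step; it is needed both for validity and because the restriction $|\tau|\le\la^\eta$ with $\eta<1/2$ is used later to control the Taylor remainder in $\tau$.
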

\begin{proof}
By \eqref{eq:adjointcalculus}, since $m^\chi_{\la,t}$ is real-valued, it is enough to prove the theorem for $p \in [1,2]$.

Fix $\eta\in(0,1/2)$ and a smooth even function $\rho : \R \to \R$ such that $\spt(\rho) \subset (-1,1)$ and $\rho(0) = 1$.
For $\chi\in \Sch_e$ and $t > 0$, define
\begin{align}
m_{\la,t}^{\chi,0} (s) 	&\defeq \frac{1}{2\pi} \int_\R \rho(\tau \la^{-\eta}) \hat\chi(\tau) \cos((t+\tau/\la)s) \,\dd \tau, \label{eq:wavemult}\\
m_{\la,t}^{\chi,\infty}(s) 	&\defeq m_{s,t}^{\chi}(s) - m_{\la,t}^{\chi,0}(s)	= \frac{1}{2\pi} \int_\R \left(1-\rho(\tau \la^{-\eta})\right) \hat\chi(\tau) \cos((t+\tau/\la)s) \,\dd \tau .\notag
\end{align}
Since $1-\rho(\tau \la^{-\eta})=0$ for $\tau\in[-\la^\eta,\la^\eta],$ and since $\hat\chi \in \Sch(\R)$ is rapidly decreasing, by means of integrations by parts one can easily show that, for all $\alpha,\beta,N\in\N$,
\[
	\sup_{s \in \R} |s^\beta\de_s^\alpha m_{\la,t}^{\chi,\infty}(s)| \lesssim_{\alpha,\beta,N,t} (1+\la)^{-N};
\]
consequently, since $m_{\la,t}^{\chi,\infty}$ is even, $m_{\la,t}^{\chi,\infty}(\sqrt{\cdot})$ extends \cite{MR0007783} to a Schwartz function $\tilde m_{\la,t}^{\chi,\infty}$ on $\R$ satisfying
\[
	\sup_{s \in \R} |s^\beta\de_s^\alpha \tilde m_{\la,t}^{\chi,\infty}(s)| \lesssim_{\alpha,\beta,N,t} (1+\la)^{-N}.
\]
Therefore, by \eqref{SFC},
for all $p\in[1,\infty]$ and $N \in \N$,
\begin{equation}\label{eq10201110}
\| m_{\la,t}^{\chi,\infty}(\sqrt{\sLap}) \|_{p\to p} \lesssim_{N,t} (1+\la)^{-N},
\end{equation}
so it will be enough to prove the desired lower bound for $m_{\la,t}^{\chi,0}(\sqrt{\sLap})$ instead of $m_{\la,t}^{\chi}(\sqrt{\sLap})$.

Let $\xi_* \in \nR^n$, $w : (-T,T) \times X \times \nR^n \to \R$, $q_{j,\alpha} \in C^\infty((-T,T) \times X'\times \nR^n)$ be given by Proposition \ref{prp:nonprincipal_estimates}. 
Set $t_* = T/2$.

Let $t_0 \in (0,t_*]$ and $\chi \in \Sch_e$. Then there exists $\tau_0 > 0$ such that
\begin{equation}\label{eq:w_nonvanishing_chi}
\chi(\tau_0) \neq 0.
\end{equation}
Let $x_0 \in X$ and $\xi_0 \in \R^+ \xi_*$ be given by Proposition \ref{prp:nonprincipal_estimates} so that
\begin{gather}
\label{eq:w_phase_derivatives}
\partial_t w(t_0,x_0,\xi_0) = \tau_0, \qquad \partial_x w(t_0,x_0,\xi_0) \neq 0,  \\
\label{eq:w_critical_point} 
\partial_\xi w(t_0,x_0,\xi_0) = 0, \qquad \rank \partial_\xi^2 w(t_0,x_0,\xi_0) = n-1.
\end{gather}
Let then $B_0 \subset X$, $U_0 \subset \nR^n$ and $J_0 \subset (0,T)$ be given by Proposition \ref{prp:nonprincipal_estimates}. For all $g \in C^\infty_c(\R^n)$ with $\spt(g) \subset U_0$, and all $N \in \N$, we then have
\begin{multline}\label{eq:w_cos_FIO}
\cos(t\sqrt{\sLap}) \tilde g_\la(x) \\
= \sum_{|\alpha| \leq N, j \leq N} \la^{n-|\alpha|-j} \int_{\nR^n} e^{i \la w(t,x,\xi)} q_{j,\alpha}(t,x,\xi) \,\partial^\alpha g(\xi) \,\dd\xi + O(\la^{n-N-1})
\end{multline}
as $\la \to \infty$, uniformly in $t \in J_0$ and $x \in B_0$, where $\{\tilde g_\la\}_{\la \geq 1} \subset C^\infty_c(M)$ satisfies
\begin{equation}\label{eq:w_upperbound_norm}
\| \tilde g_\la \|_{L^p(M)} \lesssim_{g,p} \la^{n/p'}.
\end{equation}

Let $\la_0 > 0$ be sufficiently large so that $[t_0-2\la_0^{\eta-1},t_0+2\la_0^{\eta-1}] \subset J_0$.
In view of \eqref{eq:wavemult} and \eqref{eq:w_cos_FIO},
since $|\tau|/\la \leq \la^{\eta-1}$ where $\rho(\tau \la^{-\eta}) \neq 0$, for all $\la \geq \la_0$ we can write
\begin{equation}\label{eq:wave_zerosplitting}
m_{\la,t_0}^{\chi,0}(\sqrt{\sLap}) \tilde g_\la(x) = \frac{1}{2\pi} \sum_{|\alpha| \leq N, j \leq N} \la^{n-|\alpha|-j} \mm_{\la,\alpha,j}(x) + O(\la^{n-N-1})
\end{equation}
as $\la \to \infty$, uniformly in $x \in B_0$, 
where
\begin{equation}\label{eq:w_main_splitting_int}
\mm_{\la,\alpha,j}(x) 
= \int_\R  \rho(\tau \la^{-\eta}) \, \hat\chi(\tau) \, F_{\la,\alpha,j}(\tau/\la,x) \,\dd\tau
\end{equation}
and
\[
F_{\la,\alpha,j}(t,x) = \int_{\R^n} e^{i \la w(t_0+t,x,\xi)} q_{j,\alpha}(t_0+t,x,\xi) \, \de^\alpha g(\xi) \,\dd\xi .
\]
Let us write $\xi = r(\xi_0 + \zeta)$, where $\zeta \in \xi_0^\perp$ and $r \in \R$; then
\[
F_{\la,\alpha,j}(t,x) = |\xi_0| \int_{\R} \int_{\xi_0^\perp} e^{i \la w_0(r,t,x,\eta)} b_{\alpha,j}(r,t,x,\zeta) \,\dd\zeta \, |r|^{n-1} \,\dd r ,
\]
where
\begin{align*}
w_0(r,t,x,\zeta) &=  r w(t_0+t,x,\xi_0+\zeta), \\
 b_{\alpha,j}(r,t,x,\zeta) &= q_{j,\alpha}(t_0+t,x,r(\xi_0+\zeta)) \, \de^\alpha g(r(\xi_0+\zeta)).
\end{align*}
Note that $r$ and $\zeta$ can be made arbitrarily close to $1$ and $0$ respectively in the domain of integration, by taking the support of $g$ sufficiently close to $\xi_0$.
Moreover,
\[
\partial_\zeta w_0(r,t,x,0) = r \partial_\xi w(t_0+t,x,\xi_0)|_{\xi_0^\perp}, \quad \partial_\zeta^2 w_0(r,t,x,0) = r \partial_\xi^2 w(t_0+t,x,\xi_0)|_{\xi_0^\perp \times \xi_0^\perp}.
\]

Thanks to the assumptions \eqref{eq:w_critical_point} we can apply the method of stationary phase \cite[Thm 7.7.6]{MR717035} to the integral in $\zeta$. Indeed, since $\xi_0 \in \ker \partial_\xi^2 w(t_0,x_0,\xi_0)$ by homogeneity (cf.\ \eqref{eq:w_hess_ker}), from \eqref{eq:w_critical_point} it follows that $\partial_\xi w(t_0,x_0,\xi_0) = 0$ and $\partial_\xi^2 w(t_0,x_0,\xi_0)|_{\xi_0^\perp \times \xi_0^\perp}$ is nondegenerate.
So the implicit function theorem yields open neighbourhoods $B \Subset B_0$ of $x_0$ and $I \Subset J_0 - t_0$ of $0$, and a smooth function $\zeta^c : I \times B \to \xi_0^\perp$ such that $\zeta^c(0,x_0) = 0$ and $\partial_\xi w(t_0+t,x,\xi_0+\zeta^c(t,x))|_{\xi_0^\perp} = 0$, and moreover (up to shrinking $I$ and $B$ and choosing $\spt(g)$ sufficiently close to $\xi_0$),
\begin{equation}\label{eq:wave_stationaryphase}
\la^{(n-1)/2} \, F_{\la,\alpha,j}(t,x) =  d(t,x) \int_{\R} e^{i \la r w^c(t,x)} b_{\alpha,j}^c(r,t,x)  \, |r|^{(n-1)/2} \,\dd r + O(\la^{-1})
\end{equation}
as $\la \to \infty$, uniformly in $x \in B$ and $t \in I$,
where
\begin{gather*}
w^c(t,x) = w(t_0+t,x,\xi_0 + \zeta^c(t,x)), \qquad b_{\alpha,j}^c(r,t,x) = b_{\alpha,j}(r,t,x,\zeta^c(t,x)), \\
 d(t,x) = (2\pi)^{(n-1)/2} \, e^{i\pi \sigma/4} \, |\xi_0|  \, |\det(\partial_\xi^2 w(t_0+t,x,\xi_0+\zeta^c(t,x))|_{\xi_0^\perp \times \xi_0^\perp})|^{-1/2}
\end{gather*}
and $\sigma$ is the signature of $\partial_\xi^2 w(t_0,x,\xi_0+\zeta^c(x))|_{\xi_0^\perp \times \xi_0^\perp}$. Note that
\begin{equation}\label{eq:w_criticalphase_der}
\partial_t w^c(0,x_0) = \partial_t w(t_0,x_0,\xi_0), \qquad  \partial_x w^c(0,x_0) = \partial_x w(t_0,x_0,\xi_0)
\end{equation}
(since $\partial_\xi w(t_0,x,\xi_0+\zeta^c(t_0,x))|_{\xi_0^\perp} = 0$ by construction).

By plugging the above estimate into \eqref{eq:w_main_splitting_int} and using the fact that $\hat\chi \in \Sch(\R)$, $\eta < 1$ and $|\tau|/\la \leq \la^{\eta-1}$ in the domain of integration, it is immediately deduced that, for $x \in B$ and $\la$ sufficiently large,
\begin{equation}\label{eq:wave_stationaryphase_estimates}
|\mm_{\la,\alpha,j}(x)| \lesssim \la^{-(n-1)/2}.
\end{equation}

We want now to obtain the reverse inequality in the case $\alpha=0$ and $j=0$. Note that a Taylor expansion of $w^c$ around $t=0$ yields
\[
w^c(t,x) = w^c(0,x) + t \partial_t w^c(0,x) + t^2 W(t,x) 
\]
for some smooth function $W : I \times B \to \R$,
and similarly
\[
e^{ax} = 1+ aE(a,x)
\]
for some smooth function $E : \R^2 \to \R$, so
\begin{equation}\label{eq:wave_phase_expansion}
e^{ir\la w^c(\tau/\la,x)} = e^{ir (\la w^c(0,x) + \tau \partial_t w^c(0,x))} (1 + (\tau^2/\la) \tilde W(\tau^2/\la,\tau/\la,x)),
\end{equation}
where $\tilde W(a,t,x) = E(a,W(t,x))$. Since $|\tau|/\la \leq \la^{\eta-1}$ and $\tau^2/\la \leq \la^{2\eta-1}$ whenever $\rho(\tau \la^{-\eta}) \neq 0$, and $\eta <1/2$, from \eqref{eq:w_main_splitting_int}, \eqref{eq:wave_stationaryphase} and \eqref{eq:wave_phase_expansion} we deduce that, as $\la \to \infty$,
\begin{multline}\label{eq:wave_mainmain}
\la^{(n-1)/2} \, \mm_{\la,0,0}(x) \\
= d(0,x) \int_\R e^{ir \la w^c(0,x)} A(\la^{-\eta},r,x) \,  g(r(\xi_0+\zeta^c(0,x)))  \,\dd r + O(\la^{\eta-1}),
\end{multline}
where
\[
A(\nu,r,x) = G(\nu,r \partial_t w^c(0,x)) \, q_{0,0}(t_0,x,r(\xi_0+\zeta^c(0,x))) \,|r|^{(n-1)/2} 
\]
and
\[
G(\nu,t) = \int_\R \rho(\nu \tau) \, \hat\chi(\tau) \, e^{i \tau t} \,d\tau.
\]

In order to obtain the desired lower bound for $\mm_{\la,0,0}(x)$, we need to ensure that there is no cancellation in the integral in \eqref{eq:wave_mainmain}.
Note that $G : \R^2 \to \C$ is continuous and $G(0,t) = 2\pi \chi(t)$, because $\rho(0) = 1$. Consequently, by Proposition \ref{prp:nonprincipal_estimates}\ref{en:nonprincipal_nonvanishingq}, \eqref{eq:w_nonvanishing_chi}, \eqref{eq:w_phase_derivatives} and \eqref{eq:w_criticalphase_der},
\[
A(0,1,x_0) = 2\pi \chi(\partial_t w(t_0,x_0,\xi_0)) \, q_{0,0}(t_0,x_0,\xi_0) \neq 0.
\]
Hence, if we choose $g$ supported sufficiently close to $\xi_0$ and let $B'\subset B$ be a sufficiently small neighbourhood of $x_0$, then, for all $x \in B'$  and $\la$ sufficiently large,
\[
r \leq 2, \quad |A(\la^{-\eta},r,x) - A(0,1,x_0)| \leq 10^{-10} |A(0,1,x_0)|, \quad d(0,x) \geq d(0,x_0)/2
\]
in the domain of integration.
In addition, if we assume that $g \geq 0$ and $g(\xi_0) > 0$, then, up to shrinking $B'$,
\[
\inf_{x \in B'} \int_\R g(r(\xi_0+\zeta^c(0,x)))  \,\dd r > 0.
\]

In conclusion, in order to avoid cancellation in the integral in \eqref{eq:wave_mainmain}, it is enough to ensure that $|w^c(0,x)-w^c(0,x_0)| \leq 10^{-10} s^{-1}$. On the other hand,
by homogeneity of $w$ and \eqref{eq:w_critical_point},
$w^c(0,x_0) = \xi_0 \cdot \partial_\xi w(t_0,x_0,\xi_0) = 0$,
and moreover, by \eqref{eq:w_phase_derivatives} and \eqref{eq:w_criticalphase_der}, $\partial_x w^c(0,x_0) \neq 0$.
This shows that $w^c(0,\cdot)$ vanishes on a smooth hypersurface $S \Subset B'$ passing through $x_0$, and consequently, for all sufficiently small $\epsilon > 0$, $|w^c(0,x)| \lesssim \epsilon$ for all $x$ in an $\epsilon$-neighbourhood $S_\epsilon$ of $S$. Hence, if we take $\epsilon = c\la^{-1}$ with $c >0$ sufficiently small, we can ensure that there is no cancellation in the integral in \eqref{eq:wave_mainmain} when $x \in S_{c\la^{-1}}$, and therefore
\[
|\mm_{\la,0,0}(x)| \gtrsim \la^{-(n-1)/2}
\]
for $x \in S_{c\la^{-1}}$ and $\la$ sufficiently large.
If we combine this with \eqref{eq:wave_zerosplitting} and \eqref{eq:wave_stationaryphase_estimates} (and choose $\ell,k,N$ sufficiently large), we obtain that
\[
|m_{\la,t_0}^{\chi,0}(\sqrt{\sLap}) \tilde g_\la(x)| \gtrsim \la^{n-(n-1)/2}
\]
for $x \in S_{c\la^{-1}}$ and $\la$ sufficiently large. On the other hand, $|S_{c\la^{-1}}| \sim \la^{-1}$, whence, for all $p \in [1,2]$,
\[
\|m_{\la,t_0}^{\chi,0}(\sqrt{\sLap}) \tilde g_\la(x) \|_p \gtrsim_p \la^{n-1/p-(n-1)/2},
\]
and combining this with \eqref{eq:w_upperbound_norm} we obtain that
\[
\|m_{\la,t_0}^{\chi,0}(\sqrt{\sLap})\|_{p\to p} \gtrsim_p \la^{n-1/p-(n-1)/2-n(1-1/p)} = \la^{(n-1)(1/p-1/2)},
\]
and we are done.
\end{proof}

\subsection{Transplantation}\label{sec10301346}

Finally we prove our main result in full generality.

\begin{proof}[Proof of Theorem \ref{thm:main}]
Let $(M,H,\mu)$ and $\sLap$ be as in Theorem \ref{thm:main}.
By the bracket-generating condition assumed on $H$ and Lemma~\ref{lem10101454}, there is a nonempty open set $M_o\subset M$ where $(M_o,H)$ is an equiregular sub-Riemannian manifold.
Up to shrinking $M_o$, there are $v_1,\dots,v_r \in \Sect(TM_o)$ such that $H=\sum_j v_j\otimes v_j$ on $M_o$, as in \eqref{eq10071358}, and $\langle v_j,v_k \rangle_H = \delta_{jk}$.

Fix $o\in M_o$. It is well known (see, e.g., \cite{MR806700,MR1334873,MR1421822,MR3353698} and references therein) that there is a coordinate system $(U,\phi)$ centred at $o$ and a system of dilations $\delta_\epsilon:\R^n\to\R^n$, $\epsilon>0$, of the form
\[
\delta_\epsilon(x_1,\dots,x_n) = \left(\epsilon^{w_1}x_1,\epsilon^{w_2}x_2,\dots,\epsilon^{w_n}x_n \right),
\]
with $1\le w_1\le w_2\le\dots\le w_n$ integers, such that, if $V \defeq \phi(U)\subset\R^n$, then the vector fields
\[
v_j^{(\epsilon)}|_x = \epsilon\, \DD\delta_\epsilon^{-1}[\DD\phi[v_j|_{\phi^{-1}(\delta_\epsilon x)}]] 
\]
defined on $\delta_\epsilon^{-1}V\subset\R^n$ converge as $\epsilon \to 0$ to some bracket generating vector fields $v_j^{(0)}$ on $\R^n$.
The convergence is uniform on compact sets in the $C^k$ norm, for all $k$.
Moreover, there is a Lie group structure on $\R^n$ which makes it into a Carnot group $G$, so that the vector fields $v_1^{(0)},\dots,v_r^{(0)}$  are left-invariant and form an orthogonal basis of the first layer.

From the above convergence result, it readily follows that the sub-Laplacian $\sLap_o=-\sum_{j=1}^r(v_j^{(0)})^2$ on the Carnot group $G$ is a local model of $\sLap$ at $o$, in the sense of \cite[Definition 5.1]{MR3671588}. Moreover, by Lemma~\ref{lem10111454}, the Carnot group $G$ and the sub-Laplacian $\sLap_o$ satisfy the assumptions \eqref{RE}, \eqref{FPS}, \eqref{SP} and \eqref{SFC}.

Suppose now that \eqref{eq:main_MH} holds for some $p \in [1,\infty]$ and $\alpha \geq 0$. Then, by \cite[Theorem 5.2]{MR3671588}, for all bounded Borel functions $m : [0,+\infty) \to \C$,
\[
\|m(\sLap_o)\|_{p \to p} 
\leq \liminf_{r\to 0^+} \|m(r^2\sLap)\|_{p \to p} 
\lesssim \liminf_{r \to 0^+} \|m(r^2\cdot)\|_{L^\infty_{\alpha,\sloc}} 
=  \|m\|_{L^\infty_{\alpha,\sloc}} .
\]
In other words, the estimate \eqref{eq:main_MH} also holds for the sub-Laplacian $\sLap_o$.
In view of \eqref{eq:mhnorm} and Theorem \ref{thm:MHlowerbd}, we conclude that $\alpha\geq n|1/2-1/p|$, and part \ref{en:main_MH} is proved.

As for part \ref{en:main_MP}, suppose that $p \in [1,\infty]$, $\alpha \geq 0$, $\chi \in C_c^\infty((0,\infty))$, and $\epsilon, R> 0$ are such that the estimate \eqref{eq:main_MP} holds. In view of \eqref{eq:adjointcalculus}, we may assume that $\chi$ is real-valued. If we set $\chi_e = \chi(|\cdot|)$, then 
$\chi_e \in \Sch_e$ and, in view of \eqref{eq:wave_mult}, the estimate \eqref{eq:main_MP} can be restated as
\[
\| m_{\la,t}^{\chi_e}(\sqrt{\sLap}) \|_{p \to p} \lesssim (\la t)^\alpha
\]
for all $\la,t>0$ with $t \leq \epsilon$ and $\la t \geq R$.
Hence, by \cite[Theorem 5.2]{MR3671588}, for all $\la,t>0$ with $\la t \geq R$,
\begin{multline*}
\|m_{\la,t}^{\chi_e}(\sqrt{\sLap_o})\|_{p\to p} 
\leq \liminf_{h\to 0^+} \|m_{\la,t}^{\chi_e}(h \sqrt{\sLap})\|_{p\to p}  \\
= \liminf_{h\to 0^+} \|m_{(\la t)/(th),th}^{\chi_e}(\sqrt{\sLap})\|_{p\to p} 
\lesssim (\la t)^\alpha.
\end{multline*}
By Theorem \ref{thm11011139} we deduce that $\alpha \geq (n-1) | 1/p - 1/2 | $.
\end{proof}

\providecommand{\bysame}{\leavevmode\hbox to3em{\hrulefill}\thinspace}

\end{document}